\numberwithin{equation}{section}
\def\comment#1{}
\def\SAW{\text{SAW}}
\def\comment#1{}
\def\ignore#1{}
\newtheorem{theorem}{Theorem}
\newtheorem{proposition}[theorem]{Proposition}
\newtheorem{lemma}[theorem]{Lemma}
\newtheorem{remark}[theorem]{Remark}
\newtheorem{definition}[theorem]{Definition}
\newtheorem{cor}[theorem]{Corollary}
\newtheorem{notation}[theorem]{Notation}
\newtheorem{fact}[theorem]{Fact}
\newtheorem{conj}[theorem]{Conjecture}
\renewcommand{\epsilon}{\varepsilon}
\title[Trees of self-avoiding walks]{Trees of self-avoiding walks
 }
\date{}
\author[V.~Beffara]{Vincent Beffara}
\address{Vincent Beffara\\ Univ. Grenoble Alpes, CNRS, Institut Fourier, F-38000 Grenoble, France} \email{vincent.beffara@univ-grenoble-alpes.fr}
 \author[C.-B.~Huynh]{Cong Bang Huynh}
\address{Cong Bang Huynh\\ Univ. Grenoble Alpes, CNRS, Institut Fourier, F-38000 Grenoble, France}
\email{cong-bang.huynh@univ-grenoble-alpes.fr}
\keywords{Self-avoiding walk, effective conductance, random walk on tree}
\begin{document}

\begin{abstract}
  We consider  the biased random walk  on a tree constructed  from the
  set  of finite  self-avoiding  walks on  a lattice,  and  use it  to
  construct probability measures on  infinite self-avoiding walks. The
  limit measure (if it exists) obtained when the bias converges to its
  critical value is conjectured to coincide with the weak limit of the
  uniform  SAW\@.  Along  the  way,  we obtain  a  criterion  for  the
  continuity of  the escape probability of  a biased random walk  on a
  tree as  a function  of the  bias, and show  that the  collection of
  escape  probability functions  for  spherically  symmetric trees  of
  bounded degree is stable under uniform convergence.
\end{abstract}

\maketitle
\thispagestyle{empty}


\vfill

\begin{center}
  \includegraphics[width=\hsize,angle=180]{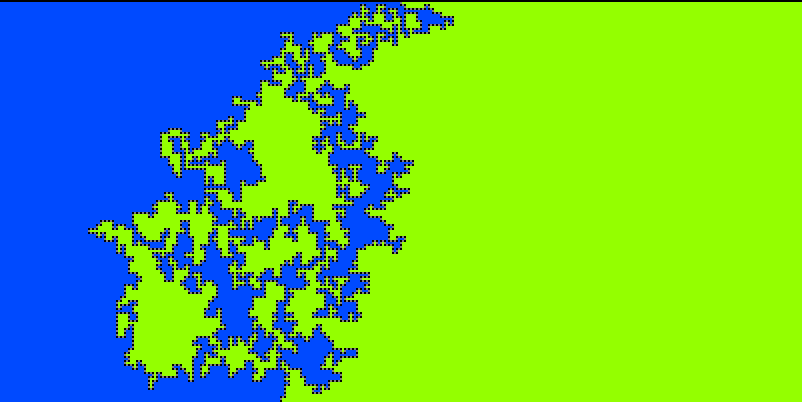}

  \bigskip

  \emph{\footnotesize  A   realization  of  the  limit   walk  in  the
    upper-half plane, with bias $\lambda=1$.}
\end{center}

\vfill

\newpage

\section{Introduction}

An  $n$-step self-avoiding  walk  (SAW) (or  a  self-avoiding walk  of
length $n$)  in a  regular lattice  $\mathbb L$  (such as  the integer
lattice  $\mathbb{Z}^2$,  triangular lattice  $\mathbb{T}$,  hexagonal
lattice,      etc)      is      a      nearest      neighbor      path
$\gamma=( \gamma_0, \gamma_1, \ldots, \gamma_n)$ that visits no vertex
more than once. Self-avoiding walks were first introduced as a lattice
model for polymer chains  (see~\cite{flory1953principles}); while they
are  very easy  to define,  they  are extremely  difficult to  analyze
rigorously and  there are still  many basic open questions  about them
(see~\cite{madras2013self}, Chapter 1).

Let $c_n$ be the number of SAWs  of length $n$ starting at the origin.
The \emph{connective constant} of $\mathbb L$, which we will denote by
$\mu$, is defined by
$$c_n =  \mu^{n+o(n)} \quad \text{when} \quad n
  \rightarrow \infty.$$ The existence of the connective constant is easy
to     establish     from    the     sub-multiplicativity     relation
$c_{n+m}  \leq  c_n  c_m$,  from   which  one  can  also  deduce  that
$c_n\geq \mu^n$ for all $n$; the existence of $\mu$ was first observed
by      Hammersley      and     Morton~\cite{hammersley1954poor}.
Nienhuis~\cite{nienhuis1982exact}  gave  a  prediction  that  for  all
regular   planar    lattices,   $c_n=\mu^n    n^{\alpha+o(1)}$   where
$\alpha=\frac{11}{32}$, and this prediction is known to hold under the
assumption of the existence of  a conformally invariant scaling limit,
see \emph{e.g.}~\cite{werner:saw}.

We are interested in defining a natural probability measure on the set
$SAW_{\infty}$  of \emph{infinite}  self-avoiding walks  (\emph{i.e.},
nearest-neighbors  paths ${(\gamma_k)}_{k  \geq 0}$  visiting no  vertex
more    than    once,     see    the    sections~\ref{kesten'smeasure}
and~\ref{newmeasure}). Such  a measure  was constructed before  in the
half-plane case  as the weak limit  of the uniform measures  on finite
self-avoiding     walks,    relying     on    results     by    Kesten
(see~\cite{madras2013self,KT2}),  and  it  is  part  of  our  goal  to
investigate whether that measure and our construction are related.

\subsection{The model}

In  this paper,  we  consider a  one-parameter  family of  probability measures
on         $SAW_{\infty}$,         denoted         by
${(\mathbb{P}_{\lambda})}_{\lambda>\lambda_c}$, defined  informally as follows
(see  Section~\ref{randomwalkontrees}    for     a    formal definition). Let
$\mathcal T_{\mathbb{Z}^2}$ be the tree  whose vertices are the finite
self-avoiding walks  in the  plane starting at  the origin $o := (0,0)$,  where
two such vertices  are adjacent when one  walk is a one-step  extension of the
other.  We will  call this tree  the \emph{self-avoiding  tree} on $\mathbb
Z^2$. Denoting  by  $\mathbb H$  the  upper-half  plane  in $\mathbb Z^2$  and
by $\mathbb Q$  the first quadrant, defined as \[\mathbb H := \{(x,y) \in
\mathbb Z^2 : y \geq 0 \} \quad \text{and} \quad \mathbb Q := \{(x,y)\in \mathbb
Z^2 : x \geq 0 \text{~and~} y \geq 0\},\] one  can construct the
self-avoiding     trees $\mathcal     T_{\mathbb{H}}$    and $\mathcal
T_{\mathbb{Q}}$ accordingly, and all the constructions below can be extended to
these cases in  a natural fashion which we will not make explicit in this
introduction.

Then,  consider the  continuous-time biased  random walk  of parameter
$\lambda>0$ on $\mathcal T_{\mathbb Z^2}$, which from a given location
jumps towards the root with rate  $1$ and towards each of its children
vertices with  rate $\lambda$. If $\lambda$  is such that the  walk is
transient,    its   path    determines   an    infinite   branch    in
$\mathcal  T_{\mathbb Z^2}$  which can  be seen  as a  random infinite
self-avoiding   walk  $\omega_\lambda^\infty$;   we  will   denote  by
$\mathbb P_\lambda^{\mathbb Z^2}$  the law of $\omega_\lambda^\infty$,
omitting the mention of $\mathbb Z^2$ in the notation when it is clear
from the  context, and  call it the  \emph{limit walk}  with parameter
$\lambda$.

The idea  of seeing the  self-avoiding walk  as a dynamical  object is
very  natural, and  not new;  it  seems that  the biased  walk on  the
``self-avoiding    tree''   was    first   considered,    mostly   for
$\lambda<\lambda_c$, by  Berretti and Sokal  (\cite{Berretti1985}, see
also~\cite{Sokal1996,Randall1994}) as a Monte-Carlo method to estimate
connective  constants  and   sample  finite-size  self-avoiding  paths
uniformly. The model was discussed informally by one of the authors of
the present paper (VB) with S.  Sidoravicius and W. Werner a number of
years ago, as  a failed attempt to understand  conformal invariance of
the  SAW model  in the  scaling limit,  and in  particular a  proof of
Theorem~\ref{proposition1.1}  was  obtained  at that  time  but  never
written down; one of our informal goals here is to revive this line of
thought: even  though the question  of SAW  proper still seems  out of
reach,     the     link     with    critical     percolation     (cf.\
Section~\ref{sec:percolation})  could  be  a promising  direction  for
further research.

\subsection{Main results}

It   is    well-known   that    there   exists   a    critical   value
$\lambda_c  =  \lambda_c(\mathcal  T_{\mathbb   Z^2})$  such  that  if
$\lambda>\lambda_c$  the  biased  random  walk  is  transient  and  if
$\lambda<\lambda_c$         it        is         recurrent        (see
Lyons~\cite{lyons1990random}). In  the general  case of  biased random
walk on a tree, the recurrence or transience of the random walk at the
critical point  depends in subtle ways  on the structure of  the tree.
The value  of $\lambda_c$ on  the other  hand is easier  to determine:
indeed, Lyons~\cite{lyons1990random} proved that it coincides with the
reciprocal  of the  branching number  of the  tree (for  background on
branching      numbers     and      trees     in      general,     see
\emph{e.g.}~\cite{LP:book}).  The  following   proposition  gives  the
critical value for self-avoiding trees.

\begin{theorem}\label{proposition1.1}
  Let
  $\mathcal      T_{\mathbb{Z}^2},\mathcal     T_{\mathbb{H}},\mathcal
    T_\mathbb{Q}$  be   the  self-avoiding   trees  defined   as  above,
  respectively in the plan, half-plane and first quadrant. Then,
  $$\lambda_c(\mathcal T_{\mathbb{Z}^2}) =
    \lambda_c(\mathcal     T_{\mathbb{H}})      =     \lambda_c(\mathcal
    T_\mathbb{Q}) = \frac1\mu,$$ where  $\mu$ is the connective constant
  of lattice $\mathbb{Z}^2$ as defined above.
\end{theorem}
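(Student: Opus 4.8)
The plan is to use Lyons's theorem identifying $\lambda_c$ of a tree with the reciprocal of its branching number $\mathrm{br}$, and then to compute the branching numbers of the three self-avoiding trees. The key identity to establish is $\mathrm{br}(\mathcal T) = \mu$ for $\mathcal T \in \{\mathcal T_{\mathbb Z^2}, \mathcal T_{\mathbb H}, \mathcal T_{\mathbb Q}\}$. The growth rate of any of these trees is $\overline{\mathrm{gr}}(\mathcal T) = \limsup_n c_n^{1/n} = \mu$ (and in fact the limit exists by submultiplicativity), where $c_n$ counts $n$-step SAWs from the origin in the relevant region; since the branching number is always at most the (upper) growth rate, we immediately get $\mathrm{br}(\mathcal T) \le \mu$ for all three trees. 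So the entire content of the theorem is the lower bound $\mathrm{br}(\mathcal T) \ge \mu$, which amounts to exhibiting, for every $\lambda < 1/\mu$, a unit flow from the root to infinity with finite energy $\sum_e \lambda^{-|e|} f(e)^2$ (equivalently, showing the biased walk is transient for all $\lambda<1/\mu$, or producing a positive-capacity percolation via Lyons's criterion with $\lambda^{-n}$ weights).

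First I would reduce to a single case. Since $\mathcal T_{\mathbb Q} \subset \mathcal T_{\mathbb H} \subset \mathcal T_{\mathbb Z^2}$ as subtrees (SAWs in the quadrant are SAWs in the half-plane are SAWs in the plane), monotonicity of the branching number under inclusion gives $\mathrm{br}(\mathcal T_{\mathbb Q}) \le \mathrm{br}(\mathcal T_{\mathbb H}) \le \mathrm{br}(\mathcal T_{\mathbb Z^2}) \le \mu$. Hence it suffices to prove $\mathrm{br}(\mathcal T_{\mathbb Q}) \ge \mu$, i.e. the hardest (smallest) tree already has branching number $\mu$. This is where the quadrant is actually convenient: a standard bridge/irreducible-walk decomposition shows that SAWs confined to $\mathbb Q$ still grow at rate $\mu$. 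Concretely, Hammersley--Welsh-type arguments (or the Kesten pattern theorem) show that the number $b_n$ of $n$-step SAWs in $\mathbb Q$ starting at the origin satisfies $b_n^{1/n} \to \mu$ as well; I would cite the relevant statement from \cite{madras2013self}.

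The main step is then the flow construction. I would fix $\lambda < 1/\mu$, set $\nu := 1/\lambda > \mu$, and build a unit flow on $\mathcal T_{\mathbb Q}$ (hence on the larger trees) with finite $\lambda$-energy. The natural candidate is to concatenate ``renewal'' building blocks: decompose each SAW into half-space bridges, which concatenate freely, so that the self-avoiding tree contains a subtree that is, up to the bridge decomposition, close to a Galton--Watson-like or periodic structure whose offspring generating behaviour has exponential rate $\mu$. On such a subtree one spreads the unit flow evenly among the children at each branch point; the energy contribution of generation $n$ is of order (number of vertices at level $n$) $\times \nu^n \times (\text{flow through each})^2$, and because the flow is split evenly the flow through a level-$n$ vertex is roughly (level-$n$ count)$^{-1}$, giving an energy term of order $\nu^n / (\text{level-}n\text{ count}) \approx (\nu/\mu)^{n}\cdot$ wait — one must be slightly careful and instead use a flow that is spread according to a subcritical weighting, taking $f$ on a level-$n$ edge proportional to $\mu^{-n}(\mu/\nu)^{n}$ along a suitably chosen positive-density subtree, so that $\sum_n \nu^n f_n^2 \cdot(\text{count}) $ is a convergent geometric series in $\mu/\nu<1$. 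This is precisely the content of Lyons's lower bound $\mathrm{br} \ge \mathrm{gr}$ for trees with enough regularity, and the self-avoiding tree has enough regularity via the bridge decomposition. The place where genuine work is needed — and the step I expect to be the main obstacle — is making the bridge-concatenation subtree honest: bridges concatenate freely to stay self-avoiding, but one must check that restricting to the quadrant (rather than the full plane) does not destroy the exponential growth rate of the block count, and that the resulting subtree is a genuine subtree of $\mathcal T_{\mathbb Q}$ on which the geometric-series energy estimate applies uniformly. Once that is in hand, finiteness of the flow energy for every $\lambda<1/\mu$ gives transience, hence $\lambda_c(\mathcal T_{\mathbb Q}) \le 1/\mu$, i.e. $\mathrm{br}(\mathcal T_{\mathbb Q}) \ge \mu$, closing the chain of inequalities and proving all three equalities simultaneously.
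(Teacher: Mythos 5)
Your overall strategy is the same as the paper's: by Lyons's theorem everything reduces to showing $\mathrm{br}(\mathcal T)=\mu$ for the three trees, the upper bound $\mathrm{br}\le\overline{\mathrm{gr}}=\mu$ is immediate, the inclusion $\mathcal T_{\mathbb Q}\subset\mathcal T_{\mathbb H}\subset\mathcal T_{\mathbb Z^2}$ reduces the lower bound to the smallest tree, and the lower bound itself is to come from a bridge decomposition. But the step you single out as ``the main obstacle'' is left as a gesture, and that gesture is not quite right. There is no general theorem of the form ``$\mathrm{br}\ge\mathrm{gr}$ for trees with enough regularity'' that you can invoke; what is true, and what the paper actually uses, is the Furstenberg-type theorem (Theorem~\ref{sousperiodic}) that an $N$-super-periodic tree of finite growth has $\mathrm{br}=\mathrm{gr}$. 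The full self-avoiding tree $\mathcal T_{\mathbb Q}$ (or $\mathcal T_{\mathbb H}$) is \emph{not} super-periodic, so this does not apply directly; the paper introduces the $m$-good subtree $\mathcal T^m$ of walks whose bridge decomposition uses only irreducible bridges of length $\le m$, verifies that $\mathcal T^m$ \emph{is} $m$-super-periodic (bounded-length irreducible bridges concatenate freely to give the required injective tree morphisms), so $\mathrm{br}(\mathcal T^m)=\mathrm{gr}(\mathcal T^m)$, and then shows separately that $\mathrm{gr}(\mathcal T^m)\to\mu$ as $m\to\infty$ via supermultiplicativity of the bridge counts $b_n^{(m)}$ together with the key identity $b_m=b_m^{(m)}$ (every length-$m$ bridge decomposes into irreducible pieces of length at most $m$). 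This last convergence $\mathrm{gr}(\mathcal T^m)\to\mu$ is exactly the content your sketch is missing; without it, the super-periodic subtree only gives a bound of the form $\mathrm{br}(\mathcal T_{\mathbb Q})\ge\mathrm{gr}(\mathcal T^m)$ for each fixed $m$, and you still have to know that these numbers exhaust $\mu$.

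Two smaller points. First, your bias-parameter inequality is reversed: to show $\lambda_c\le1/\mu$ via a finite-energy flow for the conductances $c(e)=\lambda^{|e|}$, you need to produce such a flow for every $\lambda>1/\mu$ (transience above the critical value), not $\lambda<1/\mu$; equivalently, in the max-flow formulation of the branching number you want a nonzero flow $\theta$ with $\theta(e)\le\lambda^{-|e|}$ for every $\lambda<\mu$. Your own mid-sentence correction of the exponents shows you noticed the tension but the stated inequality direction is still wrong. Second, your ``split the flow evenly'' heuristic does not work on $\mathcal T_{\mathbb Q}$ itself (the tree is too inhomogeneous to spread flow evenly and control the energy by level counts alone); what actually makes the geometric series work is passing to the super-periodic subtree $\mathcal T^m$ first and then invoking $\mathrm{br}(\mathcal T^m)=\mathrm{gr}(\mathcal T^m)$ rather than building the flow by hand. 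So the proposal has the right outline, but the heart of the argument -- the $m$-good super-periodic subtree and the computation $\mathrm{gr}(\mathcal T^m)\to\mu$ -- is the part that still needs to be supplied.
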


This is a direct consequence of Proposition~\ref{prop10} below. Notice
that it is clear from the definition  that $\mu$ is the growth rate of
$\mathcal T_{\mathbb Z^2}$;  there are rather large  classes of trees,
including  $\mathcal T_{\mathbb  Z^2}$,  for which  the branching  and
growth coincide (for  instance, this holds for  sub- or super-periodic
trees, cf.\ below, or  for typical supercritical Galton-Watson trees),
but   none   of    the   classical   results   seem    to   apply   to
$\mathcal T_{\mathbb H}$ or $\mathcal T_{\mathbb Q}$.

\bigskip

The geometry of  the limit walk is  our main object of  interest. As a
first    property   of    it,   we    obtain   the    following   (see
Section~\ref{sec:plambda}):

\begin{theorem}\label{thm:propertieoflimitwalk}
  For    all     $\lambda    >\lambda_c$,    under     the    measures
  $\mathbb{P}_{\lambda}^{\mathbb               Z^2}$               and
  ${\mathbb  P}_\lambda^{\mathbb H}$,  the  limit  walk almost  surely
  visits the line $\mathbb{Z} \times  \left \{ 0 \right \}$ infinitely
  many times.
\end{theorem}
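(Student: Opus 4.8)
The plan is to exploit the fact that the limit walk $\omega_\lambda^\infty$ is, by construction, the ray in $\mathcal T_{\mathbb H}$ (resp.\ $\mathcal T_{\mathbb Z^2}$) traced out by a transient biased random walk, and that the tree itself has a recursive self-similar structure under the operation of cutting at a return to the boundary line. Write $L = \mathbb Z\times\{0\}$. Let $\gamma = \omega_\lambda^\infty$ and suppose, for contradiction, that with positive probability $\gamma$ visits $L$ only finitely often; by conditioning on the (finite) last visit and on the finite initial segment up to that visit, and using that there are only countably many finite SAWs, it suffices to show that for \emph{every} finite SAW $\eta$ ending on $L$, the event that the limit walk started afresh from $\eta$ never returns to $L$ has probability zero. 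In other words, I want to reduce Theorem~\ref{thm:propertieoflimitwalk} to the statement: \emph{conditionally on never leaving the (closed) upper half–plane and on starting at a boundary point, the limit walk returns to $L$ almost surely}. Because the step from $\eta$ that returns to $L$ is one of at most two choices among at most three available steps, this in turn reduces to a statement about the biased random walk on $\mathcal T_{\mathbb H}$: from any vertex $v$ of the self-avoiding tree corresponding to a SAW ending on $L$, the transient biased walk selects, with positive probability \emph{bounded away from $0$ uniformly in $v$}, an infinite ray passing through a given child; and among the children of $v$ there is always one corresponding to a step back onto $L$.

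The key quantitative input is a uniform lower bound on the escape probability through the ``good'' child. Fix $\lambda>\lambda_c$. For a vertex $v$ of $\mathcal T_{\mathbb H}$ with children $v_1,\dots,v_d$ ($d\le 3$), the biased walk started at $v$ eventually commits to the subtree below some $v_i$; the probability it commits to $v_i$ equals $\lambda\,C(v_i)\big/\big(1 + \lambda\sum_j C(v_j)\big)$, where $C(\cdot)$ is the effective conductance (to infinity) of the subtree rooted at that child, computed in the network where each edge at depth $k$ has conductance $\lambda^{k}$ — this is exactly the object discussed in the escape-probability part of the paper. So what must be shown is: there is $c=c(\lambda)>0$ such that for every boundary-ending SAW $v$, the child $v_i$ obtained by stepping back onto $L$ has conductance $C(v_i)\ge c$ and, simultaneously, the total $\sum_j C(v_j)$ is bounded above. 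The upper bound on the total conductance is immediate from monotonicity: the subtree below $v$ embeds into the full self-avoiding tree, whose conductance from the root is finite precisely because $\lambda>\lambda_c$ gives transience with a finite effective resistance back — more carefully, one uses that $C(v_i)$ is dominated by the conductance of $\mathcal T_{\mathbb Z^2}$ itself, which is finite for $\lambda>\lambda_c=1/\mu$ by the branching-number computation (Theorem~\ref{proposition1.1}) together with the standard fact that at strictly supercritical bias the effective resistance is finite. For the lower bound, observe that the subtree below $v_i$ contains, as a subnetwork, the self-avoiding tree of the half-plane \emph{translated so its root sits on $L$}: a walk that has just landed on $L$ at some point $x$ can continue as any SAW in $\mathbb H$ started at $x$ that avoids the finite forbidden set coming from $v$. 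The crude bound is that it can at least avoid that finite set by first taking one or two fixed steps and then behaving like a SAW in a half-plane translate, so $C(v_i)$ is bounded below by $\lambda^{O(1)}$ times the escape conductance of a \emph{fixed} infinite self-avoiding tree, which is a strictly positive constant depending only on $\lambda$.

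Assembling these pieces: at each visit to $L$ the walk has probability at least $c(\lambda)>0$ of choosing the child that re-enters $L$ immediately at the next step, hence of visiting $L$ again. Since these ``attempts'' happen at a sequence of stopping times and the conditional probability of success given the past is uniformly at least $c$, a Borel–Cantelli / Lévy-type argument (the conditional second Borel–Cantelli lemma) forces infinitely many returns almost surely, contradicting the assumed finiteness and proving the theorem for $\mathbb H$; the $\mathbb Z^2$ case is identical with $\mathbb H$ replaced by either half-plane bordering $L$.

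I expect the main obstacle to be the \textbf{uniform} lower bound $C(v_i)\ge c(\lambda)$: one must be careful that the ``forbidden set'' inherited from the already-traced SAW $v$ does not, as $v$ grows, choke off all continuations that stay near $L$. The clean way around this is that only finitely many lattice sites near the current endpoint are relevant for taking one or two escaping steps, after which the continuation lives in a genuine half-plane translate disjoint from the forbidden set except for boundedly many sites; comparing conductances by deleting those boundedly many sites costs only a multiplicative constant, and what remains is a fixed tree whose escape conductance is positive by transience. A secondary technical point is justifying the conditioning/strong-Markov reduction rigorously for the limit walk, which is a measure defined via the random-walk path rather than directly — but this is exactly the kind of manipulation the formal definition in Notation~\ref{not:proofofprop1.1} is set up to support, so it should be routine.
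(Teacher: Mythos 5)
Your reduction to ``for every finite SAW $\eta$ ending on $L$, the event that the limit walk started afresh from $\eta$ never returns to $L$ has probability zero'' is sound, but the argument you give for that statement does not establish it: you only show that the conditional probability of stepping onto $L$ at the very next step is at least $c(\lambda)>0$, which gives $\mathbb{P}(\text{never revisit }L\mid\eta)\le 1-c<1$, not $0$. The conditional second Borel--Cantelli lemma does not close the gap, because the ``attempts'' in your scheme are indexed by visits of the limit walk to $L$ --- precisely the quantity you are trying to prove is infinite. If there are only finitely many visits then there are only finitely many attempts, each contributing a bounded conditional probability, so $\sum_k\mathbb{P}(A_k\mid\mathcal{F}_{T_k})<\infty$ and the lemma yields no contradiction. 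To iterate you would need to know that after leaving $L$ the limit walk returns to $L$ almost surely, which is circular.

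The paper's proof avoids exactly this trap by working in two stages and, crucially, by distinguishing the biased random-walk trajectory $X_n$ from the limit walk $\omega_\lambda^\infty$. Lemma~\ref{3.2} and Lemma~\ref{theorem12} first show that the trajectory $Y_n=p(X_n)$ touches $L$ infinitely often almost surely; this is proved not via a lower bound on the probability of an immediate return, but via a self-similarity argument (adding a fresh row below $L$ and comparing the walk on $\mathcal T_{\mathbb H}$ with the walk on the enlarged half-plane, so that ``stays strictly above $L$ forever'' would contradict the already-established one-visit Lemma~\ref{theorem11}). This furnishes an \emph{unconditional} infinite supply of opportunities. Only then (Steps 3 and 4, using Lemma~\ref{algorithm}, the $\mathcal T_{\mathbb Q}$ escape tree, and the strip trees $\mathcal T_{B_\ell}$ of Proposition~\ref{p2}) is the escape-probability lower bound invoked, to conclude that the limit walk cannot avoid $L$ at all of those infinitely many trajectory visits, because the failure probability is at most $(1-\mathcal C(\lambda,\mathcal T))^k\to 0$. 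Your proposal collapses these two stages into one and thereby loses the source of infinitely many trials.

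A secondary issue, which you flag but underestimate: the uniform lower bound $C(v_i)\ge c(\lambda)$ for the child stepping onto $L$ is not as routine as ``delete boundedly many sites.'' The already-traced SAW $\eta$ can wind arbitrarily far out along $L$ and enclose large regions adjacent to the current endpoint, so that no fixed half-plane translate is available after one or two escaping steps. The paper's Lemma~\ref{algorithm} is devoted precisely to constructing a bounded-length detour along the boundary of the obstruction before a genuine quadrant/strip opens up, and Proposition~\ref{p2} controls the branching number of the strip trees so that their conductance is eventually positive; this machinery is not optional.
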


A useful tool in our proofs is the \emph{effective conductance} of the
biased random walk on a tree  $\mathcal T$, defined as the probability
of never  returning to  the root  $o$ of $\mathcal  T$ and  denoted by
$\mathcal C(\lambda,\mathcal T)$ --- see~\cite{LP:book}. Along the way, we will
be interested in several properties of  it as a function of $\lambda$.
Most important for us will be the question of continuity: in a general
tree,  the  effective  conductance  is not  necessarily  a  continuous
function of $\lambda$.  We will derive criteria  for continuity, which
are forms of  \emph{uniform transience} of the random  walk, and apply
them to prove that the effective conductance of self-avoiding trees is
a continuous function (see Section~\ref{section4.5}):

\begin{theorem}\label{thm:continuousofselfavoidingtree}
  The                      effective                      conductances
  $\mathcal            C(\lambda,\mathcal            T_{\mathbb{Q}})$,
  $\mathcal       C(\lambda,\mathcal        T_{\mathbb{H}})$       and
  $\mathcal   C(\lambda,\mathcal  T_{\mathbb{Z}^2})$   are  continuous
  functions of $\lambda$ on the interval $(\lambda_c,+\infty)$.
\end{theorem}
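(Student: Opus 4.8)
The plan is to deduce the result from two ingredients developed earlier: our criterion for continuity of the escape probability (a quantitative form of uniform transience), and the stability of the escape‑probability functions of spherically symmetric bounded‑degree trees under uniform convergence.

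First I would reduce to a one‑sided statement. For a locally finite tree $T$, let $p_n(\lambda)$ be the probability that the $\lambda$‑biased walk from the root reaches level $n$ before returning to the root; before this event the walk is confined to the finite subtree spanned by levels $0,\dots,n$, so $p_n$ is rational — hence continuous — on $(0,\infty)$, and $p_n(\lambda)\downarrow\mathcal C(\lambda,T)$ as $n\to\infty$. Thus $\mathcal C(\cdot,T)$ is upper semi‑continuous, and since it is also non‑decreasing in $\lambda$ (a routine coupling shows that a larger bias stochastically increases the distance of the walk from the root), it is automatically right‑continuous; what remains is left‑continuity at each $\lambda_0\in(\lambda_c,+\infty)$, i.e. ruling out a downward jump as $\lambda$ increases to $\lambda_0$ — exactly a uniform‑transience statement. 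Concretely, in the network with conductances $\lambda^{|e|}$, where $|e|$ denotes the distance from the root of the deeper endpoint of $e$: if the level‑energy $\sum_{|e|=d}\theta_{\lambda_0}(e)^2$ of the unit current flow $\theta_{\lambda_0}$ from the root to infinity decays exponentially in $d$ at a rate $\rho<\lambda_0$, then $\theta_{\lambda_0}$ still has finite energy for some $\lambda_1\in(\lambda_c,\lambda_0)$, and monotone convergence forces the effective resistances at $\lambda\uparrow\lambda_0$ down to the one at $\lambda_0$, giving left‑continuity at $\lambda_0$. Since the self‑avoiding tree has branching number and growth both equal to $\mu$ by Theorem~\ref{proposition1.1}, one has $1/\lambda_0<\mu$, so the needed slack is present provided the current flow genuinely spreads over of order $\mu^d$ edges at level $d$.

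The core of the proof is to establish this uniform transience, and I would do it by comparison with spherically symmetric trees. For $\mathcal T_{\mathbb Z^2}$ the structural input is sub‑periodicity: the subtree below a self‑avoiding walk $\gamma$ ending at $x$ embeds, after translation by $-x$, into $\mathcal T_{\mathbb Z^2}$, so no subtree is richer than the whole tree and the current flow cannot be driven onto a thin, marginally transient part; together with $\operatorname{gr}(\mathcal T_{\mathbb Z^2})=\mu$ this should yield, uniformly for $\lambda$ in compact subintervals of $(\lambda_c,+\infty)$, a decay of the current flow's level‑energy at rate arbitrarily close to $\mu$. To make this usable I would approximate: construct spherically symmetric bounded‑degree trees of growth $\mu$, whose escape‑probability functions are explicit and continuous on $(\lambda_c,+\infty)$, and use the stability‑under‑uniform‑convergence result to squeeze $\mathcal C(\cdot,\mathcal T_{\mathbb Z^2})$ between uniform limits of such functions, obtaining continuity. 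The cases $\mathcal T_{\mathbb H}$ and $\mathcal T_{\mathbb Q}$ follow the same scheme, but translations preserving the domain exist only parallel to the boundary; one therefore runs the comparison with the ambient domain allowed to range over all translates of $\mathbb H$ (respectively $\mathbb Q$), using that, by Theorem~\ref{proposition1.1}, every such translate has connective constant $\mu$, which is what keeps the estimates uniform.

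The step I expect to be the main obstacle is precisely the spreading of the current flow, uniformly in $\lambda$ near $\lambda_0$: a finite self‑avoiding walk can almost trap itself, so the subtree below it may be small — even finite — and there is no vertex‑by‑vertex lower bound on the local escape rate; the decay of the flow's level‑energy (equivalently, the exponential decay in $n$ of the chance that an excursion reaches level $n$ and returns) must therefore be proved in an averaged form, controlling how rarely the biased walk enters such trap‑like regions before reaching level $n$. A secondary difficulty is the half‑plane and quadrant, where the loss of translation invariance transverse to the boundary forces one to propagate the uniform bounds along the whole family of translated domains rather than reducing to a single reference tree.
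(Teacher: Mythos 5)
Your reduction to left-continuity is the same as the paper's: right-continuity is for free because $\mathcal C(\lambda,\mathcal T)$ is a decreasing limit of the functions $\mathcal C(\lambda,\mathcal T,n)$, each continuous and non-decreasing in $\lambda$. From there, however, the route you sketch has a genuine gap in the step you yourself flag as ``the main obstacle,'' and the device you propose to get around it does not close the argument.

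The central problem is that the self-avoiding trees are \emph{not} uniformly transient in the vertex-by-vertex sense: a finite self-avoiding walk can (almost) trap itself, so the subtree rooted below it can be thin or finite and carry zero escape probability; no comparison with a single spherically symmetric tree of growth $\mu$ can lower-bound the escape rate at every vertex. You identify this correctly, but the remedy you propose --- squeeze $\mathcal C(\cdot,\mathcal T_{\mathbb Z^2})$ between uniform limits of escape-probability functions of spherically symmetric trees, invoking Theorem~\ref{theorem1.4} --- does not work. First, ``being sandwiched between two continuous functions'' does not imply continuity; Rayleigh monotonicity only gives one-sided comparisons and cannot rule out a jump in the middle. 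Second, Theorem~\ref{theorem1.4} concerns the class $\mathbb F_m$ of spherically symmetric trees only: it promises that a uniform limit of members of $\mathbb F_m$ is again in $\mathbb F_m$, but $\mathcal C(\cdot,\mathcal T_{\mathbb Z^2})$ is not a priori such a limit and the theorem gives you no handle on it. Third, sub-periodicity (which does hold for $\mathcal T_{\mathbb Z^2}$) says $(\mathcal T_{\mathbb Z^2})^\gamma$ embeds into $\mathcal T_{\mathbb Z^2}$, which by Rayleigh gives the \emph{wrong-direction} bound $\mathcal C(\lambda,(\mathcal T_{\mathbb Z^2})^\gamma)\geq\mathcal C(\lambda,\mathcal T_{\mathbb Z^2})$ at vertices $\gamma$ where the subtree is infinite, and says nothing useful at trapped $\gamma$.

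What actually closes the argument in the paper is the notion of \emph{weak} uniform transience (Theorem~\ref{ctgg}): one only needs a uniform lower bound on the escape probability along some sequence of disjoint V-cutsets, not at every vertex. The cutsets are chosen as the set $\pi_n$ of self-avoiding walks that stay strictly inside the box $\Lambda_n\cap\mathbb H$ until the final step lands on $\partial\Lambda_n\cap\mathbb H$. The crucial geometric observation is that from any such endpoint a fresh quadrant (or half-plane) can be grown pointing away from the box, so $(\mathcal T_{\mathbb H})^\gamma$ contains a copy of $\mathcal T_{\mathbb Q}$ or $\mathcal T_{\mathbb H}$, and Corollary~\ref{corRL} plus $\lambda_c(\mathcal T_{\mathbb Q})=\lambda_c(\mathcal T_{\mathbb H})=1/\mu$ (Theorem~\ref{proposition1.1}) gives the required uniform lower bound $\alpha_\lambda>0$ on these cutsets. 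This sidesteps trapping entirely: trapped or nearly trapped walks are simply not on the cutsets. If you want to keep your flow-energy formulation you would still need this kind of geometric input, and you would need to prove it uniformly in $\lambda$ near $\lambda_0$; as written, your proposal names the obstacle but does not supply the mechanism.
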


A related question is that of the convergence of effective conductance
along a  sequence of trees.  More precisely, let  ${(\mathcal C_n)}_n$
denote the effective  conductances for a sequence  $(\mathcal T_n)$ of
infinite  trees,  again  seen  as  functions  of  the  bias  parameter
$\lambda$, and  assume that  ${(\mathcal C_n)}_n$  converges uniformly
towards  a function  $\mathcal C$  that  is not  identically $0$.  The
question is:  is $\mathcal C$  the effective conductance of  a certain
tree? We  study this  question on the  class of  spherically symmetric
trees (a tree $\mathcal T$ is  said to be spherically symmetric if for
every vertex $\nu$, $\deg \nu$ depends only on $\left | \nu \right |$,
where $\left  | \nu \right  |$ denote its  distance from the  root and
$\deg \nu$  is its number  of neighbors). If $\mathbb{S}$  denotes the
set of spherically symmetric trees  and $m \in \mathbb{N}^*$ is fixed,
define
$$A_m:= \{ \mathcal T  \in  \mathbb{S}; \forall  \nu \in  \mathcal T,  
  \deg \nu \leq  m \}   \text{~and~}$$
$$\mathbb{F}_m:= \{ f\in  C^0([0,1]):\exists \mathcal  T\in A_m,
  \forall \lambda>0, \mathcal  C(\lambda,\mathcal T)=f(\lambda)
  \}.$$ Then (see Section~\ref{section3.4}):

\begin{theorem}\label{theorem1.4}
  Let ${(f_n)}_n$ be a sequence of functions in $\mathbb{F}_m$. Assume
  that  $f_n$   converges  uniformly  towards   $f  \neq  0   $.  Then
  $f \in \mathbb{F}_m$.
\end{theorem}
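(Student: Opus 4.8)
The plan is to extract from the sequence $(f_n)$ a subsequence along which the underlying trees converge in a suitable combinatorial sense, and then to identify the limit tree as a spherically symmetric tree in $A_m$ whose effective conductance is $f$. First I would fix, for each $n$, a tree $\mathcal T_n \in A_m$ with $\mathcal C(\lambda,\mathcal T_n) = f_n(\lambda)$ for all $\lambda>0$, and encode it by its degree sequence: since $\mathcal T_n$ is spherically symmetric, it is completely described by the sequence $d_n = (d_n(0), d_n(1), d_n(2), \ldots)$ where $d_n(k)$ is the common degree of the vertices at distance $k$ from the root, and each $d_n(k) \in \{1,2,\ldots,m\}$ (we may assume $d_n(k)\ge 2$ for all $k$, since a pendant level would make the tree finite and force $\mathcal C \equiv 0$ on a neighborhood, contradicting $f\neq 0$; more carefully, one argues that along the relevant subsequence the trees stay infinite). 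Because the alphabet $\{1,\ldots,m\}$ is finite, a diagonal argument produces a subsequence $(n_j)$ and a limiting degree sequence $d_\infty$ such that $d_{n_j}(k) \to d_\infty(k)$, i.e.\ $d_{n_j}(k) = d_\infty(k)$ for all $j$ large enough (depending on $k$). Let $\mathcal T$ be the spherically symmetric tree with degree sequence $d_\infty$; then $\mathcal T \in A_m$.

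The second step is to show $\mathcal C(\lambda, \mathcal T) = f(\lambda)$ for every $\lambda > 0$, which by uniform convergence of $f_n$ to $f$ amounts to showing $\mathcal C(\lambda,\mathcal T_{n_j}) \to \mathcal C(\lambda,\mathcal T)$ pointwise in $\lambda$ along the subsequence. For spherically symmetric trees the effective conductance has an explicit closed form: by the standard series/parallel reduction, the effective resistance from the root is $\mathcal R(\lambda,\mathcal T) = \sum_{k\ge 0} \lambda^{-k} / \prod_{i=1}^{k}(d_\infty(i)-1)$ (with the appropriate normalization coming from the rate-$1$/rate-$\lambda$ convention; I would state this as a lemma), and $\mathcal C = 1/\mathcal R$ up to the conductance of the first edge. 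Since $d_{n_j}$ agrees with $d_\infty$ on longer and longer initial segments, the partial sums of the resistance series for $\mathcal T_{n_j}$ agree with those for $\mathcal T$ up to a larger and larger index, and the tails are uniformly controlled: for $\lambda > 1$ the tail is dominated by a geometric series $\sum_{k\ge K}(\lambda(m-1))^{?}$-type bound independent of $j$ — one has to be slightly careful since $d-1$ could equal $1$, but then $\lambda^{-k}$ alone still gives summability for $\lambda>1$; for $0<\lambda\le 1$ one has $\mathcal C = 0$ identically on both sides once the tree is infinite with all degrees $\ge 2$ and bounded (recurrence below and at criticality $\lambda_c \ge 1/(m-1)$... in fact here one should simply note $f_n(\lambda)=0$ there, hence $f(\lambda)=0$, and separately that $\mathcal C(\lambda,\mathcal T)=0$ there). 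Combining, $\mathcal R(\lambda,\mathcal T_{n_j}) \to \mathcal R(\lambda,\mathcal T)$, hence $\mathcal C(\lambda,\mathcal T_{n_j}) \to \mathcal C(\lambda,\mathcal T)$, giving $f = \mathcal C(\cdot,\mathcal T) \in \mathbb F_m$.

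The main obstacle I anticipate is the interchange of limits that is hidden in the tail estimate: a priori the trees $\mathcal T_{n_j}$ could "hide" their high-degree or low-degree levels arbitrarily far out, so that even though each initial segment stabilizes, the effective conductance does not converge. This is exactly why the bounded-degree hypothesis ($\mathcal T_n \in A_m$) is essential: it makes the resistance series tails uniformly small, uniformly in $j$, on compact subsets of $\lambda \in (1,\infty)$ — this is the quantitative content that replaces a compactness argument on the space of trees. A secondary subtlety is the behavior at and below $\lambda = 1$ (equivalently near and below the critical bias), where one must separately verify that $\mathcal C(\lambda,\mathcal T)=0$ there so as to match $f(\lambda)=0$; this follows from the explicit resistance formula diverging, or from Theorem~\ref{proposition1.1}-type branching-number considerations, and is routine once the degree sequence of $\mathcal T$ is in hand. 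I would organize the write-up as: (i) closed form for $\mathcal C$ on $\mathbb S$; (ii) diagonal extraction of $d_\infty$; (iii) uniform tail bound and passage to the limit; (iv) the $\lambda \le 1$ case; concluding $f \in \mathbb F_m$.
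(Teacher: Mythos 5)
Your first step (diagonal extraction of a limiting degree sequence, i.e.\ local convergence of the trees, plus the explicit series for the resistance of a spherically symmetric tree) is exactly how the paper starts, and your tail bound for $\lambda>1$ is fine since $|\mathcal T^n_k|\geq 1$ makes $\sum_{k>K}\lambda^{-k}$ a uniform majorant. The gap is in how you dispose of the region $\lambda\leq 1$. Your claim that ``for $0<\lambda\le 1$ one has $\mathcal C=0$ identically on both sides'' (and that $f_n(\lambda)=0$ there) is false: a tree in $A_m$ can have $\lambda_c$ as small as roughly $1/(m-1)$, and its conductance is strictly positive on $(\lambda_c,1]$ --- think of the binary tree. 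So the whole interval $(\lambda_c,1]$ is left untreated, and on that interval the pointwise convergence $\mathcal C(\lambda,\mathcal T_{n_j})\to\mathcal C(\lambda,\mathcal T)$ that your argument needs can genuinely fail: take $\mathcal T^n$ equal to the binary tree up to level $n$ and a single ray beyond; then $\mathcal T^n$ converges locally to the binary tree, yet $\mathcal C(\lambda,\mathcal T^n)=0$ for every $\lambda<1$ while the limit tree has positive conductance on $(1/2,1)$. In that example the pointwise limit of the conductances is discontinuous at $1$ and is \emph{not} the conductance function of any tree in $A_m$, which shows the statement is false under mere pointwise convergence; since your proof never uses uniform convergence in an essential way, it cannot be completed along these lines without a new ingredient.

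The paper's proof supplies precisely that ingredient. It sets $g(\lambda)=\mathcal C(\lambda,\mathcal T^{\infty})$ for the local limit and first proves $f\leq g$ everywhere (Lemma~\ref{2}), by comparing $f_\ell$ with the finite-level quantity $\mathcal C(\lambda_0,\mathcal T^{\ell},\ell)=\mathcal C(\lambda_0,\mathcal T^{\infty},\ell)$, which decreases to $g$. Then, instead of trying to prove convergence of conductances at a fixed $\lambda\leq 1$, it argues by contradiction from a strict gap: if $0<f(\lambda_0)<g(\lambda_0)$, the decomposition $\mathcal R(\lambda_0,\mathcal T^n)=\sum_{k\leq n}\beta_0^k/|\mathcal T^{\infty}_k|+\sum_{k>n}\beta_0^k/|\mathcal T^n_k|$ forces the tail resistance to converge to the positive constant $1/f(\lambda_0)-1/g(\lambda_0)$, and multiplying the tail by $(\beta/\beta_0)^n$ shows $\mathcal R(1/\beta,\mathcal T^n)\to\infty$ for every $\beta>\beta_0$, hence $f\equiv 0$ on $(0,\lambda_0)$. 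Monotonicity of $f$ and $g$, continuity of $f$ (this is where uniform convergence is used), and right-continuity of $g$ (Lemma~\ref{lemma2}) then force $f=g$ on all of $(0,\infty)$. In short: your extraction step and the $\lambda>1$ estimate agree with the paper, but the behavior on $(\lambda_c,1]$ cannot be handled by a tail estimate or by declaring everything zero; it requires the gap-implies-recurrence-below argument together with an essential use of uniform convergence.
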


\subsection{Open questions}

One natural probability  measure on the set  of infinite self-avoiding
walks   is   the  limit   of   $\mathbb   P^{\mathbb  H}_\lambda$   as
$\lambda \to \lambda_c$, assuming that  this limit exists. We were not
able  to  show  convergence,  but obtained  partial  results  in  this
direction by restricting  the set of allowed paths.  Our conjecture is
that the limit exists and has to do with Kesten's measure, \emph{i.e.}
the  weak  limit   of  uniform  finite  self-avoiding   walks  in  the
half-plane, in a  way similar to the fact that  the two definitions of
the incipient  infinite cluster  for percolation (seen  as a  limit as
$p\to  p_c$  or  as  a  limit  of  conditioned  critical  percolation)
coincide, see~\cite{IIC}.

This  is  motivated  by  a  few observations.  First,  the  model  for
$\lambda  <  \lambda_c$ gives  rise  to  a  recurrent random  walk  on
$\mathcal T_{\mathbb H}$ for which the invariant measure $\mu_\lambda$
is rather  explicit (by reversibility, the  mass of a vertex  $\nu$ is
proportional to  $\lambda^{|\nu|}$), in particular it  depends only on
the  distance to  the root,  and  on the  other  hand it  tends to  be
concentrated      on     longer      and      longer     walks      as
$\lambda \uparrow \lambda_c$. This means that the initial segment of a
walk distributed as the stationary measure  can be seen as the initial
segment of a uniform self-avoiding  walk with random total length, and
we get convergence to Kesten's measure as soon as we can show that for
all       $\nu$,      $\mu_\lambda(\{\nu\})       \to      0$       as
$\lambda \uparrow \lambda_c$.  On the other hand, the  behavior of the
biased walk in a fixed neighborhood  of the origin changes very little
when  $\lambda$ is  close to  $\lambda_c$, so  for $\lambda$  slightly
larger than $\lambda_c$  it seems reasonable to predict  that the walk
will spend  a long time close  to the origin, following  an occupation
measure  close to  $\mu_{\lambda_c^-}$, before  escaping to  infinity.
Unfortunately we were unable to formalize this intuition.

Another observation is  that convergence of the law of  the limit walk
holds within  the class  of paths for  which the  bridge decomposition
involves only bridges of height less than some fixed bound $m>0$. More
precisely:    for   fixed    $m$,    the    critical   parameter    is
$\lambda_{c,m}      \geq      \lambda_c$,      and      the      limit
$\lambda \downarrow \lambda_{c,m}$ followed by $m \to \infty$ leads to
Kesten's measure, while  the limit $m \to \infty$  for fixed $\lambda$
coincides  with  the  limit  walk on  $\mathcal  T_{\mathbb  H}$  with
parameter $\lambda$ --- see  Theorem~\ref{theorem6.1} for more detail.
Exchanging the  limits would lead  to the claim. Unfortunately,  it is
not true that this can be done  in the general setting of biased walks
on   trees,  due   to  phenomena   similar  to   those  described   in
Section~\ref{section3}, so it seems that a deeper understanding of the
structure of $\mathcal T_{\mathbb H}$  would be necessary to conclude.

\subsection{Organization of the paper}

The  paper is  structured  as follows.  In Section~\ref{section2},  we
review some  basic definitions on  graphs, trees, branching  number and
growth rate of a tree, as well as a few classical results about random
walks on trees. Section~\ref{section3}  gathers some relevant examples
and counter-examples exhibiting some similarities to the self-avoiding
trees  while  being  treatable   explicitly.  The  criterion  for  the
continuity    of   the    effective    conductance    is   given    in
Section~\ref{sectioncontinuity}. Then  Section~\ref{section4} provides
some background  on self-avoiding  walks and the  self-avoiding trees,
and   some   properties  of   the   limit   walks  are   obtained   in
Section~\ref{newmeasure}.  Finally, we  state  a  few conjectures  and
conditional         results         in         Section~\ref{section6}.

\subsection*{Acknowledgments}

The authors are grateful to an anonymous referee for extremely detailed and
extensive comments on a previous version of this paper, which were a significant
help in making some key arguments clearer.

\section{Notation and basic definitions}\label{section2}

\subsection{Graphs and trees}

In this section, we review some basic definitions; we refer the reader
to  the   book~\cite{LP:book}  for  a  more   developed  treatment.  A
\emph{graph}  is a  pair  $\mathcal G=(V,E)$  where $V$  is  a set  of
\emph{vertices} and $E$ is a symmetric  subset of $V \times V$ (i.e if
$(\nu,\mu)\in E$  then $(\mu,\nu)\in E$), called  the \emph{edge set},
containing no  element of the  form $(\nu,\nu)$. If  $(\nu,\mu)\in E$,
then we call  $\nu$ and $\mu$ \emph{adjacent}  or \emph{neighbors} and
we  write  $\nu\sim \mu$.  For  any  vertex  $\nu  \in V$,  denote  by
$\deg \nu$  its number  of neighbors.  A \emph{path} in  a graph  is a
sequence of  vertices, any  two consecutive of  which are  adjacent. A
\emph{self-avoiding path}  is a path  which does not pass  through any
vertex more than once. For any $(\nu, \mu)\in V\times V$, the distance
between $\nu$ and $\mu$ is the minimum number of edges among all paths
joining  $\nu$   and  $\mu$,  denoted   $d(\nu,  \mu)$.  A   graph  is
\emph{connected}  if, for  each pair  $(\nu,\mu)\in V\times  V$, there
exist a path starting at $\nu$  and ending at $\mu$. A connected graph
with no cycles is called a \emph{tree}. A \emph{morphism} from a graph
$\mathcal G_1$  to a  graph $\mathcal  G_2$ is  a mapping  $\phi$ from
$V(\mathcal G_1)$ to $V(\mathcal G_2)$ such that the image of any edge
of $\mathcal G_1$ is an edge of $\mathcal G_2$ We will always consider
trees to  be \emph{rooted} by the  choice of a vertex  $o$, called the
\emph{root}. \bigskip

Let $\mathcal  T=(V,E)$ be  an infinite,  locally finite,  rooted tree
with set of vertices $V$ and set of  edges $E$. Let $o$ be the root of
$\mathcal T$.  For any vertex  $\nu \in V\setminus\{{o}\}$,  denote by
$  {\nu}^{-1}$ its  \emph{parent}  (we also  say that  $  {\nu}$ is  a
\emph{child}  of $  {\nu}^{-1}$), \emph{i.e.}  the neighbor  of $\nu$
with shortest distance  from $o$. For any $\nu \in  V$, let $|\nu|$ be
the number of edges in  the unique self-avoiding path connecting $\nu$
to~$o$ and call $|\nu|$ the  \emph{generation} of $\nu$. In particular,
we  have  $|o|=0$. Denote by $\mathcal{T}_n$ the set of all vertices of $\mathcal T$ that are at generation $n$.

If  a  vertex  has no  child,  it  is  called  a
\emph{leaf}. For  any edge  $e \in  E$ denote by  $e^-$ and  $e^+$ its
endpoints with $|e^+|=|e^-|+1$,  and define the generation  of an edge
as $|e|=|e^+|$. We  define an order on $V(\mathcal T)$  as follows: if
$\nu,\mu \in V(\mathcal T)$, we say  that $\nu \leq \mu$ if the simple
path   joining  $o$   to  $\mu$   passes  through   $\nu$.  For   each
$\nu \in V(\mathcal T)$, we define the \emph{subtree} of $\mathcal T$
rooted   at    $\nu$,   denoted   by   $\mathcal    T^\nu$,   where
$V(\mathcal T^\nu):=\{ \mu\in V(\mathcal T): \nu \leq \mu 
  \}$                                                                and
$E(\mathcal  T^\nu)={E(\mathcal  T)}\!\!\mid_{V(\mathcal  T^\nu)\times
    V(\mathcal T^\nu)}$.

An infinite  simple path starting at  $o$ is called a  \emph{ray}. The
set   of  all   rays,  denoted   by  $\partial   \mathcal T$,  is   called  the
\emph{boundary} of  $\mathcal T$. The set $\mathcal T  \cup \partial \mathcal T$ can  be equipped
with a metric that makes it a compact space, see~\cite{LP:book}.
\bigskip

The remaining part of this paper, we consider only infinite, locally finite and rooted trees with the root $o$.
\subsection{Branching and growth}

\begin{definition}
  Let $ \mathcal T$ be an infinite, locally finite and rooted tree. A E-cutset
  (resp.\ V-cutset) in $\mathcal T$  is a set $\pi$ of edges (resp.\ vertices)
  such that, for any infinite self-avoiding path ${(\nu_i)}_{i\ge0}$ started at
  the root,  there exists a $i\ge0$ such that $[\nu_{i-1},\nu_i]\in \pi$ (resp.\
  $\nu_i\in \pi$). In other words, a E-cutset (resp. V-cutset) is a set of edges
  (resp.\ vertices) separating the root from infinity. We use $\Pi$ to denote the
  set of E-cutsets.
\end{definition}

\begin{definition}
  Let $\mathcal T$ be an infinite, locally finite and rooted tree.
  \begin{itemize}
    \item
          The \emph{branching number} of $\mathcal T$ is defined by:
          $$br(\mathcal T)  =  \sup \left\{  \lambda  \geq  1 :  \inf_{\pi\in \Pi}  \sum_{e\in
              \pi}\lambda^{-\left  |  e \right  |}>0  \right  \}$$

    \item  We  define also
          \[ \overline{gr} (\mathcal T)  = \limsup\left | \mathcal T_{n}  \right |^{1/n} \quad
            \text{and}  \quad \underline{gr}  (\mathcal T)  = \liminf  \left |  \mathcal T_{n}
            \right|^{1/n}.\]             In             the             case
          $\overline{gr}(\mathcal T)=\underline{gr}(\mathcal T)$,  the  \emph{growth rate}  of
          $\mathcal T$ is defined by their common value and denoted by $gr(\mathcal T)$.
  \end{itemize}
\end{definition}
\begin{remark}\label{rem:comparebranchingnumber}
  It follows immediately from the definition of branching number that if $\mathcal{T}'$ is a subtree of $\mathcal{T}$, then $br(\mathcal{T}')\leq br(\mathcal{T})$.
\end{remark}

\begin{proposition}[\cite{LP:book}]\label{prop:br-gr}
  Let $\mathcal T$ be a tree, then $br(\mathcal T) \leq \underline{gr}(\mathcal T)$.
\end{proposition}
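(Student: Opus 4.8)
The plan is to prove the contrapositive-flavoured statement: every $\lambda>\underline{gr}(\mathcal T)$ fails to satisfy the condition defining $br(\mathcal T)$, i.e.\ $\inf_{\pi\in\Pi}\sum_{e\in\pi}\lambda^{-|e|}=0$ for such $\lambda$. Since $br(\mathcal T)$ is the supremum of the $\lambda\ge 1$ that \emph{do} satisfy this condition, this immediately gives $br(\mathcal T)\le\underline{gr}(\mathcal T)$. (There is nothing to check when $\underline{gr}(\mathcal T)\ge$ the largest admissible $\lambda$; note that for an infinite locally finite tree König's lemma gives a ray, so $|\mathcal T_n|\ge 1$ and $\underline{gr}(\mathcal T)\ge 1$, which is consistent with $br(\mathcal T)\ge 1$.)

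The key observation is that the set of generation-$n$ edges, $\pi_n:=\{e\in E:|e|=n\}$, is an E-cutset: any infinite self-avoiding path from the root passes through exactly one vertex of each generation $n\ge 1$, hence through exactly one edge of $\pi_n$. Moreover the map $e\mapsto e^+$ is a bijection from $\pi_n$ onto the set of generation-$n$ vertices, so $|\pi_n|=|\mathcal T_n|$. Therefore
\[
  \sum_{e\in\pi_n}\lambda^{-|e|}=|\mathcal T_n|\,\lambda^{-n}.
\]

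Now fix $\lambda>\underline{gr}(\mathcal T)$ and choose $\lambda'$ with $\underline{gr}(\mathcal T)<\lambda'<\lambda$. By definition of $\liminf$, there are infinitely many $n$ with $|\mathcal T_n|^{1/n}<\lambda'$, i.e.\ $|\mathcal T_n|<(\lambda')^n$. Along such a subsequence,
\[
  \sum_{e\in\pi_n}\lambda^{-|e|}=|\mathcal T_n|\,\lambda^{-n}<\Bl\frac{\lambda'}{\lambda}\Br^{n}\xrightarrow[n\to\infty]{}0 .
\]
Hence $\inf_{\pi\in\Pi}\sum_{e\in\pi}\lambda^{-|e|}=0$, so $\lambda$ is not among the values over which the supremum defining $br(\mathcal T)$ is taken. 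As $\lambda>\underline{gr}(\mathcal T)$ was arbitrary, $br(\mathcal T)\le\underline{gr}(\mathcal T)$.

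There is no serious obstacle in this argument; the only points that require a line of care are (i) verifying that $\pi_n$ is genuinely an E-cutset in the sense of the definition above, and (ii) the identification $|\pi_n|=|\mathcal T_n|$, which depends only on the convention that $\mathcal T_n$ denotes the $n$-th level (equivalently counted by vertices or by edges, the two being in canonical bijection). Everything else is the elementary estimate along the $\liminf$-realising subsequence.
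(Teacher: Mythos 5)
Your proof is correct: the level sets $\pi_n=\{e:|e|=n\}$ are indeed E-cutsets with $\sum_{e\in\pi_n}\lambda^{-|e|}=|\mathcal T_n|\lambda^{-n}$, and evaluating along a subsequence realising the $\liminf$ forces the infimum to vanish for every $\lambda>\underline{gr}(\mathcal T)$, which bounds the supremum defining $br(\mathcal T)$. The paper itself gives no proof (it cites \cite{LP:book}), and your argument is precisely the standard one from that reference, so there is nothing to add beyond noting that your care about $\underline{gr}(\mathcal T)\ge 1$ and the bijection $e\mapsto e^+$ is exactly what is needed.
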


In  general, the  inequality  in  Proposition~\ref{prop:br-gr} may  be
strict:  The  \emph{1--3  tree}  (see~\cite{LP:book}, page  4)  is  an
example for which  the branching number is $1$ and  the growth rate is
$2$. There  are classes  of trees however  where branching  and growth
match.

\begin{definition}
  The tree $\mathcal T$ is said to  be \emph{spherically symmetric} if $\deg \nu$
  depends only on $\left | \nu \right |$.
\end{definition}

\begin{theorem}[\cite{LP:book} page 83]\label{symetrie}
  For      every       spherically      symmetric       tree      $\mathcal T$,
  $br(\mathcal T)=\underline{gr} (\mathcal T) $.
\end{theorem}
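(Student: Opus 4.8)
The plan is to prove that for a spherically symmetric tree $\mathcal T$, the branching number equals $\underline{gr}(\mathcal T)$, by combining the general upper bound $br(\mathcal T)\le\underline{gr}(\mathcal T)$ from Proposition~\ref{prop:br-gr} with a matching lower bound. Write $d_n$ for the common degree of vertices at generation $n$ (more precisely the number of children, $\deg\nu-1$ for $\nu\neq o$), so that $|\mathcal T_n|=\prod_{k=0}^{n-1}c_k$ where $c_k$ is the number of children at level $k$; spherical symmetry is exactly what makes $|\mathcal T_n|$ a product of level-by-level factors. The key observation is that on a spherically symmetric tree every cutset can be ``pushed'' to a level cutset without increasing the relevant sum, because all subtrees hanging from a given level are isomorphic.

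First I would record the product formula for $|\mathcal T_n|$ and note that $\underline{gr}(\mathcal T)=\liminf_n |\mathcal T_n|^{1/n}$. Then, to bound $br(\mathcal T)$ from below, fix $\lambda<\underline{gr}(\mathcal T)$; I must show $\inf_{\pi\in\Pi}\sum_{e\in\pi}\lambda^{-|e|}>0$. The heart of the argument is a reduction: given any E-cutset $\pi$, I would show that $\sum_{e\in\pi}\lambda^{-|e|}\ge\inf_n |\mathcal T_n|\lambda^{-n}$, or at least that it is bounded below by something of this form. Concretely, for a spherically symmetric tree one shows by a ``unit flow'' / induction-on-subtrees argument that $\sum_{e\in\pi}\lambda^{-|e|}\ge 1$ whenever $\lambda\le 1$ trivially, and more usefully that the minimal value of $\sum_{e\in\pi}\lambda^{-|e|}$ over cutsets $\pi$ equals the minimal value over \emph{level} cutsets $\pi_n=\mathcal T_n$ (the set of all edges of generation $n$), because cutting a whole level is at least as efficient as any irregular cutset when all branches are interchangeable. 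For the level cutset $\pi_n$ one has $\sum_{e\in\pi_n}\lambda^{-|e|}=|\mathcal T_n|\lambda^{-n}$, and since $|\mathcal T_n|^{1/n}\to\underline{gr}(\mathcal T)>\lambda$ along a subsequence — and in fact $\liminf |\mathcal T_n|\lambda^{-n}>0$ because $|\mathcal T_n|\ge (\lambda')^n$ eventually for $\lambda<\lambda'<\underline{gr}$ (using that $|\mathcal T_n|$ is essentially multiplicative / using super-multiplicativity of the growth along the liminf) — this infimum is strictly positive. Hence $\lambda\le br(\mathcal T)$, and letting $\lambda\uparrow\underline{gr}(\mathcal T)$ gives $br(\mathcal T)\ge\underline{gr}(\mathcal T)$.

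I would establish the reduction to level cutsets via the standard max-flow–min-cut circle of ideas, or directly: define $W(\mathcal T,\lambda):=\inf_{\pi\in\Pi}\sum_{e\in\pi}\lambda^{-|e|}$, and prove the recursion $W(\mathcal T,\lambda)=\min\bigl(1,\ c_0\,\lambda^{-1}\,W(\mathcal T^{\nu},\lambda)\bigr)$ where $\nu$ is any child of the root (all such subtrees being isomorphic by spherical symmetry, so $W(\mathcal T^\nu,\lambda)$ is well-defined independently of the choice). Iterating this recursion $n$ times and taking the limit shows $W(\mathcal T,\lambda)=\inf_n\bigl(\prod_{k=0}^{n-1}c_k\bigr)\lambda^{-n}=\inf_n|\mathcal T_n|\lambda^{-n}$, which is exactly what is needed. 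The recursion itself is the crux: the ``$\le$'' direction is immediate by exhibiting explicit cutsets (either $\{$the root edge, morally$\}$ — more precisely for a rooted tree one takes the cutset just above the root — or the union over children of optimal cutsets of the subtrees), and the ``$\ge$'' direction follows because any cutset $\pi$, restricted to each subtree $\mathcal T^\nu$, is itself a cutset of $\mathcal T^\nu$ (after shifting generations by $1$), so $\sum_{e\in\pi}\lambda^{-|e|}=\sum_{\nu\text{ child}}\lambda^{-1}\sum_{e\in\pi\cap\mathcal T^\nu}\lambda^{-(|e|-1)}\ge c_0\lambda^{-1}W(\mathcal T^\nu,\lambda)$, unless $\pi$ already separates the root at level $0$ in which case the sum is at least $1$.

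The main obstacle I anticipate is making the infimum-over-$n$ into a \emph{positive} quantity: $|\mathcal T_n|\lambda^{-n}$ need not be monotone, and $\liminf_n|\mathcal T_n|^{1/n}=\underline{gr}>\lambda$ only controls the exponential rate, not the prefactor, so one must argue that $\inf_n|\mathcal T_n|\lambda^{-n}>0$. This is where one uses a Fekete-type / super-multiplicativity input: for spherically symmetric trees $|\mathcal T_{n+m}|=|\mathcal T_n|\cdot(\text{factor depending on levels }n,\dots,n+m)$, which is not quite sub/super-multiplicative in general, so instead I would argue directly: pick $\lambda<\lambda'<\underline{gr}(\mathcal T)$; since $\liminf|\mathcal T_n|^{1/n}=\underline{gr}>\lambda'$, there is $N$ with $|\mathcal T_n|\ge(\lambda')^n$ for all $n\ge N$, whence $|\mathcal T_n|\lambda^{-n}\ge(\lambda'/\lambda)^n\to\infty$ for $n\ge N$, while for the finitely many $n<N$ each term $|\mathcal T_n|\lambda^{-n}$ is a positive constant; therefore the infimum over all $n$ is a positive number. (Here one also uses $\lambda\ge 1$ in the definition of $br$, so all terms are genuinely positive and the $n=0$ term is $1$.) Combining everything yields $br(\mathcal T)\ge\underline{gr}(\mathcal T)$, and with Proposition~\ref{prop:br-gr} the theorem follows. \qed
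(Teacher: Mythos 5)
The paper gives no proof of this statement (it is quoted from \cite{LP:book}), so the comparison is with the standard textbook argument; your overall architecture is exactly that argument: bound $br\le\underline{gr}$ by Proposition~\ref{prop:br-gr}, and for $1\le\lambda<\lambda'<\underline{gr}(\mathcal T)$ reduce arbitrary cutsets to level cutsets and use $|\mathcal T_n|\ge(\lambda')^n$ for $n\ge N$ plus finitely many positive terms to get $\inf_{n\ge1}|\mathcal T_n|\lambda^{-n}>0$. That endgame is fine, and your target identity $W(\mathcal T,\lambda)=\inf_{n\ge1}|\mathcal T_n|\lambda^{-n}$ is indeed true for spherically symmetric trees.

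However, the step you call the crux is wrong as stated. The recursion $W(\mathcal T,\lambda)=\min\bigl(1,\ c_0\lambda^{-1}W(\mathcal T^\nu,\lambda)\bigr)$ is false: for the binary tree with $\lambda=1$ the true value is $W=2$ (the two level-$1$ edges form the cheapest cutset), while your identity forces $W\le1$. The reason is that the dichotomy must be applied child by child, not globally: a cutset may use the edge $(o,\nu)$ for some children and deep edges for others, a case your two alternatives do not cover; and the claim that a cutset meeting level $1$ has weight ``at least $1$'' fails whenever $\lambda>c_0$ (root with one child, $\lambda=3/2$: the single level-$1$ edge is a cutset of weight $2/3$). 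The correct recursion is $W(\mathcal T,\lambda)=c_0\lambda^{-1}\min\bigl(1,\ W(\mathcal T^\nu,\lambda)\bigr)$, since for each child $\nu$ either $(o,\nu)\in\pi$ or $\pi\cap E(\mathcal T^\nu)$ is a cutset of $\mathcal T^\nu$. Even after this fix, unrolling only gives $W=\min\bigl(\min_{1\le n\le N}m_n,\ m_N W_N\bigr)$ with $m_n=|\mathcal T_n|\lambda^{-n}$, which yields $W\le\inf_n m_n$ but not the lower bound without controlling the residual term (cutsets may be infinite or arbitrarily deep). The clean way to finish — and the one you should substitute for the recursion — is the unit flow splitting equally at every vertex, $\theta(e)=1/|\mathcal T_{|e|}|$: every ray meets $\pi$, so $\sum_{e\in\pi}\theta(e)\ge1$, and $\lambda^{-|e|}=m_{|e|}\,\theta(e)\ge\bigl(\inf_n m_n\bigr)\theta(e)$, giving $\sum_{e\in\pi}\lambda^{-|e|}\ge\inf_n m_n$ for every cutset. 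With that replacement the rest of your argument (positivity of $\inf_n m_n$ and letting $\lambda\uparrow\underline{gr}$) is correct.
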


\begin{definition}
  Let $N\geq 0$: an infinite, locally finite and rooted tree $\mathcal T$ with the root $o$, is said to be
  \begin{itemize}
    \item \emph{$N$-sub-periodic} if  for every $\nu \in  V(\mathcal T)$, there exists
          an  injective  morphism  $f:\mathcal T^{\nu}\rightarrow  \mathcal T^{f(\nu)}$
          with $\left | f(\nu) \right |\leq N$.
    \item \emph{$N$-super-periodic} if for every $\nu \in V(\mathcal T)$, there exists
          an injective morphism $f:\mathcal T\rightarrow  \mathcal T^{f(o)}$ with $f(o)\in \mathcal T^\nu$ and
          $|f(o)|-|\nu|\leq N$.
  \end{itemize}
\end{definition}

\begin{theorem}[see~\cite{furstenberg1967,LP:book}]\label{sousperiodic}
  Let $\mathcal T$ be an infinite, locally finite and rooted tree that
  is    either    $N$-sub-periodic,   or    $N$-super-periodic    with
  $\overline{gr}(\mathcal  T)  <  \infty$.  Then the  growth  rate  of
  $\mathcal T$ \!\!exists and $gr(\mathcal T)=br(\mathcal T)$.
\end{theorem}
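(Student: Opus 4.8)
The plan is to prove Theorem~\ref{sousperiodic} by establishing, via Proposition~\ref{prop:br-gr}, the reverse inequality $\underline{gr}(\mathcal T)\le br(\mathcal T)$, and by showing that under either periodicity assumption $\overline{gr}(\mathcal T)\le\underline{gr}(\mathcal T)$, so that the growth rate exists. The key quantitative tool is the following: to lower-bound $br(\mathcal T)$ by some $\lambda$ it suffices, by the max-flow min-cut theorem on the network where edge $e$ has capacity $\lambda^{-|e|}$, to construct a flow of positive strength from the root to infinity with these capacities; equivalently, one exhibits a probability-like measure on $\partial\mathcal T$ (a unit flow) whose edge-values decay no faster than $\lambda^{-|e|}$ up to a multiplicative constant. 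So the heart of the argument is turning the self-similarity encoded by the injective morphisms into such a flow.

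First I would treat the $N$-super-periodic case. Fix $\lambda<\underline{gr}(\mathcal T)$; I want to produce a flow witnessing $br(\mathcal T)\ge\lambda$. The super-periodicity gives, for every vertex $\nu$, an injective copy of the whole tree $\mathcal T$ rooted at some descendant $f(o)\in\mathcal T^\nu$ within $N$ generations of $\nu$. Iterating this, starting from the root and repeatedly plugging in copies of $\mathcal T$, one builds a subtree $\mathcal T'\subseteq\mathcal T$ that is roughly self-similar at scale $N$; along a suitably chosen cut at depth $kN$ the number of surviving vertices of $\mathcal T'$ grows like (number of vertices of $\mathcal T$ at depth $\sim N$)$^{k}$, and since $\overline{gr}(\mathcal T)<\infty$ one controls how far the embedded roots $f(o)$ can drift so that depth is essentially additive. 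Choosing $N$-blocks with at least $\lambda^{N}$ branches (possible because $|\mathcal T_n|^{1/n}$ exceeds $\lambda$ infinitely often, hence on some scale $N$) and spreading unit mass uniformly along this subtree yields a unit flow whose value on a generation-$m$ edge is at least $c\,\lambda^{-m}$; min-cut then gives $\inf_\pi\sum_{e\in\pi}\lambda^{-|e|}>0$, so $br(\mathcal T)\ge\lambda$, and letting $\lambda\uparrow\underline{gr}(\mathcal T)$ finishes this half. The condition $\overline{gr}(\mathcal T)<\infty$ is exactly what guarantees the block construction doesn't lose too much in depth, and it simultaneously gives $\overline{gr}=\underline{gr}$ once $br=\underline{gr}\le\overline{gr}\le\dots$; more directly, super-periodicity with $\overline{gr}<\infty$ makes $|\mathcal T_{n}|$ super-multiplicative up to the bounded shift $N$, so $\overline{gr}=\underline{gr}$ by Fekete.

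For the $N$-sub-periodic case the argument is dual: here every subtree $\mathcal T^\nu$ embeds injectively into some $\mathcal T^{f(\nu)}$ with $|f(\nu)|\le N$, so $\mathcal T^\nu$ is "no bigger" than the part of $\mathcal T$ near the root. This makes $|\mathcal T_{n}|$ essentially sub-multiplicative (with an additive correction bounded by $N$), whence $\limsup|\mathcal T_n|^{1/n}=\lim|\mathcal T_n|^{1/n}=\inf_n(\dots)$ exists by Fekete's lemma, giving $\overline{gr}(\mathcal T)=\underline{gr}(\mathcal T)=gr(\mathcal T)$ without needing to assume $\overline{gr}<\infty$. For the branching lower bound one now runs the flow construction above but using the embeddings in the other direction — or, more cleanly, cites the known result (Furstenberg; Lyons) that sub-periodic trees satisfy $br=gr$; since the statement already points to \cite{furstenberg1967,LP:book}, I would give the self-contained Fekete argument for existence of $gr$ and then invoke the flow/min-cut construction for the equality, referencing \cite{LP:book} for the standard packaging.

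The main obstacle is the bookkeeping in the super-periodic flow construction: the embedded root $f(o)$ is only controlled to within $N$ generations of $\nu$ and lands somewhere in $\mathcal T^\nu$ whose precise location we do not know, so when iterating one must show the accumulated depth is $kN+O(k)$ rather than something uncontrolled, which is where $\overline{gr}(\mathcal T)<\infty$ enters (it bounds how many candidate vertices $f(o)$ there can be, hence bounds the drift). Making the nested copies genuinely disjoint as subtrees of $\mathcal T$ also needs a small argument — one uses the injectivity of each morphism and the fact that distinct generation-$N$ vertices of $\mathcal T$ have disjoint descendant subtrees — but this is routine once the scales are set up. Everything else reduces to the max-flow min-cut characterization of $br$ together with Fekete's subadditivity lemma.
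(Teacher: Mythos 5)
The paper does not actually prove Theorem~\ref{sousperiodic}: it is quoted from Furstenberg and from the Lyons--Peres book, so there is no internal proof to compare your attempt with, and it has to be judged on its own. Your super-periodic half is essentially the standard route (super-multiplicativity of the level sizes up to an $N$-window, a Fekete-type argument, then iterating the embeddings to build a subtree carrying a unit flow and invoking max-flow min-cut together with Proposition~\ref{prop:br-gr}), and in outline it is sound. One correction there: $\overline{gr}(\mathcal{T})<\infty$ is not what controls the ``drift'' of the embedded roots --- the definition of $N$-super-periodicity already forces $|f(o)|-|\nu|\le N$, so after $k$ iterations the depth error is at most $kN$ automatically; the finiteness hypothesis is needed so that the Fekete limit is finite and the comparison of $|\mathcal{T}_n|$ with $\lambda^n$ at a well-chosen scale makes sense.

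The genuine gap is in the sub-periodic case. First, sub-periodicity gives $|\mathcal{T}_{m+n}|\le |\mathcal{T}_m|\cdot\max_{0\le j\le N}|\mathcal{T}_{n+j}|$, so the quantity that Fekete's lemma handles is the windowed maximum $M_n=\max_{0\le j\le N}|\mathcal{T}_{n+j}|$, which is genuinely sub-multiplicative; this yields convergence of $M_n^{1/n}$, hence identifies $\overline{gr}(\mathcal{T})$, but it does not by itself exclude $\liminf|\mathcal{T}_n|^{1/n}<\limsup|\mathcal{T}_n|^{1/n}$, since individual levels can dip inside the window (sub-periodic trees may have leaves), so ``existence of $gr$ by Fekete'' is not established as stated. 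Second, ``running the flow construction with the embeddings in the other direction'' does not work: sub-periodicity embeds $\mathcal{T}^{\nu}$ \emph{into} the top of the tree, which only bounds subtree sizes from above and gives no lower bound on the number of descendants of a given vertex, which is exactly what a flow needs. The hard inequality here is $\overline{gr}(\mathcal{T})\le br(\mathcal{T})$; it is precisely Furstenberg's theorem, proved by a different mechanism (combining near-optimal cutsets of subtrees with cutsets of the whole tree, as in Lyons--Peres), and once it is known it also gives existence of $gr$ because $br(\mathcal{T})\le\underline{gr}(\mathcal{T})$. Falling back on the citation is legitimate --- it is all the paper itself does --- but then the sub-periodic half of your argument is a citation, not a proof, and the Fekete claim preceding it should be dropped or repaired.
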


\subsection{Random walks on trees}\label{randomwalkontrees}

Let $\mathcal T$ be a tree, we  now define the discrete-time biased random walk
on $\mathcal T$. Working in discrete time will make some of the arguments below
a little  simpler, at the cost  of a slightly heavier  definition here
---   notice    though   that   the   definition    of   the   measure
$\mathbb P_\lambda$ and  the main results of the paper  are not at all
affected by this choice.

Let $\lambda > 0$: the biased walk $RW_\lambda$ with bias $\lambda$ on
$\mathcal T$ is  the discrete-time Markov chain  on the vertex set  of $\mathcal T$ with
transition  probabilities given,  at  a  vertex $x  \neq  o$ with  $k$
children, by \[ p_\lambda (x,y) := \begin{cases}
    \frac 1 {1 + k \lambda}      & \text{if $y$ is the father of $x$,} \\
    \frac \lambda {1 + k\lambda} & \text{if $y$ is a child of $x$,}    \\
    0                            & \text{otherwise}.
  \end{cases} \] If the root has $k>0$ children, then $p_\lambda(o,x)$
is $1/k$ if  $x$ is a child  of $o$ and $0$  otherwise. The degenerate
case  $\mathcal T=\{o\}$ where  the root  has no  child will  not occur  in our
context, so  we will silently  ignore it.  We also allow  ourselves to
consider the  cases $\lambda \in  \{ 0,  \infty \}$, with  the natural
convention that $RW_0$ remains stuck  at the root and that $RW_\infty$
always moves away  from the root, getting stuck whenever  it reaches a
leaf.

\begin{definition}
  Let $\mathcal G=(V,E)$ be a graph, and $c : E \to \mathbb R_+^\ast$ be labels
  on the edges, referred  to as \emph{conductances}. Equivalently, one
  can fix  \emph{resistances} by  letting $r(e)  := 1/c(e)$.  The pair
  $(G,c)$ is called  a \emph{network}. Given a subset $K$  of $V$, the
  restriction of $c$ to the edges  joining vertices in $K$ defines the
  \emph{induced  sub-network} $\mathcal G_{|K}$.  The \emph{random  walk} on
  the network  $(\mathcal G,c)$ is the  discrete-time Markov chain on  $V$ with
  transition probabilities proportional to the conductances.
\end{definition}

Given a network $(\mathcal T,c)$ on a tree, let $\pi(o)$ be the sum of
the conductances  of the  edges incident  to the  root, and  denote by
$T(o)$   the  first   return  time   to  the   origin  by   the  walk.
Following~\cite{LP:book} (page 25),  we can define the \emph{effective
  conductance} of the network by
\begin{equation}
  \label{equ:relationconductanceproba}
  \mathcal C_c(\mathcal T) := \pi(o) \widetilde {\mathcal C_c} (\mathcal
  T),
\end{equation}
where
$\widetilde {\mathcal C_c} (\mathcal T) := \mathbb P[T(o) = +\infty]$.
The  reciprocal   $\mathcal  R_c   (\mathcal  T)$  of   the  effective
conductance is called the \emph{effective resistance}.

We will be particularly interested in the case where the conductances are chosen exponentially in the distance to the root, more precisely if for every edge $e = (x,y)$ where $x$ is the parent of $y$ we let $c(e) = \lambda^{|x|}$, because in that case the random walk on the network is exactly the same process as the
random  walk $RW_\lambda$  defined  earlier. We will use the following notation many times in what follows:

\begin{notation}
  For every parameter $\lambda>0$, assigning to every edge $e = (x,y)$ conductance $\lambda^{|x|}$, we denote by $\mathcal C(\lambda, \mathcal T)$ (resp.\ $\mathcal R(\lambda, \mathcal T)$) the effective conductance (resp.\ resistance) of the associated network. Moreover, if $\nu$ is a child of the root $o$ of $\mathcal T$, we  write
  $\widetilde  {\mathcal   C}  (\lambda,  \mathcal{T},  \nu)$   for  the
  probability  of  the  event  that  the  random  walk  $RW_\lambda$  on
  $\mathcal T$, started  at the root (i.e $X_0=o$), visits $\nu$ at its first step (i.e $X_1=\nu$) and never returns to the root.
\end{notation}


\begin{theorem}[Rayleigh's monotonicity principle~\cite{LP:book}]\label{RL}
  Let $\mathcal T$  be an infinite tree with two  assignments, $c$ and
  $c'$, of conductances on $\mathcal T$ with $c \leq c'$ (everywhere).
  Then  the  effective  conductances  are ordered  in  the  same  way:
  $\mathcal C_{c}(\mathcal  T)\leq \mathcal C_{\widetilde{c}}(\mathcal
    T)$.
\end{theorem}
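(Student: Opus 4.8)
The plan is to reduce the statement to the case of a finite network and there invoke the variational (Dirichlet) characterization of the effective conductance. First I would set up an exhaustion of $\mathcal T$ by finite trees: for $n\ge 1$, let $\mathcal T_{\le n}$ be the finite subtree spanned by the vertices of generation at most $n$, equip its edges with the restriction of $c$, and identify all vertices of generation $n$ into a single sink $z_n$; write $\mathcal C_c(o\leftrightarrow z_n)$ for the effective conductance of the resulting finite network. Because $\mathcal T$ is a tree, the random walk on $(\mathcal T,c)$ started at $o$ agrees, until the first time it returns to $o$ or reaches generation $n$, with the random walk on this finite network, and reaching generation $n$ is the same event as hitting $z_n$; hence, writing $A_n$ for the event that the walk reaches generation $n$ before returning to $o$, one has $\mathbb P[A_n]=\widetilde{\mathcal C_c}(o\leftrightarrow z_n)$ and therefore $\pi(o)\,\mathbb P[A_n]=\mathcal C_c(o\leftrightarrow z_n)$, where $\pi(o)$ denotes the sum of the $c$-conductances of the edges incident to $o$ --- the same quantity in $\mathcal T$ and in every $\mathcal T_{\le n}$. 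The events $A_n$ are decreasing (in a tree one cannot reach generation $n+1$ from $o$ without first visiting generation $n$) and $\bigcap_{n\ge 1}A_n=\{T(o)=+\infty\}$, so continuity of the measure together with \eqref{equ:relationconductanceproba} gives
$$\mathcal C_c(\mathcal T)=\pi(o)\,\mathbb P[T(o)=+\infty]=\lim_{n\to\infty}\pi(o)\,\mathbb P[A_n]=\lim_{n\to\infty}\mathcal C_c(o\leftrightarrow z_n),$$
and likewise with $c$ replaced by $c'$.

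Next I would compare the finite networks $(\mathcal T_{\le n},c)$ and $(\mathcal T_{\le n},c')$. Dirichlet's principle (see~\cite{LP:book}) expresses the effective conductance of a finite network between a source $a$ and a sink $z$ as
$$\mathcal C_c(a\leftrightarrow z)=\inf\left\{\ \sum_e c(e)\bigl(f(e^+)-f(e^-)\bigr)^2\ :\ f(a)=1,\ f(z)=0\ \right\}.$$
Applying this with $a=o$ and $z=z_n$: the set of admissible functions $f$ does not depend on the conductances, while $c(e)\le c'(e)$ for every edge forces $\sum_e c(e)\bigl(f(e^+)-f(e^-)\bigr)^2\le\sum_e c'(e)\bigl(f(e^+)-f(e^-)\bigr)^2$ for each admissible $f$; taking the infimum over $f$ yields $\mathcal C_c(o\leftrightarrow z_n)\le\mathcal C_{c'}(o\leftrightarrow z_n)$ for every $n$. (Equivalently, one could use Thomson's principle for the effective resistance: any fixed unit flow from $o$ to $z_n$ has at least as much energy for the resistances $1/c$ as for $1/c'\le 1/c$, so the effective resistances are ordered the other way, hence the conductances as claimed.)

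Passing to the limit $n\to\infty$ and combining with the first paragraph gives $\mathcal C_c(\mathcal T)\le\mathcal C_{c'}(\mathcal T)$. The only step carrying any real content is the reduction to finite networks --- matching the probabilistic definition of $\mathcal C_c(\mathcal T)$ with the decreasing limit of the finite-network effective conductances, and checking that the normalization $\pi(o)$ is common to all truncations; once that is in place, monotonicity in the finite case is a one-line consequence of the variational formula and passes to the limit, so I do not expect any genuine obstacle beyond this bookkeeping.
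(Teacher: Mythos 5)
Your proof is correct and is essentially the standard argument from \cite{LP:book}, which the paper cites without reproducing a proof: an exhaustion by finite networks with a single sink at each generation, Dirichlet's (equivalently Thomson's) variational principle on each truncation, and a passage to the limit using that the truncated effective conductances decrease to the probabilistic quantity of \eqref{equ:relationconductanceproba}. The bookkeeping you flag as the real content --- matching the probabilistic definition with the decreasing limit of the finite-network conductances via the decreasing events $A_n$ whose intersection is $\{T(o)=+\infty\}$, and noting that $\pi(o)$ is common to all truncations --- is carried out correctly.
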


\begin{cor}\label{corRL}
  Let  $\mathcal T,\mathcal  T'$ be  two infinite  trees; we  say that
  $\mathcal  T  \subset \mathcal  T'$  if  there exists  an  injective
  morphism $f:\mathcal T\rightarrow \mathcal  T'$. If this holds, then
  for                        every                        $\lambda>0$,
  $\mathcal  C(\lambda,\mathcal  T')\leq  \mathcal  C(\lambda,\mathcal
    T)$.
\end{cor}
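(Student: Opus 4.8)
The plan is to recognise the statement as the standard fact that the effective conductance from the root does not increase when one passes to a sub-network, read through the identification of $RW_\lambda$ with a random walk on a network. Write $f$ for the injective morphism exhibiting $\mathcal T'$ as a sub-tree of $\mathcal T$; we take it to be root-preserving --- this is the case for the inclusions of self-avoiding trees to which the corollary will be applied, and it is genuinely needed, since the conductances $\lambda^{|x|}$ carried by $RW_\lambda$ depend on the generation of $x$ and would otherwise not be transported faithfully. The two steps are then: (i) check that $f$ identifies the $RW_\lambda$-network on $\mathcal T'$ with a sub-network of the $RW_\lambda$-network on $\mathcal T$; (ii) conclude by monotonicity, which for strictly positive conductances is Rayleigh's principle (Theorem~\ref{RL}).

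For step (i), the point to verify is that a root-preserving injective morphism of trees is automatically generation-preserving: applying $f$ to the unique simple path in $\mathcal T'$ from $o$ to a vertex $\nu$ gives, by injectivity of $f$, a simple path in $\mathcal T$ from $o$ to $f(\nu)$, and in a tree the simple path between two vertices is unique and has length equal to their distance, so $|f(\nu)|=|\nu|$. Consequently the $RW_\lambda$-conductance $c_\lambda(e)=\lambda^{|e^-|}$ of an edge $e$, which depends only on the generation of its lower endpoint $e^-$, is preserved by $f$, so that $f$ is an isomorphism of networks from $(\mathcal T',c_\lambda)$ onto the sub-network $\mathcal N'$ of $(\mathcal T,c_\lambda)$ induced by the vertex set of the sub-tree $f(\mathcal T')$. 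Since $f(\mathcal T')$ contains the root and effective conductance from the root is invariant under isomorphism of networks, this gives $\mathcal C(\lambda,\mathcal T')=\mathcal C_{c_\lambda}(\mathcal N')$.

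For step (ii), $\mathcal N'$ is obtained from $(\mathcal T,c_\lambda)$ by deleting the edges not in $f(\mathcal T')$, and deleting edges cannot increase the effective conductance from the root, so $\mathcal C_{c_\lambda}(\mathcal N')\le\mathcal C_{c_\lambda}(\mathcal T)=\mathcal C(\lambda,\mathcal T)$; together with the previous paragraph this is exactly $\mathcal C(\lambda,\mathcal T')\le\mathcal C(\lambda,\mathcal T)$. The only point here that is not purely formal --- and where I expect the genuine work to be --- is that Theorem~\ref{RL} as stated covers only strictly positive conductances, so one cannot literally give the deleted edges conductance $0$. I would handle this in the standard way: let $c^{(\varepsilon)}$ agree with $c_\lambda$ on the edges of $f(\mathcal T')$ and equal $\varepsilon\,c_\lambda$ on the others, with $0<\varepsilon<1$, so that $c^{(\varepsilon)}\le c_\lambda$ and Theorem~\ref{RL} gives $\mathcal C_{c^{(\varepsilon)}}(\mathcal T)\le\mathcal C(\lambda,\mathcal T)$ for every such $\varepsilon$; then let $\varepsilon\downarrow 0$ and argue that $\mathcal C_{c^{(\varepsilon)}}(\mathcal T)\to\mathcal C_{c_\lambda}(\mathcal N')$, for instance via the description $\widetilde{\mathcal C_c}=\mathbb P[T(o)=+\infty]$ and a coupling of the two walks that keeps them together until the $c^{(\varepsilon)}$-walk first crosses an edge outside $f(\mathcal T')$, or via the Dirichlet-energy characterisation of effective conductance, in which sending a conductance to $0$ simply removes the corresponding term. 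Alternatively, one may simply cite from~\cite{LP:book} that effective conductance is monotone under the addition of edges, which is itself an immediate consequence of Theorem~\ref{RL}.
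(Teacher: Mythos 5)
Your argument is, in substance, the deduction the paper intends: Corollary~\ref{corRL} is given no separate proof and is meant to follow at once from Rayleigh's principle (Theorem~\ref{RL}), and your two steps --- identifying the $RW_\lambda$-network of the embedded tree with a sub-network of the ambient one, then using monotonicity under edge deletion (your $\varepsilon$-approximation is fine, and so is simply quoting the sub-network form of Rayleigh from \cite{LP:book}) --- are exactly that. The observation that a root-preserving injective tree morphism is automatically generation-preserving, so that the conductances $\lambda^{|e|-1}$ are transported faithfully, is correct and is the one non-formal point in the identification.

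Two caveats, which concern the statement rather than your argument. First, you have silently interchanged the roles of the two trees: the corollary defines $\mathcal T\subset\mathcal T'$ by an injective morphism $f:\mathcal T\to\mathcal T'$, so that $\mathcal T$ is the embedded (smaller) tree, and then asserts $\mathcal C(\lambda,\mathcal T')\le\mathcal C(\lambda,\mathcal T)$; what you prove is the inequality with the embedded tree on the smaller side of the conductance comparison, i.e.\ $\mathcal C(\lambda,\mathcal T')\le\mathcal C(\lambda,\mathcal T)$ when $\mathcal T'$ embeds into $\mathcal T$. As printed the corollary is false: embed a single ray (root-preservingly) into the binary tree and take $\lambda\in(1/2,1)$, so the ray has conductance $0$ while the binary tree does not. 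Your reading is therefore the intended one, and it is the one compatible with the way the comparison is used later (the sub-trees $(\mathcal T_{\mathbb H})^{\gamma}$ containing copies of $\mathcal T_{\mathbb Q}$ or $\mathcal T_{\mathbb H}$ rooted at $\gamma$), but you should state explicitly that you are proving the corrected statement. Second, your insistence on a root-preserving embedding is justified and genuinely needed even for the corrected statement (graft a binary tree at the end of a long path: the binary tree embeds, non-root-preservingly, into a tree whose conductance is as small as one likes for $\lambda\in(1/2,1)$); since the paper's applications satisfy it, no harm is done, but the hypothesis should be recorded in the statement.
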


In the case of spherically  symmetric trees, the equivalent resistance
is explicit:

\begin{proposition}[see~\cite{LP:book}]\label{prop3}
  Let  $\mathcal   T$  be   spherically  symmetric  and   $(c(e))$  be
  conductances  that   are  themselves  constant  on   the  levels  of
  $\mathcal                          T$.                          Then
  $\mathcal   R_c(\mathcal    T)=\sum_{n\geq1}\frac{1}{c_n   |\mathcal
      T_n|}$, where  $c_n$ is  the conductance of  the edges  going from
  level $n-1$ to level $n$.
\end{proposition}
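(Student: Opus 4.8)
The plan is to exploit the spherical symmetry to collapse the network, level by level, down to a single path, and then to recover the resistance to infinity by exhaustion with finite truncations. To set notation, for $n\geq 0$ let $d_n$ be the common number of children of a vertex at level $n$; since $\mathcal T$ is infinite and spherically symmetric, $d_n\geq 1$ for all $n$, and $|\mathcal T_n|=d_0d_1\cdots d_{n-1}$. Let $B_N$ be the finite subtree of $\mathcal T$ spanned by the vertices of level at most $N$, and let $\mathcal R_N$ be the effective resistance in $B_N$ between $o$ and the set of vertices of level $N$. By the standard exhaustion characterisation of the effective resistance to infinity (see~\cite{LP:book}), $\mathcal R_c(\mathcal T)=\lim_{N\to\infty}\mathcal R_N$, the limit being increasing; so it is enough to show $\mathcal R_N=\sum_{n=1}^N \frac1{c_n|\mathcal T_n|}$ for every finite $N$.

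I would compute $\mathcal R_N$ by downward induction on the level. For a vertex $\nu$ at level $n\leq N$, let $\rho_n$ denote the effective resistance, inside the truncation of $\mathcal T^\nu$ at level $N$, between $\nu$ and its descendants of level $N$; by symmetry this depends only on $n$, and $\rho_N=0$. For $n<N$, the $d_n$ branches issuing from $\nu$ are in parallel, and within each branch the edge from $\nu$ to a child (resistance $1/c_{n+1}$) is in series with that child's subtree (resistance $\rho_{n+1}$); the series and parallel laws therefore give the recursion $\rho_n=\frac1{d_n}\bigl(\frac1{c_{n+1}}+\rho_{n+1}\bigr)$. Unrolling from $n=0$ and using $\rho_N=0$ yields $\mathcal R_N=\rho_0=\sum_{n=1}^N \frac1{(d_0\cdots d_{n-1})c_n}=\sum_{n=1}^N\frac1{c_n|\mathcal T_n|}$, and letting $N\to\infty$ gives the claim (with the convention that the resistance is $+\infty$, equivalently $\mathcal C_c(\mathcal T)=0$, when the series diverges). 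Equivalently, one may short together all vertices of each level $n$ of $B_N$ --- legitimate since, by symmetry, they are at equal potential when a unit voltage is imposed between $o$ and level $N$, so this changes neither the current nor $\mathcal R_N$ --- turning $B_N$ into a path $o=\nu_0,\nu_1,\dots,\nu_N$ in which $\nu_{n-1}$ and $\nu_n$ are joined by $|\mathcal T_n|$ parallel edges of conductance $c_n$; the parallel and series laws then read off $\mathcal R_N$ directly.

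The algebra above is routine once symmetry has been invoked; the only step that needs a little care is the passage to the limit, that is, the identification of $\mathcal R_c(\mathcal T)=1/\bigl(\pi(o)\,\mathbb P[T(o)=+\infty]\bigr)$ with $\lim_N\mathcal R_N$. This is the standard statement that the effective conductance from $o$ to infinity, defined by exhaustion, equals $\pi(o)$ times the escape probability; it relies on local finiteness of $\mathcal T$, and in the recurrent case it amounts to checking that both sides are $+\infty$ (resp.\ that $\mathcal C_c(\mathcal T)=0$). I expect this to be the main --- and essentially the only --- obstacle.
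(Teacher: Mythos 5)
The paper does not prove this proposition itself (it is quoted from \cite{LP:book} without argument), so there is nothing in the paper to compare against; your proof is correct and is, in essence, the standard one from that reference. The shorting argument --- identifying vertices of each level, which by spherical symmetry are equipotential for the unit-voltage problem on the truncation $B_N$, then reading off a series of $|\mathcal T_n|$-fold parallel conductances $c_n$ --- is exactly the classical route, and you correctly flag that the only nontrivial step is the exhaustion identity $\mathcal R_c(\mathcal T)=\lim_N\mathcal R_N$, which holds by local finiteness (with both sides infinite in the recurrent case).
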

The following  corollaries are the consequences of Proposition~\ref{prop3}:

\begin{cor}\label{cor:continuousofSS}
  Let  $\mathcal T$ be  a spherically  symmetric tree. The effective conductance $\mathcal C(\lambda,\mathcal T)$ is a continuous function on $(\lambda_c,+\infty)$.
\end{cor}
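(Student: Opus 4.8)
The plan is to read off an explicit formula for the effective resistance from Proposition~\ref{prop3} and then deduce continuity of $\lambda\mapsto\mathcal C(\lambda,\mathcal T)$ from a local uniform bound on the resulting series (a Weierstrass $M$-test).

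First I would apply Proposition~\ref{prop3} to the network on $\mathcal T$ for which the random walk is exactly $RW_\lambda$, namely the one where every edge from level $n-1$ to level $n$ has conductance $c_n=\lambda^{\,n-1}$. Since $\mathcal T$ is spherically symmetric these conductances are constant on levels, so the proposition gives
$$\mathcal R(\lambda,\mathcal T)=\sum_{n\ge1}\frac{1}{\lambda^{\,n-1}\,\left|\mathcal T_n\right|}.$$
Because $\mathcal C(\lambda,\mathcal T)=1/\mathcal R(\lambda,\mathcal T)$ and all terms of this series are positive, it is enough to show that $\lambda\mapsto\mathcal R(\lambda,\mathcal T)$ is finite, positive and continuous on $(\lambda_c,+\infty)$. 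Positivity is clear, and finiteness at a given $\lambda>\lambda_c$ is precisely the transience of $RW_\lambda$, which holds since $\lambda>\lambda_c$ (Lyons~\cite{lyons1990random}).

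For continuity, fix $\lambda>\lambda_c$ and pick $\lambda_0$ with $\lambda_c<\lambda_0<\lambda$. For all $\mu\ge\lambda_0$ and all $n\ge1$ one has
$$0\le\frac{1}{\mu^{\,n-1}\,\left|\mathcal T_n\right|}\le\frac{1}{\lambda_0^{\,n-1}\,\left|\mathcal T_n\right|}=:M_n,\qquad \sum_{n\ge1}M_n=\mathcal R(\lambda_0,\mathcal T)<\infty,$$
the last finiteness again coming from $\lambda_0>\lambda_c$. Each summand is continuous in $\mu$ on $[\lambda_0,+\infty)$, so by the Weierstrass $M$-test the series defining $\mathcal R(\cdot,\mathcal T)$ converges uniformly on $[\lambda_0,+\infty)$ and is therefore continuous there; in particular it is continuous at $\lambda$. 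As $\lambda>\lambda_c$ was arbitrary, $\mathcal R(\cdot,\mathcal T)$ is continuous, finite and nonzero on $(\lambda_c,+\infty)$, whence $\mathcal C(\cdot,\mathcal T)=1/\mathcal R(\cdot,\mathcal T)$ is continuous there as well.

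There is no real obstacle in this argument; the one point that deserves attention is that the restriction to $(\lambda_c,+\infty)$ is exactly what guarantees transience of $RW_\lambda$, hence finiteness of the resistance series, which is what both legitimises the identity $\mathcal C=1/\mathcal R$ and supplies the summable dominating sequence $(M_n)$. At or below $\lambda_c$ the series may diverge, and the subtler phenomena near the critical point — the possible discontinuity of the escape probability at $\lambda_c$ for more general trees — are treated separately later in the paper.
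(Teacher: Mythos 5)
Your proof is correct and is essentially what the paper has in mind: the paper states Corollary~\ref{cor:continuousofSS} as an immediate consequence of Proposition~\ref{prop3}, and your argument (explicit resistance series from Proposition~\ref{prop3}, dominated by a convergent series at $\lambda_0>\lambda_c$ via the $M$-test, then invert) is exactly that. The only detail worth noting is cosmetic: you write $c_n=\lambda^{n-1}$, matching the stated convention $c(e)=\lambda^{|x|}$ with $x$ the parent, while the paper's own later uses of Proposition~\ref{prop3} (e.g.\ Corollary~\ref{criterert}, Proposition~\ref{prop4}) write $\sum 1/(\lambda^n|\mathcal T_n|)$; the two differ by a global factor of $\lambda$ and make no difference to convergence or continuity.
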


\begin{cor}\label{criterert}
  Let  $\mathcal T$ be  a spherically  symmetric tree.  Then $RW_{\lambda}$  is
  transient            if            and            only            if
  $\sum_{n}\frac{1}{\lambda^{n}\left | \mathcal T_n \right |}< \infty$.
\end{cor}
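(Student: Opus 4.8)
The plan is to reduce the statement directly to Proposition~\ref{prop3} together with the standard correspondence between transience and finiteness of the effective resistance. First I would recall that, by the definition in~\eqref{equ:relationconductanceproba}, the walk $RW_\lambda$ started at $o$ is transient precisely when $\widetilde{\mathcal C}(\lambda,\mathcal T)=\mathbb P[T(o)=+\infty]>0$, equivalently when $\mathcal C(\lambda,\mathcal T)>0$, equivalently when $\mathcal R(\lambda,\mathcal T)<\infty$. This is the usual dichotomy for the network random walk on a tree; note that since $\mathcal T$ is locally finite each level $\mathcal T_n$ is finite, so every term appearing below is well defined and positive.

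Next I would identify the conductances attached to $RW_\lambda$. As recorded just before Theorem~\ref{RL}, $RW_\lambda$ is exactly the network random walk for the choice $c(e)=\lambda^{|x|}$ on an edge $e$ joining a vertex $x$ to one of its children; hence all edges from level $n-1$ to level $n$ carry the same conductance $c_n=\lambda^{\,n-1}$, and in particular the conductances are constant on the levels of $\mathcal T$. Since $\mathcal T$ is spherically symmetric, Proposition~\ref{prop3} applies and yields
$$\mathcal R(\lambda,\mathcal T)=\sum_{n\geq1}\frac{1}{\lambda^{\,n-1}\,|\mathcal T_n|}.$$

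Finally, the series $\sum_{n\geq1}\lambda^{-(n-1)}|\mathcal T_n|^{-1}$ and $\sum_{n\geq1}\lambda^{-n}|\mathcal T_n|^{-1}$ differ only by the fixed positive factor $\lambda$, so one converges if and only if the other does. Combining this with the first step, $RW_\lambda$ is transient if and only if $\sum_{n}\lambda^{-n}|\mathcal T_n|^{-1}<\infty$, which is the claim. I expect no genuine obstacle here: the only points requiring a little care are to invoke the correct edge conductances for $RW_\lambda$ (the weight $\lambda^{|x|}$, hence $c_n=\lambda^{n-1}$ rather than $\lambda^{n}$) and to observe that the resulting shift of exponent is immaterial for convergence; everything else is a direct quotation of Proposition~\ref{prop3} and of the conductance–transience correspondence set up in this section.
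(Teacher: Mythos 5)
Your proposal is correct and follows exactly the route the paper intends: the corollary is stated there as a direct consequence of Proposition~\ref{prop3}, combined with the standard equivalence between transience and finiteness of the effective resistance. Your remark about the exponent shift ($c_n=\lambda^{n-1}$ versus $\lambda^{n}$, depending on whether one weights by the parent's generation or the edge's generation) is immaterial for convergence, as you note, so the argument matches the paper's.
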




\begin{theorem}[Nash-Williams criterion, see~\cite{Nash1959}]\label{NW}
  If $(\pi_n, n\geq 0)$ is  a sequence of pairwise disjoint finite  E-cutsets in a
  locally finite network $\mathcal G$, then
  $$\mathcal R_c(\mathcal T)\geq \sum_{n}{\left ( \sum_{e \in
      \pi_n}     c(e)\right     )}^{-1}.$$    In     particular,     if
  $\underset{n}{\sum}{\left(    \sum_{e   \in    \Pi_n}   c(e)\right)}^{-1}=+\infty$, then the  random walk associated to  this family of
  conductances $(c(e), e\in E(\mathcal T))$ is recurrent.
\end{theorem}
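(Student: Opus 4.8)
The plan is to reduce the inequality to Thomson's variational principle, which identifies the effective resistance $\mathcal R_c(\mathcal T)$ of a network on a tree with the minimal energy $\mathcal E(\theta)=\sum_{e}\theta(e)^{2}/c(e)$ among all unit flows $\theta$ from the root $o$ to infinity (see~\cite{LP:book}). Fix such a unit flow $\theta$, where $\theta(e)$ denotes the amount flowing from $e^-$ to $e^+$. The first step is to show that $\sum_{e\in\pi_n}|\theta(e)|\ge 1$ for every $n$. Since $\pi_n$ is a finite E-cutset and $\mathcal T$ is locally finite, the connected component $C_n$ of $o$ in $\mathcal T\setminus\pi_n$ is finite: an infinite component, being a locally finite subtree containing $o$, would contain a ray from $o$ avoiding $\pi_n$ (K\"onig's lemma), contradicting that $\pi_n$ separates $o$ from infinity. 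Consequently every edge of $\mathcal T$ with exactly one endpoint in $C_n$ belongs to $\pi_n$. Summing Kirchhoff's node law for $\theta$ over all vertices of $C_n$, the contributions of edges internal to $C_n$ cancel and one is left with the net flow leaving $C_n$, which equals the strength $1$ of $\theta$ because $o\in C_n$; hence $\sum_{e\in\pi_n}|\theta(e)|\ge 1$.

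The second step is a Cauchy--Schwarz estimate on each cutset: writing $|\theta(e)|=\sqrt{c(e)}\cdot|\theta(e)|/\sqrt{c(e)}$,
$$1\le\Big(\sum_{e\in\pi_n}|\theta(e)|\Big)^{2}\le\Big(\sum_{e\in\pi_n}c(e)\Big)\Big(\sum_{e\in\pi_n}\frac{\theta(e)^{2}}{c(e)}\Big),$$
so that $\sum_{e\in\pi_n}\theta(e)^{2}/c(e)\ge\big(\sum_{e\in\pi_n}c(e)\big)^{-1}$. The third step uses the hypothesis that the $\pi_n$ are pairwise disjoint subsets of the edge set: summing the previous bound over $n$ gives
$$\mathcal E(\theta)=\sum_{e}\frac{\theta(e)^{2}}{c(e)}\ \ge\ \sum_{n}\sum_{e\in\pi_n}\frac{\theta(e)^{2}}{c(e)}\ \ge\ \sum_{n}\Big(\sum_{e\in\pi_n}c(e)\Big)^{-1}.$$
As $\theta$ was an arbitrary unit flow, taking the infimum over $\theta$ and applying Thomson's principle yields $\mathcal R_c(\mathcal T)\ge\sum_{n}\big(\sum_{e\in\pi_n}c(e)\big)^{-1}$, which is the claimed inequality. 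For the final assertion, if this series diverges then $\mathcal R_c(\mathcal T)=+\infty$, equivalently $\mathcal C_c(\mathcal T)=0$; by~\eqref{equ:relationconductanceproba} and $\pi(o)>0$ this forces $\mathbb P[T(o)=+\infty]=0$, i.e. the random walk associated to $(c(e))$ is recurrent.

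The step needing the most care is the first one --- that any unit flow pushes total mass at least $1$ (in absolute value) across each cutset --- and inside it the topological observation that a finite E-cutset isolates only a finite piece of a locally finite tree around the root; everything afterwards is Cauchy--Schwarz together with the disjointness of the $\pi_n$. If one prefers not to invoke Thomson's principle, one can run the same cutset/flow bound on the unit current flow $i$ and use the identity $\mathcal R_c(\mathcal T)=\mathcal E(i)$, but this requires the identical estimate and does not shorten the argument.
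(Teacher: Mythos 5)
Your proof is correct and is the standard textbook argument for the Nash--Williams inequality: reduce to Thomson's variational principle, observe that any unit flow from $o$ to infinity must push total (absolute) flow at least $1$ across each finite $E$-cutset because the finite component of $o$ in $\mathcal T\setminus\pi_n$ is bounded by edges of $\pi_n$ (Kirchhoff's law summed over that component), then apply Cauchy--Schwarz cutset by cutset and use pairwise disjointness of the $\pi_n$ to sum the lower bounds into the total energy. Note that the paper does not give its own proof of this theorem; it is cited to the literature (Nash-Williams, and it also appears with essentially your argument in~\cite{LP:book}), so there is nothing paper-internal to compare against. One minor point worth being aware of: the theorem's statement mentions a ``locally finite network $\mathcal G$'' but then writes $\mathcal R_c(\mathcal T)$, and your argument exploits the tree structure only in the sentence invoking K\"onig's lemma; for a general locally finite graph the same conclusion follows by taking a ray in a spanning tree of the component of $o$, so nothing essential is lost. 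Your handling of the degenerate case (no finite-energy unit flow, hence $\mathcal R_c=\infty$) and the deduction of recurrence from $\mathcal C_c(\mathcal T)=0$ via \eqref{equ:relationconductanceproba} are both fine.
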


We end  this subsection  by stating a  classical theorem  relating the
recurrence  or transience  of  $RW_\lambda$ to  the  branching of  the
underlying tree:

\begin{theorem}[see~\cite{lyons1990random}]\label{lyons90}
  Let $\mathcal T$ be an infinite, locally finite and rooted tree. If  $\lambda
  <  \frac{1}{br(\mathcal T)}$ then  $RW_{\lambda}$ is  recurrent, whereas  if
  $\lambda   >\frac{1}{br(\mathcal T)}$,  then  $RW_{\lambda}$  is transient.
  The critical  value  of  biased random  walk  on $\mathcal T$  is therefore
  $\lambda_c(\mathcal T):=\frac{1}{br(\mathcal T)}$.
\end{theorem}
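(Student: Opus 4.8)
The plan is to run everything through the electrical-network picture. Recall from the excerpt that $RW_\lambda$ is exactly the network random walk on $(\mathcal T,c)$ with $c(e)=\lambda^{|e|-1}$, so $RW_\lambda$ is recurrent if and only if $\mathcal R(\lambda,\mathcal T)=+\infty$, equivalently $\mathcal C(\lambda,\mathcal T)=0$. I would begin by recording the elementary fact that the set $S:=\{\mu\ge1:\inf_{\pi\in\Pi}\sum_{e\in\pi}\mu^{-|e|}>0\}$ occurring in the definition of $br(\mathcal T)$ is downward closed in $[1,\infty)$: for a fixed cutset $\pi$ the map $\mu\mapsto\sum_{e\in\pi}\mu^{-|e|}$ is nonincreasing, hence so is $\mu\mapsto\inf_{\pi\in\Pi}\sum_{e\in\pi}\mu^{-|e|}$. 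Thus $S$ is an interval with right endpoint $br(\mathcal T)$, and $br(\mathcal T)\ge1$ always (at $\mu=1$ the cutset sum is the cutset cardinality, which is $\ge1$).

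\emph{Recurrence.} Suppose $\lambda<1/br(\mathcal T)$, so $1/\lambda>br(\mathcal T)=\sup S$ and therefore $1/\lambda\notin S$; this says precisely that $\inf_{\pi\in\Pi}\sum_{e\in\pi}\lambda^{|e|}=\inf_{\pi\in\Pi}\sum_{e\in\pi}(1/\lambda)^{-|e|}=0$, whence also $\inf_{\pi\in\Pi}\sum_{e\in\pi}c(e)=\lambda^{-1}\inf_{\pi\in\Pi}\sum_{e\in\pi}\lambda^{|e|}=0$. I would then apply Nash--Williams (Theorem~\ref{NW}) with the single cutset $\pi$, which gives $\mathcal R(\lambda,\mathcal T)\ge\big(\sum_{e\in\pi}c(e)\big)^{-1}$ for every finite cutset $\pi$; since every cutset contains a finite sub-cutset (by compactness of $\partial\mathcal T$: the clopen sets of rays through the edges of a cutset cover $\partial\mathcal T$), taking the infimum over $\pi$ yields $\mathcal R(\lambda,\mathcal T)\ge\big(\inf_{\pi\in\Pi}\sum_{e\in\pi}c(e)\big)^{-1}=+\infty$, i.e.\ $RW_\lambda$ is recurrent.

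\emph{Transience.} Suppose $\lambda>1/br(\mathcal T)$. Since $\sup S=br(\mathcal T)>1/\lambda$, I can fix some $\mu\in S$ with $\mu>1/\lambda$ (take $\mu=1$ if $1/\lambda<1$, using $1\in S$); then $\kappa:=\inf_{\pi\in\Pi}\sum_{e\in\pi}\mu^{-|e|}>0$ and, crucially, $\lambda\mu>1$. Applying the max-flow min-cut theorem for locally finite networks (\cite{LP:book}), whose minimum cut value here is exactly $\kappa$, the tree carries a nonnegative unit flow $\theta$ from $o$ to infinity with $\theta(e)\le\kappa^{-1}\mu^{-|e|}$ for every $e$. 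Bounding one of the two factors in $\theta(e)^2$ this way and using the tree identity $\sum_{|e|=n}\theta(e)=1$ (the flux of a unit flow across each generation), the energy relative to $r(e)=c(e)^{-1}=\lambda^{1-|e|}$ is controlled by a convergent geometric series:
\[ \mathcal{E}(\theta)=\sum_e r(e)\,\theta(e)^2\le\frac{\lambda}{\kappa}\sum_{e}(\lambda\mu)^{-|e|}\theta(e)=\frac{\lambda}{\kappa}\sum_{n\ge1}(\lambda\mu)^{-n}=\frac{\lambda}{\kappa(\lambda\mu-1)}<\infty. \]
By Thomson's principle $\mathcal R(\lambda,\mathcal T)\le\mathcal{E}(\theta)<\infty$, so $RW_\lambda$ is transient. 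Combining the two regimes, $\lambda_c(\mathcal T)=1/br(\mathcal T)$.

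Most of the above is bookkeeping. The one step that genuinely requires care --- and the reason a direct use of max-flow min-cut with capacities $\lambda^{|e|}$ fails, producing the divergent sum $\sum_n1$ --- is the interpolation through an exponent $\mu$ strictly between $1/\lambda$ and $br(\mathcal T)$: it is exactly the strict gap $\lambda\mu>1$ that turns the energy into a convergent geometric series, and it uses in an essential way that $\lambda$ lies strictly off the critical value (this is also why the statement says nothing about $\lambda=1/br(\mathcal T)$). A secondary technical point is the validity of max-flow min-cut and of Thomson's principle on an infinite tree; both are classical, obtained by exhausting $\mathcal T$ by finite subtrees together with compactness of $\partial\mathcal T$ and K\"onig's lemma, and can be quoted from~\cite{LP:book}.
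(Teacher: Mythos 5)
The paper does not prove Theorem~\ref{lyons90}; it cites it from \cite{lyons1990random}, so there is no internal proof to compare against. Your argument is correct and is the classical one (essentially the proof in Lyons's paper and in \cite{LP:book}): Nash--Williams with a cutset of arbitrarily small $c$-mass for recurrence, and a finite-energy unit flow via max-flow/min-cut plus Thomson's principle for transience. You have also put your finger on the one point that genuinely matters, namely running max-flow at an intermediate exponent $\mu$ with $1/\lambda<\mu\le br(\mathcal T)$, $\mu\in S$, so that the energy becomes a geometric series in $(\lambda\mu)^{-1}$; the naive choice $\mu=1/\lambda$ yields $\sum_n 1$. The remaining bookkeeping is in order: the shift between $c(e)=\lambda^{|e|-1}$ and the weight $\lambda^{-|e|}$ in the definition of $br$ is a harmless multiplicative constant; every $E$-cutset in a locally finite tree admits a finite sub-cutset by compactness of $\partial\mathcal T$, so the infimum over finite cutsets equals that over all cutsets; and $\sum_{|e|=n}\theta(e)=1$ for a unit flow on a tree because the generation-$n$ edges form a cutset. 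One cosmetic remark: you do not actually need the max-flow to attain the value $\kappa$ exactly, only a flow of some positive strength $s$ respecting the capacities $\mu^{-|e|}$, which after rescaling gives the same geometric-series bound with $\kappa$ replaced by $s$; this sidesteps any worry about attainment on an infinite network.
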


\subsection{The law of the first \texorpdfstring{$k$}{k} steps of the limit walk}

Let $\mathcal T$  be a tree  and $(c(e))$ be conductances  on the edges  of
$\mathcal T$ such that the  associated random walk $(X_n)$ is  transient. For
every $k \geq 0$,  the walk visits $\mathcal T_k$ finitely many  times: we can
define an infinite path $\omega^\infty$  on $\mathcal T$ by letting
$\omega^\infty(k)$ be the last vertex of $\mathcal T_k$ visited by the walk.
Equivalently:
\begin{equation}
  \omega^{\infty}(k)=\nu \iff  \nu \in \mathcal T_k  \text{~and~} \exists
  n_0, \forall n>n_0: X_n \in \mathcal T^{\nu}.
\end{equation}
Let  $k \in  \mathbb{N}^{*}$  and $\nu_0=o,  \nu_1,\nu_2,\ldots,\nu_k$ be  $k$
elements of  $V(\mathcal T)$ such  that the path
$(\nu_0,\nu_1,\nu_2,\ldots,\nu_k)$ is a possible prefix of $\omega^\infty$: we
can then define
\begin{equation}
  \varphi_{c}(\nu_0,\nu_1,\nu_2,\ldots,\nu_k) :=
  \mathbb{P}(\omega^{\infty}(0)                  =
  \nu_{0},\omega^{\infty}(1) = \nu_{1},
  \ldots,
  \omega^{\infty}(k)=\nu_{k}).
\end{equation}
We will refer to this function as  the \emph{law of first $k$ steps of
  limit walk}.  In the case of  the biased walk $RW_\lambda$,  we will
denote the function  by $\varphi^{\lambda, k}$; this will  not lead to
ambiguities. We finish this section with the following lemma.

\comment{réécrit ce lemme}
\begin{lemma}\label{lemmac}
  The value of $\varphi_c(\nu_0,\ldots,\nu_k)$ depends continuously on any
  finite  collection   of  the  conductances  in   the  network.  More
  precisely, given  a finite set  $U = \{  e_1, \ldots, e_\ell  \}$ of
  edges   and    a   collection   $(c(e))$   of    conductances,   let
  $\tilde c(u_1, \ldots,  u_\ell)$ be the family  of conductances that
  coincides with $c$ outside $U$ and  takes value $u_i$ at $e_i$: then
  the                                                              map
  \[ \psi_{U,c} : (u_1, \ldots, u_\ell) \mapsto \varphi_{\tilde c(u_1,
      \ldots, u_\ell)} (\nu_0,\ldots,\nu_k) \]
  is continuous on ${(\mathbb R_+^\ast)}^\ell$.
\end{lemma}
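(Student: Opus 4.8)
\emph{Proof strategy.} The plan is to reduce the computation of $\varphi_c(\nu_0,\ldots,\nu_k)$ to an absorption probability for a random walk on a \emph{fixed finite} network, on which it becomes visibly a rational, hence continuous, function of the conductances one is allowed to vary. First I recast the quantity: since $(\nu_0,\ldots,\nu_k)$ is a simple path issued from $o$ and $\omega^{\infty}$ is always a ray from $o$, the equality $\omega^{\infty}(k)=\nu_k$ already forces $\omega^{\infty}(j)=\nu_j$ for every $j\leq k$, so, writing $(X_n)$ for $RW_{\tilde c(u_1,\ldots,u_\ell)}$,
$$\psi_{U,c}(u_1,\ldots,u_\ell)=\mathbb P\bigl(\omega^{\infty}(k)=\nu_k\bigr)=\mathbb P\bigl(\exists\, n_0,\ \forall n>n_0:\ X_n\in\mathcal T^{\nu_k}\bigr),$$
the probability that the transient walk eventually stays inside the subtree $\mathcal T^{\nu_k}$. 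This is well posed on all of $(\mathbb R_+^{\ast})^{\ell}$: altering finitely many conductances within $(0,\infty)$ changes neither recurrence nor transience (a standard fact; for instance a finite-energy unit flow from $o$ to infinity for $c$ stays of finite energy for $\tilde c(u)$, see~\cite{LP:book}), so $RW_{\tilde c(u)}$ is transient for every $u$.

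Next comes the reduction. Fix an integer $M$ with $M\geq k$ and $M\geq|e_i|$ for all $i$, let $W:=\{w\in V(\mathcal T):|w|=M\}$, and let $\mathcal T_{\leq M}$ be the finite subtree spanned by the vertices of generation at most $M$; by the choice of $M$, every edge of $U$ and the whole path $(\nu_0,\ldots,\nu_k)$ lie in $\mathcal T_{\leq M}$, whereas all conductances on edges of generation $>M$ are held fixed. For $w\in W$ set $S_w:=\mathcal R_c(\mathcal T^{w})\in(0,+\infty]$; as $\mathcal T^{w}$ only involves edges of generation $>M$, the number $S_w$ does not depend on $u$. Form the finite network $\mathcal N(u)$ with vertex set $V(\mathcal T_{\leq M})\cup\{\dagger_w:w\in W,\ S_w<\infty\}$, whose edges are those of $\mathcal T_{\leq M}$ carrying the conductances $\tilde c(u)$ (so $e_i$ carries $u_i$ and the remaining core edges keep their fixed values), together with one pendant edge $\{w,\dagger_w\}$ of conductance $1/S_w$ for each $w$ with $S_w<\infty$, every $\dagger_w$ being made absorbing. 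The key point is that $RW_{\tilde c(u)}$ on $\mathcal T$, traced on $V(\mathcal T_{\leq M})$ and sent to $\dagger_w$ once it has left $\mathcal T_{\leq M}$ below $w$ for good, is, up to the irrelevant holding times it spends on excursions out of and back into a vertex of $W$, the random walk on $\mathcal N(u)$; this is the standard network (series / star--mesh) reduction of~\cite{LP:book}, since the part of $\mathcal T$ strictly below $w$ meets the rest of the network only at $w$ (together with the boundary of $\mathcal T^{w}$, which for resistance purposes is a single point) and is therefore a two-terminal network of effective resistance $S_w$, replaceable by a single resistor $\{w,\dagger_w\}$ of that resistance.

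Under this identification $\{\omega^{\infty}(k)=\nu_k\}$ translates into the event that the walk on $\mathcal N(u)$ is absorbed in $D_k:=\{\dagger_w:w\in W,\ w\geq\nu_k\}$ --- the walk eventually stays in $\mathcal T^{\nu_k}$ exactly when its final escape from $\mathcal T_{\leq M}$ occurs below some $w\in W$ descending from $\nu_k$ --- so that
$$\psi_{U,c}(u_1,\ldots,u_\ell)=\mathbb P_o^{\mathcal N(u)}\bigl(\text{absorption in }D_k\bigr).$$
For a finite absorbing Markov chain this absorption probability is a rational function of the one-step transition probabilities, which are themselves rational in the conductances of $\mathcal N(u)$, and those conductances are affine in $u$ on the core edges (each is constant or equal to some $u_i$) and constant ($1/S_w$) on the pendant edges. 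The only denominators that occur are powers of sums of conductances at single vertices, which stay positive, and of $\det(I-Q_u)$ for $Q_u$ the sub-stochastic core-to-core block; and $\det(I-Q_u)\neq 0$ for every $u\in(\mathbb R_+^{\ast})^{\ell}$ because absorption from each core vertex is certain (the walk being transient for every $u$, some $\dagger_w$ is reachable and absorbing). Hence $\psi_{U,c}$ is continuous --- indeed real-analytic --- on $(\mathbb R_+^{\ast})^{\ell}$.

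The step demanding care is the reduction of the second paragraph: checking that the traced process is, after deleting self-loops, genuinely the walk on $\mathcal N(u)$ --- in particular that the (a.s.\ finitely many) excursions from a vertex $w\in W$ that return to $w$ affect neither the absorbing vertex nor its law, and that the degenerate case $S_w=+\infty$ (the subtree $\mathcal T^{w}$ recurrent, so $w$ is not an escape route) is correctly handled by omitting $\dagger_w$. The rest is bookkeeping.
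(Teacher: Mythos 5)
The paper itself does not prove Lemma~\ref{lemmac}; the only sentence supplied is \emph{``The proof is simple, therefore it is omitted.''} Your argument is a complete and correct proof, so the right comparison is not to a proof in the paper but to a judgement of whether your route is sound and economical.

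Your reduction is the standard and, in my view, cleanest way to make the claimed continuity rigorous, and the key subtleties are all handled: (i) you correctly observe that transience is stable under perturbing finitely many conductances in $(0,\infty)$, so $\omega^{\infty}$ is defined for every $u\in(\mathbb R_+^\ast)^\ell$; (ii) you correctly note that for a ray $\omega^\infty$ from $o$ in a tree, $\omega^\infty(k)=\nu_k$ already pins down all $\omega^\infty(j)$ for $j\le k$, so the event is simply $\{X_n\in\mathcal T^{\nu_k}\text{ eventually}\}$; (iii) the pendant-subtree replacement is legitimate because $\mathcal T^w$ attaches to the rest of the tree only at $w$ --- a quick way to see that your traced chain (after deleting the self-loop at $w$) matches the walk on $\mathcal N(u)$ is to compare one-step laws: in $\mathcal T$, on leaving $w$ the traced walk goes to $w^{-1}$, self-loops, or is killed, with masses proportional to $c(w^{-1},w)$, $\pi^w(w)-\mathcal C_c(\mathcal T^w)$, and $\mathcal C_c(\mathcal T^w)$ respectively, where $\pi^w(w)$ is the sum of conductances from $w$ into $\mathcal T^w$; once the self-loop is deleted this is exactly the law at $w$ in $\mathcal N(u)$ with the pendant conductance $1/S_w=\mathcal C_c(\mathcal T^w)$; and (iv) $S_w$ is indeed independent of $u$, since $M\ge|e_i|$ forces $e_i\notin E(\mathcal T^w)$. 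The final step --- absorption probabilities of a finite absorbing chain are rational in the transition matrix entries, with $\det(I-Q_u)\neq0$ because absorption is almost sure for every $u$ by transience --- gives real-analyticity, which is stronger than what was asked. In short: the proof is correct, supplies the argument the authors chose not to write out, and is essentially the argument one would expect them to have in mind. One could shave a few lines by noting that the desired probability equals a ratio of two Green-function-type quantities for the same finite chain and invoking Cramer's rule once, but that is a matter of taste rather than of substance.
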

\begin{proof}
  The proof is simple, therefore it is omitted.
\end{proof}

\section{A few examples}
\label{section3}

The  self-avoiding tree  in  the plane,  which we  alluded  to in  the
introduction  and will  formally  introduce in  the  next section,  is
sub-periodic but  quite inhomogeneous,  and the self-avoiding  tree in
the half-plane sits in none of  the classes of trees defined above. To
get an intuition of the kind of behavior we should expect or rule out,
we gather here a few examples of trees with some atypical features.

\subsection{Trees with discontinuous conductance}

Let  $0<\lambda_0 \leq  1$.  In the  first part  of  this section,  we
construct   two   trees   $\mathcal  T,\overline{\mathcal   T}$   with
$\lambda_c(\mathcal   T)=\lambda_c(\overline{\mathcal  T})=\lambda_0$,
such  that  the  effective conductances  $C(\lambda,\mathcal  T)$  and
$C(\lambda,\overline{\mathcal   T})$  of   the   biased  random   walk
$RW_{\lambda}$  on $\mathcal  T$ and  $\overline{\mathcal T}$  satisfy
$\mathcal      C(\lambda_c(\mathcal     T),\mathcal      T)=0$     but
$\mathcal     C(\lambda_c(\overline{\mathcal    T}),\overline{\mathcal
    T})>0$. In  the second part, we  construct a tree $\mathcal  T$ such
that $C(\lambda,T)$ is not continuous on $(\lambda_c,1)$.

\begin{proposition}
  \label{prop5}
  For    every    $x\geqslant    1$,    there    exist    two    trees
  $\mathcal T$ and $\overline{\mathcal T}$ such that:
  \begin{itemize}
    \item $br(\mathcal T)=br(\overline{\mathcal T})=x$;
    \item  $RW_{{1}/{x}}$  is recurrent  on  $\mathcal T$  and transient  on
          $\overline{\mathcal T}$.
  \end{itemize}
\end{proposition}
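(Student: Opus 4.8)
The plan is to realize both trees as \emph{spherically symmetric} trees, so that the branching number is computed by the lower growth rate (Theorem~\ref{symetrie}) and the recurrence/transience of $RW_\lambda$ is governed by the explicit series criterion of Corollary~\ref{criterert}. The key observation is that at the critical bias $\lambda=1/x$ the relevant series is $\sum_n x^n/|\mathcal T_n|$, and whether it converges can be tuned by inserting a polynomial correction into the level cardinalities $|\mathcal T_n|$ without changing the exponential growth rate $x$ (hence without changing the branching number, by Theorem~\ref{symetrie}, nor the critical value $1/x$, by Theorem~\ref{lyons90}).

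Concretely, I would fix an integer $b\geq 2$ with $b>x$ (say $b=\lceil x\rceil+1$) and set $\alpha:=\log_b x\in[0,1)$. For $\mathcal T$ I take the spherically symmetric tree with $|\mathcal T_n|:=b^{\lfloor n\alpha\rfloor}$: since $\lfloor(n+1)\alpha\rfloor-\lfloor n\alpha\rfloor\in\{0,1\}$, each level differs from the previous one by a factor $b^0$ or $b^1$, so this is a genuine spherically symmetric tree whose per-level degrees lie in $\{1,b\}$. From $x^n/b<|\mathcal T_n|\leq x^n$ one gets $|\mathcal T_n|^{1/n}\to x$, hence $br(\mathcal T)=\underline{gr}(\mathcal T)=x$, and $\sum_n x^n/|\mathcal T_n|\geq\sum_n 1=\infty$, so $RW_{1/x}$ is recurrent on $\mathcal T$ by Corollary~\ref{criterert}. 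For $\overline{\mathcal T}$ I do the same with $|\overline{\mathcal T}_n|:=b^{\lfloor n\alpha+2\log_b(n+1)\rfloor}$. The exponent is non-decreasing in $n$ (so the per-level degrees are positive integers), and since $\alpha+2\log_b\frac{n+2}{n+1}<1$ for all large $n$ the exponent again increases by $0$ or $1$ from one level to the next eventually, only finitely many levels carrying a larger but finite integer degree; thus $\overline{\mathcal T}$ is a well-defined locally finite spherically symmetric tree. Now $x^n(n+1)^2/b<|\overline{\mathcal T}_n|\leq x^n(n+1)^2$ gives again $br(\overline{\mathcal T})=x$, while $\sum_n x^n/|\overline{\mathcal T}_n|\leq b\sum_n(n+1)^{-2}<\infty$, so $RW_{1/x}$ is transient on $\overline{\mathcal T}$ by Corollary~\ref{criterert}.

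The only genuinely delicate point — the step I would be most careful about — is verifying that the prescribed level cardinalities are actually realizable by a spherically symmetric tree, i.e.\ that the ratios $|\mathcal T_{n+1}|/|\mathcal T_n|$ and $|\overline{\mathcal T}_{n+1}|/|\overline{\mathcal T}_n|$ are positive integers; choosing the base $b$ to be an integer strictly larger than $x$ and exponents whose increments are eventually in $[0,1)$ is precisely what makes this automatic, and the degenerate cases (for instance $x=1$, where $\mathcal T$ is a single ray, or $x$ an integer) are covered by the same formulas. Everything else is a routine estimate: the $\asymp x^n$ and $\asymp x^n(n+1)^2$ bounds follow directly from $b^{y-1}<b^{\lfloor y\rfloor}\leq b^y$, and the growth-rate computation $b^{(\text{exponent})/n}\to b^\alpha=x$ is immediate. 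I would not expect any obstruction beyond bookkeeping.
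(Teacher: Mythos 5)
Your proposal is correct and follows the same strategy as the paper's proof: both build spherically symmetric trees, invoke Theorem~\ref{symetrie} to equate branching with growth, and decide recurrence/transience at $\lambda=1/x$ via the series criterion of Corollary~\ref{criterert}, the only difference being the bookkeeping used to manufacture integer level sizes $|\mathcal T_n|\asymp x^n$ and $|\overline{\mathcal T}_n|\asymp x^n f(n)$ with $\sum 1/f(n)<\infty$ (you take $f(n)=(n+1)^2$ with a base-$b$ floor construction, the paper takes $f(n)=2^{\lfloor\sqrt n\rfloor}$ with a recursively defined degree sequence). No gap.
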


\begin{proof}
  We  will  construct  spherically  symmetric  trees  satisfying  both
  conditions. Denote by $\lfloor y  \rfloor$ the integer part of
  $y$. We construct  the sequence  ${(\ell_{i})}_{i\in  \mathbb{N}^*}$
  inductively as follows:
  \[   \ell_{1}   =   \left\lfloor  x    \right\rfloor,   \quad   \ell_{2}   =   \left\lfloor
    \frac{x^{2}}{\ell_{1}}    \right\rfloor,    \quad    \ell_{3}    =    \left\lfloor
    \frac{x^{3}}{\ell_{1}\ell_{2}}  \right\rfloor,  \quad \ldots,\quad  \ell_{n}  =
    \left\lfloor  \frac{x^{n}}{\prod_{i=1}^{n-1}\ell_i} \right\rfloor,  \quad\ldots\]
  and let $\mathcal T$  be the tree where vertices at  distance $i-1$ from $o$
  have $\ell_{i}$ children, so that the sizes of the levels of $\mathcal T$ are
  given by $\left | \mathcal T_n \right |=\prod_{i=1}^{n}\ell_i$. We construct the
  tree     $\overline{\mathcal T}$     from      the     degree     sequence
  ${(\ell'_{i})}_{i\in \mathbb{N}}$ by posing $\ell'_{i}=2\ell_{i}$ if $i$ can be
  written  under the  form  $i=k^{2}$,  and $\ell'_{i}=\ell_{i}$  otherwise.
  Notice that $|\overline {\mathcal T}_n| = 2^{[\sqrt n]} |\mathcal T_n|$.

  We first show that both trees  have branching number $x$. Since they
  are spherically symmetric,  it is enough to check  that their growth
  rate is  $x$; the case $x=1$  is trivial, so assume  $x>1$. From the
  definition,
  \[x^{n}-\prod_{i=1}^{n-1}\ell_i\leq     \prod_{i=1}^{n}\ell_i\leq    x^{n}
    \quad \text{hence} \quad x^n - x^{n-1} \leq |\mathcal T_n| \leq x^n\]
  so $gr(\mathcal T)=x$; the case of $\overline {\mathcal T}$ follows directly.

  The recurrence  or transience  of the critical  random walks  can be
  determined using Lemma~\ref{criterert}:
  \[ \sum \frac  1 {\lambda_c^n |\mathcal T_n|} \geq \sum  \frac 1 {\lambda_c^n
        x^n} = +\infty  \] so the critical walk on  $\mathcal T(x)$ is recurrent,
  while for $x>1$,
  \[ \quad \sum \frac 1  {\lambda_c^n |\overline {\mathcal T}_n|} \leq \sum \frac
    1 {\lambda_c^n (x^n-x^{n-1})  2^{\left \lfloor\sqrt n \right \rfloor}} = \frac  x {x-1} \sum
    \frac {1}  {2^{\left \lfloor\sqrt n\right \rfloor}}  < \infty  \] so  the critical  walk on
  $\overline  {\mathcal T}(x)$ is  transient. In  the case  $x=1$ one  gets $\sum
    2^{-\left\lfloor\sqrt n\right \rfloor}<\infty$ instead, and the conclusion is the same.
\end{proof}

\begin{proposition}
  \label{prop4}
  For every  $k \in \mathbb{N^{*}}$  and $\lambda_c \in  (0,1)$, there
  exists a  tree $\mathcal T$ with critical  drift $\lambda_c(\mathcal T)=\lambda_c$
  such  that the  ratio $\mathcal C(\lambda, \mathcal T)  / {(\lambda-\lambda_c)}^k$  remains
  bounded away from $0$ as $\lambda \to \lambda_c^+$.
\end{proposition}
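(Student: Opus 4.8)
Read literally, the statement follows at once from the construction in the proof of Proposition~\ref{prop5}: take $x:=1/\lambda_c>1$ there (legitimate since $\lambda_c\in(0,1)$) and let $\mathcal T$ be the resulting spherically symmetric tree, which has $x^n-x^{n-1}\le|\mathcal T_n|\le x^n$ and $\lambda_c(\mathcal T)=\lambda_c$. As $|\mathcal T_n|\asymp x^n$, Proposition~\ref{prop3} applied to the conductances $c_n=\lambda^{\,n-1}$ of the biased walk gives
\[ \mathcal R(\lambda,\mathcal T)=\sum_{n\ge1}\frac1{\lambda^{\,n-1}|\mathcal T_n|}\ \asymp\ \sum_{n\ge1}\frac1{(\lambda x)^{\,n}}\ \asymp\ \frac1{\lambda x-1}=\frac1{x(\lambda-\lambda_c)}\,, \]
so $\mathcal C(\lambda,\mathcal T)=\mathcal R(\lambda,\mathcal T)^{-1}\asymp\lambda-\lambda_c$ as $\lambda\downarrow\lambda_c$, whence $\mathcal C(\lambda,\mathcal T)/(\lambda-\lambda_c)^k\asymp(\lambda-\lambda_c)^{1-k}$, a positive constant for $k=1$ and $\to+\infty$ for $k\ge2$; in particular it is bounded away from $0$. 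The substantive content, and the statement I would actually establish, is the sharp one: for each $k$ there is a tree with $\mathcal C(\lambda,\mathcal T)\asymp(\lambda-\lambda_c)^{k}$, so that the exponent at criticality can be made arbitrary.

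For that I would again use a spherically symmetric tree, but with a polynomial deflation of the level sizes. Set $\rho:=1/\lambda_c>1$, fix an integer $n_0$ with $(1+1/n_0)^{k-1}\le\rho^{1/2}$, let $a_n:=\rho^{\,n}(n+n_0)^{-(k-1)}$, and give a vertex at level $n-1$ exactly $\ell_n:=\max\{1,\lfloor a_n/|\mathcal T_{n-1}|\rfloor\}$ children, with $|\mathcal T_n|:=\prod_{i\le n}\ell_i$ --- this is the construction of Proposition~\ref{prop5} with $x^n$ replaced by $a_n$. The choice of $n_0$ ensures $a_n/a_{n-1}=\rho\,(1-(n+n_0)^{-1})^{k-1}\ge\rho^{1/2}>1$ for all $n\ge1$, so $(a_n)$ increases to $+\infty$; the estimate $a_n-|\mathcal T_{n-1}|\le|\mathcal T_n|\le a_n$ (valid once the floor is $\ge1$, exactly as in that proof) together with $|\mathcal T_{n-1}|\le a_{n-1}\le\rho^{-1/2}a_n$ gives $(1-\rho^{-1/2})a_n\le|\mathcal T_n|\le a_n$ past an initial segment on which $|\mathcal T_n|=1$, hence $|\mathcal T_n|\asymp\rho^{\,n}n^{-(k-1)}$. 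Thus $|\mathcal T_n|^{1/n}\to\rho$, so $\overline{gr}(\mathcal T)=\underline{gr}(\mathcal T)=\rho$; since $\mathcal T$ is spherically symmetric, Theorem~\ref{symetrie} gives $br(\mathcal T)=\rho$ and Theorem~\ref{lyons90} gives $\lambda_c(\mathcal T)=\lambda_c$.

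Then, by Proposition~\ref{prop3} again, with $q:=\lambda_c/\lambda\in(0,1)$ and as $\lambda\downarrow\lambda_c$,
\[ \mathcal R(\lambda,\mathcal T)=\sum_{n\ge1}\frac1{\lambda^{\,n-1}|\mathcal T_n|}\ \asymp\ \sum_{n\ge1}(n+n_0)^{k-1}q^{\,n}\,. \]
Because $n\le n+n_0\le(1+n_0)n$ for $n\ge1$, one has $(n+n_0)^{k-1}\asymp(n+1)(n+2)\cdots(n+k-1)=(k-1)!\binom{n+k-1}{k-1}$, and $\sum_{n\ge0}\binom{n+k-1}{k-1}q^{\,n}=(1-q)^{-k}$, so $\sum_{n\ge1}(n+n_0)^{k-1}q^{\,n}\asymp(1-q)^{-k}$; since $1-q=(\lambda-\lambda_c)/\lambda\asymp\lambda-\lambda_c$, this gives $\mathcal R(\lambda,\mathcal T)\asymp(\lambda-\lambda_c)^{-k}$ and hence $\mathcal C(\lambda,\mathcal T)\asymp(\lambda-\lambda_c)^{k}$ (the conductance being the reciprocal of the resistance, and the escape probability differing from it only by the fixed factor $\pi(o)=\ell_1$). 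The one delicate point is the one flagged in the previous paragraph: showing that the floors in the recursive definition of the degrees never return $0$ and do not destroy the asymptotics $|\mathcal T_n|\asymp\rho^{\,n}n^{-(k-1)}$. That is exactly what the shift by $n_0$, and the resulting uniform gap $a_n/a_{n-1}\ge\rho^{1/2}$, are there to guarantee; the bookkeeping is the same in spirit as in the proof of Proposition~\ref{prop5}.
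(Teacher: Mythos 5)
You have spotted a genuine inconsistency between the statement and the paper's own proof. The paper constructs a spherically symmetric tree with $|\mathcal T_n| \asymp x^n/n^k$ (where $x = 1/\lambda_c$), computes via Proposition~\ref{prop3} that $\mathcal R(\lambda, \mathcal T) \asymp \sum_n n^k (\lambda x)^{-n} \asymp (\lambda - \lambda_c)^{-(k+1)}$, and hence $\mathcal C(\lambda, \mathcal T) \asymp (\lambda - \lambda_c)^{k+1}$. For that tree the ratio $\mathcal C(\lambda)/(\lambda - \lambda_c)^k \asymp \lambda - \lambda_c$ in fact \emph{tends to $0$}, not stays bounded away from it. The evident intent of the proposition is the opposite direction --- that the effective conductance can vanish at arbitrarily high order at $\lambda_c$, i.e.\ that $\mathcal C(\lambda)/(\lambda - \lambda_c)^k$ can be made bounded, or even tend to $0$ --- and, as you correctly observe, the statement as printed is already satisfied trivially by the tree of Proposition~\ref{prop5}, for which $\mathcal C \asymp \lambda - \lambda_c$ and the ratio diverges for $k \geq 2$.

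Your substantive construction is essentially the paper's, with the polynomial exponent in the level sizes shifted down by one: you take $|\mathcal T_n| \asymp \rho^n/n^{k-1}$ instead of $\rho^n/n^k$, giving $\mathcal C \asymp (\lambda - \lambda_c)^{k}$ exactly rather than $(\lambda - \lambda_c)^{k+1}$ (a re-indexing that is immaterial, since $k$ is arbitrary). The method --- spherically symmetric tree, explicit resistance via Proposition~\ref{prop3}, and the asymptotics $\sum_n n^{j} q^{n} \asymp (1-q)^{-(j+1)}$ as $q \uparrow 1$ with $1-q \asymp \lambda - \lambda_c$ --- is identical. Your extra bookkeeping via the shift $n \mapsto n+n_0$, chosen so that $a_n/a_{n-1} \ge \rho^{1/2}>1$ uniformly, is a worthwhile clarification: it guarantees the recursive floors never return zero and that $|\mathcal T_n| \asymp a_n$ past an initial segment, a point the paper's proof leaves implicit.
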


\begin{proof}
  We construct  a spherically symmetric  tree $\mathcal T$ which  satisfies the
  conditions of this  proposition in a similar way  as before. Letting
  $x=1/\lambda_c>1$, define inductively:

  \[ \ell_{1}=\left  \lfloor   x    \right \rfloor,    \quad    \ell_{2}=\left \lfloor
    \frac{x^{2}}{2^k \ell_{1}}   \right  \rfloor,  \quad   \ldots,\quad
    \ell_{n}=\left  \lfloor \frac{x^{n}}{n^k\prod_{i=1}^{n-1}\ell_i}  \right \rfloor,
    \quad \ldots. \]

  Let $\mathcal T$  be the spherically symmetric tree with
  degree sequence $(\ell_i)$. It is easy  to check that $br(\mathcal T)=x$ like in
  the     previous     proposition;      in     a     similar     way,
  \[ {x^{n}} -  {n^k\prod_{i=1}^{n-1}\ell_i} \leq {n^k\prod_{i=1}^{n}\ell_i}
    \leq {x^{n}} \quad \text{hence} \quad  \frac {x^{n}} {n^k} - \frac
    {x^{n-1}} {{(n-1)}^k}  \leq |\mathcal T_n| \leq \frac  {x^{n}} {n^k}.\] Recall that $x=1/\lambda_c$ and by using
  Proposition~\ref{prop3},  the  effective  resistance  at  parameter
  $\lambda > \lambda_c$ is given by
  \[ \mathcal R  (\lambda, \mathcal T) = \sum \frac  {1} {\lambda^n |\mathcal T_n|} \geq  \sum \frac
    {n^k}       {{(\lambda      x)}^n}=\sum
    {n^k}      {\left (\frac{\lambda_c}{\lambda}\right)}^n.  \]
  By an easy computation, there exists a constant $C_k>0$ such that
    $$\sum
      {n^k}      {\left (\frac{\lambda_c}{\lambda}\right)}^n       \sim_{\lambda \rightarrow \lambda_c^+}      \frac       {C_k}{{\left(1-\frac{\lambda_c}{\lambda}\right)}^{k+1}}  \sim_{\lambda \rightarrow \lambda_c^+}      \frac       {\lambda^{k+1} C_k}{{\left(\lambda-\lambda_c\right)}^{k+1}},$$
    implying that there exists a constant $D_k>0$, uniform in $\lambda$ close to $\lambda_c$, such that $\mathcal R  (\lambda, \mathcal T) \geq \frac       {D_k}{{\left(\lambda-\lambda_c\right)}^{k+1}}$. An upper bound of the same order can be obtained in a very similar fashion, leading to the conclusion.
\end{proof}

We end  this subsection with  the following proposition,  showing that
discontinuities can occur elsewhere than at $\lambda_c$:

\begin{proposition}
  \label{notcontinuous}
  There exists a  tree $\mathcal T$ such that the function  $C(\lambda,\mathcal T)$ is not
  continuous on $(\lambda_c,1)$, i.e it will discontinuous at a certain $\lambda'\in (\lambda_c,1)$.
\end{proposition}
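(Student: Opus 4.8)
The plan is to construct a spherically symmetric tree $\mathcal{T}$ whose level sizes $|\mathcal{T}_n|$ are engineered so that the series $\sum_n \lambda^{-n}|\mathcal{T}_n|^{-1}$ (which by Proposition~\ref{prop3} equals $\mathcal{R}(\lambda,\mathcal{T})$, hence controls the effective conductance) switches between finite and divergent behavior as $\lambda$ crosses some value $\lambda' \in (\lambda_c, 1)$ --- not at $\lambda_c$ itself, but strictly inside the transient regime. The idea is to interleave two growth regimes. On a sparse set of ``bad'' levels the tree barely branches (contributing large terms $\lambda^{-n}/|\mathcal{T}_n|$ that are summable only when $\lambda$ is large enough), while on the complementary ``good'' levels it branches fast enough to keep $\lambda_c$ small. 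The bad levels must be sparse enough that $\lambda_c(\mathcal{T})$ stays below the target $\lambda'$, yet their cumulative contribution must diverge for $\lambda < \lambda'$ and converge for $\lambda > \lambda'$.

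Concretely, **first I would** fix $\lambda' \in (\lambda_c^{\text{target}}, 1)$ and design the degree sequence in blocks: on most levels put $\ell_n = 2$ (say), but on a sparse increasing sequence of levels $n_1 < n_2 < \cdots$ insert ``stalls'' of consecutive levels with a single child ($\ell_n = 1$), with the $j$-th stall having length roughly $c\, j$ for a suitable constant $c$ tied to $\log(1/\lambda')$. Between stalls one recovers branching so that $\underline{gr}(\mathcal{T})$ --- and hence by Theorem~\ref{symetrie} the branching number --- equals the prescribed $1/\lambda_c > 1/\lambda'$. **Then I would** analyze $\mathcal{R}(\lambda,\mathcal{T}) = \sum_n \lambda^{-n}/|\mathcal{T}_n|$ by isolating the terms at the ends of stalls: if the $j$-th stall ends at level $m_j$ with $|\mathcal{T}_{m_j}| \approx 2^{m_j - cj}$ (constant across the stall), the corresponding block of terms contributes on the order of $(\text{stall length}) \cdot \lambda^{-m_j} 2^{cj - m_j} = cj \cdot (2\lambda)^{-m_j} 2^{cj}$. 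Choosing the parameters so that $2^{cj}(2\lambda')^{-m_j}$ is of constant order as $j \to \infty$, the block sums behave like $\sum_j cj \cdot (\lambda'/\lambda)^{m_j}$ (up to bounded factors), which diverges for $\lambda < \lambda'$ and converges for $\lambda > \lambda'$. By Corollary~\ref{criterert} this makes $RW_\lambda$ recurrent for $\lambda$ slightly below $\lambda'$ and transient above, while Corollary~\ref{cor:continuousofSS} is not contradicted because the discontinuity is precisely the jump from $\mathcal{C} = 0$ to $\mathcal{C} > 0$ --- wait, that corollary asserts continuity on $(\lambda_c, \infty)$, so instead the construction must keep $\mathcal{R}(\lambda,\mathcal{T})$ finite throughout $(\lambda_c, 1)$ but make the effective \emph{conductance} jump.

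Let me restate the mechanism correctly: **the approach** is to glue, at the root, infinitely many disjoint spherically symmetric ``gadget'' subtrees $\mathcal{T}^{(j)}$, where $\mathcal{T}^{(j)}$ is transient for $\lambda > \lambda'$ and recurrent for $\lambda \le \lambda'$, with the gadgets tuned (via Proposition~\ref{prop3} applied to each) so that $\mathcal{C}(\lambda, \mathcal{T}^{(j)}) \to 0$ as $\lambda \downarrow \lambda'$ at a rate making $\sum_j \mathcal{C}(\lambda, \mathcal{T}^{(j)})$ converge for $\lambda$ near $\lambda'$, but the \emph{whole} tree $\mathcal{T} = \bigvee_j \mathcal{T}^{(j)}$ has $\mathcal{C}(\lambda, \mathcal{T}) = \sum_j \mathcal{C}(\lambda,\mathcal{T}^{(j)}) > 0$ for $\lambda > \lambda'$ and $= 0$ for $\lambda \le \lambda'$, with a jump at $\lambda'$ coming from a lack of uniformity in $j$. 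The resulting tree need not be spherically symmetric, which is consistent with Corollary~\ref{cor:continuousofSS}. **I would** use the $N$-sub-periodic comparison (Theorems~\ref{prop:br-gr}, \ref{sousperiodic}) or direct estimates to pin $br(\mathcal{T}) = 1/\lambda_c$, ensuring $\lambda' > \lambda_c$ so the discontinuity genuinely sits in the interior of the transient interval for the full tree.

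**The main obstacle** is the bookkeeping that simultaneously forces three constraints: (i) $\lambda_c(\mathcal{T}) < \lambda'$, so the branching number is large enough; (ii) for $\lambda \in (\lambda_c, \lambda']$ the conductance vanishes, i.e. enough gadgets are individually recurrent or their conductances sum to zero; and (iii) for $\lambda > \lambda'$ the conductance is strictly positive but tends to $0$ as $\lambda \downarrow \lambda'$, producing the jump. Balancing (i) against (ii)--(iii) is delicate because making gadgets recurrent at $\lambda'$ tends to thin the tree and lower its branching number; the stall-lengths must grow slowly enough (linearly, not faster) to keep $\underline{gr} > 1/\lambda'$ while still destroying transience at $\lambda'$. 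I expect the cleanest route is the explicit spherically-symmetric-gadget computation via Proposition~\ref{prop3}, where each term is a geometric-type series whose radius of convergence is exactly $\lambda'$, after which the gluing and the branching-number verification are routine.
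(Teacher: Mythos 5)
Your proposal never quite closes the central difficulty, and in fact both variants you sketch run into the same structural obstruction. In the first (spherically symmetric) attempt, you engineer the level sizes so that $\sum_n \lambda^{-n}/|\mathcal T_n|$ switches from divergent to convergent at $\lambda'$. But by Corollary~\ref{criterert} that switch \emph{is} the definition of $\lambda_c(\mathcal T)$: you would simply have built a tree with $\lambda_c = \lambda'$, and by Corollary~\ref{cor:continuousofSS} the conductance would then be continuous on $(\lambda',\infty)$, with any jump occurring only at $\lambda_c$ itself rather than in the interior. You notice the tension with Corollary~\ref{cor:continuousofSS} but misdiagnose it as an issue about conductance versus resistance, when the real problem is that the construction forces $\lambda' = \lambda_c$.

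Your second attempt (gluing infinitely many spherically symmetric gadgets $\mathcal T^{(j)}$, each with $\lambda_c(\mathcal T^{(j)}) = \lambda'$) has two independent gaps. First, attaching infinitely many disjoint gadgets at a single root makes the root have infinite degree, violating the paper's standing assumption of local finiteness, and the formula $\mathcal C(\lambda,\mathcal T) = \sum_j \mathcal C(\lambda,\mathcal T^{(j)})$ as you write it is not the right relation anyway. Second, and more fatally, if every gadget has branching number $1/\lambda'$, then so does the union (the branching number of a tree obtained by gluing subtrees at the root is the supremum of their branching numbers). Your constraint (i), namely $\lambda_c(\mathcal T) < \lambda'$, is therefore \emph{incompatible} with using only gadgets of critical parameter $\lambda'$; you cite Theorems~\ref{prop:br-gr} and~\ref{sousperiodic} as tools to ``pin'' the branching number but do not say how to lower it below $1/\lambda'$, and no amount of tuning the gadgets' shapes will accomplish this.

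The missing idea is precisely what the paper exploits: use \emph{two} subtrees playing asymmetric roles. Proposition~\ref{prop5} supplies, for any target, a spherically symmetric tree $\mathcal H$ with $\lambda_c(\mathcal H) = \lambda_1$ that is \emph{recurrent} at its own critical point (hence $\mathcal C(\lambda_1,\mathcal H)=0$), and another tree $\mathcal G$ with $\lambda_c(\mathcal G) = \lambda_2 > \lambda_1$ that is \emph{transient} at its own critical point (hence $\mathcal C(\lambda_2,\mathcal G)>0$, while $\mathcal C(\lambda,\mathcal G)=0$ for $\lambda<\lambda_2$). Gluing $\mathcal H$ and $\mathcal G$ as the two subtrees at a common root gives a locally finite tree $\mathcal T$ with $\lambda_c(\mathcal T) = \lambda_1$; the subtree $\mathcal H$ provides the high branching that puts $\lambda_2$ strictly in the interior of $(\lambda_c(\mathcal T),1)$, while the subtree $\mathcal G$ alone carries the jump at $\lambda_2$. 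A short root-decomposition identity expresses $\mathcal C(\lambda,\mathcal T)$ as a continuous function of $\lambda$, $\mathcal C(\lambda,\mathcal H)$, and $\mathcal C(\lambda,\mathcal G)$, so the discontinuity of $\mathcal C(\cdot,\mathcal G)$ at $\lambda_2$ propagates to $\mathcal C(\cdot,\mathcal T)$. This separation of roles --- one subtree to lower $\lambda_c$, another whose critical-point transience produces the jump --- is what your plan lacks, and it makes the whole construction a two-line application of Proposition~\ref{prop5} rather than a delicate balancing act.
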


\begin{proof}
  Let  $0<\lambda_1<\lambda_2<1 $.  By Proposition~\ref{prop5},  there
  exist    two trees      $ \mathcal H$ and $ \mathcal G $               such               that
  $\lambda_c(\mathcal H)=\lambda_1,\lambda_c(\mathcal G)=\lambda_2$ and
  \begin{equation}
    \label{equa:discontinuous1}
    \mathcal C(\lambda_1,\mathcal H)=0, \, \, \mathcal C(\lambda_2,\mathcal G)>0.
  \end{equation}

  We construct a tree $\mathcal{T}$ rooted at $o$ as  follows:
  $$\mathcal{T}_1=\left \{ \nu_1,\nu_2 \right \},\, \, \, \, \mathcal{T}^{\nu_1}=\mathcal H \, \, \, \text{ and } \mathcal{T}^{\nu_2}=\mathcal G.$$
  Hence,   $$\lambda_c(\mathcal T)=\lambda_1.$$

  Denote by  $\deg \nu_1$ (resp. $\deg \nu_2$) the degree of $\nu_1$ (resp. $\nu_2$) in the tree $\mathcal{T}$. By an easy computation, for any $\lambda\in (\lambda_1,1)$, we obtain:
  \begin{equation}
    \label{equa:discontinuous2}
    \mathcal C(\lambda, \mathcal{T})=\frac{1}{2}\times \frac{\lambda \mathcal C(\lambda, \mathcal H) \deg \nu_1}{1+\lambda \mathcal C(\lambda, \mathcal H) \deg \nu_1}+ \frac{1}{2}\times \frac{\lambda \mathcal C(\lambda, \mathcal G) \deg \nu_2}{1+\lambda \mathcal C(\lambda, \mathcal G) \deg \nu_2}.
  \end{equation}

  By Corollary~\ref{cor:continuousofSS}, the function $\mathcal C(\lambda,\mathcal H)$ is continuous on $(\lambda_1,1)$ and since $\mathcal C(\lambda, \mathcal G)=0$ for any $\lambda\in (\lambda_1,\lambda_2)$, therefore:
  \begin{equation}
    \label{equa:discontinuous3}
    \underset{\lambda\rightarrow \lambda_2^-}{\lim}\mathcal C(\lambda, \mathcal T)=\frac{1}{2}\times \frac{\lambda_2 \mathcal C(\lambda_2, \mathcal H) \deg \nu_1}{1+\lambda_2 \mathcal C(\lambda_2, \mathcal H) \deg \nu_1}.
  \end{equation}
  By Equations~\eqref{equa:discontinuous1},~\eqref{equa:discontinuous2} and~\eqref{equa:discontinuous3}, we obtain:
  $$\underset{\lambda\rightarrow \lambda_2^-}{\lim}\mathcal C(\lambda, \mathcal T)<\mathcal C(\lambda_2, \mathcal{T}).$$
  The latter inequality implies that the function  $\mathcal C(\lambda,\mathcal T)$  is  discontinuous  at $\lambda_2$.
\end{proof}

Note  that continuity  properties  at $\lambda  \geq  1$ are  actually
easier to obtain, and we will investigate them further below.

\subsection{The convergence of the law of the first \texorpdfstring{$k$}{k} steps}

\mbox{}

If
$\lim_{\lambda   \rightarrow   \lambda   _c,\lambda>\lambda_c}\mathcal
  C(\lambda,\mathcal  T)>0$,  by  Lemma~\ref{lemmactggg}  the  limit  of
$\varphi^{\lambda,k}(y_1,  \ldots, y_k)$  when $\lambda$  decreases to
$\lambda_c$           exists.           If           one           has
$\lim_{\lambda \downarrow \lambda_c}\mathcal C(\lambda,\mathcal T)=0$,
the situation is more delicate and we cannot yet conclude on the limit
of  the  function  $\varphi^{\lambda,k}(\nu_0,  \ldots,  \nu_k)$  when
$\lambda$ decreases  to $  \lambda_c $.  Indeed, convergence  does not
always hold,  as we  will see  in a counterexample.  The idea  of what
follows  is  easy to  describe:  we  are  going  to construct  a  very
inhomogeneous  tree  with  various   subtrees  of  higher  and  higher
branching numbers, at locations alternating  between two halves of the
whole tree; a biased random walk  will wander until it finds the first
such subtree  inside which  it is transient,  and escape  to infinity
within this subtree with high probability.

\begin{proposition}
  \label{example1}
  There    exists    a   tree    $\mathcal T$    such    that   the    function
  $\varphi^{\lambda,1}(y_0,y_1)$     does      not     converge     as
  $\lambda \to \lambda_c$.
\end{proposition}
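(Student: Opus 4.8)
The plan is to realize the tree $\mathcal T$ as a root with two children $\nu_0^{(1)}$ and $\nu_0^{(2)}$ spawning subtrees $\mathcal T^{(1)}$ and $\mathcal T^{(2)}$, and to arrange that for a suitable decreasing sequence $\lambda_c < \cdots < \lambda^{(k+1)} < \lambda^{(k)} < \cdots \to \lambda_c$ the escape probability of $RW_{\lambda^{(k)}}$ is, with probability close to $1$, realized inside $\mathcal T^{(1)}$ when $k$ is even and inside $\mathcal T^{(2)}$ when $k$ is odd. If that is achieved, then $\varphi^{\lambda^{(k)},1}(y_0,\nu_0^{(1)})$ oscillates between values close to $1$ and close to $0$ along the sequence, so it cannot converge as $\lambda\downarrow\lambda_c$, giving the proposition.

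\textbf{Construction of the two subtrees.} Inside each $\mathcal T^{(i)}$ I would, following the idea sketched just before the statement, attach far from the root (at depth $d_k$, with $d_k\to\infty$ rapidly) disjoint finite ``trap'' subtrees $S_k$ designed so that $RW_\lambda$ is strongly transient in $S_k$ precisely when $\lambda > \lambda^{(k)}$, while below depth $d_k$ each $\mathcal T^{(i)}$ is essentially a thin path (one child per vertex, or a spherically symmetric tree with very slowly growing levels) so that its branching number stays $1/\lambda_c$. Concretely, $S_k$ can be taken spherically symmetric of bounded height $h_k$ with level sizes chosen via Corollary~\ref{criterert} so that $\mathcal C(\lambda^{(k)}, S_k)$ is bounded below by a constant $c_0>0$ (independent of $k$) for $\lambda\geq\lambda^{(k)}$ and so that the ``entrance cost'' --- the probability that $RW_{\lambda^{(k)}}$, started just above the trap, actually descends into it before being pushed back up a long thin path --- is bounded below; this is where one uses that the thin connecting paths have length tending to $\infty$, so that at bias $\lambda^{(k)}$ the random walk is pushed \emph{down} with overwhelming probability once it is on such a path, hence it reaches the trap before returning to the root. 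By placing the even-indexed traps only along $\mathcal T^{(1)}$ and the odd-indexed ones only along $\mathcal T^{(2)}$, and by making the geometry of $\mathcal T^{(1)}$ and $\mathcal T^{(2)}$ above depth $d_k$ identical for all $j<k$, one ensures that at bias $\lambda^{(k)}$ the walk wanders down the common part, fails to escape in any trap $S_j$ with $j<k$ (there it is recurrent), reaches depth $d_k$, and then escapes through $S_k$ with probability $\geq 1 - \eta_k$, $\eta_k\to0$; and $S_k$ lies on side $1$ or side $2$ according to the parity of $k$.

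\textbf{Computing $\varphi^{\lambda^{(k)},1}$.} With the construction in hand, $\omega^\infty(1)=\nu_0^{(i)}$ iff the walk ultimately stays in $\mathcal T^{(i)}$. Conditioned on escaping to infinity, this is essentially the event ``the escape happened on side $i$''. For $k$ even the escape happens on side $1$ with probability $\geq 1-\eta_k - o(1)$ (the $o(1)$ accounting for the small probability of escaping via some deeper trap $S_{k'}$, $k'>k$, which one controls by a Borel--Cantelli / geometric-series estimate on the $\eta$'s and on the return probabilities), so $\varphi^{\lambda^{(k)},1}(y_0,\nu_0^{(1)}) \to 1$ along even $k$; for $k$ odd it is $\leq \eta_k + o(1) \to 0$. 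Hence $\liminf$ and $\limsup$ of $\varphi^{\lambda,1}(y_0,\nu_0^{(1)})$ as $\lambda\downarrow\lambda_c$ differ, proving the proposition.

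\textbf{Main obstacle.} The delicate point is quantitative and is the reason the thin connecting segments must grow: I must guarantee that at bias $\lambda^{(k)}$ the walk does \emph{not} escape to infinity before reaching the trap $S_k$ --- i.e. that the bulk of the escaping mass really is funneled through $S_k$ and not leaked through the thin part or through the shallower (recurrent-at-$\lambda^{(k)}$) traps --- uniformly enough that the error terms $\eta_k$ and the ``deeper-trap'' corrections both tend to $0$. This requires choosing the depths $d_k$, the heights $h_k$, and the trap level-sizes in the right order (each much larger than dictated by the previous ones), and checking the escape/return estimates via Rayleigh monotonicity (Theorem~\ref{RL}, Corollary~\ref{corRL}) together with the explicit series criterion of Corollary~\ref{criterert}; everything else is a routine one-step conditioning using the recursive formula for $\mathcal C(\lambda,\mathcal T)$ in terms of $\mathcal C(\lambda,\mathcal T^{(i)})$ as in \eqref{equa:discontinuous2}.
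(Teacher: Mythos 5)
Your overall plan is the same mechanism as the paper's proof: put ``trap'' subtrees whose critical biases decrease to $\lambda_c$ alternately on two sides of the tree, so that along a sequence of biases tending to $\lambda_c$ the escaping mass is funneled alternately through one child of the root or the other, forcing $\varphi^{\lambda,1}$ to oscillate (in the paper the backbone is $\mathbb Z$ rooted at $0$, the traps are the trees $\mathcal T(a_i)$ supplied by Proposition~\ref{prop5}, grafted alternately left and right at distances $N_1<N_2<\cdots$ chosen inductively, and the test biases are $b_i$ chosen \emph{strictly between} consecutive thresholds, $a_{i+1}<b_i<a_i$). However, as written your construction contains a genuine flaw: you describe the traps $S_k$ as \emph{finite} subtrees of bounded height. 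A finite subtree has escape probability zero, so the requirement $\mathcal C(\lambda^{(k)},S_k)\geq c_0>0$ is impossible and Corollary~\ref{criterert} (a transience criterion for infinite spherically symmetric trees) does not apply to it; moreover, a tree made of two unary backbones decorated by finite bushes has exactly two ends, hence branching number $1$ and $\lambda_c=1$, so the walk is recurrent for every $\lambda<1$ and there is no transient regime approaching a $\lambda_c<1$ at all. The traps must be infinite trees with prescribed critical bias $\lambda^{(k)}$ and controlled behaviour at criticality; this is exactly what Proposition~\ref{prop5} provides and what the paper uses, and it cannot be replaced by bounded-height gadgets.

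A second error is the drift heuristic on the thin connecting segments. The relevant biases are reciprocals of branching numbers, hence at most $1$ (and $<1$ along your strictly decreasing sequence), so on a unary path the walk moves to the child with probability $\lambda/(1+\lambda)<1/2$: it is pushed \emph{towards the root}, not ``down with overwhelming probability'', and the chance of traversing a connecting segment of length $N$ before returning to the root is of order $\lambda^{N}$. This is not a harmless slip, because it is precisely this exponential cost that makes the deeper traps (which are also transient at the bias under consideration) capture a negligible share of the escaping mass, which is what lets the paper graft each new trap ``too far to be relevant'' and keep the earlier estimates intact; with your (reversed) heuristic the key uniform control of the leakage to deeper traps has no justification. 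There is also a bookkeeping inconsistency: you want $S_k$ transient precisely when $\lambda>\lambda^{(k)}$ yet you evaluate at $\lambda=\lambda^{(k)}$ and ask for escape through $S_k$, whereas at that bias the transient traps are the deeper ones $S_j$, $j>k$; the paper sidesteps this by testing at biases strictly between consecutive thresholds. With these repairs --- infinite traps from Proposition~\ref{prop5}, test biases strictly between thresholds, and the correct direction of the drift estimate --- your argument essentially becomes the paper's proof.
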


\begin{notation}
  Let $\mathcal T, \mathcal T'$  be two trees and  $A \subset V(\mathcal T)$. We  can construct a
  new tree by grafting  a copy of $\mathcal T'$ at all the  vertices of $A$; we
    will denote by $\mathcal T\overset{A }{\bigoplus}\mathcal T'$ this new tree. Note that
  for all  $x \in A$,  ${(\mathcal T\overset{A }{\bigoplus}\mathcal T')}^x \simeq  \mathcal T'$. In
  the   case  $A=\{x   \}$,   we  will   use   the  simpler   notation
  $\mathcal T\overset{x}{\bigoplus}\mathcal T'$ for $\mathcal T\overset{\{x\}}{\bigoplus}\mathcal T'$.
\end{notation}

\begin{proof}
  Fix $\epsilon>0$  small enough. By Proposition~\ref{prop5},  for all
  $0<a\leq 1$, there exists a tree,  denoted by $\mathcal T(a)$, such that its
  branching   number   is    $\frac{1}{a}$   and   $\mathcal C(a,\mathcal T(a))=0$.   Let
  $\mathcal H=\mathbb{Z}$, seen as  a tree rooted at $0$, so  that the integers
  are the  vertices of  $\mathcal H$ (see  the Figure~\ref{treeTinfty}).  We are
  going to construct a tree inductively.

  Let  ${(a_{i})}_{i  \geq  1}$  be  a  decreasing  sequence  such  that
  $a_{1}<1$. {Set} $a_{c} :=  \lim a_i$ and  assume that $a_c>0$. Choose a
  sequence ${(b_{i})}_{i\geq 1}$  such that $b_{i}\in (a_{i+1},a_{i})$  for all $i$.
  First,                                                           set
  $\mathcal H^0:=(\mathcal H\overset{-2                   }{\bigoplus}\mathcal T(a_1))\overset{2
    }{\bigoplus}T(a_2)$. We consider the biased random walk $RW_{b_1}$,
  then it is recurrent on $\mathcal T(a_1)$ and transient on $\mathcal T(a_2)$. On
  $\mathcal H^{0}$,  the biased  random walk  $RW_{b_1}$ is  transient, and  in
  addition  we  know that  it  stays  eventually  within the  copy  of
  $\mathcal T(a_2)$.  There exists  then $N_{1}>2$  such that  the probability
  that the  limit walk remains  in that  copy after time  $N_{1}-1$ is
  greater than $1-\epsilon$.

  Then we set $\mathcal H^1=(\mathcal H^0\overset{-N_1}{\bigoplus}\mathcal T(a_3))$. On $\mathcal H^{1}$,
  the walk of bias $b_1$ is  still transient and still has probability
  at  least $1-\epsilon$  to  escape through  the  copy of  $\mathcal T(a_2)$,
  because $\mathcal T(a_3)$  is grafted too far  to be relevant. On  the other
  hand,  consider  the biased  random  walk  $RW_{b_2}$: it  is  still
  transient on $\mathcal H^1$ but only through the new copy of $\mathcal T(a_3)$. There
  exists then $N_{2}>2$ such that  the probability that the limit walk
  remains  in  that   copy  after  time  $N_{2}-1$   is  greater  than
  $1-\epsilon$.

  We can set  $\mathcal H^2:=(\mathcal H^1\overset{N_2}{\bigoplus}\mathcal T(a_4))$ and continue
  this  procedure to  graft all  the trees  $\mathcal T(a_i) $,  further and
  further from  the origin and  alternatively on  the left and  on the
  right; denote by $ \mathcal H^{\infty} $ the union of all the $\mathcal H^k$.

  \begin{figure}[ht!]
    \centering
    \includegraphics[scale=0.6]{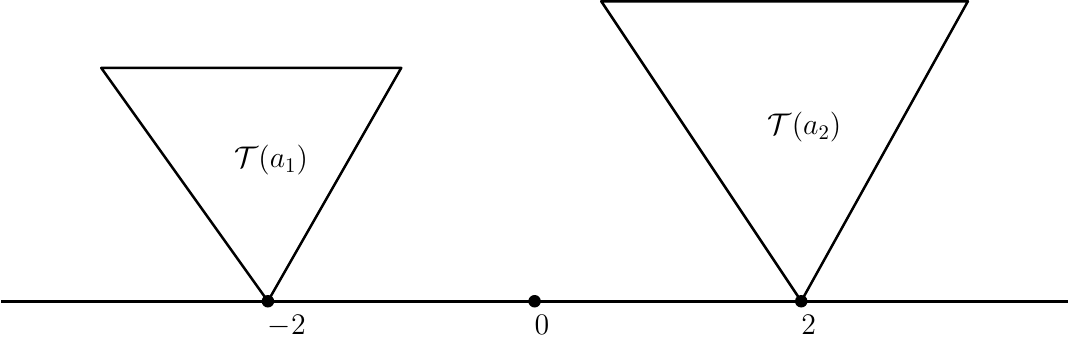}
    \caption{Tree $H^{0}$}
    \label{fig:1222}
    \label{treeTinfty}
  \end{figure}

  It remains to show  that the function $\varphi^{\lambda,1}(0,1)$ for
  the biased random  walk on the tree $\mathcal H^{\infty}$  does not converge.
  We  have $br(\mathcal H^{\infty})=\sup_{i}br(\mathcal T(a_i))=\frac{1}{a_{c}}$  and
  $\varphi^{b_{i},1}(0,1)  \geq  1-\epsilon$  if   $i$  is  odd  while
  $\varphi^{b_{i},1}(0,-1) \geq 1-\epsilon$ if $i$ is even. Then,
  $$ \forall k \geq 0,
    \begin{cases}
      \varphi^{b_{i},1}(0,1)\geq 1-\epsilon & \text{~if~} i= 2k+1 \\
      \varphi^{b_{i},1}(0,1) \leq \epsilon  & \text{~if~} i= 2k+2
    \end{cases}     $$     This     implies    that     the     function
  $\varphi^{\lambda,1}(0,1)$ does  not converge  when $\lambda$  go to
  $a_c$.
\end{proof}

The tree we just constructed is tailored to be extremely inhomogeneous.
At the other end of the spectrum, some trees have enough structure for
all the functions we are considering to be essentially explicit:

\begin{definition}
  A  tree  $\mathcal T$  is  called  \emph{periodic} (or  \emph{finite
    type}) if, for all $v  \in V(\mathcal T)\setminus \{o\}$, there is
  a bijective  morphism $f:\mathcal T^v\rightarrow  \mathcal T^{f(v)}$
  with  $f(v)$  in  a  fixed,  finite  neighborhood  of  the  root  of
  $\mathcal T$.
\end{definition}

\begin{definition}
  Let $\mathcal T$ be a  finite tree and $\mathcal L(\mathcal T)$ be the set of  leafs of $\mathcal T$. We
  set                               $\mathcal T^1=\mathcal T\overset{\mathcal L(\mathcal T)}{\bigoplus}\mathcal T$,
  $\mathcal T^2=\mathcal T^1\overset{\mathcal L(\mathcal T^1)}{\bigoplus}\mathcal T$,                        \dots,
  $\mathcal T^n=\mathcal T^{n-1}\overset{\mathcal L(\mathcal T^{n-1})}{\bigoplus}\mathcal T$ for every  $n \geq 2$.
  We continue this procedure an infinite  number of times to obtain an
  infinite tree $\mathcal T^{\infty,\mathcal T}$. Note  that $\mathcal T^{\infty,\mathcal T}$ is  also a
  periodic tree.
\end{definition}


\begin{fact}[see Lyons~\cite{lyons1990random}, theorem 5.1]
  \label{factlyons}
  Let $\mathcal T$  be a  periodic tree  and $(\nu_0=o,\nu_1,\nu_2,\ldots,\nu_k)$  be a
  simple      path      on       $\mathcal T$.      Then     
  $\varphi^{\lambda,k}(\nu_0,\nu_1,\ldots,\nu_k)$  converges when  $\lambda$
  decreases to  $\lambda_c(\mathcal T)$. 
\end{fact}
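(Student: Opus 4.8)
The plan is to exploit the finite-type structure of a periodic tree to reduce the statement to the convergence, as $\lambda\downarrow\lambda_c(\mathcal T)$, of finitely many algebraic functions of $\lambda$, and then to recognise the only potentially singular regime as the one treated by Lyons in \cite{lyons1990random}.

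Since $\mathcal T$ is periodic, the descendant subtrees $\{\mathcal T^v:v\in V(\mathcal T)\}$ fall into finitely many isomorphism classes $S_1,\dots,S_r$ (in particular $\mathcal T$ has bounded degree). For a class $i$ let $C_i(\lambda)$ be the effective conductance of $RW_\lambda$ seen from the root of $S_i$, with the edges out of that root normalised to conductance $1$. Because the transition probabilities of $RW_\lambda$ at a vertex depend only on its number of children, $C_i(\lambda)$ does not depend on where a copy of $S_i$ sits inside $\mathcal T$, and a one-step series/parallel computation gives the finite system
\[
  C_i(\lambda)=\sum_{l=1}^{d_i}\frac{\lambda\,C_{j_l(i)}(\lambda)}{1+\lambda\,C_{j_l(i)}(\lambda)},\qquad i=1,\dots,r,
\]
where $j_1(i),\dots,j_{d_i}(i)$ list, with multiplicity, the types of the children of a type-$i$ root. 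For $\lambda>\lambda_c(\mathcal T)$ the walk is transient (Theorem~\ref{lyons90}), all $C_i(\lambda)$ are strictly positive, and $\bigl(C_i(\lambda)\bigr)_i$ is the maximal solution of this system, namely the decreasing limit of the conductances of the finite truncations of the $S_i$, obtained by iterating the (coordinatewise non-decreasing) right-hand side starting from $+\infty$.

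The next step is to express $\varphi^{\lambda,k}(\nu_0,\dots,\nu_k)$ in terms of the $C_i(\lambda)$. By definition this is the probability that $RW_\lambda$ started at $o$ eventually remains inside $\mathcal T^{\nu_k}$; collapsing each collection of subtrees hanging off $\nu_0,\dots,\nu_{k-1}$ away from the path to its effective conductance --- a standard network reduction for trees --- turns this into the escape probability of an explicit finite birth-and-death-type network, whose conductances are $\lambda^{|\nu_{j-1}|}$ along the path and series combinations of $\lambda$ and the $C_i(\lambda)$ on the collapsed bushes. Hence $\varphi^{\lambda,k}$ is a fixed rational function of $\lambda$ and of the finitely many numbers $C_{\mathrm{type}(\mu)}(\lambda)$ with $\mu$ on or adjacent to the path; Lemma~\ref{lemmac} records the continuity of this dependence.

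Finally one passes to the limit $\lambda\downarrow\lambda_c(\mathcal T)$. The right-hand side of the system is non-decreasing in each variable and in $\lambda$, so its maximal fixed point is non-decreasing in $\lambda$, whence $C_i(\lambda)$ decreases to a limit $\ell_i\ge 0$. If the conductances do not degenerate --- for instance if $\lim_{\lambda\downarrow\lambda_c}\mathcal C(\lambda,\mathcal T)>0$, so that $\ell_i>0$ for all types relevant to the path --- then substituting into the rational expression above, whose denominators stay bounded away from $0$, already gives the convergence of $\varphi^{\lambda,k}$. The delicate case, and the main obstacle, is $\ell_i=0$: then $\varphi^{\lambda,k}$ is a ratio of quantities both tending to $0$, and one must show the ratio still converges. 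Here the natural object is the children-by-type matrix $M=(m_{ij})$, for which $\lambda_c(\mathcal T)=1/\rho(M)$ by the Perron--Frobenius theorem (the linearisation of the system at $C=0$ is $\lambda M$); along each communicating class of the type graph the conductances $C_i(\lambda)$ vanish at comparable rates governed by the corresponding Perron eigenvector, so the ratios defining $\varphi^{\lambda,k}$ converge. This is exactly the content of \cite{lyons1990random}, Theorem~5.1, which moreover identifies the limiting harmonic measure on $\partial\mathcal T$; invoking it completes the proof.
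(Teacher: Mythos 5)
The paper does not prove this Fact; it cites it from Lyons (1990, Theorem~5.1) and only later offers a new proof of a particular case, namely Proposition~\ref{proposition6.1} for trees of the form $\mathcal T^{\infty,\mathcal T}$. Your sketch sets up the correct scaffolding: the finite system satisfied by the type conductances $C_i(\lambda)$, the observation that $\varphi^{\lambda,k}$ is a fixed rational function of $\lambda$ and finitely many of the $C_i(\lambda)$, and the identification $\lambda_c(\mathcal T)=1/\rho(M)$ via the children-by-type matrix $M$. All of that is sound.

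The gap is in the final step. For a periodic tree the critical walk is recurrent, and one can show $C_i(\lambda)\downarrow 0$ as $\lambda\downarrow\lambda_c$, so the regime you dispatch easily ($\ell_i>0$) is vacuous: the $0/0$ situation is the \emph{only} situation, not a ``delicate sub-case.'' At that point you do not prove convergence; you invoke Lyons's Theorem~5.1, i.e.\ the very statement to be established, which makes the proposal circular. To turn it into a proof one would need to actually carry out the Perron--Frobenius asymptotics, e.g.\ show that the $C_i(\lambda)$ vanish at a common rate governed by a Perron eigenvector of $M$ and then take the limit in the rational expression for $\varphi^{\lambda,k}$. It is instructive that the paper avoids the $0/0$ problem entirely in its special-case proof: for $\mathcal T^{\infty,\mathcal T}$ every subtree rooted at a leaf of the building block $\mathcal T$ is isomorphic to $\mathcal T^{\infty,\mathcal T}$ itself, so $\widetilde{\mathcal C}(\lambda,\mathcal T^{\infty,\mathcal T})$ appears as a common factor in numerator and denominator of $\varphi^{\lambda,1}$ and cancels outright (via Lemmas~\ref{lemma6.1} and~\ref{lemmactggg}), leaving a finite sum of manifestly convergent terms. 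That cancellation trick does not extend verbatim to a periodic tree with several subtree types, which is exactly why the asymptotic comparison you gesture at (but do not execute) is the heart of the matter.
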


In  the rest  of this  section we
provide a new proof of a particular case (the case of $\mathcal T^{\infty,\mathcal T}$) of fact~\ref{factlyons}:

\begin{proposition}
  \label{proposition6.1}
  Let  $\mathcal T$ be  a  finite tree  and  $(y_0=o, y_1, \ldots, y_k)$ be  a
  simple    path     on    $\mathcal T^{\infty,\mathcal T}$.    Then     the    function
  $\varphi^{\lambda,k}(y_0,y_1,\ldots,y_k)$      of     $\mathcal T^{\infty,\mathcal T}$
  converges when $\lambda$ decreases to $\lambda_c(\mathcal T^{\infty,\mathcal T})$.
\end{proposition}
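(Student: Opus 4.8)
The plan is to exploit the self-similarity of $\mathcal T^{\infty,\mathcal T}$: every regeneration vertex $w$ (a leaf of some finite stage $\mathcal T^n$) satisfies $(\mathcal T^{\infty,\mathcal T})^w\simeq\mathcal T^{\infty,\mathcal T}$, and for any $\mu$ with $|\mu|=k$ the subtree $(\mathcal T^{\infty,\mathcal T})^\mu$ is a finite tree carrying full copies of $\mathcal T^{\infty,\mathcal T}$ grafted at its leaves. Two preliminary reductions come first. Since the walk is transient, the event $\{\omega^\infty(k)=\nu_k\}$ already forces $\omega^\infty(j)$ to equal the level-$j$ ancestor of $\nu_k$ for every $j\le k$, so
\[
\varphi^{\lambda,k}(\nu_0,\ldots,\nu_k)=\mathbb P_o\big[\,RW_\lambda\text{ is eventually inside }(\mathcal T^{\infty,\mathcal T})^{\nu_k}\,\big].
\]
Next, put $F(\lambda):=\mathcal C(\lambda,\mathcal T^{\infty,\mathcal T})$. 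Collapsing each grafted copy rooted at a leaf $w$ of the top block to a single pendant edge carrying its effective conductance --- which is $\lambda^{|w|}F(\lambda)$, because as a sub-network it carries the conductances of a standalone copy scaled by $\lambda^{|w|}$ --- the series--parallel reduction of the finite block $\mathcal T$ yields the fixed-point equation $F(\lambda)=\Psi_\lambda(F(\lambda))$, where $\Psi_\lambda$ is an explicit rational function, increasing and strictly concave in its argument, with $\Psi_\lambda(0)=0$ and $\Psi_\lambda'(0)=\sum_{w\in\mathcal L(\mathcal T)}\lambda^{|w|}$. By Rayleigh monotonicity (Theorem~\ref{RL}) and concavity, for $\lambda>\lambda_c$ the number $F(\lambda)$ is the unique positive fixed point of $\Psi_\lambda$, it is continuous and nondecreasing in $\lambda$, and, since $\Psi_{\lambda_c}'(0)=1$ (which follows from the fixed-point equation and Theorem~\ref{lyons90}) while $\Psi_\lambda$ varies continuously with $\lambda$, one gets $F(\lambda)\downarrow0$ as $\lambda\downarrow\lambda_c$. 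In particular we are in the regime \emph{not} covered by Lemma~\ref{lemmactggg}.

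The core of the argument is a last-exit (renewal) decomposition of $\mathbb P_o[\,RW_\lambda\text{ eventually in }(\mathcal T^{\infty,\mathcal T})^{\nu_k}\,]$ along the finite set $L_{k-1}$ of level-$(k-1)$ vertices; the case $k=1$ (where $L_0=\{o\}$) is the trivial one, giving directly $\varphi^{\lambda,1}(o,\nu_1)=q_{\nu_1}\big/\sum_\nu q_\nu$. For $k\ge2$, write $c_\mu:=\mathcal C_c((\mathcal T^{\infty,\mathcal T})^\mu)$ and $q_\mu:=\mathbb P_\mu[\text{never hit }\mu^{-1}]=c_\mu/(c_\mu+\lambda^{k-1})$ for $|\mu|=k$. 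Traced on $L_{k-1}$, the walk is a substochastic chain $W^{(\lambda)}$ that, between successive visits to $L_{k-1}$, performs either an up-excursion inside the finite tree $\mathcal T^{[k-1]}$ of levels $\le k-1$, or a down-excursion into some $(\mathcal T^{\infty,\mathcal T})^\mu$ that returns with probability $1-q_\mu$ and otherwise escapes, killing $W$ ``through $\mu$''. This gives
\[
\varphi^{\lambda,k}(\nu_0,\ldots,\nu_k)=\Big(\sum_{\eta\in L_{k-1}}\xi_\lambda(\eta)\,G_{W^{(\lambda)}}(\eta,\nu_k^{-1})\Big)\,p_\lambda(\nu_k^{-1},\nu_k)\,q_{\nu_k},
\]
with $\xi_\lambda$ the hitting distribution of $L_{k-1}$ from $o$ and $G_{W^{(\lambda)}}$ the Green function of the killed chain. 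As $\lambda\downarrow\lambda_c$ all $c_\mu\to0$, hence all $q_\mu\to0$, so the killing of $W^{(\lambda)}$ vanishes and $W^{(\lambda)}$ converges to a stochastic, irreducible (as $\mathcal T^{[k-1]}$ is connected) chain on $L_{k-1}$ with stationary law $\pi^0_\lambda>0$; the standard asymptotics for the Green function of a finite chain with vanishing killing give $G_{W^{(\lambda)}}(\eta,\eta')=\pi^0_\lambda(\eta')/\langle\pi^0_\lambda,\kappa_\lambda\rangle+O(1)$ uniformly in $\eta$, where $\kappa_\lambda(\zeta)=\sum_{\mu^{-1}=\zeta}p_\lambda(\zeta,\mu)q_\mu$. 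Substituting and cancelling the divergent factor and the $k$-dependent powers turns $\varphi^{\lambda,k}$ into the ratio
\[
\varphi^{\lambda,k}(\nu_0,\ldots,\nu_k)=\frac{\pi^0_\lambda(\nu_k^{-1})\,p_\lambda(\nu_k^{-1},\nu_k)\,q_{\nu_k}}{\sum_{|\mu|=k}\pi^0_\lambda(\mu^{-1})\,p_\lambda(\mu^{-1},\mu)\,q_\mu}\,\big(1+o(1)\big).
\]

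To conclude I would divide numerator and denominator by $F(\lambda)$. By the same collapsing and linearization as in the first paragraph, every $(\mathcal T^{\infty,\mathcal T})^\mu$ with $|\mu|=k$ is a finite tree carrying copies of $\mathcal T^{\infty,\mathcal T}$ (each of effective conductance $\lambda^{|w|}F(\lambda)$) at its leaves $w$, so $c_\mu=F(\lambda)\big(\rho_\mu(\lambda)+o(1)\big)$ with $\rho_\mu$ continuous and strictly positive near $\lambda_c$; hence $q_\mu/F(\lambda)\to\rho_\mu(\lambda_c)/\lambda_c^{k-1}>0$. Since $\pi^0_\lambda$ and $p_\lambda(\mu^{-1},\mu)$ are continuous in $\lambda$, the ratio converges to an explicit finite limit, which is the assertion of the proposition. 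The main obstacle is precisely this last double limit: the factors $q_\mu$ collapse to $0$ while the Green function $G_{W^{(\lambda)}}$ blows up, and one must show they do so at matching rates so that the product stabilizes. This is why the soft continuity of Lemma~\ref{lemmac} does not suffice and one needs the two quantitative inputs --- the linearization $c_\mu\sim\rho_\mu(\lambda)F(\lambda)$ \emph{with a common factor $F(\lambda)$}, which is exactly where self-similarity is used, and the $1/\langle\pi^0_\lambda,\kappa_\lambda\rangle$ asymptotics of the Green function of a vanishingly-killed finite chain, uniform in the starting point so that $\xi_\lambda$ disappears to leading order.
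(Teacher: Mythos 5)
Your strategy is sound and, as far as I can tell, it does prove the statement, but it is a genuinely different (and heavier) route than the paper's. The paper works with the exact identity of Lemma~\ref{lemmactggg}, $\varphi^{\lambda,1}(o,\nu_i)=\widetilde{\mathcal C}(\lambda,\mathcal T^{\infty,\mathcal T},\nu_i)/\widetilde{\mathcal C}(\lambda,\mathcal T^{\infty,\mathcal T})$, and then iterates the exact one-step recursion of Lemma~\ref{lemma6.1} through the finite building block: because the subtree at every leaf of the block is again $\mathcal T^{\infty,\mathcal T}$, the unknown (vanishing) conductance of the whole tree appears as a common factor in the numerator and cancels \emph{exactly}, leaving a finite sum $\sum_{\gamma\in S^i}f^{\gamma}_1(\lambda)\cdots f^{\gamma}_{|\gamma|}(\lambda)$ of explicit rational functions whose only $\lambda$-dependence beyond powers of $\lambda$ is through sub-conductances that tend to $0$; the limit $\sum_{\gamma\in S^i}\lambda_c^{|\gamma|}$ then follows from soft continuity alone. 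You instead replace this exact cancellation by an asymptotic matching: a last-exit/trace-chain decomposition at level $k-1$, the quasi-stationary Green-function estimate $G_{W^{(\lambda)}}(\eta,\eta')=\pi^0_\lambda(\eta')/\langle\pi^0_\lambda,\kappa_\lambda\rangle+O(1)$ for a finite chain with vanishing killing, and the linearization $c_\mu=F(\lambda)\bigl(\rho_\mu(\lambda)+o(1)\bigr)$ coming from differentiating the network reduction at zero pendant conductances. Both of these inputs are true (and your limit, e.g.\ $\rho_{\nu_1}(\lambda_c)/\sum_\nu\rho_\nu(\lambda_c)$ for $k=1$, agrees with the paper's $\sum_{\gamma\in S^i}\lambda_c^{|\gamma|}$ via the relation $\sum_w\lambda_c^{|w|}=1$), but in your write-up they are asserted rather than proved, and they are precisely where the work lies; the paper's identity makes them unnecessary. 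What your route buys is a treatment of general $k$ in one stroke and a mechanism (ratio of killing rates against a common scale $F(\lambda)$) that could survive approximate self-similarity; what the paper's route buys is that the exact self-similarity turns a $0/0$ limit into a finite explicit formula with no perturbation theory at all. If you flesh yours out, do prove the uniform Green-function asymptotics (with the stationary law of the unkilled trace chain at parameter $\lambda$, not only of the limiting chain) and the fixed-point facts about $\Psi_\lambda$ (strict domination $\Psi_\lambda(u)<u\sum_w\lambda^{|w|}$ via a two-cutset bound gives $F(\lambda)\to0$ cleanly), since these carry the whole argument.
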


Before  showing  Proposition~\ref{proposition6.1},  we  need to show the following lemma:


\begin{lemma}
  \label{lemma6.1}
  Let $\mathcal T$ be a tree rooted at $o$ such that $\deg o=d_0$ and
  $$\left\{\begin{matrix}
      \mathcal T_1=\left \{ \nu_1,\nu_2,\ldots,\nu_{d_0} \right \} \\
      \forall i   \in  \{  1,2,\ldots,d_0
      \},\lambda_c(\mathcal T)=\lambda_c(\mathcal T^{\nu_i})=\lambda_c        \text{~and~}
      \mathcal C(\lambda_c,\mathcal T)=\mathcal C(\lambda_c,\mathcal T^{\nu_i})=0
    \end{matrix}\right.$$
  Then for all $i$, we have $\widetilde{\mathcal C}(\lambda,\mathcal T,\nu_i)=\frac{(d_{\nu_i}-1) \lambda \widetilde{\mathcal C}(\lambda,\mathcal T^{\nu_i})}{d_0(1+(d_{\nu_i}-1)\lambda \widetilde{\mathcal C}(\lambda,\mathcal T^{\nu_i}))}$, where $d_{\nu_i}=\deg \nu_i$. 
\end{lemma}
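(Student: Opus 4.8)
The plan is to compute $\widetilde{\mathcal C}(\lambda,\mathcal T,\nu_i)$ — the probability that $RW_\lambda$ started at $o$ moves to $\nu_i$ at the first step and never returns to $o$ — by a standard first-step / harmonic analysis argument, conditioning on the first move and then on the behaviour of the walk inside the subtree $\mathcal T^{\nu_i}$. First I would write, for a fixed child $\nu=\nu_i$ with $d_\nu = \deg\nu$ (so $\nu$ has $d_\nu - 1$ children in $\mathcal T$), the recursive relation for escape from $o$ through $\nu$: the walk must first jump $o\to\nu$ (probability $1/d_0$, since from the root each of the $d_0$ children is equally likely), and then, starting from $\nu$, it must escape to infinity within $\mathcal T^\nu$ without ever stepping back to $o$. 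Let $q$ denote the probability that $RW_\lambda$ started at $\nu$ never visits $o$. Then $\widetilde{\mathcal C}(\lambda,\mathcal T,\nu) = \tfrac1{d_0}\, q$.

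Next I would compute $q$ by analysing excursions from $\nu$. From $\nu$, at each visit the walk goes to $o$ (an absorbing-for-this-count event) with probability $\tfrac{1}{1+(d_\nu-1)\lambda}$, or into one of the $d_\nu - 1$ child-subtrees with probability $\tfrac{\lambda}{1+(d_\nu-1)\lambda}$ each. Conditioned on entering a given child subtree, the walk either escapes to infinity inside it — probability $\widetilde{\mathcal C}(\lambda,\mathcal T^\nu)$, since that subtree rooted at $\nu$'s child is isomorphic to the relevant piece and the bias structure matches — or returns to $\nu$. A geometric-series resummation over the number of failed excursions gives
\[
  q \;=\; \frac{(d_\nu-1)\lambda\,\widetilde{\mathcal C}(\lambda,\mathcal T^\nu)}{1 + (d_\nu-1)\lambda\,\widetilde{\mathcal C}(\lambda,\mathcal T^\nu)},
\]
because the probability of ever escaping before returning to $o$, per return to $\nu$, is $(d_\nu-1)\tfrac{\lambda}{1+(d_\nu-1)\lambda}\widetilde{\mathcal C}(\lambda,\mathcal T^\nu)$ while the probability of going to $o$ is $\tfrac{1}{1+(d_\nu-1)\lambda}$, and $q$ is the ratio of the former to the sum of the two (the walk from $\nu$ either eventually hits $o$ or eventually escapes). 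Combining with $\widetilde{\mathcal C}(\lambda,\mathcal T,\nu)=\tfrac1{d_0}q$ yields exactly the claimed formula.

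The computation is elementary once set up correctly; the only point requiring care — and the main (mild) obstacle — is bookkeeping the escape probability from a child-subtree of $\nu$ in terms of $\widetilde{\mathcal C}(\lambda,\mathcal T^{\nu})$ rather than $\widetilde{\mathcal C}(\lambda,\mathcal T^{\nu_i'})$ for the grandchild. Here one uses that $\widetilde{\mathcal C}(\lambda,\mathcal T^\nu)$ is itself, by the same first-step decomposition at $\nu$, built from the grandchildren's escape probabilities, so the quantity $(d_\nu-1)\lambda\,\widetilde{\mathcal C}(\lambda,\mathcal T^\nu)$ cleanly packages "sum over the children of $\nu$ of $\lambda$ times (escape-into-that-subtree-without-returning-to-$\nu$)"; I would make this identification explicit to avoid a factor error. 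The hypotheses $\lambda_c(\mathcal T)=\lambda_c(\mathcal T^{\nu_i})=\lambda_c$ and $\mathcal C(\lambda_c,\cdot)=0$ are not needed for the algebraic identity itself — it holds for all $\lambda$ for which the walk is transient — but they are the standing assumptions under which the lemma will be applied in the proof of Proposition~\ref{proposition6.1}, so I would simply carry them along and note that the identity requires only $\lambda>\lambda_c$.
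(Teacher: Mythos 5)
Your proposal is correct and follows essentially the same route as the paper: decompose on the first step $o\to\nu_i$ (probability $1/d_0$), then on the excursions from $\nu_i$ — per visit, escape without returning to $\nu_i$ or $o$ with probability $\frac{(d_{\nu_i}-1)\lambda}{1+(d_{\nu_i}-1)\lambda}\widetilde{\mathcal C}(\lambda,\mathcal T^{\nu_i})$, step to $o$ with probability $\frac{1}{1+(d_{\nu_i}-1)\lambda}$ — and resum the geometric series, which is exactly the paper's computation with $m$ and $c$. Your flagged identification of the conditional escape probability with $\widetilde{\mathcal C}(\lambda,\mathcal T^{\nu_i})$, and your observation that the stated hypotheses are not needed for the identity, are both consistent with the paper's argument.
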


\begin{proof}
  Recall that $\widetilde{\mathcal C}(\lambda,\mathcal T,\nu_i)=\mathbb{P}(\mathcal{A})$, where $\mathcal A$ is the event that the random walk  $RW_\lambda$ on $\mathcal T$, started at
  the    root (i.e $X_0=o$),     never    returns    to    it and reached $\nu_i$ at the first step (i.e $X_1=\nu_i$). We can write
  $$\mathcal{A}=\underset{k\geq 0}{\bigcup}\mathcal A_k$$
  where
  $$\mathcal{A}_k:=\left\{\#\{j>0: X_j=o\}=0\right\} \cap \{X_1=\nu_{i}\} \cap \left\{\#\{j>1: X_j=\nu_i\}=k\right\}.$$
  Let $m=\frac{(d_{\nu_i}-1)\lambda }{1+(d_{\nu_i}-1)\lambda}$ and $c=\widetilde{\mathcal C}(\lambda,\mathcal T^{\nu_i})$. Note that the sequence $(\mathcal A_k, k\geq 0)$ are pairwise disjoint and $\mathbb{P}(\mathcal{A}_k)=\frac{mc{(m(1-c))}^k}{d_0}$, therefore we obtain:

  \begin{align*}
    \widetilde{\mathcal                             C}(\lambda,\mathcal
    T,\nu_i) & =\frac{mc}{d_0}\sum_{k=0}^{\infty}{(m(1-c))}^k=\frac{(d_{\nu_i}-1)
      \lambda     \widetilde{\mathcal     C}(\lambda,\mathcal
      T^{\nu_i})}{d_0(1+(d_{\nu_i}-1)\lambda
      \widetilde{\mathcal C}(\lambda,\mathcal T^{\nu_i}))}. \qedhere
  \end{align*}
\end{proof}

\begin{proof}[\bf Proof of Proposition~\ref{proposition6.1}]
  First, since $\mathcal T^{\infty, \mathcal T}$ is a periodic tree, the biased random walk $RW_{\lambda_c}$ on $\mathcal T^{\infty, \mathcal T}$ is
  recurrent (see~\cite{lyons1990random}). Recall that
  $L(\mathcal T)$ is the  set of all leafs  of the finite tree $\mathcal T$; let $S^i$ be
  the set  of all finite  paths starting  at the origin, ending  at one
  element  of $L(\mathcal T)$  and  passing  through $\nu_i$.  For all
  $\nu \in L(\mathcal T)$, we have  ${(\mathcal T^{\infty, \mathcal T})}^\nu = \mathcal T^{\infty, \mathcal T}$ and we can apply Lemma~\ref{lemma6.1} several times to obtain:
  $$\widetilde{\mathcal C}(\lambda,\mathcal T^{\infty, \mathcal T},\nu_i)=\sum_{\gamma            \in
      S^i}f^{\gamma}_1(\lambda)f^{\gamma}_2(\lambda)\cdots f^{\gamma}_{\left | \gamma \right |}(\lambda)\widetilde{\mathcal C}(\lambda,{(\mathcal T^{\infty, \mathcal T})}^{\gamma_{\left | \gamma \right |}}),$$
  where $f^{\gamma}_j(\lambda)=\frac{m_{\gamma_j}\lambda}{m_{\gamma_{j-1}}(1+m_{\gamma_j}\lambda \mathcal C(\lambda,{(\mathcal T^{\infty,\mathcal{T}})}^{\gamma_j}))}$ and $m_{\gamma_j}=d_{\gamma_j}-1$ if $j>1$ and $m_{\gamma_0}=d_0$.
  Moreover, we have
  $$\widetilde{\mathcal C}(\lambda,{(\mathcal T^{\infty, \mathcal T})}^{\gamma_{\left | \gamma \right |}})=\widetilde{\mathcal C}(\lambda,\mathcal T^{\infty, \mathcal T})$$ then
  $$\widetilde{\mathcal C}(\lambda,\mathcal T^{\infty, \mathcal T},\nu_i)=\sum_{\gamma            \in
      S^i}f^{\gamma}_1(\lambda)f^{\gamma}_2(\lambda) \cdots f^{\gamma}_{\left | \gamma \right |}(\lambda)\widetilde{\mathcal C}(\lambda,\mathcal T^{\infty, \mathcal T}).$$
  By Lemma~\ref{lemmactggg}, we obtain
  $$\varphi^{\lambda,1}(o,\nu_i)=\frac{\widetilde{\mathcal C}(\lambda,\mathcal T^{\infty, \mathcal T},\nu_i)}{\widetilde{\mathcal C}(\lambda,\mathcal T^{\infty, \mathcal T})}=\sum_{\gamma
      \in S^i}f^{\gamma}_1(\lambda)f^{\gamma}_2(\lambda) \cdots f^{\gamma}_{\left | \gamma \right |}(\lambda).$$
  Note that for all $\gamma \in S^i$ we have $ m_{\gamma_0}=m(\gamma_{\left | \gamma \right |})$. Moreover, since the biased random walk $RW_{\lambda_c}$ on $\mathcal T^{\infty, \mathcal T}$ is recurrent, for all $0\leq j\leq \left|\gamma \right |$, we have:
  $$\mathcal C \left( \lambda_c
        \left(\mathcal T^{\infty, \mathcal{T}}\right),
        {\left(\mathcal T^{\infty,\mathcal{T}}\right)}^{\gamma_j}\right)=0.$$
  Hence, $\varphi^{\lambda,1}(o,\nu_i)$ converges when $\lambda$ decreases towards $\lambda_c(\mathcal T^{\infty, \mathcal T})$ and
  \begin{equation}
    \label{equation4}
    \lim_{\lambda \rightarrow \lambda_c(\mathcal T^{\infty, \mathcal T})}\varphi^{\lambda,1}(o,\nu_i)=\sum_{\gamma\in S^i}\lambda_c^{\left | \gamma \right |}.
  \end{equation}
  This proves the statement of the proposition in the case $k=1$; general cases are handled in a very similar fashion, enumerating the vertices at distance $k$ from the root rather than children of $o$.
\end{proof}

\begin{remark}
  The  equation~\eqref{equation4}  gives us  a  way  to calculate  the
  critical value of $RW_{\lambda}$ on $\mathcal T^{\infty, \mathcal T}$, as the solution of
  the following equation:
  \begin{equation}
    \label{eq:critvalueperiodic}
    \sum_{i=1}^{m_o}\sum_{\gamma  \in  S^i}x^{\left|  \gamma  \right|}=1.
  \end{equation}
\end{remark}

\section{The continuity of effective conductance}
\label{sectioncontinuity}

We  end  the first  half  of  the paper  with  a  few results  on  the
conductance functions  of trees, namely  we prove a criterion  for the
continuity  of  $\mathcal  C(\lambda,\mathcal T)$  in  $\lambda$  (see
Theorems~\ref{ctg}  and~\ref{ctgg}  below)   and  study  the  set  of
conductance functions of spherically symmetric trees of bounded degree
(see Theorem~\ref{theorem1.4}).

\subsection{Left- and right-continuity of \texorpdfstring{$\mathcal{C} (\mathcal{T}, \lambda)$}{C (T,λ)}}
\label{sectiononcontinuous}

\begin{lemma}
  \label{lemma2}
  Let $\mathcal T$ be an infinite, locally finite and rooted tree. Then $\mathcal C(\lambda,\mathcal T)$ is right
  continuous on $\left (0,+\infty \right )$.
\end{lemma}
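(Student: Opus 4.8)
The plan is to realize $\mathcal C(\lambda,\mathcal T)$ as a monotone limit of the conductances of finite truncations of $\mathcal T$, each of which is manifestly continuous in $\lambda$, and then upgrade ``monotone limit of continuous functions'' to right-continuity via monotonicity in $\lambda$. Concretely, for each $n$ let $\mathcal T^{(n)}$ be the finite tree obtained by keeping only the first $n$ generations of $\mathcal T$, i.e. cutting every edge from level $n$ to level $n+1$ and keeping the vertices of $\mathcal T_n$ as leaves. On the finite network with conductances $\lambda^{|x|}$, the effective conductance $\mathcal C(\lambda,\mathcal T^{(n)})$ from the root to the set $\mathcal T_n$ (viewed as a single sink, or equivalently the probability of reaching level $n$ before returning to $o$, times $\pi(o)$) is a \emph{rational} function of $\lambda$ with positive coefficients in both numerator and denominator — it is obtained by finitely many series-parallel reductions, each of which is a rational operation — hence continuous on $(0,\infty)$. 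By Rayleigh monotonicity (Theorem~\ref{RL}), shorting all of level $n+1$ into level $n$ only increases conductance, so $\mathcal C(\lambda,\mathcal T^{(n)})$ is non-increasing in $n$, and the standard exhaustion result for effective conductance on trees gives $\mathcal C(\lambda,\mathcal T^{(n)})\downarrow \mathcal C(\lambda,\mathcal T)$ pointwise for each fixed $\lambda$.

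Next I record the monotonicity in $\lambda$: increasing $\lambda$ increases every conductance $\lambda^{|x|}$ (for edges at level $\ge 1$; the level-$1$ edges carry conductance $1=\lambda^0$ but the ones below are strictly increasing), so by Theorem~\ref{RL} again $\lambda\mapsto\mathcal C(\lambda,\mathcal T^{(n)})$ and $\lambda\mapsto\mathcal C(\lambda,\mathcal T)$ are non-decreasing. Fix $\lambda_0\in(0,\infty)$ and $\varepsilon>0$. Choose $n$ with $\mathcal C(\lambda_0,\mathcal T^{(n)})\le \mathcal C(\lambda_0,\mathcal T)+\varepsilon$. Since $\mathcal C(\,\cdot\,,\mathcal T^{(n)})$ is continuous at $\lambda_0$, there is $\delta>0$ with $\mathcal C(\lambda,\mathcal T^{(n)})\le \mathcal C(\lambda_0,\mathcal T^{(n)})+\varepsilon$ for $\lambda\in(\lambda_0,\lambda_0+\delta)$. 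Combining with $\mathcal C(\lambda,\mathcal T)\le \mathcal C(\lambda,\mathcal T^{(n)})$ gives, for such $\lambda$,
\[
\mathcal C(\lambda_0,\mathcal T)\ \le\ \mathcal C(\lambda,\mathcal T)\ \le\ \mathcal C(\lambda_0,\mathcal T)+2\varepsilon,
\]
the left inequality being monotonicity in $\lambda$. Letting $\varepsilon\to0$ yields $\lim_{\lambda\downarrow\lambda_0}\mathcal C(\lambda,\mathcal T)=\mathcal C(\lambda_0,\mathcal T)$, i.e. right-continuity.

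The only genuinely substantive point — and the one I would spell out carefully rather than wave at — is the exhaustion statement $\mathcal C(\lambda,\mathcal T^{(n)})\downarrow\mathcal C(\lambda,\mathcal T)$; this is the tree analogue of the standard fact that effective conductance to infinity is the decreasing limit of effective conductances to the ``boundary at level $n$'' (shorted), and it is exactly where transience/recurrence enters: the limit is the probability of never returning to $o$ (times $\pi(o)$), cf.\ \eqref{equ:relationconductanceproba} and \cite{LP:book}. Everything else is soft: rationality (hence continuity) of the finite-network conductance, and two applications of Rayleigh monotonicity — one in $n$ (shorting) to get the decreasing exhaustion and one in $\lambda$ to get monotonicity of the limit. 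Note the argument is genuinely one-sided: it uses that the approximants $\mathcal C(\,\cdot\,,\mathcal T^{(n)})$ lie \emph{above} $\mathcal C(\,\cdot\,,\mathcal T)$ together with monotonicity in $\lambda$, which controls the limit from above on the right and from below trivially; there is no comparable family of continuous functions bounding $\mathcal C(\,\cdot\,,\mathcal T)$ from below, which is precisely why left-continuity can fail (Proposition~\ref{notcontinuous}) and needs the extra hypotheses of the later theorems.
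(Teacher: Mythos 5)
Your proof is correct and follows essentially the same route as the paper: your truncated conductance $\mathcal C(\lambda,\mathcal T^{(n)})$ is exactly the paper's $\mathcal C(\lambda,\mathcal T,n)=\pi(o)\,\mathbb P(S_n<S_0)$, which the paper likewise shows is continuous in $\lambda$ (via Lemma~\ref{lemmac} rather than your series-parallel rationality argument), decreasing in $n$ to $\mathcal C(\lambda,\mathcal T)$, and increasing in $\lambda$, concluding that a decreasing limit of increasing continuous functions is right-continuous. Your $\varepsilon$--$\delta$ sandwich just makes that last step explicit, so the two arguments coincide in substance.
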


\begin{proof}
  Let $(X_n, n\geq 0)$ be the biased random walk with parameter $\lambda$ on $\mathcal{T}$. We  define  $S_{0}:=\inf  \left\{k>0: X_k=o  \right\}$ and for any $n>0$,
  $$ S_{n}:=\inf  \left\{k>0: d(o,X_{k})=n  \right\}.$$

  Recall that the random walk on a network $(\mathcal{T}, c)$, where $c(e)=\lambda^{|e|}$ is exactly the same  process as the biased random walk with parameter $\lambda$. We use Equation~\eqref{equ:relationconductanceproba} to obtain
  $$\mathcal C(\lambda,\mathcal T)=\pi(o)\lim_{n\rightarrow + \infty}
    \mathbb{P}(S_{n}<S_{0}).$$

  We set  $\mathcal C(\lambda,\mathcal{T}, n):=\pi(o) \mathbb{P}(S_{n}<S_{0})$. It  is easy  to see
  that  $\mathcal C(\lambda,\mathcal{T},n)  \geq  C(\lambda,\mathcal T, n+1)$. On the other hand, by
  Lemma~\ref{lemmac}, we obtain  $\mathcal C(\lambda,\mathcal T, n)$   is  a   continuous function. Hence, $\mathcal C(\lambda,\mathcal T, n)$ is a continuous  increasing function for each $n$. It implies that $\mathcal C(\lambda,\mathcal T)$  is the decreasing limit of increasing  functions.  Therefore  $\mathcal C(\lambda,\mathcal T)$   is  right
  continuous.
\end{proof}
\begin{definition}
  Let $\mathcal  T$ be a tree. For  each $\nu \in  \mathcal T$, denote by  $X^{\nu}_{n}$ the biased random walk on the subtree $\mathcal T^\nu$ (i.e
  $X^{\nu}_0=\nu$ and  $\forall n>0, X^\nu_n \in  \mathcal T^\nu$). We
  say that $\mathcal T$ is \emph{uniformly transient} if
  $$\forall  \lambda>\lambda_{c},\exists \alpha_{\lambda}>0,\forall  \nu
    \in   \mathcal T,\mathbb{P}(\forall     n>0,    X^{\nu}_{n}     \neq    \nu)\geq
    \alpha_{\lambda}.$$
  It is called \emph{weakly uniformly transient} if there exists a sequence of
  finite pairwise disjoint V-cutsets $(\pi_n, n\geq 1)$, such that
  $$\forall  \lambda>\lambda_{c},\exists \alpha_{\lambda}>0,\forall  \nu
  \in   \bigcup_{k=1}^{+\infty} \pi_k,\mathbb{P}(\forall  n>0,X^{\nu}_{n}\neq   \nu)\geq
  \alpha_{\lambda}.$$
\end{definition}

\begin{remark}
  It is easy to see that if $\lambda_c(\mathcal T)=1$, then $\mathcal T$ is uniformly transient: indeed, on every infinite subtree and for every $\lambda>1$, escape probabilities are bounded below by the escape probability in $\mathbb Z_+$ which is itself strictly positive for $\lambda>1$.
\end{remark}

\begin{theorem}
  \label{ctg}
  Let $\mathcal T$ be a uniformly transient  tree. Then $\mathcal C(\lambda,\mathcal T)$ is
  left  continuous  on  $(\lambda_{c},+\infty)$.
\end{theorem}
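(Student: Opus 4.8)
The plan is to prove more than left continuity, namely full continuity on $(\lambda_c,+\infty)$, by upgrading the monotone approximation behind Lemma~\ref{lemma2} to a locally uniform one. Recall from that proof that, for $RW_\lambda$ started at $o$, with $S_n$ the first hitting time of level $n$ and $S_0$ the first return time to $o$, one has $\mathcal C(\lambda,\mathcal T,n)=\pi(o)\,\mathbb P_\lambda(S_n<S_0)$, that $\mathcal C(\lambda,\mathcal T,n)$ decreases to $\mathcal C(\lambda,\mathcal T)=\pi(o)\,\mathbb P_\lambda(S_0=+\infty)$ as $n\to\infty$, and that each $\mathcal C(\cdot,\mathcal T,n)$ is continuous (it involves only finitely many positive conductances; cf.\ Lemma~\ref{lemmac}). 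Since $\pi(o)=\deg o$ does not depend on $\lambda$, it suffices to bound $\mathcal C(\lambda,\mathcal T,n)-\mathcal C(\lambda,\mathcal T)$ by a quantity tending to $0$ that is uniform over $\lambda\in[a,+\infty)$ for each fixed $a>\lambda_c$: then $\mathcal C(\cdot,\mathcal T)$ is a uniform limit of continuous functions on $[a,+\infty)$, hence continuous there, and as $a>\lambda_c$ is arbitrary it is continuous — a fortiori left continuous — on $(\lambda_c,+\infty)$.

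\textbf{Step 1 (from uniform transience to a uniform one-level bound).} Fix $a>\lambda_c$. I would first show there is $\beta=\beta_a>0$ with $\mathbb P_\lambda(RW_\lambda\text{ from }\nu\text{ ever visits }\parent\nu)\le1-\beta$ for every vertex $\nu$ and every $\lambda\ge a$. Decomposing on the first step of $RW_\lambda$ from $\nu$ (which has $k_\nu\ge1$ children, as uniform transience forbids leaves), and using that if the first step goes to a child $\mu$ and the walk afterwards never visits $\nu$ then it stays in $\mathcal T^\mu$ and in particular avoids $\parent\nu$, one gets
\[ \mathbb P_\lambda\bigl(\text{from }\nu,\ \text{never visit }\parent\nu\bigr)\ \ge\ \sum_{\mu:\ \parent\mu=\nu}\frac{\lambda}{1+k_\nu\lambda}\,q_\mu\ =\ \frac{k_\nu\lambda}{1+k_\nu\lambda}\,\widetilde{\mathcal C}(\lambda,\mathcal T^\nu)\ \ge\ \frac{\lambda}{1+\lambda}\,\widetilde{\mathcal C}(\lambda,\mathcal T^\nu), \]
where $q_\mu$ is the probability that the walk from $\mu$ never visits $\nu$ (depending only on $\mathcal T^\mu$, so that $\widetilde{\mathcal C}(\lambda,\mathcal T^\nu)=k_\nu^{-1}\sum_\mu q_\mu$), and the last step uses that $t\mapsto t\lambda/(1+t\lambda)$ increases and $k_\nu\ge1$. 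By hypothesis $\widetilde{\mathcal C}(\lambda,\mathcal T^\nu)\ge\alpha_\lambda>0$; and $\widetilde{\mathcal C}(\cdot,\mathcal T^\nu)=\mathcal C(\cdot,\mathcal T^\nu)/k_\nu$ is non-decreasing by Rayleigh's monotonicity principle (Theorem~\ref{RL}, the conductances $\lambda^{|e|-1}$ being non-decreasing in $\lambda$), so $\widetilde{\mathcal C}(\lambda,\mathcal T^\nu)\ge\alpha_a$ for all $\lambda\ge a$. Together with $\lambda/(1+\lambda)\ge a/(1+a)$ this gives the claim with $\beta:=a\,\alpha_a/(1+a)$.

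\textbf{Step 2 (chaining over levels).} Fix $\lambda\ge a$ and start at $o$. On $\{S_n<S_0\}$ the walk is at level $n$ at time $S_n$, and to return to $o$ it must visit levels $n-1,n-2,\dots,0$ in turn; each descent from level $k$ to level $k-1$ means the walk, currently at some level-$k$ vertex $W$, reaches $\parent W$ (necessarily the first vertex of level below $k$ it meets), an event of conditional probability at most $1-\beta$ by Step~1. Applying the strong Markov property at the successive first hitting times of levels $n-1,\dots,0$ yields $\mathbb P_\lambda(S_0<\infty\mid S_n<S_0)\le(1-\beta)^n$, hence $\mathbb P_\lambda(S_n<S_0,\,S_0<\infty)\le(1-\beta)^n$; since $\{S_n<S_0,\,S_0=+\infty\}\subseteq\{S_0=+\infty\}$ we get $\mathbb P_\lambda(S_n<S_0)-\mathbb P_\lambda(S_0=+\infty)\le(1-\beta)^n$, so by Lemma~\ref{lemma2}
\[ 0\ \le\ \mathcal C(\lambda,\mathcal T,n)-\mathcal C(\lambda,\mathcal T)\ \le\ \deg o\,(1-\beta)^n\qquad\text{for all }\lambda\ge a, \]
which is the sought uniform bound, and the conclusion follows as explained above.

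\textbf{Main obstacle.} The crux is Step~1: uniform transience only provides a constant $\alpha_\lambda$ that a priori depends on $\lambda$, and the work is in turning it into a bound on the probability of climbing one level up that is uniform \emph{both} over starting vertices \emph{and} — via the $\lambda$-monotonicity of $\widetilde{\mathcal C}(\cdot,\mathcal T^\nu)$ — over $\lambda\ge a$. Once this double uniformity is in hand, the geometric decay of Step~2 and the passage to uniform convergence are routine. (The same scheme, with the finite V-cutsets $\pi_n$ playing the role of whole levels, should cover the weakly uniformly transient case, Theorem~\ref{ctgg}.)
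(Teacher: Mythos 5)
Your proof is correct and follows essentially the same route as the paper: both use Rayleigh monotonicity to make the transience lower bound locally uniform in $\lambda$, chain it over levels to get geometric decay of the return probability beyond a fixed depth, and combine this with continuity in finitely many conductances (Lemma~\ref{lemmac}), your version merely packaging the estimate as uniform convergence of the monotone approximants $\mathcal C(\cdot,\mathcal T,n)$. Your Step~1 is in fact more careful than the corresponding step in the paper: uniform transience directly controls $\widetilde{\mathcal C}(\lambda,\mathcal T^\nu)$, the escape probability of the walk on the \emph{subtree} $\mathcal T^\nu$ (where $\nu$ has no parent), whereas the chaining requires a lower bound on the probability that the walk on $\mathcal T$ started at $\nu$ never visits $\parent\nu$, and your one-step decomposition is precisely the conversion that the paper uses implicitly without justification.
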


\begin{proof}
  Fix $\lambda_{1}>\lambda_{c}$, we will  prove that $\mathcal C(\lambda,\mathcal T)$ is
  left        continuous        at        $\lambda_{1}$.        Choose
  $\lambda_{0} \in (\lambda_{c},\lambda_{1})$. By Theorem~\ref{RL}, we
  can   find   a   constant   $\alpha  >0$   (does   not   depend   on
  $\lambda \in [\lambda_{0},\lambda_{1}]$) such that
  $$\forall \lambda \in  [\lambda_{0},\lambda_{1}],\forall \nu \in V(\mathcal T),
    \mathbb{P}(\forall  n>0,X^{\nu}_{n}\neq \nu)\geq  \alpha.$$ Given  a
  family                        of                        conductances
  $c =  {c(e)}_{e\in E(\mathcal T)}\in {(0,+\infty)}^{E}$,  let $Y_{n}$ be
  the associated random  walk. Let $A \subset  {(0,+\infty)}^{E}$ be the
  subset  of  elements  of  ${(0,+\infty)}^{E}$  such  that  $Y_{n}$  is
  transient  for those  choices of  conductances. Then  we define  the
  function $\psi : A \to \mathbb R_+^*$ as
  $$\psi(c):=\mathcal
    C_c(\mathcal T).$$ Recall  that $\mathcal T_k$ is  the collection of
  all the vertices at distance $k$ from the root: then we have
  $$\mathcal C(\lambda,\mathcal T)=\psi(
    \underbrace{\lambda,\lambda,\ldots\lambda}_{\left | \mathcal T_1  \right |} ,
    \underbrace{\lambda^2,\lambda^2,\ldots\lambda^2}_{\left | \mathcal T_2 \right|} ,\ldots.).$$
  We will abuse notation until the end of the argument, writing
  $$\psi(\lambda_1,\lambda_2^2,\lambda_3^3,\ldots) \quad \text{for}
    \quad  \psi( \underbrace{\lambda_1,\lambda_1,\ldots\lambda_1}_{\left|             \mathcal T_1             \right            |}             ,
    \underbrace{\lambda_2^2,\lambda_2^2,\ldots\lambda_2^2}_{\left  | \mathcal T_2
      \right     |}     ,\ldots)$$     so     that     in     particular
  $\mathcal C(\lambda,\mathcal T)=\psi(\lambda,\lambda^2,\lambda^3,\ldots)$.

  Let        $\epsilon        >0$,       we        choose        $L\in
    \mathbb{N}$  such that  ${(1-\alpha)}^{L}<\epsilon$. For  $\lambda \in
    (\lambda_{0},\lambda_{1})$       we        have       $\left       |
    \mathcal C(\lambda_1,\mathcal T)-\mathcal C(\lambda,\mathcal T)  \right  |=\left  |  \psi(\lambda_{1},
    \lambda_{1}^{2},         \lambda_{1}^{3},        \ldots)         -
    \psi(\lambda,\lambda^{2},\lambda^{3},\ldots)                \right|$ and by the triangular inequality, we get
  \begin{align}
    \label{equation1}
    \left | \mathcal C(\lambda_{1},\mathcal T)-\mathcal C(\lambda,\mathcal T) \right | & \leq \left |
    \psi(\lambda_{1},\ldots,\lambda_{1}^L,b_{1})                   -
    \psi(\lambda,\ldots,\lambda^L,b_{1})
    \right |
    \nonumber                                                                                         \\ & +\left |
    \psi(\lambda,\ldots,\lambda^L,b_{1})                           -
    \psi(\lambda,\ldots,\lambda^L,b)
    \right |
  \end{align}
  where        $b:={(\lambda^{L+k})}_{k        \geq       1}$        and
  $b_{1}:={(\lambda_{1}^{L+k})}_{k\geq 1}$.

  Let $\lambda'  \in \left [  \lambda_0,\lambda_1 \right ]$ and denote by
    $S^{\lambda'}_n$ the first hitting point of $\mathcal T_n$ by the random walk
    with conductances
  $$(\underbrace{\lambda,\ldots,\lambda}_{\left    |   \mathcal T_1
      \right |} ,\underbrace{\lambda^2,\ldots,\lambda^2}_{\left|                 \mathcal T_2                  \right                 |}
    ,\ldots,\underbrace{\lambda^L,\ldots,\lambda^L}_{\left    |
      \mathcal T_L                 \right                  |}                 ,
    \underbrace{{(\lambda')}^{L+1},\ldots,{(\lambda')}^{L+1}}_{\left        |
      \mathcal T_{L+1}                        \right                        |},\ldots$$  We  can   see  that  the  law  of
  $S^{\lambda_1}_L$ and the law of $S^{\lambda}_L$ are identical. Since $\mathcal T$ is uniformly transient, then when
  the random  walk reaches $\mathcal T_L$, it  returns to $o$ with  a probability
  strictly smaller than ${(1-\alpha)}^{L}$. It implies that
  \begin{equation}
    \label{equation2}
    \left                                                              |
    \psi(\lambda,\ldots,\lambda^L,b_{1})                         -
    \psi(\lambda,\ldots,\lambda^L,b)
    \right |)\leq 2{(1-\alpha)}^{L}\leq 2\epsilon.
  \end{equation}

  It                remains                 to                estimate
  $\left     |      \psi(\lambda_{1},\ldots,\lambda_{1}^L,b_{1})     -
    \psi(\lambda,\ldots,\lambda^L,b_{1})      \right       |$.      By
  Theorem~\ref{RL}, we have
  $$\psi(\lambda_{1},\ldots,\lambda_{1}^L,b_{1})\geq \mathcal{C}(\lambda_0, \mathcal T)>0 \text{ } \text{and}
    \text{   }   \psi(\lambda,\ldots,\lambda^L,b)\geq \mathcal{C}(\lambda_0, \mathcal T)>0.$$
  We   apply   the Lemma~\ref{lemmac} to obtain
  \begin{equation}
    \label{equation3}
    \exists \delta >0, \forall \lambda \in \left [ \lambda_1-\delta,\lambda_1
      \right ],
    \left | \psi(\lambda_{1},\ldots,\lambda_{1}^L,b_{1}) -
    \psi(\lambda,\ldots,\lambda^L,b_{1}) \right |<\epsilon.
  \end{equation}

  We            combine~\eqref{equation1},~\eqref{equation2}
  and~\eqref{equation3} to get
  $$\exists \delta >0, \forall \lambda \in \left [ \lambda_0,\lambda_1
      \right     ]    \text{     }    \text{such     that}    \text{     }
    \lambda_1-\lambda<\delta:   \left  |  \mathcal  C(\lambda_{1},\mathcal T)-\mathcal C(\lambda,\mathcal T)
    \right |\leq  3\epsilon.$$ This implies that  $\mathcal C(\lambda,\mathcal T)$ is left
  continuous  at  $\lambda_1$.
\end{proof}

Using the same  method as in the proof of Theorem~\ref{ctg},  we can prove
the slightly stronger result (the proof of which we omit):

\begin{theorem}
  \label{ctgg}
  Let $\mathcal T$  be a weakly  uniformly transient tree: then  the effective
  conductance $\mathcal C(\lambda,\mathcal T)$ is left continuous on $(\lambda_{c},1]$.
\end{theorem}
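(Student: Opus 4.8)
The plan is to mimic the proof of Theorem~\ref{ctg}, with the V-cutsets $(\pi_n)_{n\ge1}$ furnished by weak uniform transience playing the role of the levels $\mathcal T_n$. First I would arrange that these cutsets are \emph{nested}: since $\mathcal T$ is locally finite every ball meets only finitely many of the $\pi_n$, and the $o$-component of $\mathcal T\setminus\pi_n$ contains no ray, hence is finite, so after passing to a subsequence one may assume that each vertex of $\pi_{n+1}$ is separated from $o$ by $\pi_n$. This costs nothing, as the escape bound is inherited by any sub-collection; one may also assume $o\notin\bigcup_n\pi_n$.

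Fix $\lambda_1\in(\lambda_c,1]$ and an auxiliary $\lambda_0\in(\lambda_c,\lambda_1)$. The first substantive step is a uniform escape estimate at cutset vertices. A one-step decomposition gives, for $RW_\lambda$ on $\mathcal T$ and any non-leaf vertex $v$ with $k_v$ children,
\[
\mathbb P\bigl(X_1\in\mathcal T^v\ \text{and}\ X_n\ne v\ \text{for all}\ n>0\ \big|\ X_0=v\bigr)=\frac{k_v\lambda}{1+k_v\lambda}\,\widetilde{\mathcal C}(\lambda,\mathcal T^v),
\]
and on that event the walk stays in $\mathcal T^v$ forever, so it never reaches any ancestor of $v$. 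Cutset vertices are not leaves, for $\widetilde{\mathcal C}(\lambda,\mathcal T^v)=0$ at a leaf, contradicting the hypothesis; since $k_v\lambda/(1+k_v\lambda)\ge\lambda_0/(1+\lambda_0)$ and, by Rayleigh monotonicity (Theorem~\ref{RL}), $\widetilde{\mathcal C}(\lambda,\mathcal T^v)\ge(\lambda_0/\lambda)\widetilde{\mathcal C}(\lambda_0,\mathcal T^v)\ge(\lambda_0/\lambda_1)\alpha_{\lambda_0}$, there is a $\beta>0$ — uniform over $\lambda\in[\lambda_0,\lambda_1]$ and over $v\in\bigcup_n\pi_n$ — bounding the displayed probability below. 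By the strong Markov property and the nesting of the cutsets, it follows that $RW_\lambda$ started anywhere on $\pi_L$ reaches $o$ with probability at most $(1-\beta)^L$; and, crucially, the same bound still holds after the conductances strictly outside $\pi_L$ are raised to their bias-$\lambda_1$ values, since this leaves the transition probabilities at the inner cutset vertices unchanged and only increases the subtree conductances entering the estimate.

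With this in hand the argument of Theorem~\ref{ctg} applies essentially verbatim. Given $\epsilon>0$, choose $L$ with $(1-\beta)^L<\epsilon$; the set $U$ of edges with an endpoint in the $o$-component of $\mathcal T\setminus\pi_L$ is finite. Splitting $U$ from its complement as in~\eqref{equation1} bounds $|\mathcal C(\lambda_1,\mathcal T)-\mathcal C(\lambda,\mathcal T)|$ by two terms. In the first, only the finitely many conductances on $U$ are moved (from their bias-$\lambda$ to their bias-$\lambda_1$ values), the conductances outside $U$ being kept at the bias-$\lambda_1$ values; this term tends to $0$ as $\lambda\uparrow\lambda_1$ by Lemma~\ref{lemmac}, all configurations involved dominating the transient bias-$\lambda_0$ configuration (Theorem~\ref{RL}) and so lying in the region where the effective conductance is defined and at least $\mathcal C(\lambda_0,\mathcal T)>0$. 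In the second, the conductances on $U$ are kept at the bias-$\lambda$ values and those outside $U$ are lowered from bias-$\lambda_1$ to bias-$\lambda$; writing
\[
\mathbb P(\text{never return to }o)=\sum_{u\in\pi_L}\mathbb P(\text{reach }\pi_L\text{ at }u\text{ before returning to }o)\,\mathbb P_u(\text{never reach }o),
\]
the first factors are determined by the conductances on $U$ alone — a walk cannot get beyond the cutset $\pi_L$ without first hitting it — while each $\mathbb P_u(\text{never reach }o)$ lies in $[1-(1-\beta)^L,1]$ for both configurations by the estimate above, so this term is $O((1-\beta)^L)=O(\epsilon)$. Letting $\lambda\uparrow\lambda_1$ and then $\epsilon\downarrow0$ yields left-continuity at $\lambda_1$.

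I expect the main obstacle to be purely geometric: handling V-cutsets in place of level sets. Concretely, one must reduce to nested cutsets, verify that the event ``reach $\pi_L$ before returning to $o$'' is measurable with respect to the finitely many conductances on $U$, and check that the escape estimate is stable under the one-sided perturbation of the outer conductances used for the second term. The probabilistic identities and the bookkeeping with the functional are as in Theorem~\ref{ctg}. The restriction to $\lambda\le1$ is precisely the range where this argument is needed: continuity of $\mathcal C(\cdot,\mathcal T)$ for $\lambda\ge1$ follows from the easier considerations mentioned after Proposition~\ref{notcontinuous}.
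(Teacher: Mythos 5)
Your proof is correct, and it fills in precisely the adaptation the paper leaves implicit when it says Theorem~\ref{ctgg} follows ``in the same method'' as Theorem~\ref{ctg}: you nest the cutsets, replace the balls $\mathcal T_{\le L}$ by the finite $o$-component $K_L$ of $\mathcal T\setminus\pi_L$, and run the same two-term split with Lemma~\ref{lemmac} on the finite edge set $U$ and the iterated escape estimate beyond $\pi_L$. Two harmless slackenings: Rayleigh here gives $\widetilde{\mathcal C}(\lambda,\mathcal T^v)\ge\widetilde{\mathcal C}(\lambda_0,\mathcal T^v)$ directly (the root-edge conductance of $\mathcal T^v$ is $\lambda^0=1$ under the paper's normalization, so the factor $\lambda_0/\lambda$ is unnecessary), and the chain of escape estimates at interior cutset vertices yields $(1-\beta)^{L-1}$ rather than $(1-\beta)^{L}$ since the transition probabilities at $v\in\pi_L$ itself are altered by the hybrid conductances — neither affects the conclusion.
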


\subsection{Proof of Theorem~\ref{theorem1.4}}
\label{section3.4}

\begin{definition}
  Let  $(\mathcal T^n, n\geq 1)$ be  a  sequence of infinite, locally finite and rooted trees. We  say  that $\mathcal T^n$  converges
  locally             towards              $\mathcal T^{\infty}$             if
  $\forall  k, \exists n_0, \forall n \geq n_0, \mathcal T^n_{\leq k}=\mathcal T^{\infty}_{\leq k}$, where $ \mathcal T_{\leq k}$ is a finite tree defined by:
  $$\left\{\begin{matrix}
      V(\mathcal T_{\leq k}):=\left \{ \nu \in V(\mathcal T), d(o,\nu)\leq k \right\} \\
      E(\mathcal T_{\leq k})=E_{\mid   V(\mathcal T_{\leq k})\times V(\mathcal T_{\leq k})}
    \end{matrix}\right.$$
\end{definition}


Recall from the introduction that $\mathbb F_m$ denotes the collection
of all effective conductance functions for spherically symmetric trees
with degree uniformly bounded by $m$.

\begin{lemma}
  \label{2}
  Let $(f_n, n\geq  1)$ be a sequence of  functions in $\mathbb{F}_m$.
  Assume  that  $f_n$ converges   towards some function $f$. Then,  there
  exists  a  function   $g  \in  \mathbb{F}_m$  such   that,  for  any
  $\lambda>0$,
  $$f(\lambda)\leq g(\lambda).$$
\end{lemma}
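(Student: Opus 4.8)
The plan is to realize $g$ as the effective conductance of a spherically symmetric tree obtained as a \emph{local limit} of a subsequence of the trees defining the $f_n$, all comparisons being carried out through the explicit resistance formula of Proposition~\ref{prop3}. (The cases $m\le 2$ are degenerate — $\mathbb F_1=\varnothing$ and $\mathbb F_2=\{0\}$ — so one may assume $m\ge 3$.)

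For each $n$ fix a spherically symmetric $\mathcal T^n\in A_m$ with $f_n=\mathcal C(\cdot,\mathcal T^n)$ and set $a^n_k:=|\mathcal T^n_k|$; the degree bound gives $a^n_0=1$, $a^n_1\le m$ and $1\le a^n_{k+1}/a^n_k\le m-1$, so for each fixed $k$ the integer $a^n_k$ ranges over a finite set. A diagonal extraction produces a subsequence $(n_j)$ along which, for each $k$, $a^{n_j}_k$ is eventually constant, say equal to $a^\infty_k$; the sequence $(a^\infty_k)_k$ is the level-size sequence of a spherically symmetric tree $\mathcal T^\infty\in A_m$, towards which $\mathcal T^{n_j}$ converges locally. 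By Proposition~\ref{prop3}, $\mathcal R(\lambda,\mathcal T)=\sum_{k\ge1}(\lambda^{k-1}|\mathcal T_k|)^{-1}$ for any spherically symmetric $\mathcal T$, so for each fixed $N$ and all large $j$
$$\sum_{k=1}^{N}\bigl(\lambda^{k-1}a^\infty_k\bigr)^{-1}=\sum_{k=1}^{N}\bigl(\lambda^{k-1}a^{n_j}_k\bigr)^{-1}\ \le\ \mathcal R(\lambda,\mathcal T^{n_j})=\tfrac1{f_{n_j}(\lambda)} .$$
Letting $j\to\infty$ and then $N\to\infty$, using $f_{n_j}(\lambda)\to f(\lambda)$ and the conventions $1/0=+\infty$, $1/{+\infty}=0$, one gets $\mathcal R(\lambda,\mathcal T^\infty)\le 1/f(\lambda)$, that is $f(\lambda)\le\mathcal C(\lambda,\mathcal T^\infty)$ for every $\lambda$. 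This semicontinuity estimate, together with the compactness extraction, is the soft part.

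It remains to turn $\mathcal C(\cdot,\mathcal T^\infty)$ into a genuine element of $\mathbb F_m$. By Corollary~\ref{cor:continuousofSS} the only possible discontinuity of $\mathcal C(\cdot,\mathcal T^\infty)$ on $[0,1]$ is a left jump at $\lambda_c(\mathcal T^\infty)$, and it occurs exactly when $RW_{\lambda_c(\mathcal T^\infty)}$ is transient on $\mathcal T^\infty$; if it does not occur (e.g.\ automatically when $gr(\mathcal T^\infty)=m-1$, because then $|\mathcal T^\infty_k|\le m(m-1)^{k-1}=m\,gr(\mathcal T^\infty)^{k-1}$ forces recurrence), we simply put $g:=\mathcal C(\cdot,\mathcal T^\infty)\in\mathbb F_m$. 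Otherwise we fall back on the maximal tree $\mathcal T^{\ast}$ of $A_m$ — root of degree $m$, every other vertex of degree $m$: since $|\mathcal T^{\ast}_k|=m(m-1)^{k-1}\ge|\mathcal T_k|$ for every $k$ and every $\mathcal T\in A_m$, Proposition~\ref{prop3} gives $\mathcal C(\cdot,\mathcal T^{\ast})\ge\mathcal C(\cdot,\mathcal T)$ for all such $\mathcal T$, in particular $\mathcal C(\cdot,\mathcal T^{\ast})\ge f_n$ for all $n$ and hence $\ge f$; moreover $\mathcal R(\lambda_c(\mathcal T^{\ast}),\mathcal T^{\ast})=\sum_k 1/m=+\infty$ makes $\mathcal C(\cdot,\mathcal T^{\ast})$ continuous on $[0,1]$, so $g:=\mathcal C(\cdot,\mathcal T^{\ast})\in\mathbb F_m$ does the job. (Note $\mathcal T^{\ast}$ alone already proves the statement as literally written; the worthwhile content is the sharper bound $f\le\mathcal C(\cdot,\mathcal T^\infty)$, which is what later arguments exploit.)

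The main obstacle is precisely this last point: the natural candidate $\mathcal C(\cdot,\mathcal T^\infty)$ need not lie in $\mathbb F_m$, since a bounded-degree spherically symmetric tree can have effective conductance that is discontinuous at its critical parameter (cf.\ the trees $\overline{\mathcal T}$ of Proposition~\ref{prop5}); one must therefore either rule this out under the hypotheses or bypass it, as above.
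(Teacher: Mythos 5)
Your proof is correct, and it follows the same skeleton as the paper's: fix trees $\mathcal T^n\in A_m$ realizing the $f_n$, use the bounded-degree hypothesis and a diagonal extraction to get a local limit $\mathcal T^\infty\in A_m$, and take $g=\mathcal C(\cdot,\mathcal T^\infty)$. Where you differ is in the execution of the comparison and in one point of rigor. For the inequality $f\le\mathcal C(\cdot,\mathcal T^\infty)$ the paper does not use the resistance series: it introduces the truncated quantities $\mathcal C(\lambda_0,\mathcal T,n)$ (escape probability before level $n$, as in Lemma~\ref{lemma2}), which decrease to the conductance, exploits $\mathcal C(\lambda_0,\mathcal T^\ell,\ell)=\mathcal C(\lambda_0,\mathcal T^\infty,\ell)$ from the level-$\ell$ agreement, and reaches a contradiction with an $\varepsilon/4$ bookkeeping; your two-step limit ($j\to\infty$ then $N\to\infty$) in the explicit formula of Proposition~\ref{prop3} is an equivalent and arguably more transparent route, legitimate here because everything in sight is spherically symmetric. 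The genuinely new content in your write-up is the last point: the paper simply declares $g=\mathcal C(\cdot,\mathcal T^\infty)\in\mathbb F_m$, ignoring that membership in $\mathbb F_m$ requires continuity on $[0,1]$, which by Corollary~\ref{cor:continuousofSS} can only fail through a jump at $\lambda_c(\mathcal T^\infty)$ and can indeed fail for bounded-degree spherically symmetric trees (the trees $\overline{\mathcal T}$ of Proposition~\ref{prop5}); your fallback to the maximal tree $\mathcal T^\ast$, whose critical walk is recurrent and whose conductance dominates every $\mathcal C(\cdot,\mathcal T)$ with $\mathcal T\in A_m$ by Proposition~\ref{prop3}, repairs this gap and, as you note, already settles the lemma as literally stated. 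Keep the sharper bound $f\le\mathcal C(\cdot,\mathcal T^\infty)$ in view, though: it is this inequality for the specific local-limit tree, not the bare existence of some $g$, that the proof of Theorem~\ref{theorem1.4} actually invokes.
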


\begin{proof}
  Recall from the introduction that $A_m$ denotes the collection of all spherically symmetric trees with maximal degree at most $m$;
  let  $(\mathcal T^n, n\geq 1)$  be  a  sequence  of  elements  of  $A_m$  such  that, for every $n>0$,
  $$f_n(\lambda)=\mathcal C(\lambda,\mathcal T^n).$$  Since the  degree  of
  vertices  of $\mathcal T^n$  are  bounded by  $m$, we  can  apply the  diagonal
  extraction  argument.  After  renumbering indices,  there  exists  a
  subsequence  of  $(\mathcal T^n, n\geq 1)$,  denoted also  by  $(\mathcal T^n, n\geq 1)$ below, which converges
  locally towards some tree $\mathcal T^{\infty} \in A_m$. Moreover, we can assume that for any $n>0$,
  \begin{equation}
    \label{equ:rec0}
    \mathcal T^n_{\leq n}=\mathcal T^{\infty}_{\leq n}.
  \end{equation}

  Set  $g(\lambda)=\mathcal C(\lambda,  \mathcal T^{\infty})$, it  remains  to  show that for every $\lambda>0$,
  $$f(\lambda)\leq g(\lambda).$$
  Assume     that    there     exists     $\lambda_0$    such     that
  $f(\lambda_0)>g(\lambda_0)$ and  let $c:=f(\lambda_0)-g(\lambda_0)>0$.
  Since the sequence $(f_n(\lambda_0), n\geq 1)$ converges towards $f(\lambda_0)$,
  \begin{equation}
    \label{equ:rec2}
    \exists \ell_1>0, \forall n\geq \ell_1,
    f_n(\lambda_0)>f(\lambda_0)-\frac{c}{4}.
  \end{equation}
  Recall the definition of the function $\mathcal{C}(\lambda_0, \mathcal{T}, n)$ in the proof of Lemma~\ref{lemma2}, the  sequence
  $\left(\mathcal C(\lambda_0, \mathcal T^{\infty}, n), n\geq 1\right)$ decreases towards $g(\lambda_0)$, implying that
  \begin{equation}
    \label{equ:rec3}
    \exists             \ell_2>0,              \forall             n\geq \ell_2,
    \mathcal C(\lambda_0,\mathcal T^{\infty},n)<g(\lambda_0)+\frac{c}{4}.
  \end{equation}
  Letting  $\ell:=\ell_1 \vee\ell_2$, combine~\eqref{equ:rec2} and~\eqref{equ:rec3} to obtain:
  \begin{equation}
    \label{equ:rec4}
    f_\ell(\lambda_0)>f(\lambda_0)-\frac{c}{4}\quad\text{and}\quad
    \mathcal C(\lambda_0,\mathcal T^{\infty}, \ell)<g(\lambda_0)+\frac{c}{4}.
  \end{equation}

  On the other hand    $\mathcal C(\lambda_0,\mathcal T^\ell, \ell)=\mathcal C(\lambda_0,\mathcal T^{\infty},\ell)$ and  by~\eqref{equ:rec4} we obtain:
  \begin{equation}
    \label{equ:rec4bis}
    f_\ell(\lambda_0)>f(\lambda_0)-\frac{c}{4}\quad \text{and}\quad
    \mathcal C(\lambda_0,\mathcal T^{\ell}, \ell)<g(\lambda_0)+\frac{c}{4}.
  \end{equation}
  The   sequence  $\left(\mathcal C(\lambda_0,\mathcal T^\ell, k), k\geq 1\right)$   decreases  towards
  $f_\ell(\lambda_0)$ when $k$ goes to $+\infty$. Hence,
  \begin{equation}
    \label{equ:rec5}
    f_\ell(\lambda_0)\leq  \mathcal C(\lambda_0,\mathcal T^{\ell}, \ell) < g(\lambda_0)+\frac{c}{4}.
  \end{equation}
  From~\eqref{equ:rec4bis} and~\eqref{equ:rec5} we obtain
  $$f(\lambda_0)-\frac{c}{4}<f_\ell(\lambda_0)<g(\lambda_0)+\frac{c}{4},$$
  hence
  $c=f(\lambda_0)-g(\lambda_0)<\frac{c}{2}$,
  leading to a contradiction.
\end{proof}


\begin{proof}[Proof of Theorem~\ref{theorem1.4}]
  Let  $(\mathcal T^n, n\geq 1)$  be  a  sequence  of  elements  of  $A_m$  such  that, for any $n>0$,
  $$f_n(\lambda)=\mathcal C(\lambda,\mathcal T^n).$$
  Fix a sub-sequence of $(\mathcal T^n, n\geq 1)$ which converges locally towards $\mathcal{T}^{\infty}$ and such that~\eqref{equ:rec0} holds as  in  the   proof  of  the Lemma~\ref{2}. We set $g(\lambda)=\mathcal{C}(\lambda, \mathcal T^{\infty})$ and we need to prove that $f=g$.

  By  Lemma~\ref{2}, we have $f(\lambda)  \leq g(\lambda)$. Assume
  that       there       exists      $\lambda_0$       such       that
  $0<f(\lambda_0)<g(\lambda_0)$. We prove that for any $\lambda<\lambda_0$, we have $f(\lambda)=0$.

  Use  Proposition~\ref{prop3} to obtain
  \begin{equation}
    \label{equ:rec6}
    \left\{\begin{array}{rcl}
      \forall                                                       n>0,\,\, \,
      \mathcal R(\lambda_0,\mathcal T^n)&=&\sum_{k=1}^{+\infty}\frac{1}{\lambda_0^k \left |
      \mathcal T^n_k \right |} \\
      \mathcal R(\lambda_0,\mathcal T^{\infty})&=&\sum_{k=1}^{\infty}\frac{1}{ \lambda_0^k\left| \mathcal T^{\infty}_k \right |}
    \end{array}\right.
  \end{equation}

  We write
  $$\mathcal R(\lambda_0,\mathcal T^n)=\sum_{k=1}^{+\infty}\frac{1}{\lambda_0^k\left      |
      \mathcal T^n_k  \right |}=\sum_{k  \leq n}\frac{1}{\lambda_0^k\left  | \mathcal T^n_k
      \right |}+\sum_{k > n}\frac{1}{\lambda_0^k\left  | \mathcal T^n_k \right |}.$$
  On the other hand, for any $k\leq n$ we have $\left  |  \mathcal T^n_k \right  |=\left |  \mathcal T^{\infty}_k
    \right |$, hence
  \begin{equation}
    \label{equ:rec9}
    \mathcal R(\lambda_0,\mathcal T^n)=\sum_{k    \leq     n}\frac{1}{\lambda_0^k\left    |
      \mathcal T^{\infty}_k  \right  |}+\sum_{k  >  n}\frac{1}{\lambda_0^k \left  |
      \mathcal T^n_k \right |}.
  \end{equation}

  Since $f_n$ converges to $f$, then
  \begin{equation}
    \label{equ:rec10}
    \left\{\begin{array}{rcl} \underset{n\rightarrow\infty}{\lim}\mathcal R(\lambda_0,\mathcal T^n)&=&\frac{1}{f(\lambda_0)}<\infty \\
      \mathcal R(\lambda_0,\mathcal T^{\infty})&=&\frac{1}{g(\lambda_0)}        <
      \frac{1}{f(\lambda_0)}
    \end{array}\right.
  \end{equation}

  By using~\eqref{equ:rec9} and~\eqref{equ:rec10}, we obtain
  \begin{equation}
    \label{equ:rec11}
    \lim_{n\rightarrow  +\infty}\sum_{k  > n}\frac{1}{\lambda_0^k \left  |
      \mathcal T^n_k                                                     \right|}=\frac{1}{f(\lambda_0)}-\frac{1}{g(\lambda_0)}>0.
  \end{equation}
  Now    we    take    $\lambda   < \lambda_0$   and    we    apply    the
  Proposition~\ref{prop3} in order to get
  \begin{equation}
    \label{equ:rec12}
    \mathcal R\left(\lambda,\mathcal T^n\right)=\sum_{k=0}^{+\infty}\frac{1}{\lambda^k \left  |
      \mathcal T^n_k  \right |}>\sum_{k>n}\frac{1}{\lambda^k \left  | \mathcal T^n_k  \right|}\geq {\left(\frac{\lambda_0}{\lambda}\right)}^n\sum_{k>n}\frac{1}{\lambda_0^k \left |
      \mathcal T^n_k                        \right                       |}.
  \end{equation}
  We combine~\eqref{equ:rec11} and~\eqref{equ:rec12} to obtain:
  \begin{equation}
    \label{equ:rec13}
    \underset{n\rightarrow\infty}{\lim}\mathcal{R}\left(\lambda, \mathcal{T}^n\right)=\infty
  \end{equation}
  This implies that
  $f\left(\lambda\right)=\underset{n\rightarrow\infty}{\lim}f_n\left(\lambda\right)=\underset{n\rightarrow\infty}{\lim}\frac{1}{\mathcal R\left(\lambda,\mathcal T^n\right)}=0$. Therefore, we proved that:
  $$\forall \lambda <\lambda_0, f(\lambda)=0.$$

  Since $f \neq 0$, we can define
  $\lambda_c:=\inf\left  \{ 0\leq\lambda  \leq 1  : f(\lambda)>0\right\}$:  we  proved that
  \begin{equation}
    \label{equ:rec13bis}
    \forall  \lambda   >\lambda_c,  f(\lambda)=g(\lambda).
  \end{equation}
  As  the
  sequence ${(f_n)}_n$ converges  uniformly to $f$, then  $f$ is continuous,
  and  hence $f(\lambda_c)=0$.  By Lemma~\ref{lemma2}, $g$  is right
  continuous, so we get
  \begin{equation}
    \label{equ:rec14}
    f(\lambda_c)=\lim_{\lambda                             \rightarrow
      \lambda_c^+}f(\lambda)=\lim_{\lambda                     \rightarrow
      \lambda_c^+}g(\lambda)=g(\lambda_c)=0.
  \end{equation}
  On the other hand, by Theorem~\ref{RL}, $g$  is a non-decreasing  function,   hence:
  \begin{equation}
    \label{equ:rec15}
    \forall \lambda <\lambda_c, g(\lambda)=0=f(\lambda).
  \end{equation}
  Combine~\eqref{equ:rec13bis},~\eqref{equ:rec14} and~\eqref{equ:rec15} to obtain the identity $f=g$, thus concluding the proof.
\end{proof}

\color{black}

\section{Self-avoiding walks}
\label{section4}
The main goal of this section is to prove Theorem~\ref{proposition1.1} (Section~\ref{section4.3}) and Theorem~\ref{thm:continuousofselfavoidingtree} (Section~\ref{section4.5}).

\subsection{Walks and bridges}

In this section, we review some definitions on the self-avoiding walk, bridges
and connective constant (see~\cite{madras2013self}). Denote by $c_n$ the number
of self-avoiding  walks of length $n$, starting at the origin on  the considered
graph. If $\mathcal G$  is transitive,  the sequence \smash{$c_n^{1/n}$}
converges to a constant when $n$ goes to infinity. This constant is called the
connective constant of $\mathcal G$.
\begin{definition}
  An \emph{$n$-step bridge} in the plane $\mathbb{Z}^2$ (or upper half-plane
  $\mathbb{H}$)  is an  $n$-step self-avoiding  walk ($SAW$)  $\gamma$
  such that
  \[   \forall   i=1,2,\ldots,n,   \quad   \gamma_1(0)<\gamma_1(i)\leq
    \gamma_1(n) \] where ${{\gamma  }_{1}}(i)$ is the first coordinate
  of $\gamma (i)$.  
  Denote by ${{b}_{n}}$ the number of all $n$-step
  bridges with $\gamma (0)=o$. By convention, set ${{b}_{0}}=1$.
\end{definition}

We have ${{b}_{m+n}}\ge {{b}_{m}}\cdot {{b}_{n}}$, hence we can define
\[{{\mu }_{b}}=\underset{n\to +\infty }{\mathop{\lim
    }}\,{{b_n}^{\frac{1}{n}}}=\underset{n}{\mathop{\sup
    }}\,b_{n}^{\frac{1}{n}}. \]
Moreover, ${{b}_{n}}\le \mu _{b}^{n}\le {{\mu }^{n}}$.




\begin{definition}
  Given a bridge $\gamma$ of length $n$,
  $\gamma$  is  called   an  \emph{irreducible  bridge} if it cannot be decomposed into two
  bridges  of  length strictly smaller than
  $n$. It means, we  cannot find $i \in \left [  1,n-1 \right ]$ such
  that $\gamma_{\mid  [0,i]},\gamma_{\mid \left  [ i,n \right  ]}$ are
  two   bridges.  The   set   of   all
  irreducible bridges is denoted by $iSAW$.
\end{definition}

\subsection{Kesten's  measure}
\label{kesten'smeasure}
For this  section, we  refer the reader  to (\cite{KT2},\cite{Lawler}) for a
more precise description. Denote  by $SAW_{\infty}$ the set of all
self-avoiding walks  on  the plane  $\mathbb{Z}^2$ or  half-plane $\mathbb{H}$.
In this section, we review the Kesten measure, which is a probability  measure
on the set of infinite self-avoiding paths in the  half-plane constructed from
finite bridges. Denote by $\mathbb{B}$ (resp. $\mathbb{I}$) the set of bridges
(resp.\ irreducible bridges) starting at the origin.  
Denote by $p_n$ the  number of  irreducible bridges  starting at the origin, of
length  $n$. 

We   define   a   notion  of   concatenation   of   paths.   If $\gamma^1=[
\gamma^1(0),\ldots,\gamma^1(m)]$ and $\gamma^2=[
\gamma^2(0),\ldots,\gamma^2(n)]$ are two    SAWs, we     define $\gamma^1\oplus
\gamma^2$ to  be the  $(m+n)$-step walk  (which is not necessarily
self-avoiding) as
\begin{align*}
  \gamma^1\oplus \gamma^2  := [0, &  \gamma^1(1),          \ldots,     \gamma^1(m), \\
  &\gamma^1(m)+\gamma^2(1)-\gamma^2(0),                         \ldots,
  \gamma^1(m)+\gamma^2(n)-\gamma^2(0) ].
\end{align*}
    Similarly, we  can
define $\gamma^1\oplus \gamma^2\oplus \cdots\oplus \gamma^k$. We begin
with the following equality:
\begin{fact}[Kesten {\cite[Theorem 5]{KT2}}]
  \label{Kes2}
  We have $$\sum_{n=1}^{+\infty}\frac{p_n}{\mu^n}=1.$$
\end{fact}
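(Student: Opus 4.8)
The plan is to work with the generating functions of bridges and of irreducible bridges and to exploit the renewal structure coming from the unique decomposition of a bridge into irreducible pieces. Write $B(z)=\sum_{n\ge0}b_nz^n$ (with $b_0=1$) and $I(z)=\sum_{n\ge1}p_nz^n$. The first step is to record that every bridge $\gamma$ of length $n\ge1$ has a \emph{unique} decomposition $\gamma=\gamma^1\oplus\cdots\oplus\gamma^k$ into irreducible bridges: take $\gamma^1$ to be the shortest prefix of $\gamma$ which is itself a bridge and whose complementary suffix is again a bridge, and iterate on that suffix; one checks that $\gamma^1$ is then irreducible and that the first piece of any such decomposition is forced. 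Consequently $b_n=\sum_{k\ge1}\sum_{n_1+\cdots+n_k=n}p_{n_1}\cdots p_{n_k}$ for $n\ge1$, that is,
\[ B(z)=\sum_{k\ge0}I(z)^k \]
as formal power series; since all coefficients are nonnegative, for each real $z\ge0$ this is a genuine identity in $[0,+\infty]$, with common value $(1-I(z))^{-1}$ when $I(z)<1$ and $+\infty$ when $I(z)\ge1$.

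For the inequality $\sum_np_n\mu^{-n}\le1$, use the bound $b_n\le\mu^n$ recorded in the excerpt: then $B(z)\le\sum_n(\mu z)^n<\infty$ for $0\le z<1/\mu$, so by the identity $I(z)<1$ on $[0,1/\mu)$, and letting $z\uparrow1/\mu$ (monotone convergence) gives $\sum_np_n\mu^{-n}=I(1/\mu)\le1$.

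The reverse inequality $\sum_np_n\mu^{-n}\ge1$ is the real content. Here the identity already does the bookkeeping: if one had $I(1/\mu)<1$, then $B(1/\mu)=\sum_k I(1/\mu)^k=(1-I(1/\mu))^{-1}<\infty$; so it suffices to prove
\[ \sum_{n\ge1}b_n\,\mu^{-n}=+\infty, \]
i.e.\ that the bridge generating function diverges at its radius of convergence $1/\mu$ (that the radius equals $1/\mu$ is the Hammersley--Welsh identity $\mu_b=\mu$). I expect this to be the main obstacle. What one gets \emph{for free} is not enough: the Hammersley--Welsh bound $c_n\le e^{O(\sqrt n)}b_n$ together with $c_n\ge\mu^n$ gives only $b_n\ge\mu^n e^{-O(\sqrt n)}$, and $\sum_n e^{-c\sqrt n}$ converges for every $c>0$. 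The way around --- which is precisely Kesten's Theorem~5 in \cite{KT2} --- is to run the Hammersley--Welsh decomposition of a general $n$-step SAW into a controlled number of bridges and reversed bridges while tracking the subexponential corrections sharply, so as to upgrade $c_n\ge\mu^n$ to a \emph{non-summable} lower bound on the sequence $(b_n\mu^{-n})_n$; alternatively one uses Kesten's pattern theorem to exclude summable decay of $b_n\mu^{-n}$. In the write-up I would invoke \cite{KT2} for this step rather than reproduce the argument.

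Combining the two inequalities gives $\sum_{n=1}^{+\infty}p_n\mu^{-n}=1$.
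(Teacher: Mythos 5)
The paper does not actually prove this statement: it is recorded as a Fact with a direct citation to Kesten \cite{KT2}, Theorem 5, and no argument is given. Your reconstruction of the argument is correct in its skeleton, and --- importantly --- you are honest about where the nontrivial content lies. The renewal identity $B(z)=\sum_{k\ge0}I(z)^k$, valid in $[0,+\infty]$ because the decomposition of a bridge into irreducible bridges is unique, is right, and so is the derivation of the easy inequality $I(1/\mu)\le1$ from $b_n\le\mu^n$ by monotone convergence along $z\uparrow1/\mu$. You also correctly reduce the reverse inequality to the divergence $B(1/\mu)=\sum_n b_n\mu^{-n}=+\infty$, and correctly observe that this does \emph{not} follow from the crude consequences of Hammersley--Welsh recorded in the paper ($c_n\le b_n e^{B\sqrt n}$ together with $c_n\ge\mu^n$ only yields $b_n\ge\mu^n e^{-B\sqrt n}$, and $\sum e^{-B\sqrt n}$ converges). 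At that point you defer to Kesten, which is exactly what the paper itself does for the entire statement. So there is no discrepancy to flag: you have essentially unpacked one layer of the black box (the renewal bookkeeping) before re-invoking \cite{KT2} for the genuinely hard analytic input, whereas the paper cites \cite{KT2} for the whole identity. One small clarification you could add: the uniqueness of the irreducible decomposition is itself worth a sentence --- the set of indices $k$ at which both $\gamma|_{[0,k]}$ and $\gamma|_{[k,n]}$ are bridges is exactly the set of renewal times, and cutting at all of them gives the decomposition, which forces uniqueness; your ``shortest prefix'' description implicitly uses this but doesn't quite state it.
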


\begin{remark}
  \label{R1}
  An obvious consequence of this equality and the existence of arbitrarily large bridges is that $\sum_{\omega  \in \mathbb{I}}\beta^{\left  | \omega  \right|}<\infty$ is finite for $\beta<1/\mu$ and infinite for $\beta>1/\mu$.
\end{remark}

Let us now define the Kesten measure on  the $SAW_{\infty}$ in the
half-plane.   We    fix   $\beta   \leq   \frac{1}{\mu}$    and   denote by
${Q^{\beta}}$ the  probability measure  on $\mathbb{I}$
defined by
$${Q^{\beta}}(\omega)=\frac{\beta^{\left   |    \omega   \right|}}{Z_{\beta}}, \omega \in \mathbb{I}$$
where
$Z_{\beta}=\sum_{\omega  \in \mathbb{I}}\beta^{\left  | \omega  \right|}$. By Fact~\ref{Kes2} and Remark~\ref{R1}, $Z_{\beta}$ is
finite  and  thus ${Q^{\beta}}$  is  a  probability measure  on
$\mathbb{I}$.

Let $k \geq 1$, we consider the product space $\mathbb{I}^k$ and define
the product probability  measure ${Q}^{\beta}_{k}$ on $k$-tuples of bridges; we also write ${Q}^{\beta}_{k}$ for the measure on finite self-avoiding paths defined as
\[Q_k^\beta (\gamma) = \begin{cases}
  {Q}^{\beta}_{k} (\omega_1,\ldots,\omega_k) & \text {if $\gamma = \omega^1\oplus \omega^2\oplus \cdots\oplus \omega^k$, ${(\omega_i)}_{i \leq k} \in \mathbb I^k$} \\
  0 & \text{otherwise.}
\end{cases}\]
The measure $Q^\beta_\infty$ on the set of infinite self-avoiding paths is defined in the same way as the product measure on the bridge decomposition, or equivalently as the projective limit of the $Q_k^\beta$. From the definition, we directly obtain the following property:

\begin{fact}
  \label{p11}
  Under the  $\beta$-Kesten measure, the infinite  self-avoiding walk,
  denoted by $\omega^{\infty,\beta}_{K}$, almost surely does not reach
  the line $\mathbb{Z}\times
    \{0\}$. 
\end{fact}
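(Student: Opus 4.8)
The plan is to read off the conclusion from the explicit description of $\omega^{\infty,\beta}_K$ as an infinite concatenation of i.i.d.\ irreducible half-plane bridges. Under $\mathbb Q^\beta_\infty$ one samples $(\omega^i)_{i\ge1}$ i.i.d.\ with law $\mathbb Q^\beta$ on $\mathbb I$ (a genuine probability measure by Fact~\ref{Kes2} and Remark~\ref{R1}) and sets $\omega^{\infty,\beta}_K=\omega^1\oplus\omega^2\oplus\cdots$. I would first fix the bookkeeping for the ``blocks'': writing $n_i=|\omega^i|$, $N_0=0$ and $N_j=n_1+\cdots+n_j$, the vertices $N_{j-1},\dots,N_j$ of $\omega^{\infty,\beta}_K$ form a translate of $\omega^j$, the translation vector being the terminal vertex of the previous block. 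Since $\mathbb Q^\beta$ is carried by finite bridges of positive length, $N_j\uparrow\infty$ almost surely, so every index $k$ eventually exceeds any given $N_{j_0}$; writing $j(k)$ for a block containing $k$, this gives $j(k)\to\infty$ as $k\to\infty$. (The non-initial vertices of block $j$ occupy the range of first coordinates strictly to the right of the junction with block $j-1$ and up to the junction with block $j+1$, by the span condition defining a bridge; disjointness of these ranges up to shared junction vertices is exactly why the concatenation is self-avoiding, and I will not dwell on this.)

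The engine of the proof is a one-line monotonicity observation: an irreducible bridge in $\mathbb H=\mathbb Z\times\mathbb Z_{\ge0}$ starts at the origin and is contained in $\mathbb H$, so its terminal height---its vertical displacement $\Delta(\omega)$---is $\ge0$. Writing $\Delta_i=\Delta(\omega^i)$ and $h_j=\Delta_1+\cdots+\Delta_j$ (with $h_0=0$), the sequence $(h_j)_{j\ge0}$ is non-negative and non-decreasing, and the $j$-th block, being a copy of $\omega^j$ translated vertically by $h_{j-1}$, sits at heights $\ge h_{j-1}$; hence the second coordinate of $\omega^{\infty,\beta}_K(k)$ is at least $h_{j(k)-1}$. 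To finish I would check that $h_j\to\infty$ almost surely: the length-two walk $\eta=[(0,0),(1,0),(1,1)]$ is an irreducible bridge in $\mathbb H$ with $\Delta(\eta)=1$, so $\mathbb Q^\beta(\{\Delta\ge1\})\ge\mathbb Q^\beta(\{\eta\})>0$; since the $\Delta_i$ are i.i.d., Borel--Cantelli gives $\Delta_i\ge1$ for infinitely many $i$, and as all $\Delta_i\ge0$ the partial sums diverge. Combining, $\dist\big(\omega^{\infty,\beta}_K(k),\mathbb Z\times\{0\}\big)=\big(\omega^{\infty,\beta}_K(k)\big)_2\ge h_{j(k)-1}\to\infty$ almost surely, so $\omega^{\infty,\beta}_K$ meets $\mathbb Z\times\{0\}$ only finitely often.

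I do not expect a genuine obstacle: all the work is carried by the half-plane constraint forcing $\Delta_i\ge0$, after which everything is bookkeeping. Two points need a little care. First, the block decomposition together with the verification that the concatenation is an honest self-avoiding walk for which the stated height lower bound on each block really holds. Second, the precise reading of ``does not reach the line'': $\omega^{\infty,\beta}_K$ starts at the origin, which lies on $\mathbb Z\times\{0\}$, and it may well return there during the first few blocks (for instance if $\omega^1$ contains a horizontal segment along the boundary), so the assertion must be understood as ``returns only finitely often'', equivalently $\dist(\omega^{\infty,\beta}_K(k),\mathbb Z\times\{0\})\to\infty$, which is exactly what the argument yields. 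If one instead adopts the convention that a half-plane bridge leaves the boundary at its first step---so that $\Delta\ge1$ always and every block $j\ge1$ sits at heights $\ge h_{j-1}+1\ge1$---the same argument shows the walk never returns to the line after time $0$.
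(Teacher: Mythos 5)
Your argument is correct and is essentially the paper's own (the paper simply asserts that the fact ``follows immediately from the definition of the $\beta$-Kesten measure''): the measure is a concatenation of i.i.d.\ irreducible bridges whose displacements away from the boundary are nonnegative (strictly positive under the bridge convention), so the heights of successive blocks are monotone and the walk leaves the line $\mathbb{Z}\times\{0\}$ for good. Your explicit handling of the convention issue (finitely many returns versus no return after time $0$) is a sensible clarification of what ``does not reach'' must mean, but it does not change the substance.
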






\subsection{Proof of Theorem~\ref{proposition1.1}}
\label{section4.3}
\begin{notation}\label{not:proofofprop1.1}
  Consider  the  self-avoiding  walks   in  the  lattice  $\mathbb{Z}^2$
  starting at  the origin. We  construct a tree  $\mathcal T_{\mathbb{Z}^2}$, which is called \emph{self-avoiding tree}, from
  these self-avoiding walks: The  vertices of $\mathcal T_{\mathbb{Z}^2}$ are the
  finite self-avoiding walks and two  such vertices joined when one path
  is  an  extension by  one  step  of  the  other. Formally,  denote  by
  $\Omega_n$ the  set of self-avoiding  walks of length $n$  starting at
  the  origin and  $V:=  \bigcup_{n=0}^{+\infty}\Omega_n$. Two  elements
  $x,y \in V$  are adjacent if one  path is an extension by  one step of
  the other. We then define $\mathcal T_{\mathbb{Z}^2}=(V,E)$. In the same way, we can define other self-avoiding trees $\mathcal T_{\mathbb{H}}, T_{\mathbb{Q}}$, where $\mathbb{H}$
  is a half-plane and $\mathbb{Q}$ is a quarter-plane.
\end{notation}

\begin{remark}
  Note that each vertex (resp.\ a ray) of $\mathcal{T}_{\mathbb{Z}^2}$ (or
  $\mathcal{T}_{\mathbb{H}}$, $\mathcal{T}_{\mathbb{Q}}$) is a finite
  self-avoiding path (resp.\ an infinite self-avoiding path). Moreover, it is
  easy to see that the number of vertices at generation $n$ of
  $\mathcal{T}_{\mathbb{Z}^2}$ (or $\mathcal{T}_{\mathbb{H}}$,
  $\mathcal{T}_{\mathbb{Q}}$) is the number of self-avoiding walks of length $n$
  in $\mathbb{Z}^2$ (resp. $\mathbb{H}$, $\mathbb{Q}$).
\end{remark}

\begin{notation}
  \label{subsubsection:notation}  In~\cite{KT2},  Kesten proved  that all
  bridges  in  a  half-plane  can  be  decomposed  into  a  sequence  of
  irreducible bridges in a unique way. 
  An infinite  self-avoiding path starting at the origin is called
  ``$m$-good'' if it possesses  a decomposition into irreducible bridges
  of length at most $m$.  Denote by  $G_m$ the set  of infinite  self-avoiding paths
  which  are  $m$-good, and  let  $\mathcal  T^m$  be  the  subtree  of
  $\mathcal{T}_{\mathbb{Z}^2}$,  which   we  will  refer  to   as  the
  \emph{$m$-good             tree},              spanning the vertex set
  $$V(\mathcal{T}^m):=\{\omega\in V(\mathcal{T}_{\mathbb Z^2}): \text{ there exists } \gamma\in G_m \text{ such that }  \gamma\!\!\mid_{[0, |\omega|]}=\omega\}.$$
\end{notation}

\begin{proposition}
  \label{prop10}
  Let $\mathcal T_{\mathbb{H}}$ and $\mathcal T_\mathbb{Q}$ be  defined as
  above. Then,
  \[gr(\mathcal T_{\mathbb Z^2})=br(\mathcal{T}_{\mathbb{Z}^2})= gr(\mathcal
    T_{\mathbb{H}})  =  br(\mathcal T_{\mathbb{H}}) =  gr(\mathcal
    T_{\mathbb{Q}})  = br(\mathcal T_{\mathbb{Q}})=\mu,\] where  $\mu$ is the
    connective constant of the lattice $\mathbb{Z}^2$.
\end{proposition}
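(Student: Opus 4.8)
The plan is to sandwich each of the six quantities between $\mu$ and $\mu$. For any infinite locally finite rooted tree $\mathcal T$ one has $br(\mathcal T)\le\underline{gr}(\mathcal T)\le\overline{gr}(\mathcal T)$ by Proposition~\ref{prop:br-gr}, so it suffices to prove, for each of $\mathcal T\in\{\mathcal T_{\mathbb Z^2},\mathcal T_{\mathbb H},\mathcal T_{\mathbb Q}\}$, that (a) $\overline{gr}(\mathcal T)\le\mu$ and (b) $br(\mathcal T)\ge\mu$; then (a), (b) and Proposition~\ref{prop:br-gr} force $\overline{gr}(\mathcal T)=\underline{gr}(\mathcal T)=br(\mathcal T)=\mu$, in particular $gr(\mathcal T)$ exists and equals $\mu$. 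Part (a) is immediate: by the remark following Notation~\ref{not:proofofprop1.1} the number of vertices at generation $n$ of $\mathcal T_{\mathbb Z^2}$ is $c_n=\mu^{n+o(n)}$, and since $\mathcal T_{\mathbb Q}\subseteq\mathcal T_{\mathbb H}\subseteq\mathcal T_{\mathbb Z^2}$ (a walk confined to a smaller region is a fortiori a walk in a larger one), the generation sizes of all three trees are at most $c_n$, whence $\overline{gr}\le\mu$ everywhere. For $\mathcal T_{\mathbb Z^2}$ one may moreover note directly that it is $0$-sub-periodic — the map sending a walk extending $\nu$ to its tail after $\nu$, translated back to the origin, is an injective morphism $\mathcal T_{\mathbb Z^2}^{\nu}\to\mathcal T_{\mathbb Z^2}$ — so Theorem~\ref{sousperiodic} already gives $br(\mathcal T_{\mathbb Z^2})=gr(\mathcal T_{\mathbb Z^2})=\mu$; but this embedding is unavailable for $\mathcal T_{\mathbb H}$ or $\mathcal T_{\mathbb Q}$, which is where the real work lies.

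The heart of the proof is therefore (b) for $\mathcal T_{\mathbb Q}$: by Remark~\ref{rem:comparebranchingnumber} it entails (b) for $\mathcal T_{\mathbb H}$ and $\mathcal T_{\mathbb Z^2}$ as well. I will derive it from the $m$-good trees $\mathcal T^m$ of Notation~\ref{subsubsection:notation}. Since an irreducible bridge starting at the origin lies in the quarter plane and, in any concatenation $\omega^1\oplus\cdots\oplus\omega^k$ of such bridges, the first coordinate strictly increases across each bridge boundary, every concatenation — and hence every prefix of an $m$-good walk — is a self-avoiding walk contained in $\mathbb Q$; thus $\mathcal T^m\subseteq\mathcal T_{\mathbb Q}$ and $br(\mathcal T_{\mathbb Q})\ge br(\mathcal T^m)$ for every $m$. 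The key structural observation is that $\mathcal T^m$ is $(m-1)$-sub-periodic: a vertex of $\mathcal T^m$ has the form $\omega^1\oplus\cdots\oplus\omega^k\oplus\beta$ with $\omega^i\in\mathbb I$, $|\omega^i|\le m$, and $\beta$ a proper prefix of an irreducible bridge of length $\le m$, so $|\beta|\le m-1$; and because a continuation of such a walk never revisits the already completed bridges $\omega^1,\dots,\omega^k$ (their first coordinates are all strictly smaller), the subtree $(\mathcal T^m)^{\nu}$ is isomorphic to $(\mathcal T^m)^{\beta}$ with $|\beta|\le m-1$. Theorem~\ref{sousperiodic} then yields $br(\mathcal T^m)=gr(\mathcal T^m)$.

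It remains to identify $gr(\mathcal T^m)$ and let $m\to\infty$. Writing $I_m(x):=\sum_{j=1}^{m}p_j x^j$ (with $p_j$ the number of length-$j$ irreducible bridges), Kesten's uniqueness of the decomposition into irreducible bridges shows that the number of length-$n$ concatenations of bridges of length $\le m$ equals $[x^n](1-I_m(x))^{-1}$; since every vertex of $\mathcal T^m$ is such a concatenation followed by a bounded prefix $\beta$, the generation sizes $|\mathcal T^m_n|$ coincide with $[x^n](1-I_m(x))^{-1}$ up to a bounded factor, and as $p_1\ge1$ this rational generating function has a genuine smallest positive pole $\rho_m$, the unique positive root of $I_m(x)=1$; hence $\limsup|\mathcal T^m_n|^{1/n}=\rho_m^{-1}$, and since $gr(\mathcal T^m)$ exists it equals $\rho_m^{-1}$. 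Finally, Fact~\ref{Kes2} gives $\sum_{j\ge1}p_j\mu^{-j}=1$, i.e. $I_\infty(1/\mu)=1$, while $I_m\uparrow I_\infty$ and all these functions are strictly increasing, so $\rho_m\downarrow1/\mu$ and therefore $br(\mathcal T^m)=gr(\mathcal T^m)=\rho_m^{-1}\uparrow\mu$. Combining, $br(\mathcal T_{\mathbb Q})\ge\mu$, which completes the argument. The main obstacle is the verification that $\mathcal T^m$ is sub-periodic — this is precisely where Kesten's uniqueness of the irreducible-bridge decomposition and the non-interaction of concatenated bridges are used — together with the elementary but slightly fiddly two-sided estimate $|\mathcal T^m_n|\asymp\rho_m^{-n}$.
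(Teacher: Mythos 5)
There is a genuine gap, and it sits at the two places you yourself flag as the heart of the argument. First, the inclusion $\mathcal T^m\subseteq\mathcal T_{\mathbb Q}$ is false: a bridge is constrained only in the coordinate used in its definition (its first coordinate stays strictly above that of its starting point), while the orthogonal coordinate is completely free. For instance the two-step walk consisting of one step in the bridge direction followed by one step ``downwards'' is an irreducible bridge that already leaves the first quadrant, and concatenations of irreducible bridges can wander arbitrarily far on the wrong side of the other axis. So concatenations of irreducible bridges live in a half-plane, not in $\mathbb Q$, and the chain $br(\mathcal T_{\mathbb Q})\geq br(\mathcal T^m)$ collapses. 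Since by Remark~\ref{rem:comparebranchingnumber} the quadrant tree is the smallest of the three, its lower bound $br(\mathcal T_{\mathbb Q})\geq\mu$ is precisely the part that cannot be inherited from $\mathbb H$ or $\mathbb Z^2$; it needs a quadrant-specific bridge argument (quadrant bridges $b_n^{\mathbb Q}$ and their supermultiplicativity, as the paper uses later in Lemma~\ref{p1} and Proposition~\ref{p2}, or an unfolding estimate in $\mathbb Q$). What your argument does give, modulo the next point, is $br(\mathcal T_{\mathbb H})\geq\mu$ via $\mathcal T^m\subseteq\mathcal T_{\mathbb H}\subseteq\mathcal T_{\mathbb Z^2}$, hence the statements for $\mathbb H$ and $\mathbb Z^2$.

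Second, the claimed $(m-1)$-sub-periodicity of $\mathcal T^m$ is not established. The decomposition $\nu=\omega^1\oplus\cdots\oplus\omega^k\oplus\beta$ is not intrinsic to the vertex $\nu$: it depends on which $m$-good extension one follows, so the map $\nu\oplus\delta\mapsto\beta\oplus\delta$ is not well-defined. Concretely, let $\nu$ be the two-step irreducible bridge (one step forward, one step up) and choose the decomposition with $k=1$, $\beta=\varnothing$; there exist irreducible bridges (of length $\leq m$ for $m$ moderately large) having $\nu$ as a proper prefix and whose interior returns to the starting column, and if $\delta$ completes such a bridge then $\nu\oplus\delta\in\mathcal T^m$ while $\beta\oplus\delta=\delta$, translated to the origin, revisits the origin's column and is a prefix of no concatenation of bridges, so $\delta\notin\mathcal T^m$. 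The assertion that continuations ``never revisit the completed bridges because their first coordinates are all strictly smaller'' presupposes that the endpoint of $\omega^k$ is a renewal point of the continued walk, which is exactly what can fail. The paper avoids this by proving instead that $\mathcal T^m$ is $m$-\emph{super}-periodic (complete the current partial bridge in at most $m$ extra steps and graft a full copy of $\mathcal T^m$, which stays strictly beyond the last crossed hyperplane), and Theorem~\ref{sousperiodic} covers super-periodic trees with finite upper growth. Your generating-function identification of $gr(\mathcal T^m)$ through Kesten's identity (Fact~\ref{Kes2}) is a perfectly reasonable alternative to the paper's supermultiplicativity argument once $br(\mathcal T^m)=gr(\mathcal T^m)$ is secured this way; but as written, both the sub-periodicity step and, more importantly, the quadrant inclusion are gaps, and the quadrant case needs a separate argument.
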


\begin{proof}
  As explained in the introduction, there are rather  large classes of trees, including
  $\mathcal T_{\mathbb Z^2}$,  for which the  branching and growth  coincide (for
  instance, this holds for sub-  or super-periodic trees, cf.\ below, or
  for  typical  supercritical  Galton-Watson  trees), but  none  of  the
  classical results seem to apply to $\mathcal T_{\mathbb H}$ or $\mathcal T_{\mathbb Q}$.

  Note that $\mathcal{T}_{\mathbb{Z}^2}$ is a sub-periodic tree. By Theorem~\ref{sousperiodic} and the definition of connective constant, we have
  \begin{equation}
    \label{equ:recprop11}
    gr(\mathcal{T}_{\mathbb{Z}^2})=br(\mathcal{T}_{\mathbb{Z}^2})=\mu.
  \end{equation}
  We also know (see~\cite{DC1,hammersley1962}) that there exists a
  constant    $B$    and    $n_0    \in    \mathbb{N}$    such    that for any $ n>n_0$,
  $  c_n\leq  b_n\,  e^{B\sqrt{n}}$
  from which we obtain
  \begin{equation}
    \label{equ:recprop00}
    \mu\leq \underset{n\rightarrow \infty}{\lim}{(b_n)}^{\frac{1}{n}}\leq gr(\mathcal{T}_{\mathbb H})\leq gr(\mathcal{T}_{\mathbb Z^2})=\mu.
  \end{equation}
  Hence,
$    gr(\mathcal T_{\mathbb{H}})=\mu$ (as already mentioned in~\cite{KT2})
and, by Proposition~\ref{prop:br-gr},
  \begin{equation}
    \label{equ:recprop10}
    br(\mathcal{T}_\mathbb H)\leq \mu.
  \end{equation}

  Let  $b_{n}^{(m)}$ be the number  of bridges of  length  $n$ which  possess  a
  decomposition  into irreducible bridges of length at most $m$.  Recall that
  ${(\mathcal{T}^m)}_n$ is the number of vertices of $\mathcal{T}^m$ at generation
  $n$. Then for any $n>0$, we have
  \begin{equation}
    \label{equ:recprop14}
    \left | {(\mathcal T^m)}_n \right |\geq b^{(m)}_n.
  \end{equation}
  Note that $\mathcal T^{m}$ is also a subtree of $\mathcal T_{\mathbb{H}}$, so that by Remark~\ref{rem:comparebranchingnumber} we have
  \begin{equation}
    \label{equ:recprop15}
    br(\mathcal T^m)\leq     br(\mathcal T_{\mathbb{H}}).
  \end{equation}
  On the other hand, $\mathcal T^{m}$ is $m$-super-periodic (because from any of its vertices, one can complete the current irreducible bridge in at most $m$ steps after which every self-avoiding path in $\mathcal T^m$ provides a possible continuation),  so we  can apply  Theorem~\ref{sousperiodic} to
  obtain the existence of $gr(\mathcal T^{m})$ and the equality
  \begin{equation}
    \label{equ:recprop16}
    br(\mathcal T^{m})=gr(\mathcal T^{m}).
  \end{equation}
  We use~\eqref{equ:recprop15} and~\eqref{equ:recprop16} to obtain, for any $m>0$,
  \begin{equation}
    \label{equ:recprop17}
    br(\mathcal T_{\mathbb{H}})\geq gr(\mathcal T^{m}).
  \end{equation}

  It            remains            to            prove            that
  $\lim_{m\rightarrow  \infty}  gr(\mathcal T^{m})=\mu$. Noting that the  concatenation of two
  bridges is itself a bridge, we see that the sequence $(b_n)$ is super-multiplicative: for any $m,n$,
  \begin{equation}
    \label{equ:recprop18}
    b_{m+n}\geq b_m\, b_n\quad\text{and} \quad b_{n_1+n_2}^{(m)}\geq       b_{n_1}^{(m)} \,b_{n_2}^{(m)},
  \end{equation}
  implying the existence of
  \begin{equation}
    \label{equ:recprop000}
    \mu_m:=\underset{n\rightarrow\infty}{\lim}{\left(b^{(m)}_n\right)}^{1/n}=\underset{n\rightarrow\infty}{\sup}{\left(b^{(m)}_n\right)}^{1/n}.
  \end{equation}
  Fix
  $\epsilon > 0$: by~\eqref{equ:recprop18} there exists $m_0$ such that for all $m\geq m_0$,
  \begin{equation}
    \label{equ:recprop19}
    \left  | \mu  -{(b_{m})}^{\frac{1}{m}} \right  |\leq
    \epsilon.
  \end{equation}
  As we already mentioned (see  Notation~\ref{subsubsection:notation}), all bridges
  in a  half-plane can  be decomposed into  a sequence  of irreducible
  bridges in  a unique way. Therefore  each bridge of
  length  $m$ possesses  a decomposition  into irreducible  bridges of length at most $m$. Hence, for any $m>m_0$,
  \begin{equation}
    \label{equ:recprop20}
    b_{m}=b_{m}^{(m)}.
  \end{equation}
  Combining~\eqref{equ:recprop18},~\eqref{equ:recprop000},~\eqref{equ:recprop19}  and~\eqref{equ:recprop20}, we obtain that for any $m>m_0$,
  \begin{equation}
    \label{equ:recprop21}
    \mu_m\geq {(b_{km}^{(m)})}^{\frac{1}{km}}\geq {\left({(b_m^{(m)})}^k\right)}^{\frac{1}{km}}={(b_{m}^{(m)})}^{\frac{1}{m}}={(b_m)}^{\frac{1}{m}}\geq \mu-\epsilon.
  \end{equation}
  By~\eqref{equ:recprop000}, the  sequence  ${(b_{\ell}^{(m)})}^{1/\ell}$  converges to $\mu _{m}$, hence
  $\lim_{k\to\infty} {(b_{km}^{(m)})}^{{1}/{km}}=\mu _{m}$. Using~\eqref{equ:recprop14} and~\eqref{equ:recprop21}, for any $m>m_0$, we have $ \mu\geq gr(\mathcal T^{m})\geq \mu_m\geq \mu -\epsilon$ and then,
  \begin{equation}
    \label{equ:recprop22}
    \underset{m\rightarrow\infty}{\lim}gr(\mathcal T^{m})=\mu.
  \end{equation}
  Combining~\eqref{equ:recprop10},~\eqref{equ:recprop17} and~\eqref{equ:recprop22} leads to $br(\mathcal{T}_{\mathbb{H}})=\mu$. By following a similar strategy in $\mathbb Q$, we obtain $gr(\mathcal T_{\mathbb{Q}})=br(\mathcal T_{\mathbb{Q}})=\mu$.
\end{proof}

Theorem~\ref{proposition1.1} is a consequence of Theorem~\ref{lyons90} and Proposition~\ref{prop10}.

\subsection{Proof of Theorem~\ref{thm:continuousofselfavoidingtree}}
\label{section4.5}
Now,   we   apply  the   results  in Section~\ref{sectiononcontinuous} for   the   self-avoiding   trees $\mathcal{T}_{\mathbb{Q}}$,
$\mathcal{T}_{\mathbb{H}}$ and $\mathcal T_{\mathbb{Z}^2}$.

\subsubsection*{\bf Notation} For any $n\in \mathbb{N}$, let $\Lambda_n:={[\![-n,n]\!]}^2$ be a subdomain of $\mathbb{Z}^2$. Denote by $\partial \Lambda_n$ the boundary of $\Lambda_n$, i.e,
$$\partial \Lambda_n:=\left\{(a,b)\in \Lambda_n: |a|=n \, \text{ or }\, |b|=n\right\}.$$
We write $\overset{\circ}{\Lambda}_n:=\Lambda_n \setminus \partial\Lambda_n$ for the interior of $\Lambda_n$.
Let $\gamma$ be a finite self-avoiding walk: we say that $\gamma$ is \emph{a self-avoiding walk in the domain $\Lambda_n$} if for any $0\leq k\leq |\gamma|$, we have $\gamma(k)\in \Lambda_n$. Denote by $\Omega(\Lambda_n)$ the set of self-avoiding in $\Lambda_n$ starting from the origin $o=(0,0)$.
\begin{lemma}
  \label{lem:rightcontinuous}
  The         functions        $\mathcal C(\lambda,\mathcal T_{\mathbb{Q}})$, $\mathcal C(\lambda,\mathcal T_{\mathbb{H}})$ and  $\mathcal C(\lambda,\mathcal T_{\mathbb{Z}^2})$      are        right continuous        on
  $(\lambda_c,+\infty)$.
\end{lemma}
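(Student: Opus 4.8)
The plan is to invoke Lemma~\ref{lemma2} directly. That lemma asserts that for \emph{any} infinite, locally finite, rooted tree $\mathcal T$, the function $\mathcal C(\lambda,\mathcal T)$ is right continuous on $(0,+\infty)$. Since $\mathcal T_{\mathbb Q}$, $\mathcal T_{\mathbb H}$ and $\mathcal T_{\mathbb Z^2}$ are all infinite, locally finite, rooted trees, the statement follows immediately by specialization. Concretely, the proof of Lemma~\ref{lemma2} writes $\mathcal C(\lambda,\mathcal T)=\pi(o)\lim_{n\to\infty}\mathbb P(S_n<S_0)$ as the decreasing limit of the functions $\mathcal C(\lambda,\mathcal T,n):=\pi(o)\,\mathbb P(S_n<S_0)$, each of which is continuous and increasing in $\lambda$ (continuity via Lemma~\ref{lemmac}, monotonicity because a larger bias only helps the walk reach level $n$ before returning to the root). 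A pointwise-decreasing limit of continuous increasing functions is automatically upper semicontinuous and, being a limit of increasing functions, is right continuous; hence $\mathcal C(\lambda,\mathcal T)$ is right continuous. Applying this verbatim to each of the three self-avoiding trees gives the claim.

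**The only thing worth spelling out** is that the three trees do satisfy the hypotheses of Lemma~\ref{lemma2}: local finiteness of $\mathcal T_{\mathbb Z^2}$ holds because from a self-avoiding walk of length $n$ there are at most $3$ one-step self-avoiding extensions (at most $2d-1$ in $\mathbb Z^d$), and similarly for $\mathcal T_{\mathbb H}$ and $\mathcal T_{\mathbb Q}$; they are infinite because self-avoiding walks of every length exist; and they are rooted at the trivial (length-$0$) walk. There is no substantive obstacle here — this lemma is purely a bookkeeping step isolating the ``easy half'' of continuity, the genuine content (left continuity, which combined with this yields Theorem~\ref{thm:continuousofselfavoidingtree}) being deferred to the uniform-transience arguments of Section~\ref{sectiononcontinuous} applied later in Section~\ref{section4.5}.

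\begin{proof}
  Each of $\mathcal T_{\mathbb Q}$, $\mathcal T_{\mathbb H}$ and
  $\mathcal T_{\mathbb Z^2}$ is an infinite, locally finite, rooted
  tree: local finiteness holds since a self-avoiding walk of length
  $n$ admits at most $3$ one-step self-avoiding extensions, and the
  trees are infinite and rooted at the length-$0$ walk. Therefore
  Lemma~\ref{lemma2} applies to each of them, and yields that
  $\mathcal C(\lambda,\mathcal T_{\mathbb Q})$,
  $\mathcal C(\lambda,\mathcal T_{\mathbb H})$ and
  $\mathcal C(\lambda,\mathcal T_{\mathbb Z^2})$ are right continuous
  on $(0,+\infty)$, in particular on $(\lambda_c,+\infty)$.
\end{proof}
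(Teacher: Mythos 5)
Your proposal is correct and matches the paper's own proof exactly: the paper also cites Lemma~\ref{lemma2} as an immediate consequence, and your verification that the three self-avoiding trees are infinite, locally finite and rooted is the (trivial but worth stating) hypothesis check that the paper leaves implicit.
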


\begin{proof}
  It follows immediately from Lemma~\ref{lemma2}.
\end{proof}

\begin{lemma}
  \label{lem:leftcontinuous}
  The         functions        $\mathcal C(\lambda,\mathcal T_{\mathbb{Q}})$, $\mathcal C(\lambda,\mathcal T_{\mathbb{H}})$ and  $\mathcal C(\lambda,\mathcal T_{\mathbb{Z}^2})$      are left continuous        on
  $(\lambda_c,+\infty)$.
\end{lemma}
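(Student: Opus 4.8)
The plan is to obtain the statement from the continuity criteria of Subsection~\ref{sectiononcontinuous}, Theorems~\ref{ctg} and~\ref{ctgg}, by establishing enough transience for the self-avoiding trees. Write $\mathcal T_\bullet$ for any of $\mathcal T_{\mathbb Q},\mathcal T_{\mathbb H},\mathcal T_{\mathbb Z^2}$ and recall from Proposition~\ref{prop10} that $\lambda_c=1/\mu<1$. I would split the interval into $(\lambda_c,1]$ and $(1,+\infty)$ and treat them differently, because the full self-avoiding tree is \emph{not} uniformly transient in the sense of Theorem~\ref{ctg}: it contains leaves (the trapped self-avoiding walks) and, more generally, vertices whose subtree is finite, and at such a vertex $\nu$ one has $\widetilde{\mathcal C}(\lambda,\mathcal T^\nu)=0$.

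On $(\lambda_c,1]$ I would prove that $\mathcal T_\bullet$ is \emph{weakly} uniformly transient and invoke Theorem~\ref{ctgg}. For $n\ge1$ let $\pi_n$ be the set of self-avoiding walks $\gamma$ in the domain that reach $\ell^\infty$-distance $n$ from the origin for the first time at their endpoint; each $\pi_n$ is a finite V-cutset and the $\pi_n$ are pairwise disjoint. The geometric heart of the matter is that such a $\gamma$ cannot meet its own trace again if it continues strictly beyond distance $n$: if $\gamma\in\pi_n$ ends at $p$, choose a lattice direction $e$ with $p+e$ in the domain and at $\ell^\infty$-distance $n+1$, and consider the continuations of $\gamma$ that first step to $p+e$ and then stay in the closed half-space $\{x:\langle x,e\rangle\ge\langle p+e,e\rangle\}$. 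These are genuine extensions of $\gamma$ (its trace lies within distance $n$ of the origin, hence outside that half-space), so $\mathcal T^\gamma$ contains the subtree $U_\gamma$ formed by that one edge followed by a self-avoiding tree of a half-plane or a quarter-plane rooted on its boundary. By Proposition~\ref{prop10} (and lattice symmetry) $br(U_\gamma)=\mu$, so $\lambda_c(U_\gamma)=1/\mu=\lambda_c$; moreover, as $\gamma$ varies, $U_\gamma$ realises only finitely many isomorphism types. Hence for each fixed $\lambda>\lambda_c$ the walk $RW_\lambda$ is transient on every $U_\gamma$ and $\inf_\gamma\widetilde{\mathcal C}(\lambda,U_\gamma)>0$; since $U_\gamma$ is a generation-preserving subtree of $\mathcal T^\gamma$ through its root and the self-avoiding trees have bounded degree, Rayleigh's monotonicity principle (Theorem~\ref{RL}) gives a constant $\alpha_\lambda>0$ with $\mathbb P(\forall n>0,\ X^\gamma_n\ne\gamma)=\widetilde{\mathcal C}(\lambda,\mathcal T^\gamma)\ge\alpha_\lambda$ for all $\gamma\in\bigcup_k\pi_k$. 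Thus $\mathcal T_\bullet$ is weakly uniformly transient, and Theorem~\ref{ctgg} yields left-continuity of $\mathcal C(\lambda,\mathcal T_\bullet)$ on $(\lambda_c,1]$.

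On $(1,+\infty)$ I would first remove the obstruction noted above: let $\mathcal T_\bullet^{\mathrm{red}}$ be the tree obtained by deleting every vertex that lies on no infinite ray. Since the root (the empty walk) has only good children, erasing the excursions of $RW_\lambda$ into the deleted finite subtrees turns it into $RW_\lambda$ on $\mathcal T_\bullet^{\mathrm{red}}$ without changing $\pi(o)$ or the event of non-return to $o$, so $\mathcal C(\lambda,\mathcal T_\bullet)=\mathcal C(\lambda,\mathcal T_\bullet^{\mathrm{red}})$ for all $\lambda$. Now $\mathcal T_\bullet^{\mathrm{red}}$ has no leaf, so every subtree $(\mathcal T_\bullet^{\mathrm{red}})^\nu$ contains an infinite ray; a ray has critical parameter $1$, and the escape probability along it is $1-1/\lambda>0$ for $\lambda>1$, whence (again by Rayleigh and bounded degree) $\mathbb P(\forall n>0,\ X^\nu_n\ne\nu)\ge c\,(1-1/\lambda)$ for every vertex $\nu$, with $c>0$ an absolute constant. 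This is precisely the uniform escape bound used in the proof of Theorem~\ref{ctg}, valid on every compact subinterval $[\lambda_0,\lambda_1]\subset(1,+\infty)$; since that proof establishes left-continuity at a point $\lambda_1$ using the bound only for $\lambda$ near $\lambda_1$, rerunning it gives left-continuity of $\mathcal C(\lambda,\mathcal T_\bullet^{\mathrm{red}})$, hence of $\mathcal C(\lambda,\mathcal T_\bullet)$, on $(1,+\infty)$. Together with the previous paragraph this covers all of $(\lambda_c,+\infty)$.

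The main obstacle, as indicated, is the range $\lambda>1$: one must first be careful that passing to the reduced tree does not change the conductance function (this uses that the root has no dangling child), and then check that the proof of Theorem~\ref{ctg} is genuinely local in $\lambda$, so that "uniform transience on $(1,+\infty)$ only" still suffices there. In the range $\lambda\le1$ the only real content is the geometric observation that a self-avoiding walk never revisits its trace once it has moved strictly beyond its current maximal distance from the origin, which reduces the escape estimate to the branching-number identity of Proposition~\ref{prop10}; verifying that the $\pi_n$ are finite, pairwise disjoint cutsets and that the grafted trees fall into finitely many types is routine.
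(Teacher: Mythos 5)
Your proof is correct, and on the range $(\lambda_c,1]$ it is essentially the paper's own argument: the paper likewise establishes weak uniform transience of the self-avoiding tree using exactly your cutsets (the first-exit prefixes of the boxes $\Lambda_n$, resp.\ $\mathbb H_n=\Lambda_n\cap\mathbb H$), observes that the subtree rooted at such a prefix contains a grafted copy of $\mathcal T_{\mathbb H}$ or $\mathcal T_{\mathbb Q}$, deduces a uniform escape bound from Proposition~\ref{prop10}/Theorem~\ref{proposition1.1} together with Rayleigh's principle, and then invokes Theorem~\ref{ctgg}. Where you genuinely add something is the regime $\lambda>1$: the paper simply concludes by citing Theorem~\ref{ctgg}, whose statement only yields left-continuity on $(\lambda_c,1]$, leaving the ``easier'' range $\lambda\geq1$ implicit, whereas you close it explicitly. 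Your treatment there is sound: deleting the vertices with finite descendant subtree (the paper's leafless tree $\widetilde{\mathcal T}$) does not change $\pi(o)$ or the non-return event, since excursions into finite subtrees return almost surely and the trace of $RW_\lambda$ on the reduced tree is again $RW_\lambda$; every subtree of the reduced tree contains a ray, giving the uniform bound $\widetilde{\mathcal C}\geq(1-1/\lambda)/4$; and the proof of Theorem~\ref{ctg} indeed only uses the escape bound on a compact interval $[\lambda_0,\lambda_1]$, so choosing $\lambda_0>1$ suffices. One minor inaccuracy: the grafted trees $U_\gamma$ do \emph{not} fall into finitely many isomorphism types in the half-plane case (a side exit yields a quadrant tree rooted at a boundary point whose distance to the corner varies with $\gamma$), but this is harmless and unnecessary --- each $U_\gamma$ contains a rooted copy of $\mathcal T_{\mathbb Q}$ (restrict to the sub-quadrant with corner at $p+e$), and Rayleigh together with the bounded degree already gives a bound on $\widetilde{\mathcal C}(\lambda,\mathcal T^\gamma)$ that is independent of $\gamma$, which is all that weak uniform transience requires.
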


\begin{figure}[ht!]
  \centering
  \includegraphics[scale=0.6]{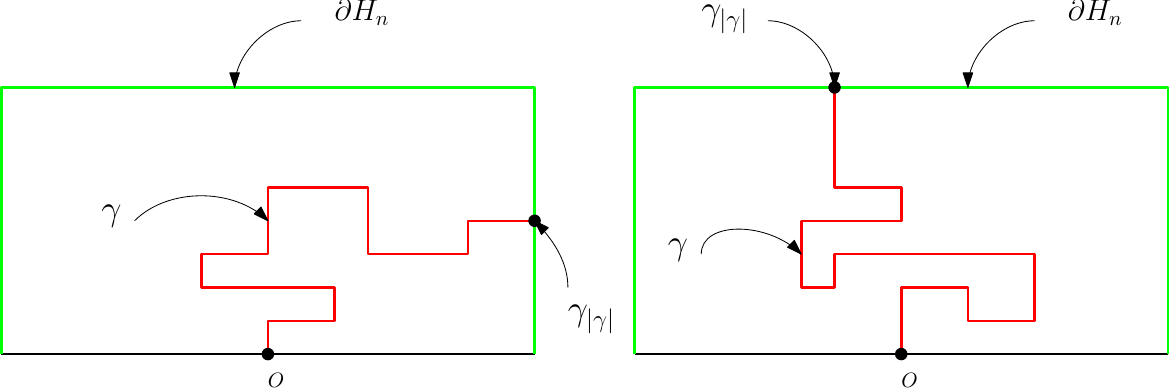}
  \caption{The boundary of $\mathbb{H}_n$ is green and the self-avoiding walk $\gamma$ is red. Recall that $\gamma$ is a vertex of the tree $\mathcal{T}_\mathbb{H}$. On the left (resp.\ right), we can add a  new quadrant $\mathbb{Q}$ (resp.\ new half-plane $\mathbb{H}$) rooted at $\gamma_{|\gamma|}$. Hence, on the left (resp.\ on the right) the subtree ${\left(\mathcal{T}_\mathbb H\right)}^\gamma$ contains the tree $\mathcal{T}_\mathbb{Q}$ (resp. $\mathcal{T}_\mathbb{H}$). }
  \label{fig:cutsets}
\end{figure}
\begin{proof}
  We prove this Lemma for the case $\mathcal{T}_\mathbb{H}$ and we use the same argument for other cases ($\mathcal{T}_\mathbb{Q}$ and $\mathcal{T}_{\mathbb{Z}^2}$). Note that $\mathcal{T}_\mathbb{H}$ is not uniformly transient, therefore we cannot use Theorem~\ref{ctg}. Fortunately, we can prove that $\mathcal{T}_\mathbb{H}$ is weakly uniformly transient. For this purpose, we define a sequence of cutsets $(\pi_n, n\geq 1)$ as follows. Set $\mathbb{H}_n:=\Lambda_n\bigcap \mathbb{H}$ and $\partial \mathbb{H}_n:=\left(\partial \Lambda_n\right)\bigcap \mathbb{H}$ (see Figure~\ref{fig:cutsets}). Recall that $\Omega\left(\mathbb{H}_n\right)$ is the set of self-avoiding walks of domain $\mathbb{H}_n$. For any $n\geq 1$,
  $$\pi_n :=\left \{  \gamma\in \Omega\left(\mathbb{H}_n\right):  \text{ for any
    } 0\leq k< \left   |    \gamma   \right   |   ,   \gamma(k)\in
    \overset{\circ}{\mathbb{H}}_n \text{ and } \gamma_{|\gamma|}\in
    \partial(\mathbb{H}_n)\right \}$$ Since $\mathbb{H}_n$ is a finite domain of
    $\mathbb{H}$, therefore any infinite self-avoiding walk starting at the origin
    of $\mathbb{H}$, must touch the boundary of $\mathbb{H}_n$. Hence, for any
    $n\geq 1$, we have $\pi_n$ is a V-cutset of $\mathcal{T}_\mathbb{H}$.  Let
    $\Gamma:=\underset{n\geq 1}{\bigcup}\pi_n$: it remains to verify that
  \begin{equation}
    \label{equ:recrec1}
    \forall  \lambda>\lambda_{c}(=\frac{1}{\mu}),\exists
    \alpha_{\lambda}>0,\forall    \gamma    \in    \Gamma,\mathbb{P}(\forall
    n>0,X^{\gamma}_{n}\neq \gamma)\geq \alpha_{\lambda}.
  \end{equation}
  Note that for any $\gamma\in \Gamma$, the subtree
  ${\left(\mathcal{T}_\mathbb{H}\right)}^\gamma$ contains  the tree $\mathcal
  T_{\mathbb{H}}$ or $\mathcal T_{\mathbb{Q}}$ (see Figure~\ref{fig:cutsets}).
  Hence,~\eqref{equ:recrec1} is a consequence of  Theorem~\ref{proposition1.1}
  and Theorem~\ref{RL}. We use Theorem~\ref{ctgg} to complete the proof of
  Lemma.
\end{proof}

Theorem~\ref{theorem1.4} is a consequence of Lemmas~\ref{lem:rightcontinuous}
and~\ref{lem:leftcontinuous}.

\section{The biased walk on the self-avoiding tree}
\label{newmeasure}

We now begin  the study of our  main object of interest,  which is the
biased random walk on the self-avoiding  tree. We will use the results
that were obtained  in the  previous section  to prove the properties of  the limit
walk. In the next section, we will gather a few natural conjectures.

\subsection{The limit walk}
\label{section6.1}

Let $\lambda  \in \left [0,+\infty  \right ]$ and consider  the biased
random     walk    $RW_{\lambda}$     on     $\mathcal    T$     where
$\mathcal{T}=\mathcal                T_{\mathbb{H}}$                or
$\mathcal{T}=\mathcal T_{\mathbb{Z}^2}$.  For $\lambda>\lambda_c$, the
biased random walk is transient so almost surely, the random walk does
not visit $\mathcal  T_k$ anymore after a sufficiently  large time. We
can    then    define    the     limit    walk,    as    denoted    by
$\omega_{\lambda}^{\infty}$ in the following way:
$$\omega_{\lambda}^{\infty}(i)=x_i                     \iff
  \left\{\begin{matrix}
    x_i \in \mathcal T_i \\
    \exists n_0, \forall n>n_0: X_n \in \mathcal T^{x_i}
  \end{matrix}\right \}.
$$
$\omega_{\lambda}^{\infty}$     is      a     random      ray.    Denote by
$\mathbb{P}_{\lambda}^{\mathbb{H}}$    the     law     of
$\omega_{\lambda}^{\infty}$   in  the   half-plane  $\mathbb{H}$   and
$\mathbb{P}_{\lambda}^{\mathbb{Z}^2}$,        the        law        of
$\omega_{\lambda}^{\infty}$ in  the plane  $\mathbb{Z}^2$. We  can see
$\mathbb{P}_{\lambda}^{\mathbb{H}}$                      (respectively
$\mathbb{P}_{\lambda}^{\mathbb{Z}^2}$)  as  a probability  measure  on
$SAW_{\infty}$ in the half-plane (respectively the plane).

For what follows, it will be  useful to have the following definition:
removing all the  finite branches of $\mathcal T_{R}$ (where  $R$ is a
regular lattice),  leads to  a new  tree without  leaf, which  we will
denote by $\widetilde{\mathcal T}_{R}$.

\subsection{The case \texorpdfstring{$\lambda=+\infty$}{λ=infinity} and percolation}
\label{sec:percolation}

First, we  review some definitions of  percolation theory. Percolation
was   introduced   by   Broadbent   and  Hammersley   in   1957   (see~\cite{broadbent1957percolation}). For $p \in \left [ 0,1 \right ]$, we
consider the  triangular lattice $\mathbb{T}$, a  site of $\mathbb{T}$
is  open  with  probability  $p$ or  closed  with  probability  $1-p$,
independently  of the  others.  This  can also  be  seen  as a  random
colouring  (in black  or  white)  of the  faces  of hexagonal  lattice
$\mathbb{T}^{*}$ dual of $\mathbb{T}$.

We  define  the  exploration  curve as  follows  (see~\cite{WN:book},
section 6.1.2  for more  details). Let $\Omega$  be a  simply connected
subgraph of the  triangular lattice and $A$, $B$ be  two points on its
boundary. We can then divide  the hexagonal cells of $\partial \Omega$
into two arcs, going from $A$  to $B$ in two directions (clockwise and
counter-clockwise).  These arcs  will be  denoted by  $\mathbb{B}$ and
$\mathbb{W}$ such that $A,\mathbb{B},B,\mathbb{W}$ is in the clockwise
direction. Assume that all of the hexagons in $B$ are colored in black
and that all of the hexagons in $\mathbb{W}$ are colored in white. The
color of  the hexagonal faces in  $\Omega$ is chosen at  random (black
with probability $p$ and  white with probability $1-p$), independently
of  the  others.  We  define  the  \emph{exploration  curve}  $\gamma$
starting at $A$ and ending at  $B$ which separates the black component
containing   $\mathbb{B}$   from   the  white   component   containing
$\mathbb{W}$.

Then the exploration curve $\gamma$  is a self-avoiding walk using the
vertices and edges of hexagonal  lattice $\mathbb{T}^*$. We can define
this interface  $\gamma$ in  an equivalent, dynamical  way, informally
described  as follows.  At  each  step, $\gamma$  looks  at its  three
neighbors on  the hexagonal lattice, one  of which is occupied  by the
previous  step  of $\gamma$.  For  the  next step,  $\gamma$  randomly
chooses one of these neighbors that  has not yet occupied by $\gamma$.
If there is just one neighbor that  has not yet been occupied, then we
choose this  neighbor and if there  are two neighbors, then  we choose
the right  neighbor with  probability $p$ and  the left  neighbor with
probability $1-p$.

We know that there exists $p_c \in \left [ 0,1 \right ]$ such that for
$p<p_c$ there is almost surely  no infinite cluster, while for $p>p_c$
there is almost  surely an infinite cluster. This  parameter is called
\emph{critical  point}.  It  is  known  that  the  critical  point  of
site-percolation on  the triangular lattice equals  $\frac{1}{2}$. The
lower   bound   of   critical   point  was   proven   by   Harris   in~\cite{harris1960lower}.  A  similar  theorem   in  the  case  of  bond
percolation   on    square   lattice   was   given    by   Kesten   in~\cite{kesten1980critical}, and the result on the triangular lattice is
obtained in a similar fashion.

Now,  take  $\Omega=\mathbb{T}^*_{+}$,  the  half-plane  of  hexagonal
lattice. The hexagons on the  boundary of $\Omega$ ($\partial \Omega$)
and on  the right of  origin (denoted  by $\partial ^{+}  \Omega$) are
colored in black and the hexagons on $\partial \Omega$ and on the left
of origin ($\partial^{-} \Omega$) are  colored in white. In this case,
the  exploration curve  is  an (random)  infinite self-avoiding  walk.
Denote  by   $\mathcal  T_{\mathbb{T}^*_+}$  the   self-avoiding  tree
constructed from the self-avoiding walks in $\mathbb{T}^*_{+}$.

In  the  case  $\lambda=+\infty$,   one  can  reinterpret  the  second
construction   of   the   exploration   curve  as   the   limit   walk
$\omega^{\infty}$ on $\widetilde{\mathcal T}_{\mathbb T^*_+}$. This is
very useful  because every feature of  the curve $\gamma$ is  also one
for $\omega^{\infty}$  and can therefore  be restated in terms  of the
biased walk on the self-avoiding tree. One of these properties is that
$\gamma$ almost  surely reaches the  boundary of $\Omega$  infinitely
many times, which  follows from  Russo-Seymour-Welsh type arguments.  As we
will  see   below,  this   property  is  still   valid  in   the  case
$RW_{\lambda}$,       for      all       $\lambda>\lambda_c$      (see
Theorem~\ref{thm:propertieoflimitwalk}).

\subsection{Proof of Theorem~\ref{thm:propertieoflimitwalk}}
\label{sec:plambda}

In this section, for any $z\in  \mathbb{Z}^2$, we write $\Re z$ (resp.
$\Im z$) for the real part (resp.\ imaginary part) of $z$. Before going
into  the  proof, we  introduce  a  map from  the  set  of all  finite
self-avoiding walks on  a graph to the set of  vertices of that graph,
which to  each walk associates  the location  of its last  vertex (the
``head        of        the       snake''):        formally,        if
$\omega  = (\omega_0,\ldots,  \omega_{|\omega|})$  is a  self-avoiding
path, its  head is the  vertex $\omega_{|\omega|}$. In our  setup, the
underlying graph is  either $\mathbb H$ or $\mathbb Z^2$,  so the walk
itself     is    a     vertex     of     the    corresponding     tree
($\mathcal      T      =      \mathcal     T_{\mathbb      H}$      or
$\mathcal T =  \mathcal T_{\mathbb Z^2}$ respectively),  and we define
$p : V(\mathcal T) \to \mathbb Z^2$ by letting
\[p(\omega) := \omega_{|\omega|}.\]
The image  by $p$ of a  random walk on  $\mathcal T$ is thus  a random
process on $\mathbb Z^2$, which we are going to investigate.

The proof of Theorem~\ref{thm:propertieoflimitwalk} has several steps.
In the first  step, we study the trajectory of  the biased random walk
$X_n$   on  $\mathcal   T$.  We   prove  that,   under  the   measures
$\mathbb{P}^{\mathbb           H}_{            \lambda}$           and
$\mathbb{P}^{\mathbb Z^2}_{\lambda}$,  $p(X_n)$ almost  surely reaches
the  line  $\mathbb{Z}\times\{0\}$.  In  the  second  step,  we  prove
furthermore    that    it    almost   surely    reaches    the    line
$\mathbb{Z}\times\{0\}$  an infinite  number  of times.  In the  third
step, we  prove that  under $\mathbb{P}^{\mathbb  Z^2}_{\lambda}$, the
limit walk  (\emph{i.e.}, the infinite self-avoiding  path obtained as
the limit of  $(X_n)$ as $n\to\infty$) almost surely  touches the line
$\mathbb{Z}\times \{0\}$  an infinite  number of  times ---  note that
this is  not a deterministic  consequence of the previous  step, since
there are trajectories for $(X_n)$  which do touch the line infinitely
many times but whose  limit path does not. In the  last step, we prove
the corresponding result for the  half-plane, \emph{i.e.} we show that
under $\mathbb{P}^{\mathbb H}_{\lambda}$, the limit walk almost surely
touches the line $\mathbb{Z}\times \{0\}$ an infinite number of times.
For simplicity, we will write $Y_n$ for $p(X_n)$.

\subsubsection{\bf The first step}

In this step, we study the trajectory of $RW_{\lambda}$. We begin with
the following simple lemma:

\begin{lemma}
  \label{3.2}
  Let $\lambda>\lambda_c$ and consider  the biased random walk $(X_n)$
  of   law   $RW_{\lambda}$    on   $\mathcal   T_{\mathbb{Z}^2}$   or
  $\mathcal       T_{\mathbb{H}}$.       Then      almost       surely
  $\limsup \left | \Re(p(X_n)) \right |=+\infty$.
\end{lemma}

\begin{figure}[ht!]
  \centering
  \includegraphics[scale=0.6]{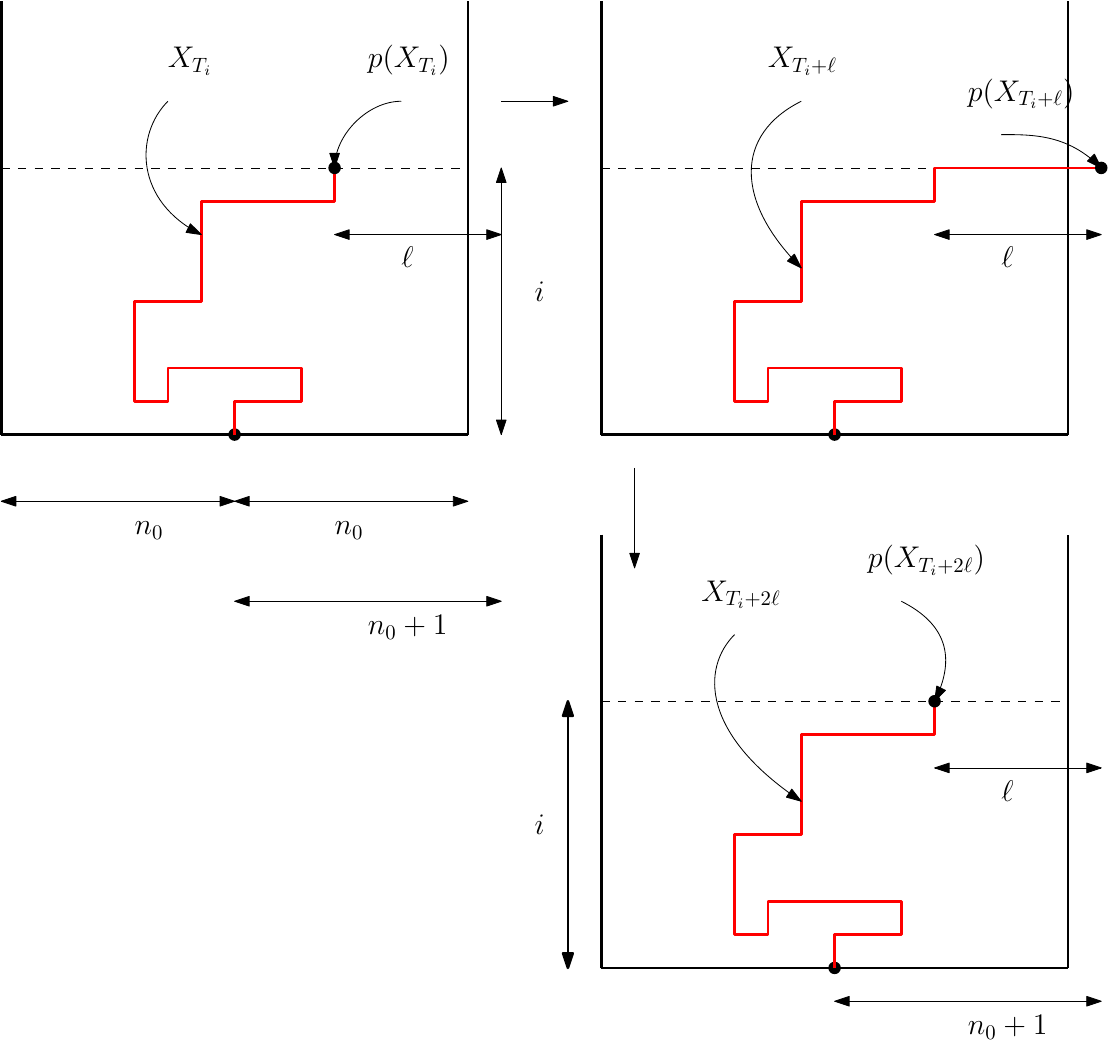}
  \caption{Illustration of the proof of Lemma~\ref{3.2}}
  \label{fig:firststep1}
\end{figure}

\begin{proof}
  Recall that  we defined $Y_n :=  p(X_n)$. We prove the  lemma in the
  case    of    $\mathcal     T_{\mathbb{H}}$;    the    result    for
  $\mathcal T_{\mathbb{Z}^2}$  can be obtained  in a similar  way. The
  idea of the  argument is straightforward: if the real  part of $Y_n$
  is constrained, then its imaginary part has to take large values and
  every time  it visits a  new height, the real  part has a  chance of
  becoming large. What follows is a formalization of this idea. Assume
  that
  $\alpha:=\mathbb{P}(\limsup \left |  \Re(Y_n) \right | <+\infty)>0$,
  then there exists a constant $n_0>0$ such that,
  \begin{equation}
    \label{equ:firststeprec1}
    \beta:= \mathbb{P}\left\{\forall n>0, -n_{0}\leq  \Re(Y_n)\leq n_{0}\right\}>0.
  \end{equation}
  For  any    $i    \geq    0$, define
  \begin{equation}
    \label{equ:firststeprec2}
    T(i):=\inf \left \{ n\geq 0: \Im(Y_n)=i \right \}.
  \end{equation}
  Note       that       $T(i)<+\infty$        on       the       event
  $\left\{\text{for all } n>0: -n_{0}\leq \Re(Y_n)\leq n_{0}\right\}$.
  We remark that,  at time $T(i)$, $X$ can always  be extended towards
  the left or the right. For any $i\geq 0$, define
  $$\scalebox{.9}{$S_{i}:=\left  \{  \exists!  k: \left  |  \Re(Y_k)  \right|=n_{0}+1,  \Im(Y_{k})=i \text{ and }\forall{}   n  \neq{}   k:  -n_{0}   \leq \Re
      (Y_n)\leq n_{0} \right \}$}.$$
  If the walk is at time $T(i)$, then we go   towards  the   left  or   the   right  to   reach  the   domain
  $$\left \{ \Re z=n_{0}+1 \right  \}\bigcup \left \{ \Re z=-n_{0}-1\right\},$$ and after, we go back to $X_{T(i)}$ (see Figure~\ref{fig:firststep1}). We need at most $2n_{0}$
  steps  to do this.  Then,  there exist  a  constant  $c>0$ such  that for any $i>0$,
  \begin{equation}
    \label{equ:firststeprec3}
    \mathbb{P}(S_{i})\geq c \,\beta.
  \end{equation}
  On the other hand, we have
  \begin{equation}
    \label{equ:firststeprec4}
    \bigcup_{i=0}^{+\infty}S_{i} \subset \left \{\forall n\geq 0, -n_0-1\leq  \Re (Y_n)
    \leq n_{0}+1\right \}.
  \end{equation}
  Since these $S_i$ are pairwise disjoint, by using~\eqref{equ:firststeprec3} and~\eqref{equ:firststeprec4} we obtain:
  $$\mathbb{P}\left(\forall n\geq 0, -n_0-1\leq  \Re (Y_n)
    \leq n_{0}+1\right )\geq \sum_{i=0}^{\infty} \mathbb{P}(S_{i})\geq
    \sum_{i=0}^{\infty}c\, \beta=+\infty.$$ This  is a contradiction and
  therefore                        almost                       surely
  $\limsup \left | \Re (Y_n) \right |=+\infty $.
\end{proof}

\begin{lemma}
  \label{theorem11}
  Let $\lambda>\lambda_c$ and consider  the biased random walk $(X_n)$
  of   law   $RW_{\lambda}$    on   $\mathcal   T_{\mathbb{Z}^2}$   or
  $\mathcal                    T_{\mathbb{H}}$.                   Then
  $\#\left \{ n>0:\Im(p(X_n))=0 \right \}\geq 1$ almost surely.
\end{lemma}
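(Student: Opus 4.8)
The plan is to reduce to the half-plane tree, argue by contradiction, and combine Lemma~\ref{3.2} with a local ``escape hatch'' along the boundary line. First I would dispose of $\mathcal T_{\mathbb Z^2}$ in favour of $\mathcal T_{\mathbb H}$: if $Y_1\in\{(1,0),(-1,0)\}$ the assertion already holds at $n=1$ (probability $\tfrac12$); otherwise $Y_1\in\{(0,1),(0,-1)\}$, and by the reflection of $\mathbb Z^2$ across $\mathbb Z\times\{0\}$ --- an automorphism of $\mathcal T_{\mathbb Z^2}$ commuting with $RW_\lambda$ --- it suffices to bound the probability of the event that $X_1=\nu_1$ (the single northward step) and $Y_n\notin\mathbb Z\times\{0\}$ for every $n\ge1$ thereafter. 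On that event every vertex of every $X_n$ is a past value of the head, hence lies in $\{\Im\ge1\}\cup\{(0,0)\}\subseteq\mathbb H$; so the whole trajectory stays in the sub-tree $\mathcal T_{\mathbb H}\subseteq\mathcal T_{\mathbb Z^2}$, and since any vertex whose head has height $\ge1$ has the same degree in $\mathcal T_{\mathbb Z^2}$ and in $\mathcal T_{\mathbb H}$, the two biased walks coincide there. Thus it is enough to prove the statement for $\mathcal T_{\mathbb H}$.

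Suppose then, for contradiction, that $\beta:=\mathbb P_\lambda^{\mathbb H}(E)>0$, where $E:=\{\Im(Y_n)\ge1\text{ for all }n\ge1\}$. On $E$ one has $X_1=\nu_1$; no $X_n$ meets $\mathbb Z\times\{0\}$ except at the origin; and the trajectory stays inside the sub-tree $\mathcal S$ of self-avoiding walks that extend $\nu_1$ and remain in $\{\Im\ge1\}$ beyond their first step --- a tree that is rooted-isomorphic to $\mathcal T_{\mathbb H}$ by an upward shift. Since Lemma~\ref{3.2} holds almost surely, on $E$ we also have $\limsup_n|\Re(Y_n)|=+\infty$.

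Now for the escape hatch: call a vertex $x$ of $\mathcal T_{\mathbb H}$ a \emph{doorway} if its head is a point $(a,1)$ with $a\ne0$. On $E$ the site $(a,0)$ lying below such a head is never occupied by $X_n$ (occupying it would drop the head to height $0$), so $(a,0)$ is a child of $x$, and whenever the walk is at $x$ it steps onto the vertex $x\to(a,0)$ with probability at least $\lambda/(1+3\lambda)>0$, conditionally on the past. Hence, if the walk visited doorways infinitely often on $E$, then by the conditional form of the second Borel--Cantelli lemma it would almost surely eventually take one of these steps, producing $\Im(Y_n)=0$ for some $n\ge1$ and contradicting $E$; so on $E$ the walk visits doorways only finitely often.

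\emph{The heart of the matter --- and the step I expect to be the main obstacle --- is to rule this out}, i.e.\ to show that on $E$ the head must return infinitely often to a bounded neighbourhood of the line $\Im=1$, so that doorways are in fact visited infinitely often. I would attack this with a quantitative version of the Lemma~\ref{3.2} surgery, run ``from the boundary inward'' rather than ``from the head'': because $\mathcal T_{\mathbb H}$ is super-periodic with matching branching and growth $\mu$ (Proposition~\ref{proposition1.1}), the sub-tree below any vertex close to the boundary contains a uniformly ``fat'' copy of $\mathcal T_{\mathbb H}$ rooted on $\mathbb Z\times\{0\}$, so by Rayleigh monotonicity (Theorem~\ref{RL}) the escape probability through such copies is bounded below; feeding this into a cutset/Borel--Cantelli scheme modelled on the disjoint events $S_i$ in the proof of Lemma~\ref{3.2} should show that escaping upward forever has probability zero. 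An alternative route to the same contradiction is to iterate the escape-hatch argument level by level (with the strong Markov property, and using that on the relevant event all but finitely many heads at a given height lie above a site not yet occupied by $X_n$), deducing that on $E$ one has $\Im(Y_n)\to+\infty$, and then playing this off against $\limsup_n|\Re(Y_n)|=+\infty$ and the transience of $RW_\lambda$ on the ``fat'' boundary sub-trees.
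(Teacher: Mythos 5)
There is a genuine gap, and you have in fact flagged it yourself: the entire argument for the half-plane case stops at the decisive point. Your reduction of $\mathcal T_{\mathbb{Z}^2}$ to $\mathcal T_{\mathbb{H}}$ via the degree-matching coupling is correct (every point of $X_n$ was a past head, so on the avoidance event the trajectory stays in $\mathcal T_{\mathbb H}$ and the transition probabilities agree), and the ``doorway'' step with the conditional Borel--Cantelli lemma is sound; but these only show that on $E$ the walk visits doorways finitely often. The statement you then need --- that on $E$ the head must come back to a bounded neighbourhood of the line $\Im=1$ infinitely often --- is exactly the hard part, and what you offer for it is a list of strategies (``should show'', ``I would attack'') rather than a proof. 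None of the cited tools (super-periodicity, Rayleigh monotonicity, cutset/Borel--Cantelli schemes) visibly excludes the scenario in which, on $E$, the head drifts upward with $\Im(Y_n)\to+\infty$ while $\limsup|\Re(Y_n)|=+\infty$; indeed, proving returns to the boundary region infinitely often on $E$ is at least as strong as the present lemma and is close to the content of Lemma~\ref{theorem12}, which the paper only establishes \emph{after} this lemma, using it as input. So as written the proposal is circularly reduced to a claim that is not easier than the one to be proved.

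Note also that your reduction runs in the opposite direction to the paper's, and this is not a cosmetic difference. The paper proves the $\mathbb{Z}^2$ case \emph{first}: assuming the avoidance probability $\alpha>0$, Lemma~\ref{3.2} forces the head to reach the vertical lines $\{\Re z=\pm n_0\}$, at which moment an entire untouched half-plane (rotated by $90$ degrees) is available; after one forced step (cost $\lambda/(1+3\lambda)$) the walk can be restarted there and the hypothesis $\alpha>0$ is reused against itself, producing a uniform extra mass $\lambda\alpha^2/(1+3\lambda)$ of trajectories that avoid the line up to time $n_0$ but touch it later, which contradicts the near-saturation $\mathbb P(\exists k\le n_0:\Im(Y_k)=0)\ge 1-\alpha-\epsilon$. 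This ``new half-plane with origin $Y_S$'' move exploits the rotational symmetry of the full plane and the fact that the region beyond a first-hit vertical line is completely fresh; it is precisely what is lost once you pass to $\mathcal T_{\mathbb H}$, where the boundary line breaks the symmetry. The half-plane case is then deduced from the plane case by the same coupling you use, run the other way. To repair your proof you would either have to supply the missing ``returns near the boundary'' argument in $\mathbb H$, or reverse the logical order as the paper does and prove the planar statement directly.
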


\begin{proof}
  Recall again that $Y_n = p(X_n)$ is  the location of the head of the
  walk at time  $n$. The idea of  the proof is somehow  similar to the
  previous  one: given  that  $|\Re(Y_n)|$  reaches arbitrarily  large
  values, we will look at the path at record times of $|\Re(Y_n)|$ and
  argue from the  previous lemma that from each such  record time, the
  walk touches the real axis with positive probability. We again write
  the proof in  $\mathbb Z^2$ in detail, since the  half-plane case is
  almost identical.

  Say that  a path $\omega$ is  \emph{good} if its head  is further to
  the left or right than its other vertices, namely if
  \[|\Re   p(\omega)|   >   \max_{k<|\omega|}   |\Re(\omega_k)|.\]   A
  consequence of  the previous lemma  is that almost surely,  $X_n$ is
  good  infinitely  many  times:  indeed,  it  is  automatically  good
  whenever $|\Re(Y_n)|$ reaches a record value. Now define inductively
  the following stopping times: $T_0 = S_0 = 0$,
  \[T_{k+1} =  \inf \{ n>S_k :  X_n \text{~is good} \}  \text{~and} \]
  \[ S_{k+1} = \inf \{ n>T_{k+1} : \Im (Y_n) = 0 \text{~or~} \Im (Y_n)
    =  2  \Im(Y_{T_{k+1}})\}.\] All  these  stopping  times are  a.s.\
  finite,  the $T_k$  as mentioned  above and  the $S_k$  by the  same
  argument  as in  the proof  of  the previous  lemma, which  excluded
  $(Y_n)$ from remaining within a strip.

  Assume              by               contradiction              that
  $\alpha := \mathbb  P(\forall n>0, \Im Y_n>0)$ is  positive. The key
  observation, which  we will  use several times  in various  forms in
  what follows, is that the behavior of the walk after time $T_k$, and
  until its  first visit to the  parent of $X_{T_k}$ in  $\mathcal T$,
  matches       the       similar       process       defined       in
  $\mathbb Z^2 \setminus \{X_{T_k}(i) : 0 \leq i < |X_{T_k}|\}$. Here,
  this         domain          contains         the         half-plane
  \[\mathbb H_k := \{ z \in \mathbb Z^2 : |\Re(z)| \geq |\Re(Y_{T_k})|
    \text{~and~} \Re(z)  \Re(Y_{T_k}) \geq  0 \}. \]  With conditional
  probability $\lambda/(1+3\lambda)$, the first step of $Y$ after time
  $T_k$  will  be  horizontal  and into  $\mathbb  H_k$,  after  which
  $Y$ will remain within $\mathbb H_k$ with conditional probability at
  least $\alpha$ by our running hypothesis.  If such is the case, then
  by symmetry, $\Im(Y_{S_k}) =  0$ with conditional probability $1/2$.
  To summarize,  \[ \mathbb P(\Im(Y_{S_k})=0 |  \mathcal F_{T_k}) \geq
    \frac {\alpha\lambda} {2(1+3\lambda)} > 0,\] where as is customary
  $\mathcal  F_{T_k}$  denotes  the $\sigma$-field  generated  by  the
  process $(X_n)$ up to the stopping time $T_k$. Since this inequality
  holds  for  all  $k$,  there  a.s.\  exists  some  $k_0$  such  that
  $\Im(Y_{S_{k_0}}) = 0$, which contradicts our assumption.
\end{proof}

As  a shortcut  in  later arguments,  we  will refer  to  the kind  of
construction in the proof above  as \emph{considering a new half-plane
  with origin $Y_{T_k}$}.

\subsubsection{\bf The second step}

The goal of this step is to prove the following lemma:

\begin{lemma}
  \label{theorem12}
  Let  $\lambda>\lambda_c$ and consider the  biased random
  walk  $RW_{\lambda}$  on   $\mathcal T_{\mathbb{Z}^2}$  or  $\mathcal T_{\mathbb{H}}$. Then almost surely $\# \left  \{ n>0:\Im(Y_{n})=0 \right \} = +\infty$.
\end{lemma}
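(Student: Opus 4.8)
The plan is to promote the single visit guaranteed by Lemma~\ref{theorem11} to infinitely many visits via a renewal argument at the successive times the head $Y_n=p(X_n)$ lies on $\mathbb Z\times\{0\}$. Precisely, I would prove by induction on $k$ that $\mathbb P(N\ge k)=1$, where $N:=\#\{n>0:\Im(Y_n)=0\}$. The cases $k=0$ and $k=1$ are trivial and Lemma~\ref{theorem11} respectively. For the step, assume $\mathbb P(N\ge k)=1$ and let $T_k$ be the time of the $k$-th visit of $Y$ to $\mathbb Z\times\{0\}$; this is a stopping time for $(X_n)$ and is a.s.\ finite by the induction hypothesis. By the strong Markov property at $T_k$, conditionally on $X_{T_k}$ the process $(X_{T_k+n})_{n\ge0}$ is $RW_\lambda$ started from the vertex $X_{T_k}$, so if we know that such a walk a.s.\ has its head on $\mathbb Z\times\{0\}$ at some positive time, we obtain $N\ge k+1$ a.s. Since $\{N\ge k\}$ decreases in $k$, this yields $\mathbb P(N=\infty)=\lim_k\mathbb P(N\ge k)=1$, which is the lemma.

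Thus the one thing to establish is a ``restarted'' form of Lemma~\ref{theorem11}: for every $\lambda>\lambda_c$ and every vertex $v$ of $\mathcal T$ (with $\mathcal T=\mathcal T_{\mathbb Z^2}$ or $\mathcal T_{\mathbb H}$), the head of $RW_\lambda$ started from $v$ reaches $\mathbb Z\times\{0\}$ at some positive time almost surely. For $v=o$ this is Lemma~\ref{theorem11}, and for general $v$ one re-runs its proof: the root plays no special role there, the argument using only (i) transience of $RW_\lambda$ on $\mathcal T$ (valid from every vertex since $\lambda>\lambda_c$), (ii) Lemma~\ref{3.2}, whose ``wiggle to the side of the strip'' proof is insensitive to the starting vertex, and (iii) the ``new half-plane with origin $Y_S$'' construction, which only concerns the walk after a stopping time and hence does not see $v$. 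The one bookkeeping change is that the constant $n_0$ in that proof must be chosen larger than $\max_i|\Re(v(i))|$, which is harmless as $v$ is fixed; and, exactly as in Case~II of Lemma~\ref{theorem11}, the half-plane case reduces to the plane case because the walk restricted to $\{\Im>0\}$ has the same law in $\mathbb H$ as in $\mathbb Z^2$. If $p(v)$ happens to lie in a bounded region cut off from infinity by the self-avoiding walk $v$, one first applies the strong Markov property at the (a.s.\ finite, by transience) first time the walk retracts to the mouth of that region, reducing to a vertex whose head lies on the boundary of the unbounded component of the complement.

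The hard part is precisely this restarted statement, i.e.\ checking that the proof of Lemma~\ref{theorem11} genuinely localises to an arbitrary starting vertex. The delicate point is that after restarting at $X_{T_k}$ the walk may backtrack below $X_{T_k}$, even all the way toward the root, so one cannot simply replace the walk on $\mathcal T$ by $RW_\lambda$ on the subtree $\mathcal T^{X_{T_k}}$ (on which the dynamics near the root would differ); this is why the statement must be phrased in terms of the head of the full walk $RW_\lambda$ on $\mathcal T$ started at $v$. Once it is granted, the induction above is routine, and the passage from $\mathcal T_{\mathbb H}$ to $\mathcal T_{\mathbb Z^2}$ is handled as in Lemma~\ref{theorem11}.
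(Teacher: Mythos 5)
Your skeleton (induction on the number of visits, strong Markov property at the successive visit times $T_k$) is sound, but the entire weight of the proof is carried by your ``restarted'' form of Lemma~\ref{theorem11}, and that statement is not proved: it is essentially equivalent to the lemma being proved (once the head a.s.\ returns to the line from \emph{every} configuration, iterating is immediate), so the assertion that ``the root plays no special role'' in the proof of Lemma~\ref{theorem11} is exactly where the missing content lies. The localization is not routine. The proof of Lemma~\ref{theorem11} uses the up/down symmetry of the initial configuration to reduce to ``the head stays above the line'', and then compares the walk after a stopping time with a fresh walk in a new half-plane, the root quantity $\alpha=\mathbb P(\forall n>0,\ \Im Y_n>0)$ reappearing in the quantitative estimate; for a general starting vertex $v$ these steps do not carry over automatically: the trace of $v$ may meet and cross $\mathbb Z\times\{0\}$ (indeed the restart vertices $X_{T_k}$ always have their head \emph{on} the line, which is precisely where the half-plane/plane identification used in Case~II of Lemma~\ref{theorem11} breaks), the above/below symmetry is destroyed, and the comparison domains must avoid all of $v$, so much more than ``take $n_0$ larger than the horizontal extent of $v$'' is needed. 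You flag this as the hard part but offer no argument for it.

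The paper's proof is organized precisely to avoid any statement started from an arbitrary vertex. It argues by contradiction: it extracts a finite initial configuration $\omega_0$ of positive probability after which the head never returns to the line; in the half-plane case it then adds one line below and views the whole trajectory as a trajectory of the \emph{root} walk on the enlarged tree $\mathcal T_{\mathbb H'}$, so the bad event becomes ``no visit to the boundary line at any positive time'' for a root walk, contradicting Lemma~\ref{theorem11} applied to $\mathbb H'$; in the plane case, where $\omega_0$ may already have excursions below the line, it performs an explicit surgery on $\omega_0$ (re-arranging the excursions, Lemma~\ref{algorithm}, the concatenated path $\omega_1\oplus\omega_2\oplus\omega_3$ based at the point $B$) to transfer the contradiction to the half-plane case. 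That surgery is exactly the treatment of the difficulty you defer to ``re-running'' Lemma~\ref{theorem11}. If you want to keep your induction, the natural repair is to prove your restarted lemma by the same device --- encode the restart configuration as a positive-probability initial segment of a root walk on a shifted or re-based half-plane tree --- rather than by claiming that the internal proof of Lemma~\ref{theorem11} localizes to an arbitrary starting vertex.
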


\begin{proof}
  \begin{figure}[ht!]
    \centering
    \includegraphics[scale=0.55]{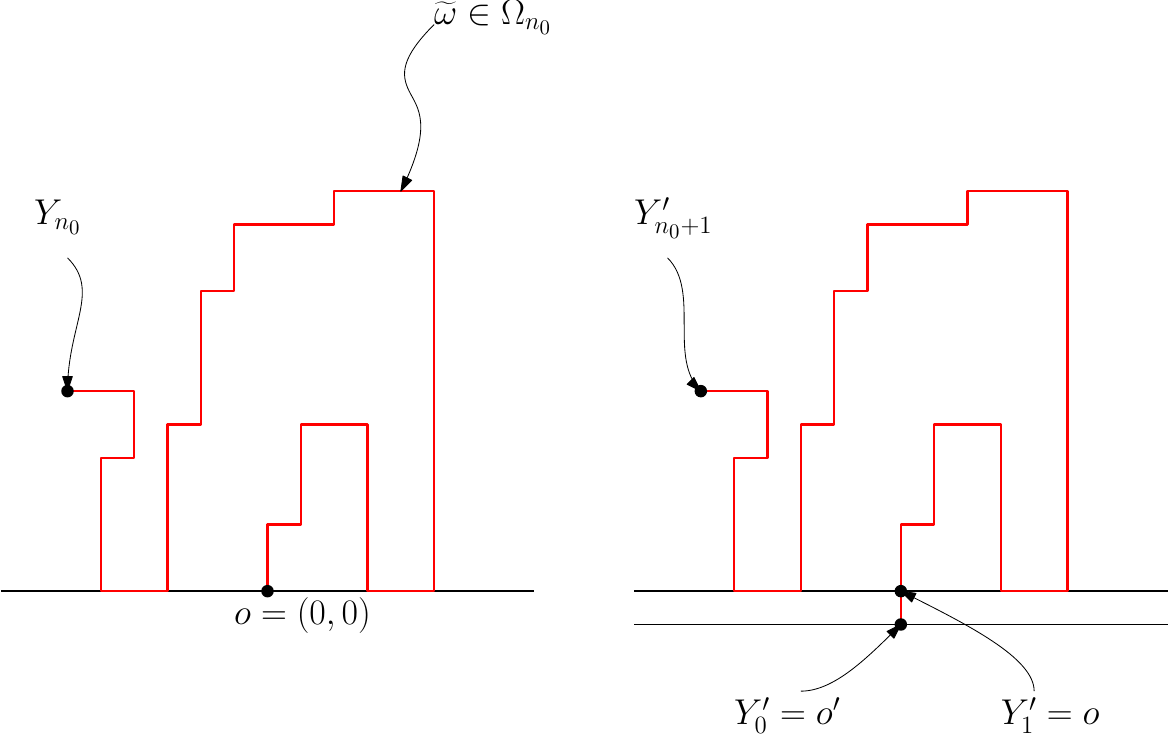}
    \caption{Illustration of the proof of Lemma~\ref{theorem12}, case $\mathcal T_{\mathbb{H}}$}
    \label{fig:SAW5}
  \end{figure}

  We again need to deal separately with the two cases.

  \paragraph{\bf Case {I}: the tree $\mathcal T_{\mathbb{H}}$}

  Denote by $A$ the event that  $(Y_n)$ touches the real line infinitely
  often, namely
  $$A:=\left \{  \# \left  \{ n>0:  \Im Y_n  =0 \right  \}=+\infty \right\}=\left \{ \forall  k, \exists n>k: \Im Y_n=0 \right  \}.$$ Assume by
  contradiction  that $\mathbb{P}(A^c)>0$.  Then,  there exists  $n_0>0$
  such that
  \begin{equation}
    \label{equ:secondstep11}
    \mathbb{P}(\forall n\geq n_0: \Im Y_n >0)>0.
  \end{equation}
  Now,  consider  the  random  walk  until  time  $n_0$  and  denote  by
  $\Omega_{n_0}$ the  set of all  possible trajectories of  $(X_{n})$ in
  the  tree  $\mathcal  T_{\mathbb  H}$   up  to  that  time.  For  each
  $\omega \in \Omega_{n_0}$, define the events
  \begin{equation}
    \label{secondstep10}
    A_{\omega} := \{  {(X_{n})}_{n \leq n_0} = \omega  \} \quad \text{and}
    \quad B_{\omega} := A_\omega \cap \{ \forall
    n\geq n_0, \Im(Y_n)>0 \}.
  \end{equation}
  Since  the  cardinality  of  $\Omega_{n_0}$ is  finite,  there  exists
  $\tilde       \omega      \in       \Omega_{n_0}$      such       that
  $\mathbb{P}(B_{\tilde    \omega})>0$;    fix   such    a    trajectory
  $\tilde\omega =  (\tilde \omega_0, \ldots, \tilde  \omega_{n_0})$ from
  now on.

  We add a new line under the line $\mathbb{Z}\times \{0\}$ and consider
  a  new half-plane  $\mathbb{H}'$ with  origin $O'  = (0,-1)$  (see the
  Figure~\ref{fig:SAW5}   and   the   discussion   in   the   proof   of
  Lemma~\ref{theorem11}).  Observe  an  independent biased  random  walk
  $X'_n$  with parameter  $\lambda$  on  $\mathcal T_{\mathbb{H}'}$  and
  denote by $Y'_n$ its head, $Y'_n := p(X'_n)$. Let
  $$A'_{\tilde \omega}  := \{ \forall n  \leq n_0, X'_{1+n} =  O' \oplus
    \tilde     \omega_n     \}$$     where     as     defined     earlier,
  $O' \oplus \tilde \omega_n$ is  the self-avoiding path in $\mathbb H'$
  obtained by prepending the vertex  $O'$ to the path $\tilde \omega_n$;
  obviously  $P(A'_{\tilde  \omega}) >  0$.  Now,  conditionally on  the
  events $A_{\tilde \omega}$ and $A'_{\tilde \omega}$, the walks $(X_n)$
  and $(X'_n)$ can be  coupled after time $n_0$ in such  a way that they
  agree on the event $B_{\tilde \omega}$; and whenever this is the case,
  the imaginary  part of  $Y'_n$ remains positive  for all  times $n>0$,
  thus  showing   that  this  happens  with   positive  probability  and
  contradicting Lemma~\ref{theorem11} applied to $X'$.

  \bigskip

  \paragraph{\bf Case {II}: the tree $\mathcal T_{\mathbb{Z}^2}$.}

 \begin{figure}[ht!]
   \centering
   \includegraphics[scale=0.55]{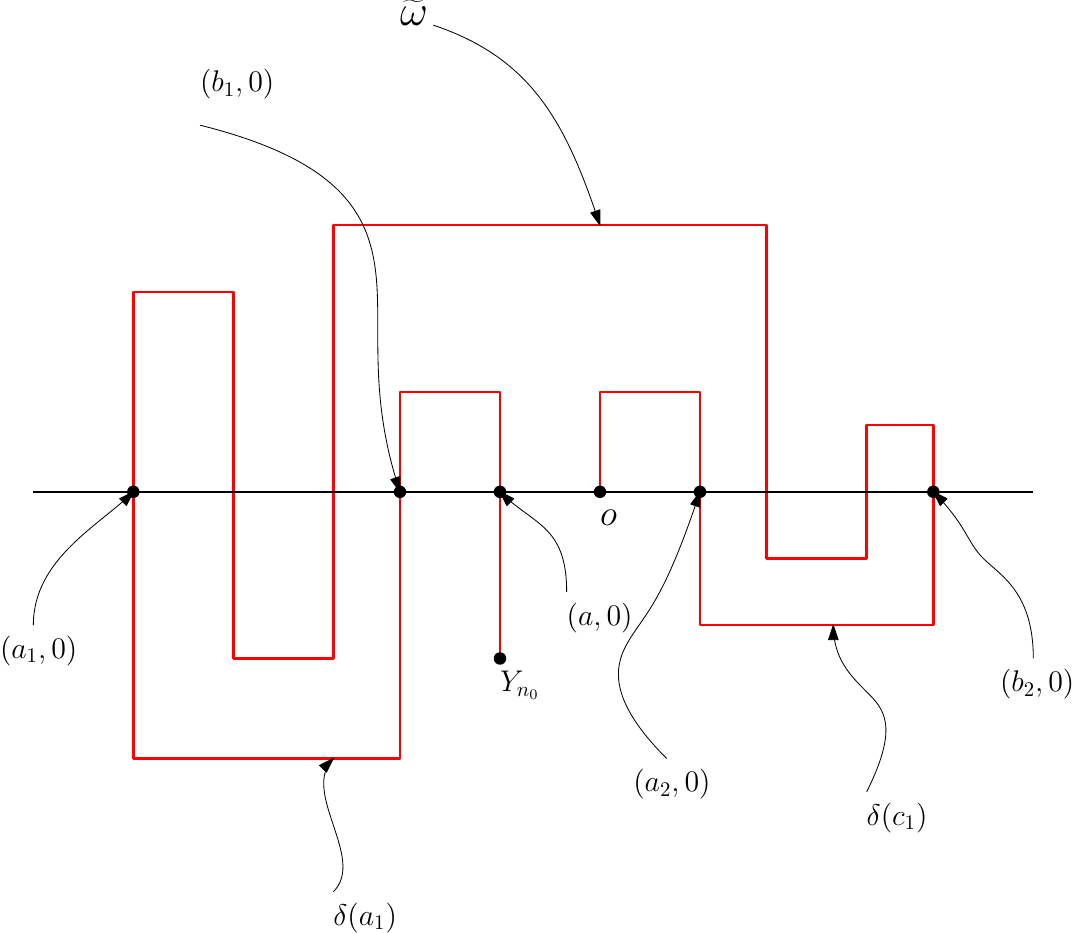}
   \caption{Illustration of the proof of Lemma~\ref{theorem12}, case $\mathcal T_{\mathbb{Z}^2}$}
   \label{fig:SAW6}
 \end{figure} 

  \begin{figure}[ht!]
    \centering
    \includegraphics[scale=0.55]{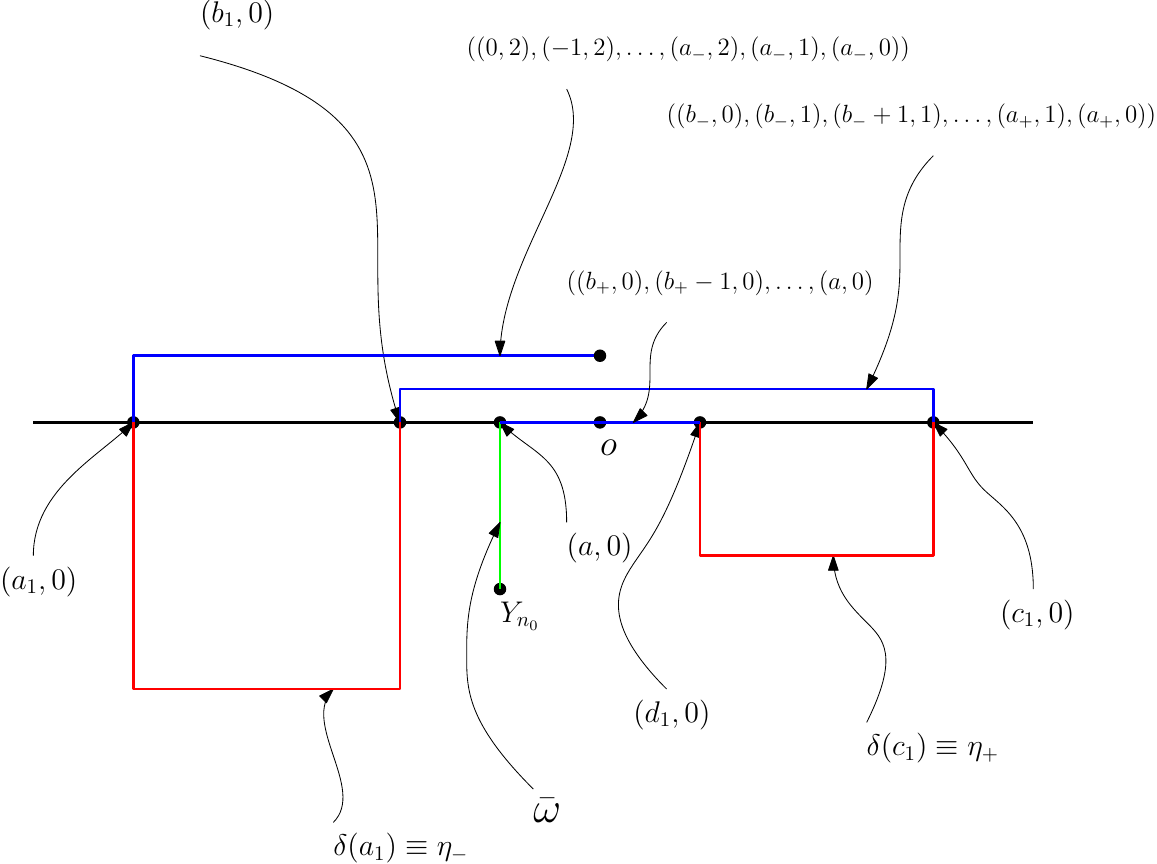}
    \caption{Illustration of the proof of Lemma~\ref{theorem12}, case $\mathcal T_{\mathbb{Z}^2}$}
    \label{fig:secondstep3}
  \end{figure}

  Assume  again  that with  positive  probability,  the process  $(Y_n)$
  reaches the line $\mathbb{Z}\times \{0\}$  finitely many times. By the
  same  argument as  in the  case  of $\mathcal T_{\mathbb{H}}$,  there exists  a
  positive number  $n_0$ and  a trajectory $\tilde  \omega$ in  the tree
  $\mathcal T_{\mathbb Z^2}$ such that the events
  $$A_{\tilde\omega} := \{  {(X_n)}_{n \leq n_0} = \tilde  \omega \} \quad
    \text{and} \quad B_{\tilde\omega} := A_{\tilde \omega} \cap \{ \forall n
    \geq n_0, \Im  Y_n < 0 \}$$ both have  positive probability. The key step of
    the proof is to show  the existence of an integer $h>0$ and of a
    self-avoiding   path    $\eta$    in    the   half-plane    below $\mathbb Z
    \times \{h\}$, starting at $(0,h)$, which has the same head as 
    $\tilde{\omega}_{n_0}$ and whose complement in the lower half-plane has the same
    unbounded connected component as that of $\tilde \omega_{n_0}$ (meaning that
    its intersection with the lower half-plane is the same except for irrelevant
    ``hidden parts''). Indeed,  if such  a path exists,  then a  similar
    coupling argument  as in  the  proof of  the previous  lemma  shows that
    with  positive  probability, the  process $RW_\lambda$ in  the half-plane
    below  $\mathbb Z \times  \{h\}$ first grows along  $\eta$ and then does
    not visit $\mathbb Z  \times \{0\}$ afterwards,  which  contradicts the
    half-plane  case  of the  present lemma.

  It  remains  to  construct  the  path  $\eta$;  here  is  an  informal
  description of one possible  construction: consider all the excursions
  of $\tilde \omega_{n_0}$ below the line $\mathbb Z \times \{0\}$, keep
  the maximal  ones (\emph{i.e.}, those  not separated from  infinity by
  any other)  and ``collect'' them  linearly along the axis,  first from
  the left up to the exit  point of $\tilde \omega_{n_0}$, then from the
  right --- see Figure~\ref{fig:secondstep3}.

  By an \emph{excursion} of $\tilde \omega_{n_0}$, we will always mean a
  sub-path     of     $\tilde      \omega_{n_0}$     of     the     form
  $(\tilde  \omega_{n_0}(s),  \tilde \omega_{n_0}(s+1),  \ldots,  
  \tilde{\omega}_{n_0}(t))$                                                where
  $\Im\tilde  \omega_{n_0}(s)   =  \tilde   \omega_{n_0}(t)  =   0$  and
  $\Im \tilde \omega_{n_0}  (u) <0$ whenever $s<u<t$.  Notice first that
  the head $Y_{n_0}  = p(\tilde \omega_{n_0})$ is not  surrounded by any
  of  the   excursions  of  $\tilde  \omega_{n_0}$,   since  that  would
  contradict the assumption that $P(B_{\tilde \omega}) > 0$. Let $(a,0)$
  be   the   last    point   of   $\mathbb   Z    \times   \{0\}$   that
  $\tilde \omega_{n_0}$  visits, $\bar \omega$  be the path  followed by
  $\tilde  \omega_{n_0}$ after  visiting $(a,0)$,  and define  $\Delta_-$
  (resp.\ $\Delta_+$)  as the set  of all integers $x<a$  (resp.\ $x>a$)
  such that the  vertex $(x,0)$ is visited by  $\tilde \omega_{n_0}$. If
  $(x,0)$  is one  endpoint of  an excursion  of $\tilde  \omega_{n_0}$,
  denote its other  endpoint by $(\varphi(x),0)$ and  let $\delta(x)$ be
  sub-path    of    $\tilde    \omega_{n_0}$   between    $(x,0)$    and
  $(\varphi(x),0)$, parameterized to start at $(x,0)$ (walking backwards
  if needed); otherwise, set $\varphi(x)=x$ and $\delta(x) = \{x\}$.

  Assume   first    that   $\Delta_-    \neq   \emptyset$:    then   let
  $a_1 :=  \min \Delta_-$,  $b_1 :=  \varphi(a_1)$, and  inductively let
  $a_{i+1}   :=   \min   \{   x>b_i   :   x   \in   \Delta_-   \}$   and
  $b_{i+1} = \varphi (a_{i+1})$, as long as the set in the definition of
  $b_{i+1}$   is   nonempty.   This   leads   to   a   finite   sequence
  $a_1\leq b_1<  a_2 \leq b_2  < \cdots<a_{i_0} \leq  b_{i_0}<a$. Define
  $\eta_-$ as the  path obtained by concatenating  the $\delta(a_i)$ and
  the    interval     of    the    form     $[(b_i,0),    (a_{i+1},0)]$:
  \[ \eta_-  := \delta(a_1) \oplus [(b_1,0),(a_2,0)]  \oplus \delta(a_2)
    \oplus \cdots  \oplus \delta(a_{i_0}).\] If $\Delta_-  = \emptyset$,
  let  $\eta_-   :=  \{(-1,0)\}$.  Similarly,  collect   the  excursions
  intersecting with $\Delta_+$ starting  from the rightmost one, leading
  to               a               non-increasing               sequence
  $c_1 \geq d_1 > c_2 \geq d_2  > \cdots > c_{j_0} \geq d_{j_0} >0$, and
  concatenate                them                 to                form
  \[ \eta_+  := \delta(c_1) \oplus [(d_1,0),(c_1,0)]  \oplus \delta(c_2)
    \oplus   \cdots    \oplus   \delta(c_{j_0}),   \]    again   setting
  $\eta_+   :=  \{(1,0)\}$   if  $\Delta_+   =  \emptyset$.   Denote  by
  $A_-  = (a_-,0)$  and  $B_- =  (b_-,0)$, resp.\  $A_+  = (a_+,0)$  and
  $B_+  =  (b_+,0)$, the  first  and  last  points of  $\eta_-$,  resp.\
  $\eta_+$. Last, define
  \begin{align*}
    \eta := & ((0,2),(-1,2),\ldots,(a_-,2),(a_-,1),(a_-,0)) \oplus \eta_-
    \\
            & \oplus  ((b_-,0),(b_-,1),(b_-+1,1),\ldots, (a_+,1),(a_+,0)) \oplus
    \eta_+                                                                       \\
            & \oplus ((b_+,0),(b_+-1,0),\ldots,(a,0)) \oplus \bar \omega.
  \end{align*}
  It  is easy  now  to  see that  $\eta$  satisfies our
  requirements, thus concluding the proof of the lemma.
\end{proof}

\begin{remark}
  Recall that if $\mathcal T$ is a tree, we denote by $\widetilde{\mathcal T}$
  the subtree obtained from $\mathcal T$ by recursively erasing all its
  leaves; in terms of our dynamical self-avoiding walk model, this
  corresponds to preventing the path from entering traps. The reader can
  easily check that the previous arguments still apply in the cases of
  $\widetilde{\mathcal T}_{\mathbb H}$ and
  $\widetilde{\mathcal T}_{\mathbb Z^2}$.
  Note that since the limit walk is the same on these trees without leaves
  as in the original ones, it  is  sufficient  to  prove
  Theorem~\ref{thm:propertieoflimitwalk}  in  the  case
  of $\widetilde{\mathcal T}_{\mathbb H}$   and
  $\widetilde{\mathcal T}_{\mathbb Z^2}$.
\end{remark}

\subsubsection{\bf The third step}
\label{thethirdstep}

In      this       step,      we       give      a       proof      of
Theorem~\ref{thm:propertieoflimitwalk}        in       the        case
of $\mathbb{P}_{\lambda}^{\mathbb{Z}^2}$.  We  start with  a
definition:

\begin{definition}
  Let $A$ be a finite subset of $\mathbb Z^2$, its \emph{boundary} is the set
  \[\partial A := \{ x \notin A : \exists z \in A, d (x,z) \leq \sqrt 2 \} \] (where here $d$ denotes the euclidean distance on $\mathbb Z^2$) and its \emph{outer boundary} is the set $\partial_e A$ of all vertices in $\partial A$ from which there exists an infinite self-avoiding path that does not intersect $A$.
\end{definition}

A crucial fact, which we will use in several instances below, is that if $A$ is
connected (seen as a sub-graph of $\mathbb Z^2$), then $\partial_e A$ is
connected as well. This is intuitively clear: informally, one can simply walk
around $A$ while remaining in $\partial_e A$ (see figure~\ref{fig:SAW0111}). A
formal proof is easy but tedious to write, and is therefore omitted here.

 \begin{figure}[ht!]
  \centering
  \includegraphics[scale=0.6]{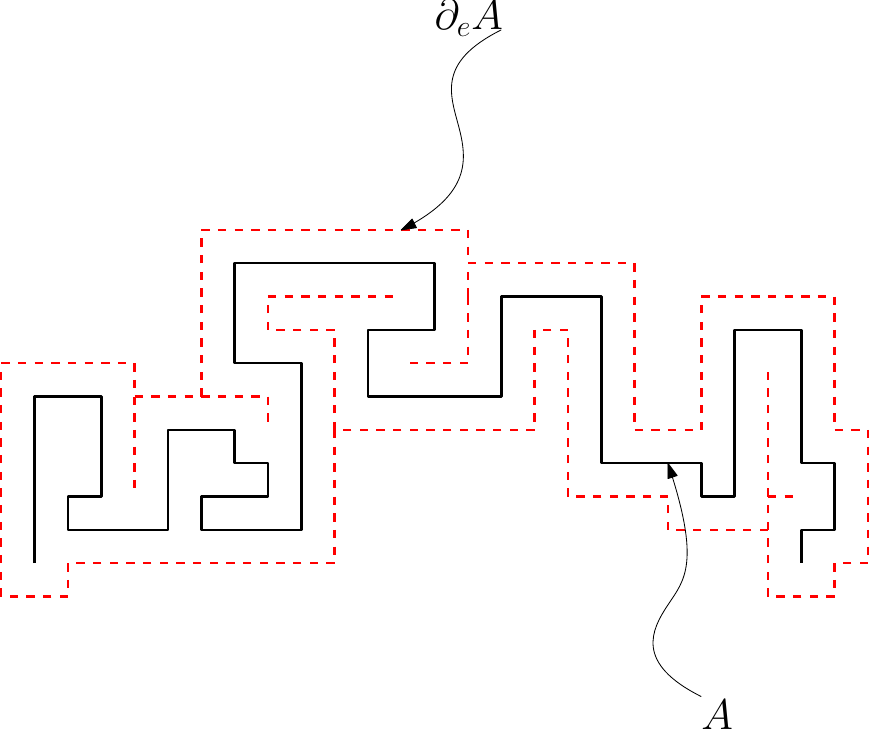}
  \caption{The outer boundary of a sub-graph $A$ of $\mathbb{Z}^2$}
  \label{fig:SAW0111}
 \end{figure}

\begin{proof}[\bf  Proof of  Theorem~\ref{thm:propertieoflimitwalk} in
    the case of $\mathbb{P}_{\lambda}^{\mathbb{Z}^2}$]
  Denote by $\mathcal A$ the following event:
  $$\mathcal A:=\left \{  \#\left \{ n>0: \Im \omega_{\lambda}^{\infty}(n)
    =0 \right \}=\infty \right \}.$$
  We want to show that $P[\mathcal A] = 1$, and to do that we are going to
  apply a strategy which is similar to the one used in step 2. We will need
  some additional notation: for every finite self-avoiding path $\omega$, let $\mathcal
    B_\omega$ denote the event
  $$\mathcal B_\omega :=\begin{cases}
      \omega_{\lambda}^{\infty}(0)                                     =
      \omega(0),\omega_{\lambda}^{\infty}(1)
      =\omega(1),\ldots,\omega_{\lambda}^{\infty}(|\omega|)=\omega(|\omega|);
      \\
      \forall n>|\omega|: \Im \omega_{\lambda}^{\infty}(n)< 0.
    \end{cases}$$
  It is enough to show that $P[\mathcal B_\omega] = 0$ for every $\omega$, so
  we fix a finite self-avoiding path $\omega$ for the rest of the proof. Let
  $n_0 := |\omega|$ be its length;
  without loss of generality we can always assume that $\Im \omega(n_0)<0$ and
  $P[\mathcal B_\omega] > 0$.
  %
  Define
  $$D_\omega := \Big\{ (x,y) \in \mathbb Z^2 : y \geq 0 \text{ and } x \notin
    \{ \Re \omega(i) : 0 \leq i \leq n_0 \} \Big\}$$ and
  let $V_\omega$ be the set of all points in $(\mathbb{Z} \times \{0\})
    \setminus D_\omega$ from which there exists an infinite self-avoiding path
  in the lower half-plane which does not intersect $\omega$ (see
  Figure~\ref{fig:thirdstep2}).
  \begin{figure}[ht!]
    \centering
    \includegraphics[scale=0.5]{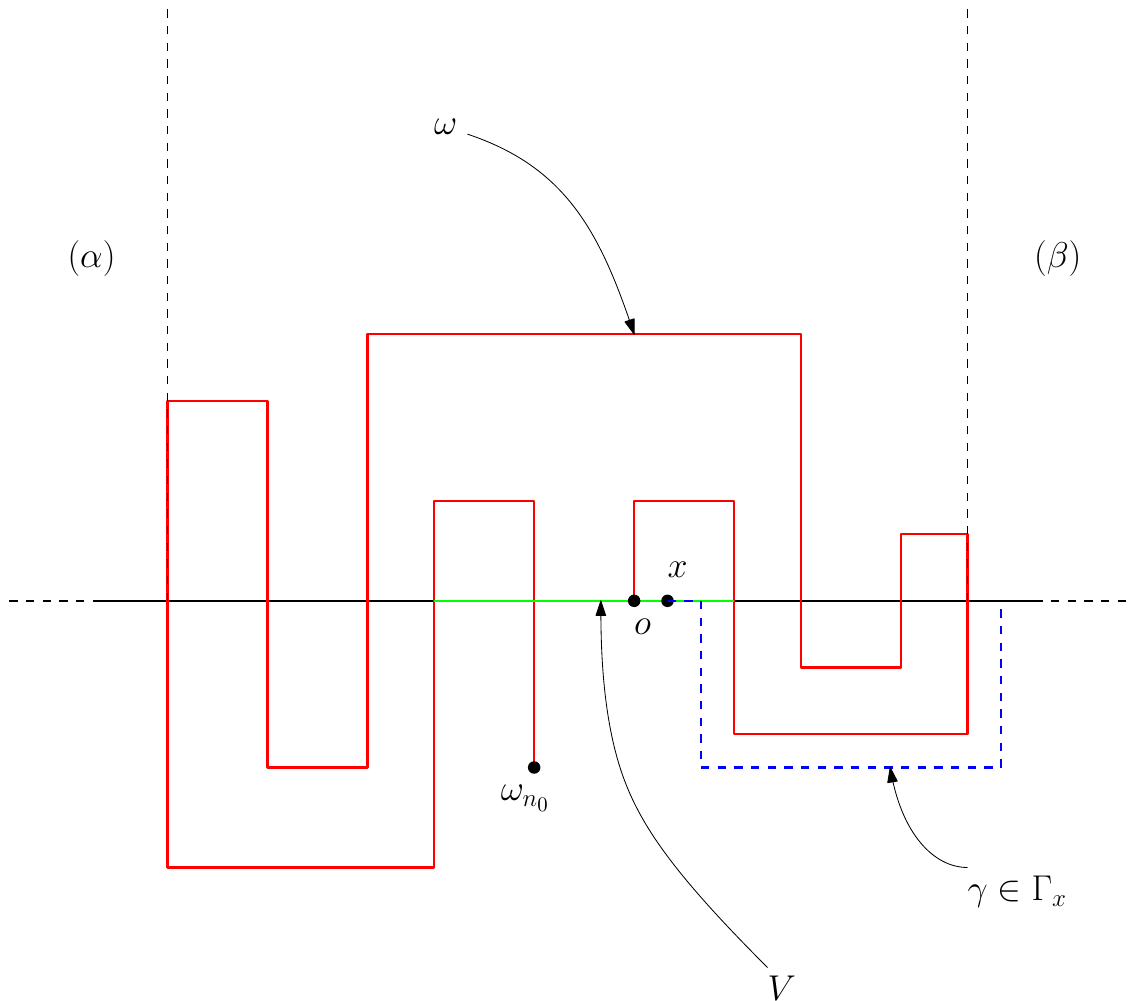}
    \caption{The self-avoiding walk $\omega$ is colored by red; the domain $D$ is the union of two quadrants $\alpha$ and $\beta$ and the set $V$ is colored by green.}
    \label{fig:thirdstep2}
  \end{figure}

  For each $x\in V_\omega$, denote by $\Gamma_x$ the collection of all
  (finite) self-avoiding paths from $x$ to a point in $D_\omega$ that are not
  intersecting $\omega$ and are contained in $\partial
    \omega \cup (\mathbb Z \times \{0\})$. The set $\Gamma_x$ is finite and it is not
  empty by the previously noted connectivity fact about set
  boundaries. We then set $p_\omega:=\max_{x \in V_\omega}\max_{\gamma\in
      \Gamma_x}\left | \gamma \right |$.

  We first make the following remark: let $\tau$ be a stopping time such
  that on the event $\{\tau < \infty\}$, $\omega$ is a prefix of $X_\tau$ and
  $X_\tau \cap (\mathbb Z \times \{0\}) = \omega \cap (\mathbb Z \times \{0\})$.
  On this event, defining \[\begin{cases}
      \sigma_1 := \inf \{ t>\tau : p(X_t) \in \mathbb Z \times \{0\} \text{~and} \\
      \sigma_2 := \inf \{ t>\tau : \omega \text{~not~prefix~of~} X_t \},
    \end{cases}\] it is always the case that $\min(\sigma_1,\sigma_2) < \infty$.
  Indeed, if neither of these occur, then $(X_n)$
  in particular does not touch the axis infinitely many times, thus
  contradicting Lemma~\ref{theorem12} under our assumption that $P[\mathcal
        B_\omega]>0$. On the other hand, on the event $\mathcal B_\omega$, the case
  where $\min(\sigma_1,\sigma_2) = \sigma_2<\infty$ can only occur finitely many times, by definition of
  the limit walk. Thus we only need to rule out the scenario where the case
  $\min(\sigma_1,\sigma_2) = \sigma_1 < \infty$
  occurs infinitely many times.

  To do this, consider a realization of the process where $\sigma_1 <
    \sigma_2$. Let $x := p(X_{\sigma_1})$. We claim that at least one
  element $\gamma$ of $\Gamma_x$ intersects $X_{\sigma_1}$ only at $x$: indeed,
  otherwise the path $X_{\sigma_1}$ would have to form a loop around $x$, closed
  strictly before time $\sigma_1$, and this is impossible for the process
  constructed on the leafless tree that we are considering. Thus, with
  conditional probability at least ${(\lambda/(1+3\lambda))}^{p_\omega}$, the
  trajectory of the process $(X_n)$ right after time $\sigma_1$ will follow the
  path $\gamma$ until reaching $D_\omega$ at some point $y$. When this occurs, the process now
  sees an empty quadrant, and by the positivity of effective conductance
  $C(\lambda, \mathcal T_{\mathbb Q})$ of the
  tree constructed on such a quadrant, with uniformly positive probability it
  will never backtrack further than $y$, in which case the event $\mathcal B_\omega$
  will not be realized. This concludes the proof.
\end{proof}

\subsubsection{\bf The last step}

In      this      section,      we      give      a      proof      of
Theorem~\ref{thm:propertieoflimitwalk}        in       the        case
$\mathbb{P}_{\lambda}^{\mathbb{H}}$. Before beginning the proof, we will need
some combinatorial information about the connective constants of strips.

\begin{definition}
  A \emph{strip of width $\ell$}, denoted by $B_\ell$, is a sub-domain of $\mathbb{Z}^2$ which is
  limited by two lines $\{\Im z=a\}$ and $\{\Im z=b\}$ (for a horizontal strip) or $\{\Re z=a\}$ and
  $\{\Re z=b\}$ (for a vertical strip), such that $|a-b|=\ell$.

  Fix an origin $O\in \{\Im z=a\}\cup
    \{\Im z=b\}$ (or $\{\Re z=a\}\cup \{\Re z=b\}$) of $B_\ell$ and let $\gamma$
  be a finite self-avoiding path starting at $O$. We say that $\gamma$ is
  a \emph{self-avoiding path in the strip $B_\ell$} if for any $0\leq k\leq
    |\gamma|$, we have $\gamma(k)\in B_\ell$; we define the self-avoiding tree
  $\mathcal{T}_{B_\ell}$ from the self-avoiding paths from $O$ in $B_\ell$ as in
  Notation~\ref{not:proofofprop1.1}.

  A \emph{bridge} (resp. \emph{irreducible bridge}) in
  $B_\ell$ is defined in the same way as in the half-plane; see
  Figure~\ref{fig:bridgeofstrip}.
\end{definition}

\begin{figure}[ht!]
  \centering
  \includegraphics[scale=0.6]{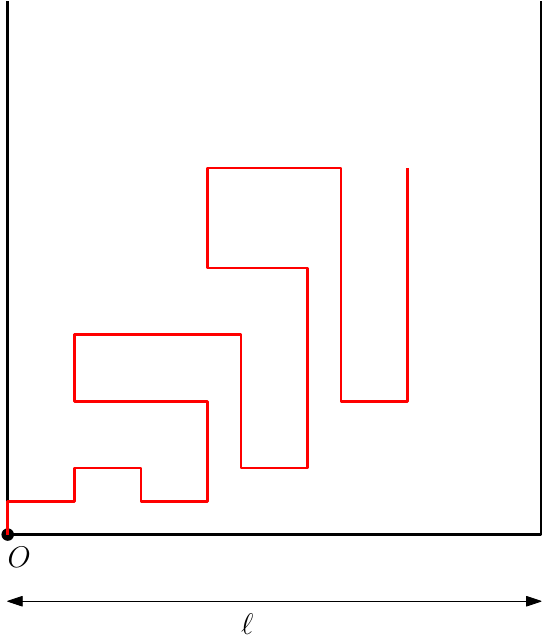}
  \caption{A bridge in a strip $B_\ell$}
  \label{fig:bridgeofstrip}
\end{figure}

\begin{lemma}[The subadditivity property]
  \label{subadditivityproperty}
  For  $\ell, n$ positive natural  numbers, denote  by
  $p^{(\ell)}_n$ the  number of  bridges of
  length $n$ starting  at the origin in the strip  $B_\ell$. Then, for any $\ell, n,
    m, k \in \mathbb{N}^*$,
  $$ p^{(2\ell)}_{n+m}\geq
    p^{(\ell)}_{m}\,p^{(\ell)}_{n} \, \text{ and }\,  p^{(2\ell)}_{kn} \geq {(p^{(\ell)}_{n})}^k.$$
\end{lemma}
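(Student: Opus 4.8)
The plan is to derive both inequalities by concatenating bridges with the operation $\oplus$ of Section~\ref{kesten'smeasure}, relying on the elementary fact already used in the proof of Proposition~\ref{prop10} that a concatenation of two bridges is again a bridge: if $\gamma^{1},\gamma^{2}$ are bridges, then in $\gamma^{1}\oplus\gamma^{2}$ every vertex of $\gamma^{1}$ has first coordinate at most that of its endpoint, and every vertex of $\gamma^{2}$ other than the junction point has strictly larger first coordinate, so $\gamma^{1}\oplus\gamma^{2}$ is self-avoiding, satisfies the bridge condition and has length $|\gamma^{1}|+|\gamma^{2}|$. The only extra point to watch is that the concatenated walk stays inside a strip of width $2\ell$. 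I would treat the case $B_{\ell}=\{0\le\Im z\le\ell\}$ with origin $O=(0,0)$ on the lower boundary, the vertical strip, and the case of origin on the other boundary line, being handled identically. For a bridge $\gamma$ of $B_{\ell}$ let $\bar{\gamma}$ be its reflection through the midline $\{\Im z=\ell/2\}$; this is again a self-avoiding walk in $B_{\ell}$ with the same first coordinates as $\gamma$ (hence again a bridge), but starting on the upper boundary line, and $\gamma\mapsto\bar{\gamma}$ is an involution on the set of length-$n$ bridges of $B_{\ell}$.

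For the first inequality I would take a bridge $\gamma^{1}$ of length $m$ and a bridge $\gamma^{2}$ of length $n$ of $B_{\ell}$ and translate $\gamma^{2}$ so that it starts at the endpoint of $\gamma^{1}$, whose imaginary part is some $h_{1}\in\{0,\dots,\ell\}$. Then $\gamma^{1}$ lies in $\{0\le\Im z\le\ell\}$ and the translate of $\gamma^{2}$ lies in $\{h_{1}\le\Im z\le h_{1}+\ell\}\subseteq\{0\le\Im z\le 2\ell\}$, so $\gamma^{1}\oplus\gamma^{2}$ is a bridge of $B_{2\ell}$ of length $m+n$ starting at $O$. The map $(\gamma^{1},\gamma^{2})\mapsto\gamma^{1}\oplus\gamma^{2}$ is injective, since $\gamma^{1}$ is the restriction of $\gamma^{1}\oplus\gamma^{2}$ to its first $m$ steps and $\gamma^{2}$ is recovered by translating back the last $n$ steps; this gives $p^{(2\ell)}_{n+m}\ge p^{(\ell)}_{m}\,p^{(\ell)}_{n}$.

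For the second inequality the naive iteration of this construction drifts — after $k$ translated copies the imaginary part may reach $k\ell$ — so I would instead orient each successive bridge according to the current height. Given bridges $\gamma^{1},\dots,\gamma^{k}$ of $B_{\ell}$ of length $n$, build the concatenation block by block, writing $H_{i}$ for the imaginary part of the endpoint after $i$ blocks, so $H_{0}=0$. To append $\gamma^{i}$: if $H_{i-1}\le\ell$, append the translate of $\gamma^{i}$ starting at the current endpoint, which then lies in $\{H_{i-1}\le\Im z\le H_{i-1}+\ell\}\subseteq\{0\le\Im z\le 2\ell\}$; if $H_{i-1}>\ell$, append instead the translate of $\bar{\gamma}^{i}$ starting at the current endpoint, which then lies in $\{H_{i-1}-\ell\le\Im z\le H_{i-1}\}\subseteq\{0\le\Im z\le 2\ell\}$. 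A short induction shows $H_{i}\in\{0,\dots,2\ell\}$ throughout, so the whole path stays in $B_{2\ell}$; since reflection does not change first coordinates, each appended block is a bridge for its own first coordinate, and by the concatenation fact the result is a bridge of $B_{2\ell}$ of length $kn$ starting at $O$. As $\gamma\mapsto\bar{\gamma}$ is an involution, each $\gamma^{i}$ may range independently over a set of cardinality $p^{(\ell)}_{n}$, and the resulting map is injective: from the output bridge $\delta$ one reads off $H_{0},\dots,H_{k}$ (the imaginary parts of $\delta$ at the multiples of $n$), hence knows at which steps a reflection was used, hence recovers each $\gamma^{i}$ by cutting $\delta$ into $k$ blocks of $n$ steps and undoing the translation and the possible reflection. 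This yields $p^{(2\ell)}_{kn}\ge(p^{(\ell)}_{n})^{k}$.

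I expect the only genuinely delicate part to be the height bookkeeping in the last paragraph: verifying that the height-dependent choice between $\gamma^{i}$ and $\bar{\gamma}^{i}$ keeps every appended block inside $\{0\le\Im z\le 2\ell\}$, and that this choice is recoverable from $\delta$ so that the counting map is injective. Everything else is the standard ``concatenation of bridges is a bridge'' argument already used for Proposition~\ref{prop10}.
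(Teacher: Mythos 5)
Your proof is correct and takes essentially the same route as the paper: concatenate strip-bridges, and for the $k$-fold version apply a vertical reflection to each new block whenever the running endpoint has passed the midline, so that the whole path stays in $B_{2\ell}$, then recover the factors by cutting at multiples of $n$. The only cosmetic difference is that you reflect through the fixed midline of $B_\ell$ before translating, while the paper reflects the new block through the horizontal line through the current endpoint (the paper's description of $L(z)$ as ``orthogonal to $\mathbb{Z}\times\{0\}$'' is a slip and must mean parallel, otherwise the bridge condition would be broken); these give the same map, and the injectivity and height bookkeeping you spell out are exactly the points the paper's proof leaves implicit.
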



\begin{proof}
  Divide the (vertical) strip  $B_{2\ell}$  into two  small  strips
  $B^1_{\ell},B^2_{\ell}$  of width  $\ell$ (see Figure~\ref{fig:SAW270}).
  Consider $\gamma_1,\gamma_2$ two bridges in the  strip $B^1_{\ell}$, of length $m$
  and  $n$ respectively,  and  concatenate   $\gamma_1$ and $\gamma_2$ to obtain
  a  new bridge $\gamma_{12}:=\gamma_1 \oplus \gamma_2$ of length $m+n$ in the
  strip $B_{2\ell}$ (see Figure~\ref{fig:SAW270} again). This is an injection,
  and hence for any $\ell,n,m \in
    \mathbb{N}^*$,
  $$p^{(2\ell)}_{n+m}\geq p^{(\ell)}_{m}\,p^{(\ell)}_{n}$$
  which is the first claim of the Lemma.

  \begin{figure}[ht!]
    \centering
    \includegraphics[scale=0.6]{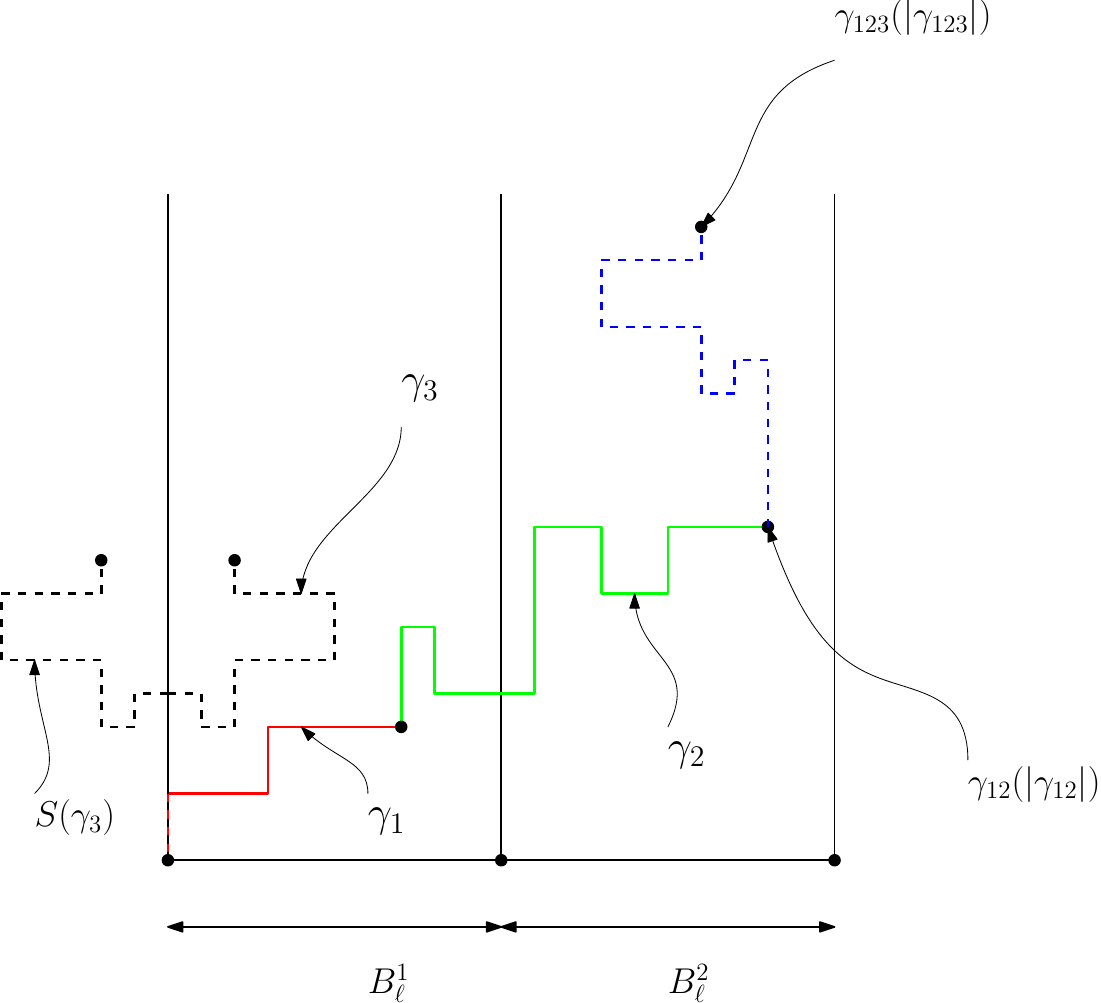}
    \caption{A concatenation of 3 bridges in $B^1_{L}$.}
    \label{fig:SAW270}
  \end{figure}

  Given a  third bridge  $\gamma_3$ in  $B^1_{\ell}$ of
  length  $q$, we build a bridge $\gamma_{123}$ of length $m+n+q$ in $B_{2\ell}$ as follows (see Figure~\ref{fig:SAW270}):
  $$\begin{cases}
      \gamma_{123}=\gamma_{12}\oplus \gamma_3    & \text{~if~} \gamma_{12}(\left | \gamma_{12} \right |)\in B^1_{\ell}   \\
      \gamma_{123}=\gamma_{12}\oplus S(\gamma_3) & \text{~if~} \gamma_{12}(\left | \gamma_{12} \right |) \in B^2_{\ell},
    \end{cases}$$
  where $S$ denotes the reflection across the vertical line going through $(0,0)$.
  This is again an injection, so for any $\ell,n,m,q \in \mathbb{N}^*$,
  $$p^{(2\ell)}_{n+m+q}\geq p^{(\ell)}_{m}\,p^{(\ell)}_{n}\,p^{(\ell)}_q.$$
  Iterating the same construction leads to the second claim of the Lemma, and
  thus finishes the proof.
\end{proof}

\begin{lemma}
  \label{p1}
  Denote by $\mu(\ell)$ the connective  constant of the strip $B_\ell$, which
  exists from the previous Lemma. Then,
  $$\underset{\ell\rightarrow\infty}{\lim}\mu(\ell)=\mu,$$  where  $\mu$  is  the  connective
  constant of $\mathbb{Z}^2$.
\end{lemma}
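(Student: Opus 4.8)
The plan is to squeeze $\mu(\ell)$ between the exponential growth rate of \emph{bridges} of $B_\ell$ and $\mu$, and to show both bounds converge to $\mu$. Write $c^{(\ell)}_n$ for the number of $n$-step self-avoiding walks contained in $B_\ell$ and started at its origin, so that (by definition of the connective constant of the strip) $\lim_n (c^{(\ell)}_n)^{1/n}=\mu(\ell)$, and recall $p^{(\ell)}_n$ from Lemma~\ref{subadditivityproperty}. Since $B_\ell\subset B_{\ell+1}\subset\mathbb Z^2$ and every bridge of $B_\ell$ is in particular a self-avoiding walk of $B_\ell$, we have $p^{(\ell)}_n\le c^{(\ell)}_n\le c_n$ and both $p^{(\ell)}_n$ and $c^{(\ell)}_n$ are nondecreasing in $\ell$. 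Setting $\varrho(\ell):=\limsup_n (p^{(\ell)}_n)^{1/n}$ (one checks this is a genuine limit, but the $\limsup$ is all we need), we obtain
\[
  \varrho(\ell)\ \le\ \mu(\ell)\ \le\ \mu ,\qquad \varrho(\ell)\ \text{nondecreasing in }\ell .
\]
Thus it suffices to prove that $\varrho(\ell)\to\mu$ as $\ell\to\infty$.

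Two elementary inputs drive this. First, an $n$-step bridge of $\mathbb H$ starts on the boundary line and each step changes one coordinate by $1$, so its height is at most $n$; hence such a bridge lies inside $B_\ell$ as soon as $\ell\ge n$, and therefore
\[
  p^{(\ell)}_n=b_n\qquad\text{for all }n\le\ell .
\]
Second, Lemma~\ref{subadditivityproperty} gives $p^{(2\ell)}_{kn}\ge (p^{(\ell)}_n)^k$ for all $k,n,\ell$, so $(p^{(2\ell)}_{kn})^{1/(kn)}\ge (p^{(\ell)}_n)^{1/n}$; taking $\limsup_{k\to\infty}$ along the subsequence $(kn)_k$ of the integers yields
\[
  \varrho(2\ell)=\limsup_{N\to\infty}(p^{(2\ell)}_N)^{1/N}\ \ge\ (p^{(\ell)}_n)^{1/n}\qquad\text{for every }n .
\]
Specializing to $n\le\ell$ and using $p^{(\ell)}_n=b_n$ gives $\varrho(2\ell)\ge \sup_{n\le\ell}(b_n)^{1/n}$. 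As $\ell\to\infty$ the right-hand side increases to $\sup_n(b_n)^{1/n}=\lim_n(b_n)^{1/n}$, which equals $\mu$ by the super\-multiplicativity $b_{m+n}\ge b_m b_n$ together with the Hammersley--Welsh bound $c_n\le b_n e^{B\sqrt n}$ already used in the proof of Proposition~\ref{prop10}. Hence $\lim_\ell\varrho(\ell)=\lim_\ell\varrho(2\ell)\ge\mu$ (using monotonicity of $\varrho$), and since $\varrho(\ell)\le\mu$ we conclude $\varrho(\ell)\to\mu$.

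Combining $\varrho(\ell)\le\mu(\ell)\le\mu$ with $\varrho(\ell)\to\mu$ forces $\mu(\ell)\to\mu$, which is the assertion of the lemma. The only non-routine ingredient is Lemma~\ref{subadditivityproperty}: the concatenation-with-reflection bookkeeping that glues two width-$\ell$ bridges into a single width-$2\ell$ bridge, which has already been carried out; the rest is monotonicity of strip inclusions and Fekete-type limits. (The same argument works verbatim for strips delimited by vertical lines, which is the form needed elsewhere in this section.)
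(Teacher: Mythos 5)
Your proof is correct and follows the same route as the paper's: both rest on the subadditivity Lemma~\ref{subadditivityproperty}, the observation that half-plane bridges of length $n\le\ell$ are already strip bridges, and the fact that bridges grow at rate $\mu$. Your version is marginally cleaner in that it avoids the paper's unjustified identification of the bridge growth rate in $B_\ell$ with the connective constant $\mu(\ell)$, using instead only the one-sided inclusions $\varrho(\ell)\le\mu(\ell)\le\mu$, which are all the squeeze argument really needs.
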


\begin{proof}
  Denote by $b^{\mathbb{Q}}_n$ the number of bridges of length $n$ in
  $\mathbb{Q}$, starting at the origin. Note that for any $\ell$, we have:
  \begin{equation}
    \label{equ:recrecrec1}
    \underset{n\rightarrow\infty}{\lim}{(p^{(\ell)}_n)}^{\frac{1}{n}}=\mu(\ell) \quad \text{and} \quad p^{(\ell)}_\ell=b^{\mathbb{Q}}_\ell.
  \end{equation}
  Moreover, we also have:
  \begin{equation}
    \label{equ:recrecrec2}
    \underset{n\rightarrow\infty}{\lim}{\left(b^{\mathbb{Q}}_n\right)}^{\frac{1}{n}} =\mu.
  \end{equation}
  By using Lemma~\ref{subadditivityproperty}, for any  $\ell,n,k$:
  \begin{equation}
    \label{equ:recrecrec3}
    p^{(2\ell)}_{kn}\geq {(p^{(\ell)}_n)}^k.
  \end{equation}
  Fix $\epsilon>0$ and by~\eqref{equ:recrecrec2}, there exists $n_0$ such that for any $n>n_0$, we have
  \begin{equation}
    \label{equ:recrecrec4}
    \left | {\left(b^{\mathbb{Q}}_n\right )}^{\frac{1}{n}}-\mu \right |\leq \epsilon.
  \end{equation}
  Let     $\ell>n_0$    and     $k>0$.    By~\eqref{equ:recrecrec1},~\eqref{equ:recrecrec3} and~\eqref{equ:recrecrec4}, we have:
  \begin{equation}
    \label{equ:recrecrec5}
    {\left(p^{(2\ell)}_{k\ell}\right)}^{\frac{1}{k\ell}}\geq
    {\left(p^{(\ell)}_{\ell}\right)}^{\frac{1}{\ell}}={\left(b^{\mathbb{Q}}_{\ell}\right)}^{\frac{1}{\ell}}\geq \mu-\epsilon.
  \end{equation}
  Since the  sequence ${(p^{(2\ell)}_{k\ell})}^{\frac{1}{k\ell}}$  converges towards
  $\mu({2\ell})$ when $k$ goes to infinity, we use~\eqref{equ:recrecrec5} to obtain:

  \begin{equation}
    \label{equ:recrecrec6}
    \mu\geq \mu_{2\ell} \geq \mu -\epsilon,
  \end{equation}
  where inequality $\mu\geq \mu_{2\ell}$ is obvious. Hence, the sequence $(\mu(\ell), \ell\geq 1)$ converges towards $\mu$ when $\ell$ goes to $+\infty$.
\end{proof}

\begin{proposition}
  \label{p2}
  Denote  by   $br(\mathcal{T}_{B_\ell})$  the  branching   number  of
  $\mathcal{T}_{B_\ell}$. Then,
  $$\underset{\ell\rightarrow\infty}{\lim}br(\mathcal{T}_{B_\ell})=\mu,$$
  where $\mu$ is again the connective constant of $\mathbb{Z}^2$.
\end{proposition}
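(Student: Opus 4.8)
The plan is to establish the two bounds $\limsup_{\ell\to\infty}br(\mathcal T_{B_\ell})\le\mu$ and $\liminf_{\ell\to\infty}br(\mathcal T_{B_\ell})\ge\mu$ separately. Observe first that $\ell\mapsto br(\mathcal T_{B_\ell})$ is nondecreasing: $B_\ell$ embeds into $B_{\ell+1}$ as a sub-domain sharing the boundary origin, so $\mathcal T_{B_\ell}$ is a sub-tree of $\mathcal T_{B_{\ell+1}}$ and Remark~\ref{rem:comparebranchingnumber} applies; hence $\lim_{\ell}br(\mathcal T_{B_\ell})$ exists in $[1,+\infty]$, and the two bounds together give the claim. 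The upper bound is immediate: realizing $B_\ell$ as $\mathbb Z\times\{0,\dots,\ell\}$ inside the half-plane $\mathbb H$ makes $\mathcal T_{B_\ell}$ a sub-tree of $\mathcal T_{\mathbb H}$, so $br(\mathcal T_{B_\ell})\le br(\mathcal T_{\mathbb H})=\mu$ for every $\ell$, by Remark~\ref{rem:comparebranchingnumber} and Proposition~\ref{prop10}.

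For the lower bound, the idea is to build, for each $n$, a spherically symmetric sub-tree of $\mathcal T_{B_{2\ell}}$ with branching number $(p^{(\ell)}_n)^{1/n}$, out of the bridges of $B_\ell$ of length $n$. Root $B_{2\ell}$ at a boundary point $O$, and consider the walks obtained by concatenating $n$-step bridges of $B_\ell$, using the very same reflection scheme as in the proof of Lemma~\ref{subadditivityproperty}: the first piece is placed into the half of $B_{2\ell}$ containing $O$, and each subsequent piece is placed, after the orthogonal reflection $S$ if necessary, so that it remains inside $B_{2\ell}$, the choice being dictated solely by which half of $B_{2\ell}$ the current endpoint lies in. Since a bridge occupies abscissae strictly larger than its starting abscissa except at the starting point itself, each such concatenation is automatically self-avoiding and is itself a bridge of $B_{2\ell}$; and since the reflection applied to each new piece depends only on the position of the current endpoint, distinct sequences $(\beta^1,\dots,\beta^j)$ of $n$-step bridges of $B_\ell$ yield distinct walks. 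Let $\mathcal S^{(n)}_{\ell}$ be the sub-tree of $\mathcal T_{B_{2\ell}}$ consisting of all initial segments of such infinite concatenations.

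By construction, every vertex of $\mathcal S^{(n)}_{\ell}$ that is an exact concatenation of full $n$-step bridges has precisely $p^{(\ell)}_n$ descendants at distance $n$ that are again exact concatenations, independently of its endpoint; so $\mathcal S^{(n)}_{\ell}$ is (isomorphic to) the $p^{(\ell)}_n$-ary tree with each edge subdivided into $n$ edges. This tree is spherically symmetric, so by Theorem~\ref{symetrie} one has $br(\mathcal S^{(n)}_{\ell})=\underline{gr}(\mathcal S^{(n)}_{\ell})=(p^{(\ell)}_n)^{1/n}$. Applying Remark~\ref{rem:comparebranchingnumber} again, $br(\mathcal T_{B_{2\ell}})\ge(p^{(\ell)}_n)^{1/n}$ for every $n$; since the right-hand side converges to $\mu(\ell)$ as $n\to\infty$ — this is exactly the identity $\lim_n(p^{(\ell)}_n)^{1/n}=\mu(\ell)$ used in the proof of Lemma~\ref{p1} — we conclude $br(\mathcal T_{B_{2\ell}})\ge\mu(\ell)$.

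Finally, Lemma~\ref{p1} gives $\mu(\ell)\to\mu$, so $\liminf_\ell br(\mathcal T_{B_{2\ell}})\ge\mu$; by the monotonicity noted at the outset, the full sequence $br(\mathcal T_{B_\ell})$ has the same limit as its subsequence $br(\mathcal T_{B_{2\ell}})$, whence $\liminf_\ell br(\mathcal T_{B_\ell})\ge\mu$, and combined with the upper bound this yields $\lim_\ell br(\mathcal T_{B_\ell})=\mu$. I expect the only genuinely technical point to be the geometric bookkeeping of the reflections — checking that the successive pieces stay inside $B_{2\ell}$ while the concatenation remains self-avoiding, and that the coding of walks by bridge sequences is injective — but this is precisely what Lemma~\ref{subadditivityproperty} already handles, so I would invoke it rather than repeat the argument.
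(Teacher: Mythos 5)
Your overall strategy is sound and, once repaired, lands very close to the paper's own argument — but there is a genuine mis-step in the identification of the sub-tree $\mathcal S^{(n)}_{\ell}$.

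The claim that $\mathcal S^{(n)}_{\ell}$ is isomorphic to the $p^{(\ell)}_n$-ary tree with every edge subdivided into $n$ edges is false, and consequently the invocation of spherical symmetry (Theorem~\ref{symetrie}) does not apply. The point is that distinct $n$-step bridges of $B_\ell$ typically share initial segments, so the intermediate levels of $\mathcal S^{(n)}_{\ell}$ are not chains of degree-$2$ vertices: for instance, the root has at most three children (the possible first steps of a bridge in $B_\ell$), not $p^{(\ell)}_n$, while the branching happens gradually over the first $n$ levels in a way that depends on the trie structure of the set of $n$-step bridges. In particular the degree of a vertex at level $j$, $0<j<n$, depends on the actual prefix and not only on $j$, so $\mathcal S^{(n)}_{\ell}$ is \emph{not} spherically symmetric.

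The conclusion you want, $br(\mathcal S^{(n)}_\ell)=(p^{(\ell)}_n)^{1/n}$, is nonetheless true, but you should reach it via super-periodicity rather than spherical symmetry — which is exactly what the paper does with its ``$m$-nice trees.'' Indeed, from any vertex of $\mathcal S^{(n)}_{\ell}$ one can move at most $n$ levels down to reach an exact concatenation of full $n$-step bridges, whose subtree is isomorphic to the whole $\mathcal S^{(n)}_\ell$ (the reflection rule depends only on which half of $B_{2\ell}$ the endpoint lies in); so $\mathcal S^{(n)}_{\ell}$ is $n$-super-periodic. Moreover $(p^{(\ell)}_n)^{\lfloor j/n\rfloor}\leq |(\mathcal S^{(n)}_\ell)_j|\leq (p^{(\ell)}_n)^{\lceil j/n\rceil}$, so $\overline{gr}(\mathcal S^{(n)}_{\ell})<\infty$ and $gr(\mathcal S^{(n)}_{\ell})=(p^{(\ell)}_n)^{1/n}$. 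Theorem~\ref{sousperiodic} then gives $br(\mathcal S^{(n)}_{\ell})=gr(\mathcal S^{(n)}_{\ell})=(p^{(\ell)}_n)^{1/n}$, and the rest of your argument goes through unchanged. With this correction your proof essentially coincides with the paper's, the only cosmetic difference being that the paper uses the sub-tree of walks with irreducible-bridge decompositions of bounded length (super-periodic for the same reason) in place of your sub-tree of concatenations of fixed-length bridges.
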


\begin{proof}
  The following argument is very close in spirit to the proof of Proposition~\ref{prop10}. Recall that  an infinite self-avoiding path starting at the origin in $B_\ell$ is \emph{$m$-good} if it possesses a  decomposition into
  irreducible bridges of length at most $m$.  Denote by $G_m(B_\ell)$ the set
  of infinite, $m$-good
  self-avoiding paths in $B_\ell$. Let $\smash{\mathcal
      T^{(m)}_{B_\ell}}$ be the subtree of $\mathcal{T}_{B_\ell}$,  which  we will
  refer to as the \emph{$m$-good tree}, spanning
  $$V(\mathcal{T}^{(m)}_{B_\ell}):=\{\omega\in V(\mathcal{T}_{B_\ell}):
    \text{ there exists } \gamma\in G_m(B_\ell) \text{ such that }
    \gamma_{[0, |\omega|]}=\omega\}$$
  (\emph{i.e.}, the tree formed of all finite self-avoiding paths in $B_\ell$
  that can be extended into an infinite, $m$-good path).

  Denote by $p^{(\ell,m)}_n$ be  the number  of bridges of length $n$ in
  $B_\ell$, starting at the origin, which
  possess a decomposition  into bridges of length at most $m$. Recall that $p^{(\ell)}_n$ is the  number of  bridges of
  length $n$ in $B_\ell$ starting  at origin, and that
  ${(\mathcal{T}^{(m)}_{B_\ell})}_n$ denotes the number of vertices of
  $\mathcal{T}^{(m)}_{B_\ell}$ at generation $n$. Then for any $n>0$, we have
  \begin{equation}
    \label{equ:nicetree1}
    \left | {\left(\mathcal{T}^{(m)}_{B_\ell}\right)}_n \right |\geq p^{(m)}_n.
  \end{equation}
  By using Lemma~\ref{subadditivityproperty}, for any $\ell, m, n, k$ we have:
  \begin{equation}
    \label{equ:nicetree2}
    p^{(2\ell)}_{nk}\geq {(p^{(\ell)}_n)}^k \quad \text{and} \quad p^{(2\ell,m)}_{nk}\geq {(p^{(\ell,m)}_{n})}^k.
  \end{equation}
  We know (see Notation~\ref{subsubsection:notation}) that all  bridges  in  a
  half-plane  can be decomposed into a sequence of irreducible bridges in a
  unique way; therefore each bridge  in $B_{\ell}$ of length $m$ possesses  a
  unique decomposition  into  irreducible  bridges of length at most $m$. Hence, for any $m,
    \ell>0$,
  \begin{equation}
    \label{equ:nicetree0}
    p_{m}^{(\ell)}=p_{m}^{(\ell, m)}.
  \end{equation}
  Fix $\epsilon>0$, by Lemma~\ref{p1},  there exists $\ell_0$ such that
  for any $\ell>\ell_0$,
  \begin{equation}
    \label{equ:nicetree3}
    \mu\geq \mu(2\ell)>\mu-\epsilon.
  \end{equation}
  Moreover, since $\mu(2\ell)=\underset{n\rightarrow \infty}{\lim}{(p^{(2\ell)}_n)}^\frac{1}{n}$, then there exists  $n_0$ such that for any $n>n_0$:
  \begin{equation}
    \label{equ:nicetree4}
    {(p^{(2\ell)}_n)}^\frac{1}{n}>\mu(2\ell)-\epsilon.
  \end{equation}
  Hence by~\eqref{equ:nicetree0},~\eqref{equ:nicetree2},~\eqref{equ:nicetree3} and~\eqref{equ:nicetree4},

  \begin{equation}
    \label{equ:nicetree5}
    {(p^{(4\ell,n)}_{kn})}^\frac{1}{kn}\geq
    {(p^{(2\ell,n)}_n)}^{\frac{1}{n}}={(p^{(2\ell)}_n)}^{\frac{1}{n}}\geq
    \mu(2\ell)-\epsilon \geq \mu-2\epsilon.
  \end{equation}
  Therefore  for $\ell>\ell_0$ and $m>m_0 (\ell)$, we have
  \begin{equation}
    \label{equ:nicetree6}
    \overline{gr(\mathcal T_{B_{4\ell}}^{(m)})}\geq  \mu-2\epsilon.
  \end{equation}

  On the other hand, noting that by construction,
  $\mathcal T_{B_{4\ell}}^{(m)}$        is         $(m+4\ell)$-super-periodic        and
  has finite growth,      we     can use
  Theorem~\ref{sousperiodic}  to get:
  \begin{equation}
    \label{equ:nicetree7}
    gr(\mathcal T_{B_{4\ell}}^{(m)}) \quad\text{exists  and}\quad  gr(\mathcal T_{B_{4\ell}}^{(m)})=br(\mathcal T_{B_{4\ell}}^{(m)}).
  \end{equation}
  Since $\mathcal T_{B_{4\ell}}^{(m)}\subset  \mathcal T_{B_{4\ell}}$, by using~\eqref{equ:nicetree6},~\eqref{equ:nicetree7} and Proposition~\ref{prop:br-gr} we obtain for any $\ell>\ell_0$:
  \begin{equation}
    \label{equ:nicetree8}
    \mu\geq  br(\mathcal T_{B_{4\ell}})\geq \mu-2\epsilon,
  \end{equation}
  where we used $\mathcal T_{B_{4\ell}}\subset \mathcal{T}_{\mathbb{H}}$ for the first inequality. Therefore,
  the sequence ${(br(\mathcal T_{B_\ell}))}_{\ell\geq 1}$ converges towards $\mu$ when $\ell$ goes to infinity.
\end{proof}




We can now turn to the study of the limit self-avoiding path in the
half-plane.

\begin{proposition}
  \label{proposition4.2}
  Consider    the   biased   random   walk    $RW_{\lambda}$   on
  $\widetilde{\mathcal T}_{\mathbb H}$.  Let  ${(B_\ell)}_{\ell\geq 1}$ be  the
  sequence  of (horizontal) strips in
  $\mathbb{H}$ where $B_\ell$ is the strip between the lines $\{\Im z =0\}$ and
  $\{\Im z =\ell\}$. Suppose that $\lambda > \frac{1}{\mu}$, where $\mu$ is the
  connective constant of $\mathbb{H}$.  Then, whenever $\ell>0$ is large enough
  that $\mu(B_\ell) > 1/\lambda$ (which holds for $\ell$ large enough by the
  previous Lemma),
  the limit walk
  $\omega_{\lambda}^{\infty}$ in $\mathbb H$ almost surely touches the
  strip $B_\ell$ infinitely often.
\end{proposition}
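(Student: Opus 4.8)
The plan is to adapt the scheme of the third step (the proof of Theorem~\ref{thm:propertieoflimitwalk} for $\mathbb{P}_\lambda^{\mathbb{Z}^2}$), with a fresh translate of a strip $B_\ell$ playing the role that a fresh quadrant plays there. First I fix $\ell$. By Proposition~\ref{p2} we have $br(\mathcal{T}_{B_\ell}) \to \mu$ as $\ell \to \infty$, and $\lambda > 1/\mu$ by hypothesis, so I may choose $\ell$ so large that $\lambda > 1/br(\mathcal{T}_{B_\ell})$. By Theorem~\ref{lyons90}, $RW_\lambda$ is then transient on $\mathcal{T}_{B_\ell}$, that is, $\mathcal{C}(\lambda, \mathcal{T}_{B_\ell}) > 0$; this will be the $\ell$ in the statement.

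Next I argue by contradiction. Suppose the limit walk touches $B_\ell$ only finitely often with positive probability. Since $\omega_\lambda^\infty$ stays in $\mathbb{H}$, the offending event is $\{\exists k_0,\ \forall n \ge k_0,\ \Im\,\omega_\lambda^\infty(n) > \ell\}$, a countable union over finite self-avoiding initial segments, so there is a self-avoiding walk $\omega = (\omega(0),\dots,\omega(n_0))$ for which the event
\[ B := \{\omega_\lambda^\infty|_{[0,n_0]} = \omega\} \cap \{\forall n > n_0:\ \Im\,\omega_\lambda^\infty(n) > \ell\} \]
has $\mathbb{P}(B) > 0$. On $B$ the stabilised prefix of the trajectory $(X_n)$ eventually lies strictly above height $\ell$, while by Lemma~\ref{theorem12} --- valid for $\widetilde{\mathcal{T}}_{\mathbb{H}}$ by the Remark following its proof --- the head $p(X_n)$ returns to the line $\mathbb{Z}\times\{0\}$ infinitely often almost surely; combined with transience (which on $B$ forces $\Im\,p(X_n) > \ell$ infinitely often as well, at the last-visit times to each high enough level), this shows that $p(X_n)$ performs infinitely many disjoint first descents from $\{\Im > \ell\}$ down to $\mathbb{Z}\times\{0\}$.

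The core of the proof is then a geometric input, the strip analogue of Lemma~\ref{algorithm} and of the construction of the third step: there is a constant $P = P(\omega) < \infty$ such that, at the first instant of each such descent, when $p(X_n)$ has just reached a point $x \in \mathbb{Z}\times\{0\}$, the walk may be continued by a self-avoiding path of length at most $P$, staying in $\mathbb{H}$ and inside the domain not yet visited by $X_n$, that lands in a translate $B'$ of $B_\ell$ which $X_n$ has not yet touched, and from the endpoint of that path the unvisited domain contains a full copy of $\mathcal{T}_{B_\ell}$ inside $B'$. The reason $P$ is finite is that at the first instant of a descent the portion of $X_n$ lying inside the strip $B_\ell$ is a single self-avoiding strand meeting $\mathbb{Z}\times\{0\}$ only at $x$, together with the finitely many cells near the origin used by $\omega$; a book-keeping argument then produces such a bounded detour. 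Granting this, let $\mathcal{T}'$ be the tree consisting of a path of length $P$ capped by a copy of $\mathcal{T}_{B_\ell}$; prepending a finite path to a transient tree keeps it transient, so from its root the walk on $\mathcal{T}'$ escapes to infinity with some probability $c > 0$.

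Finally, by the strong Markov property applied at the successive descent-instants of $p(X_n)$, and by Rayleigh's monotonicity principle (Theorem~\ref{RL}, comparing with the sub-tree $\mathcal{T}'$), at each descent the walk escapes to infinity through the freshly available copy of $\mathcal{T}_{B_\ell}$ --- hence remains at heights $\le \ell$ ever after, so that $\omega_\lambda^\infty$ visits $B_\ell$ at arbitrarily large times --- with conditional probability at least $c$, independently of the past. Since there are almost surely at least $k$ such descents for every $k$, while on $B$ no such escape can occur, we get $\mathbb{P}(B) \le (1-c)^k$ for all $k$, hence $\mathbb{P}(B)=0$, a contradiction; therefore $\omega_\lambda^\infty$ touches $B_\ell$ infinitely often almost surely. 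I expect the main obstacle to be exactly the geometric step of the third paragraph: the uniformity of the bound $P$ over all descent-instants is essential, because for $\lambda \in (1/\mu,1)$ the escape probability along a long path decays, so a detour of unbounded length would not provide a uniform lower bound $c>0$; controlling the shape of $X_n$ inside the strip at the moment of a descent is where the real work lies.
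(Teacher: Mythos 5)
Your proposal follows essentially the same route as the paper: choose $\ell$ via Proposition~\ref{p2} so that $\lambda>1/br(\mathcal{T}_{B_\ell})$, set up the contradiction event $B$ from a finite initial segment, invoke Lemma~\ref{theorem12} for infinitely many returns of the head to the base line, and at each such return use a bounded detour into a fresh copy of $\mathcal{T}_{B_\ell}$ (the third-step argument) to get a uniform escape probability $c>0$ and the bound $\mathbb{P}(B)\le(1-c)^k$. The geometric uniformity issue you flag is precisely the point the paper also leaves implicit by citing "the same argument as in the third step," so your write-up is, if anything, more explicit about where the work lies.
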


\begin{figure}[ht!]
  \centering
  \includegraphics[scale=0.5]{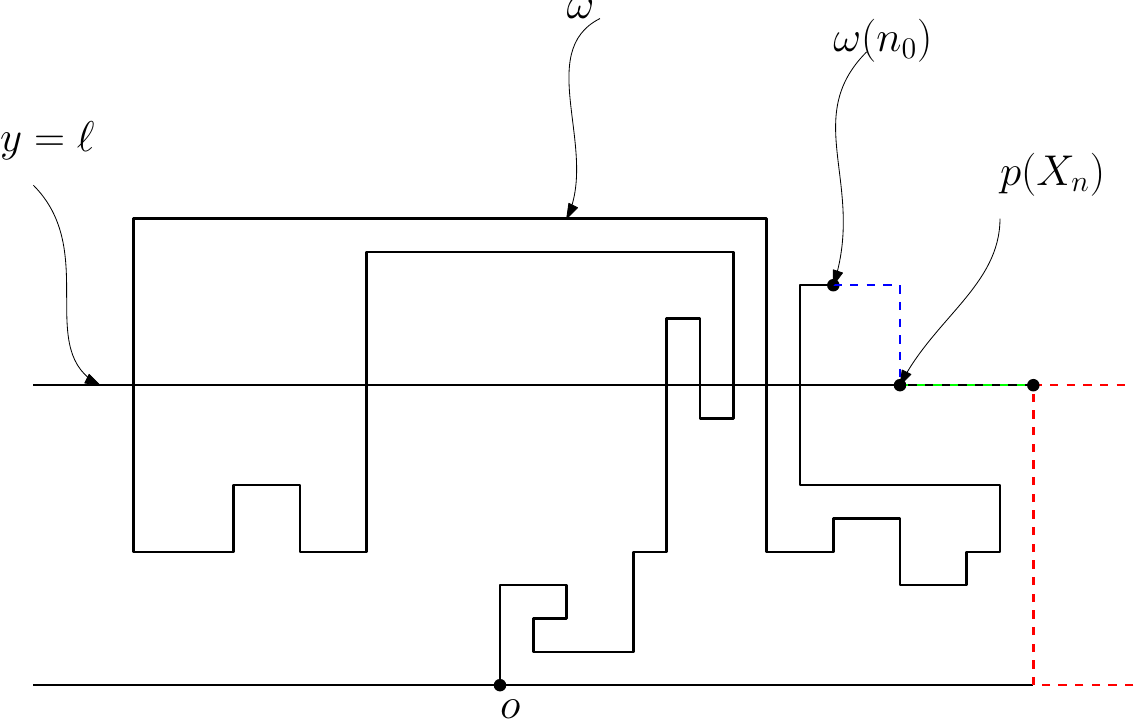}
  \caption{Illustration of the proof of Proposition~\ref{proposition4.2}}
  \label{fig:SAW111111111}
 \end{figure}   

\begin{proof}
  Fix $\ell$ such that $\mu(B_\ell) > 1/\lambda$. We  use again  the
  same argument as in the previous steps: assuming that $\omega_\lambda$ touches
  $B_\ell$ finitely many times with positive probability, there exists $n_0>0$
  and a self-avoiding path
  $\omega=\left[\omega(0),\ldots,\omega(n_0)\right]$
  such that the  following event has  strictly positive probability:
  $$B:=\begin{cases}
      \omega_{\lambda}^{\infty}(0)              =             \omega(0),
      \omega_{\lambda}^{\infty}(1)       =      \omega(1),       \ldots,
      \omega_{\lambda}^{\infty}(n_0) = \omega(n_0)
      \\
      \forall n>n_0: \Im \omega_{\lambda}^{\infty}(n)> \ell.
    \end{cases}$$
  By Lemma~\ref{theorem12},  we know that the head $p(X_n)$ of the dynamical
  path almost surely reaches
  the line $\mathbb{Z}\times \{0\}$ an infinite number of  times, therefore on
  the event $B$ it must be the case that $\Im p(X_n) = \ell$ infinitely many
  times after time $n_0$. Let $n$ be the first time when this occurs: at time
  $n$, the process sees a strip of width $\ell$ that is empty except for the
  trace of $\omega$; the corresponding subtree has positive conductance (with
  a lower bound depending only on $\lambda$, $\ell$ and $\omega$), and
  therefore $X$ has positive probability of never backtracking through $X_n$,
  in which case the second condition defining the event $B$ does not hold.
  Iterating the same argument at each successive visit of $B_\ell$ separated
  by backtracking, this leads to a contradiction. See
    Figure~\ref{fig:SAW111111111} for illustration.
\end{proof}

\begin{proof}[\bf  Proof of  Theorem~\ref{thm:propertieoflimitwalk} in
    the case of $\mathbb{P}_\lambda^{\mathbb{H}}$]

  By Proposition~\ref{proposition4.2}, we can fix a number $\ell>0$ such that
  the limit walk  almost  surely reaches the strip $B_\ell$ infinitely often.
  We just need to prove  that it therefore visits the  line  $\mathbb{Z}\times
    \{0\}$.

  The argument is again similar to the previous ones, so we do not flesh it out
  in detail. We know that $\omega_\lambda$ visits $B_\ell$ infinitely many
  times, therefore it occurs infinitely many times that \[p(X_n) \in B_\ell
    \text{ and } |p(X_n)| > \max \{ |p(X_m)| : m < n \text{ and } p(X_m) \in
    B_\ell \}.\] Whenever this holds,
  the process has positive probability to reach the line $\mathbb Z \times
    \{0\}$ by going down vertically, and once it reaches it, seeing an empty
  half-strip of width $\ell$, to never backtrack through that point.
  %
  %
  %
\end{proof}

\begin{remark}
  We applied a similar reasoning several times above; informally, the general
  principle is that if there is a region $V \subset \mathbb Z^2$ that the head
  $p(X_n)$ visits infinitely often with probability $1$, and which is such that
  the effective conductance of $\mathcal T_{V \setminus K}$ can be bounded below
  for every finite $K$ uniformly in the choice of the root, then the limit walk
  has infinitely many points in $V$. Writing a formal statement with this flavor
  would involve technicalities that are not needed in the setups that we were in
  (where $V$ is either a half-plane or a strip).
\end{remark}

\subsection{The law of first steps of the limit walk}

We consider the biased random walk $RW_{\lambda}$ on $\mathcal T_{\mathbb{Z}^2}$
(resp.\ $\mathcal T_{\mathbb H}$, $\mathcal T_{\mathbb Q}$ --- the argument
below is exactly the same all three cases). Recall that
$\omega^{\infty}_{\lambda}$   is  the   associated   limit  walk   and
$\mathbb{P}_{\lambda}^{\mathbb{H}}$ denotes its law.

Let $k \in  \mathbb{N}^{*}$ and $y_1,y_2,\ldots,y_k$ be  $k$ elements of
$V(\mathcal T_{\mathbb{H}})$ such that the path $(o,y_1,y_2,\ldots,y_k)$
in $\mathcal T_{\mathbb{H}}$ is simple.
For each $\lambda> \lambda_c$, recall that the law of first $k$ steps of
$\omega_\lambda$ is defined by:
\begin{equation}\label{equ:recs1}
  \varphi^{\lambda,k}(y_1,y_2,\ldots,y_k)                            =
  \mathbb{P}_{\lambda}^{\mathbb{H}}(\omega^{\infty}_{\lambda}(1)        =       y_{1},
  \omega^{\infty}_{\lambda}(2)         =          y_{2},         \ldots,
  \omega^{\infty}_{\lambda}(k)=y_{k}).
\end{equation}
We prove the continuity of this function.

\begin{theorem}
  \label{ctggg}
  For every $k \in \mathbb{N}^{*}$ and $(y_1,y_2,\ldots,y_k) \in V^k$,
  the  function  $\varphi^{\lambda,k}$  depends continuously on $\lambda$ on the
  whole interval $(\lambda_c,+\infty)$.
\end{theorem}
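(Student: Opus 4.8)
The plan is to express $\varphi^{\lambda,k}$ as a finite rational expression in effective conductances of self-avoiding trees and then invoke the continuity results of Sections~\ref{sectioncontinuity} and~\ref{section4.5}. First I would note that, the self-avoiding path $(o,y_1,\dots,y_k)$ from $o$ to $y_k$ in $\mathcal T_{\mathbb H}$ being unique, $\varphi^{\lambda,k}(y_1,\dots,y_k)=\mathbb P_\lambda^{\mathbb H}(\omega^\infty_\lambda(k)=y_k)$, and by the definition of $\omega^\infty_\lambda$ this is the probability that $RW_\lambda$ is eventually confined to the subtree $\mathcal T^{y_k}$. Decomposing that event at the last visit of the walk to a vertex of level $\le k-1$ --- which on this event is necessarily $y_{k-1}$ --- the strong Markov property gives
\[
  \varphi^{\lambda,k}(y_1,\dots,y_k)=G_\lambda(o,y_{k-1})\,p_\lambda(y_{k-1},y_k)\,h_\lambda(y_k),
\]
where $G_\lambda$ is the Green function of $RW_\lambda$ on $\mathcal T_{\mathbb H}$ and $h_\lambda(y_k):=\mathbb P_{y_k}(X_n\neq y_{k-1}\ \forall n\ge 0)$ is the probability that the walk started at $y_k$ never backtracks through its parent. (For $k=1$, and via $G_\lambda(o,o)=1/\widetilde{\mathcal C}(\lambda,\mathcal T_{\mathbb H})$, this is the ratio identity of Lemma~\ref{lemmactggg}; in general one may alternatively just quote that lemma and argue with the resulting ratio of escape probabilities.)

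Next I would unwind each factor by the usual electrical computations on a tree. The one-step probability $p_\lambda(y_{k-1},y_k)$ is explicit, positive, and rational in $\lambda$. Writing $G_\lambda(o,y_{k-1})=F_\lambda(o,y_{k-1})/\widetilde{\mathcal C}_\lambda(y_{k-1})$ with $F_\lambda(o,y_{k-1})=\mathbb P_o(\text{hit }y_{k-1})=\prod_{i=0}^{k-2}\mathbb P_{y_i}(\text{hit }y_{i+1})$ (valid because hitting $y_{i+1}$ is forced along the path), the series--parallel reductions already used in Lemma~\ref{lemma6.1} express $F_\lambda(o,y_{k-1})$, $\widetilde{\mathcal C}_\lambda(y_{k-1})$ and $h_\lambda(y_k)$ as rational functions of $\lambda$ and of the \emph{finitely many} effective conductances $\widetilde{\mathcal C}(\lambda,\mathcal T^{z})$, $z$ ranging over the children of $y_0,\dots,y_{k-1}$ (these include $y_1,\dots,y_k$). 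Hence $\varphi^{\lambda,k}$ is a rational function of $\lambda$ and of these conductances; its denominator is a positive combination of escape probabilities and powers of $\lambda$, so by Rayleigh monotonicity (Theorem~\ref{RL}) --- each escape probability being non-decreasing in $\lambda$ and strictly positive above the critical value --- it is bounded away from $0$ on every compact subinterval of $(\lambda_c,+\infty)$.

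It remains to prove that each $\lambda\mapsto\widetilde{\mathcal C}(\lambda,\mathcal T^{z})$ is continuous on $(\lambda_c,+\infty)$, and this is the only step needing more than a citation. Each $\mathcal T^z$ is a self-avoiding tree over $\mathbb Z^2$, $\mathbb H$ or $\mathbb Q$ with a \emph{finite} self-avoiding walk deleted, so Theorem~\ref{thm:continuousofselfavoidingtree} does not literally apply --- but its proof does. Indeed $\mathcal T^z\subset\mathcal T_{\mathbb Z^2}$ gives $br(\mathcal T^z)\le\mu$ (Remark~\ref{rem:comparebranchingnumber}), while grafting a fresh half-plane or quadrant at the tip shows $\mathcal T^z$ contains a copy of $\mathcal T_{\mathbb H}$ or $\mathcal T_{\mathbb Q}$, whence $br(\mathcal T^z)\ge\mu$ by Proposition~\ref{prop10}; so $\lambda_c(\mathcal T^z)=1/\mu=\lambda_c$. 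Right-continuity of $\widetilde{\mathcal C}(\lambda,\mathcal T^z)$ is Lemma~\ref{lemma2}; for left-continuity one runs the proof of Lemma~\ref{lem:leftcontinuous} verbatim, using as cut-sets the self-avoiding walks of $\mathcal T^z$ that first reach the boundary of a large box around the tip --- each of their vertices again admits a fresh half-plane or quadrant, so $\mathcal T^z$ is weakly uniformly transient and Theorem~\ref{ctgg} applies on $(\lambda_c,1]$, the range $\lambda\ge1$ being easier as in the remark after Proposition~\ref{notcontinuous}. Substituting back into the rational expression above yields continuity of $\varphi^{\lambda,k}$ on $(\lambda_c,+\infty)$. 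The main obstacle is the bookkeeping of the second paragraph --- pinning down the finite family of subtrees and checking the denominator never vanishes --- together with the (routine once stated) observation that the weak-uniform-transience argument survives deletion of a finite walk; no idea beyond Sections~\ref{sectioncontinuity} and~\ref{section4.5} is needed.
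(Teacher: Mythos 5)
Your proof follows essentially the same route as the paper: it starts from the ratio identity of Lemma~\ref{lemmactggg} (equivalently your Green-function decomposition at the last visit to $y_{k-1}$) and reduces everything to continuity of the escape probabilities $\widetilde{\mathcal C}(\lambda,\mathcal T^{y_i})$, obtained by rerunning the argument of Theorem~\ref{thm:continuousofselfavoidingtree} --- Lemma~\ref{lemma2} for right-continuity and weak uniform transience (fresh half-planes/quadrants grafted at cutset vertices) with Theorem~\ref{ctgg} for left-continuity. In fact you supply more detail than the paper, which simply invokes ``the same method'' for the subtrees $\mathcal T^{y_i}$ and expresses $\widetilde{\mathcal C}(\lambda,\mathcal T^{y_i},y_{i+1})$ in terms of $\lambda$ and subtree conductances, so your write-up is correct and matches the intended argument.
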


Let $\mathcal T$ be an infinite, locally finite and rooted tree and $\nu$ is a
child of the root. Recall the definition of
$\widetilde{\mathcal C}(\lambda,\mathcal T)$ and $\widetilde{\mathcal
    C}(\lambda, \mathcal{T}, \nu)$ in Section~\ref{randomwalkontrees}. To prove
Theorem~\ref{ctggg}, we need the following lemma:

\begin{lemma}
  \label{lemmactggg}
  We have
  $$\varphi^{\lambda,k}(y_1,y_2,\ldots,y_k)=
    \frac{\widetilde{\mathcal C}(\lambda, \mathcal{T}, y_1)}{\widetilde{\mathcal C}(\lambda, \mathcal{T})}                \times
    \frac{\widetilde{\mathcal C}(\lambda, \mathcal{T}^{y_1}, y_2)}{\widetilde{\mathcal C}(\lambda, \mathcal{T}^{y_1})}   \times     \cdots
    \times \frac{\widetilde{\mathcal C}(\lambda, \mathcal{T}^{y_{k-1}}, y_k)}{
      \widetilde{\mathcal C}(\lambda, \mathcal{T}^{y_{k-1}})}.$$
\end{lemma}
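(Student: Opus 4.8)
The plan is to read $\varphi^{\lambda,k}(y_1,\dots,y_k)$ as the probability that the limit walk $\omega^{\infty}_{\lambda}$ --- equivalently, the almost sure limit $\lim_n X_n$ in $\partial\mathcal{T}$ of the transient chain $RW_{\lambda}$ --- passes through $y_1,\dots,y_k$, and to establish the stated product as a chain rule for this quantity, by induction on $k$. Since the subtrees are nested, $\mathcal{T}^{y_k}\subset\cdots\subset\mathcal{T}^{y_1}$, the event $\{\omega^{\infty}_{\lambda}(1)=y_1,\dots,\omega^{\infty}_{\lambda}(k)=y_k\}$ is just $\{X_n$ eventually stays in $\mathcal{T}^{y_k}\}$ (and if $y_k$ is a leaf both sides vanish, so assume it is not). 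For the base case $k=1$, and as the basic tool throughout, I would use the last-exit decomposition at the root: with $L:=\sup\{n:X_n=o\}<\infty$ a.s., one has $\omega^{\infty}_{\lambda}(1)=z\iff X_{L+1}=z$; decomposing over $L=m$, using the Markov property at time $m$, and $\sum_m\mathbb{P}(X_m=o)=\mathbb{E}_o[\#\{n:X_n=o\}]=\widetilde{\mathcal{C}}(\lambda,\mathcal{T})^{-1}$, this gives $\mathbb{P}(\omega^{\infty}_{\lambda}(1)=z)=\widetilde{\mathcal{C}}(\lambda,\mathcal{T},z)/\widetilde{\mathcal{C}}(\lambda,\mathcal{T})$ for every child $z$ of $o$, and verbatim for any infinite locally finite rooted tree carrying a transient $RW_{\lambda}$.

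For the inductive step I would condition on $L$ and on $X_{L+1}=y_1$: the shifted trajectory $(X_{L+i})_{i\ge1}$ then has the law $\overline{\mathbb{P}}$ of $RW_{\lambda}$ on $\mathcal{T}$ started at $y_1$ and conditioned never to hit $o$. Running the same last-exit bookkeeping (and noting that generation $j-1$ relative to $y_1$ is generation $j$ of $\mathcal{T}$) yields
\[
\varphi^{\lambda,k}(y_1,\dots,y_k)=\frac{\widetilde{\mathcal{C}}(\lambda,\mathcal{T},y_1)}{\widetilde{\mathcal{C}}(\lambda,\mathcal{T})}\cdot\overline{\mathbb{P}}\bigl(\textstyle\lim_n X_n\in\partial\mathcal{T}^{y_k}\bigr),
\]
so it remains to identify $\overline{\mathbb{P}}(\lim_n X_n\in\partial\mathcal{T}^{y_k})$ with $\varphi^{\lambda,k-1}_{\mathcal{T}^{y_1}}(y_2,\dots,y_k)$, after which the induction hypothesis on $\mathcal{T}^{y_1}$ finishes the proof. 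The content of this identification is that $\overline{\mathbb{P}}$ --- which lives on trajectories staying in $\mathcal{T}^{y_1}$ --- induces via $\lim_n$ the same harmonic measure on $\partial\mathcal{T}^{y_1}$ as $RW_{\lambda}$ on $\mathcal{T}^{y_1}$. To see this, observe (again by a last-exit decomposition, now at an arbitrary vertex $v\in\mathcal{T}^{y_1}$) that for both walks the conditional law, given that the limit lies in $\partial\mathcal{T}^{v}$, of which child subtree $\mathcal{T}^{w}$ contains the limit equals $w\mapsto(1-\rho_w)/\sum_{w'}(1-\rho_{w'})$, where $\rho_w:=\mathbb{P}^{\mathcal{T}^{v}}_{w}(\text{hit }v)$ is intrinsic to $\mathcal{T}^{v}$: for $RW_{\lambda}$ on $\mathcal{T}^{y_1}$ this drops out of the one-step law at $v$; for $\overline{\mathbb{P}}$, which is the Doob $h$-transform of $RW_{\lambda}$ on $\mathcal{T}$ by $h(x):=\mathbb{P}_x(\text{avoid }o)$, the extra factors cancel because $h(w)=(1-\rho_w)+\rho_w\,h(v)$ and $\overline{\mathbb{P}}_w(\text{hit }v)=\rho_w h(v)/h(w)$. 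Since the cylinders $\partial\mathcal{T}^{v}$ form a generating $\pi$-system, the two harmonic measures coincide.

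The delicate point --- the main obstacle --- is exactly this last identification: one cannot simply "restrict the walk to $\mathcal{T}^{y_1}$ and recurse", because conditioning $RW_{\lambda}$ never to return to $o$ does not reproduce $RW_{\lambda}$ on $\mathcal{T}^{y_1}$ as a process (it is a genuine $h$-transform); what rescues the induction is the insensitivity of the escape ray to this conditioning. If one prefers to bypass the conceptual argument, the same conclusion follows from an explicit excursion computation: writing $Q_j:=\mathbb{P}^{\mathcal{T}}_{y_j}\bigl(RW_\lambda\text{ avoids }y_{j-1}\text{ and }\lim_n X_n\in\partial\mathcal{T}^{y_k}\bigr)$ with $y_0:=o$, a one-step analysis at the successive visits to $y_j$ gives $Q_j=\frac{\lambda}{1+\lambda m_j\widetilde{\mathcal{C}}(\lambda,\mathcal{T}^{y_j})}\,Q_{j+1}$ for $1\le j\le k-1$ (with $m_j$ the number of children of $y_j$) and $Q_k=\mathbb{P}^{\mathcal{T}}_{y_k}(\text{avoid }y_{k-1})$; combined with the elementary identities $\widetilde{\mathcal{C}}(\lambda,\mathcal{T}^{y_{i-1}},y_i)=\frac{1}{m_{i-1}}\cdot\frac{\lambda m_i\widetilde{\mathcal{C}}(\lambda,\mathcal{T}^{y_i})}{1+\lambda m_i\widetilde{\mathcal{C}}(\lambda,\mathcal{T}^{y_i})}$ (the computation of Lemma~\ref{lemma6.1}, valid here with no extra hypothesis) and $\mathbb{P}^{\mathcal{T}}_{y_j}(\text{avoid }y_{j-1})=\frac{\lambda m_j\widetilde{\mathcal{C}}(\lambda,\mathcal{T}^{y_j})}{1+\lambda m_j\widetilde{\mathcal{C}}(\lambda,\mathcal{T}^{y_j})}$, together with $\varphi^{\lambda,k}(y_1,\dots,y_k)=Q_1/(k\,\widetilde{\mathcal{C}}(\lambda,\mathcal{T}))$, unrolling the recursion makes the right-hand side telescope to exactly the claimed product.
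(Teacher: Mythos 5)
Your proposal is correct, and for the base case $k=1$ it is essentially the paper's own argument: the paper decomposes the event $\{\omega^\infty_\lambda(1)=y_1\}$ according to the number of returns to the root and sums a geometric series, which is exactly your last-exit computation at $o$. Where you genuinely diverge is in the cases $k\ge 2$, which the paper dismisses with ``slightly more complicated, but following the same lines'' and never writes down. You correctly identify the point that makes this non-immediate: conditioning $RW_\lambda$ to escape through $y_1$ without returning to $o$ is a Doob $h$-transform, not the walk $RW_\lambda$ on $\mathcal T^{y_1}$, so one cannot naively recurse. Your primary route --- showing that the $h$-transform by $h(x)=\mathbb P_x(\text{avoid }o)$ leaves the exit ray's law unchanged, via the cancellation $\bar p(v,w)\bigl(1-\bar\rho_w\bigr)=p(v,w)\bigl(1-\rho_w\bigr)/h(v)$ together with the fact that $p(v,\cdot)$ is constant over children --- is a more conceptual argument than anything suggested in the paper, and it isolates the reason the induction works (insensitivity of harmonic measure to the conditioning). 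Your second route, the excursion decomposition with $Q_j=\frac{\lambda}{1+\lambda m_j\widetilde{\mathcal C}(\lambda,\mathcal T^{y_j})}Q_{j+1}$ and the identities from Lemma~\ref{lemma6.1} (which indeed hold without that lemma's criticality hypotheses), is presumably what the authors meant by ``the same lines''; I checked that the telescoping does reproduce the claimed product. Two small points to fix: in $\varphi^{\lambda,k}(y_1,\ldots,y_k)=Q_1/(k\,\widetilde{\mathcal C}(\lambda,\mathcal T))$ the letter $k$ must be the number of children of the root (the paper's $k$ in the definition of $p_\lambda(o,\cdot)$), not the path length --- write $m_0$ to avoid the clash; and in the harmonic-measure step you should note that the conditioning events $\{\text{ray through }y_j\}$ have positive probability (otherwise both sides of the identity vanish and there is nothing to prove), so the telescoping of conditionals along $y_1,\ldots,y_k$ is legitimate.
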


\begin{proof}
  We prove this lemma in the  case $k=1$, and leave the (slightly more
  complicated, but  following the same  lines) cases $k\geq 2$  to the
  reader. Denote by $\widetilde{\mathcal C_i}(\lambda,\mathcal T)$ the
  probability that the biased random walk on $\mathcal T$ returns to  origin
  exactly $i$
  times before going to  infinity.  Define  the following events:
  \[\begin{cases}
      \mathcal A :=     & \{  \omega^{\infty}_{\lambda}(1)=y_{1} \},                          \\
      \mathcal A_{i} := & \left \{  \omega^{\infty}_{\lambda}(1)=y_{1} \quad\text{and}\quad \#\{ n>0:
      X_n=o \}=i \right \}.
    \end{cases}\]
  The events $\mathcal A_{i}$ are disjoint, and by transience, $\mathcal
    A=\bigcup \mathcal A_{i}$.
  On the other hand,  by the Markov property, for any $i\geq 0$, we have
  \begin{equation*}
    \label{equ:recrecrecrec1}
    \mathbb{P}(\mathcal A_{i})
    =\widetilde{\mathcal C}(\lambda,\mathcal T,y_{1}){\left(1- \widetilde{\mathcal C}(\lambda,\mathcal T)\right)}^i.
  \end{equation*}
  Summing this identity over $i \geq 0$ leads to
  $$\mathbb{P}(\mathcal A)=\sum_{i=0}^{+\infty}\mathbb{P}(\mathcal A_{i})
    =\frac{\widetilde{\mathcal C}(\lambda,\mathcal T,y_{1})}{\widetilde{\mathcal C}(\lambda,\mathcal T)},$$
  which is indeed the claim of the Lemma for the case $k=1$.
\end{proof}

\begin{proof}[Proof of Theorem~\ref{ctggg}.]
  By Lemma~\ref{lemmactggg} applied to the case $\mathcal T = \mathcal
    T_{\mathbb H}$, we have
  $$\varphi^{\lambda,k}(y_1,y_2,\ldots,y_k)=
    \frac{\widetilde{\mathcal C}(\lambda, \mathcal{T}, y_1)}{\widetilde{\mathcal C}(\lambda, \mathcal{T})}                \times
    \frac{\widetilde{\mathcal C}(\lambda, \mathcal{T}^{y_1}, y_2)}{\widetilde{\mathcal C}(\lambda, \mathcal{T}^{y_1})}   \times     \cdots
    \times \frac{\widetilde{\mathcal C}(\lambda, \mathcal{T}^{y_{k-1}}, y_k)}{
      \widetilde{\mathcal C}(\lambda, \mathcal{T}^{y_{k-1}})}.$$
  It is
  enough    to    prove    that the functions   $\widetilde{\mathcal C}(\lambda,\mathcal T^{y_i},y_{i+1})$    and
  $\widetilde{\mathcal C}(\lambda,\mathcal T^{y_i})$   are  continuous. This
  follows from previous arguments, which we are going to adapt.

  The  continuity   of
  $\widetilde{\mathcal C}(\lambda,\mathcal T^{y_i})$ is a consequence of
  Theorem~\ref{ctgg}. All that is needed is to show that the trees $\mathcal
    T^{y_i}$ are all weakly uniformly transient, and this is true because they are
  finite modifications of $\mathcal T_{\mathbb H}$, which is itself weakly
  uniformly transient.

  For        the        continuity        of
  $\widetilde{\mathcal C}(\lambda,\mathcal T^{y_i},y_{i+1})$, this function can
  be written in terms of effective conductances: denoting by
  $\hat {\mathcal T}^{y_i}$ the tree obtained from $\mathcal T^{y_i}$ by adding
  one extra vertex that is a parent of the root, using the Markov property at
  time $1$ we get
  \[ \widetilde {\mathcal C} (\lambda, \mathcal T^{y_i}, y_{i+1}) = \frac
    {1} {d(y_i)} \cdot \mathcal C(\hat {\mathcal T}^{y_i})\] which is therefore
  continuous by another application of Theorem~\ref{ctgg}.
\end{proof}



\section{The critical probability measure via biased random walk}
\label{section6}

\subsection{The critical probability measure}
\begin{figure}[ht!]
  \centering
  \includegraphics[scale=0.6]{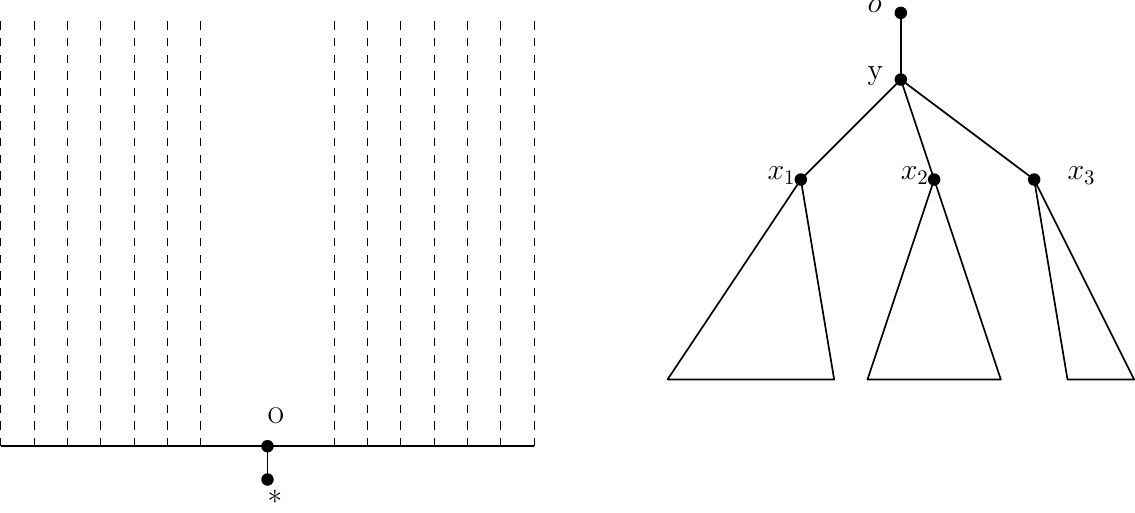}
  \caption{The upper-half plane on the left and the tree $\mathcal{T}_{\bar{\mathbb{H}}}$ on the right.}
  \label{fig:SAW111}
  \label{figure:critical1}
\end{figure}

In this section, we only work on the half-plane $\mathbb H$ and the associated
self-avoiding tree $\mathcal T_{\mathbb H}$. It will be useful below to extend
the half-plane by adding one more vertex below the origin: namely,
\[\bar{\mathbb H} := \mathbb H \cup \{\ast\} \quad \text{with} \quad \ast := (0,-1)\] and to consider the biased walk
on the corresponding tree $\mathcal T_{\bar{\mathbb H}}$; notice that the limit
walk in $\bar {\mathbb H}$ has the same law as the concatenation $[\ast,o] \oplus \gamma$
where $\gamma$ is the limit walk on $\mathbb H$.


We aim to construct a  critical probability measure through the biased
random walk on the self-avoiding tree by letting $\lambda$ decrease to its critical value. First, we  review the construction
of  Madras and  Slade (see~\cite{madras2013self} for  detail). Let $\mathcal
  B_n$ denote the set of all self-avoiding bridges of length $n$ in 
  $\bar{\mathbb{H}}$ starting at $\ast$, and let $\beta_n := |\mathcal B_n|$ be the number of
such bridges. Given  $n\geq m$ and  an $m$-step self-avoiding walk  $\gamma$ in
$\bar{\mathbb{H}}$, let  $\mathbb{P}^{\mathcal B}_{m,n}(\gamma)$  denote the
fraction of $n$-step bridges in $\mathcal B_n$ that extend $\gamma$, \emph{i.e.}
\begin{equation}
  \label{equation6}
  \mathbb{P}^{\mathcal B}_{m,n}(\gamma)=\frac{\left | \mathcal F_n(\gamma)\cap {\mathcal B_n} \right |}{\beta_n}=\frac{\left | \mathcal F_n(\gamma) \right |}{\beta_n},
\end{equation}
where $\mathcal F_n(\gamma)$ is the set of all $n$-step bridges which extend
$\gamma$. One can think of $\mathbb{P}^{\mathcal B}_{m,n}(\gamma)$ as the probability that a
bridge uniformly chosen from among all $n$-step bridges is an extension of
$\gamma$, it defines a probability measure on $\mathcal T_{\bar{\mathbb H}}^m$.
Theorem~8.3.1 in~\cite{madras2013self} states that the limit
\begin{equation}
  \label{equation7}
  \mathbb{P}^{\mathcal B}_m(\gamma):=\lim_{n \rightarrow \infty}\mathbb{P}^{\mathcal B}_{m,n}(\gamma)
\end{equation}
exists.
By the Kolmogorov theorem, this allows for the definition of a measure
$\mathbb{P}^{\mathcal B}_{\infty}$  on   the   set $\SAW_\infty$  of infinite self-avoiding
paths in
$\bar{\mathbb{H}}$ by imposing,  for  every  $\gamma \in  \SAW_{\infty}$,
$m\geq0$ and $\gamma'$ of length $m$,
$$\mathbb{P}^{\mathcal B}_{\infty}(\gamma[0,m]=\gamma')=\mathbb{P}^{\mathcal B}_m(\gamma').$$

\begin{theorem}[\cite{madras2013self}, Theorem 8.3.2]
  The probability measure $\mathbb{P}^{\mathcal B}_{\infty}$ coincides with the
  Kesten measure at parameter $\mu^{-1}$, where $\mu$ is the connective constant
  of the square lattice.
\end{theorem}

Recall  that for  all  $m\geq  1$, $\mathcal T_{\bar{\mathbb H}}^{(m)}$  is  the  $m$-good tree built
from self-avoiding paths having only irreducible bridges of length at most $m$ in their
decomposition (see Notation~\ref{subsubsection:notation}).
Fix $k>0$ and a self-avoiding path $\gamma$ of length $k$ started at $\ast$;
define $\varphi^{\lambda}(\gamma)$ and $\varphi^{\lambda,m}(\gamma)$ as the
probability that $\gamma$ is the prefix of length $m$ of the limit walk with
parameter $\lambda$ respectively on $\mathcal T_{\bar{\mathbb H}}$ and $\mathcal
  T_{\bar{\mathbb H}}^{(m)}$. Note that $\varphi^\lambda$ is defined for all $\lambda >
  \mu^{-1}$ and $\varphi^{\lambda,m}$ for all $\lambda > \lambda_m :=
  \lambda_c(\mathcal T_{\bar{\mathbb H}}^{(m)})$. 

\begin{lemma}
  As $m\to\infty$, we have $\lambda_m \to \lambda_c(\mathcal T_{\bar{\mathbb H}})$.
\end{lemma}

\begin{proof}
  This follows from the same proof as that of Proposition~\ref{p2}: in the last
  paragraph, instead of embedding the tree $\mathcal T^{(m)}_{B_{4\ell}}$ into $\mathcal
  T_{B_{4\ell}}$, one can embed it into $\mathcal T_{\bar{\mathbb H}}^{(m)}$ to derive the lower
  bound $br(\mathcal T_{\bar{\mathbb H}}^{(m)}) \geq \mu - 2\varepsilon$ and conclude in a similar
  fashion.
\end{proof}

\begin{theorem}
  \label{theorem6.1}
  For every $k>0$ and $\gamma$ of length $k$, the following hold:
  \begin{enumerate}
    \item \label{item1}
          $\varphi^{\lambda,m}(\gamma)$ converges as $\lambda$ decreases to $\lambda_m$:
          let $\varphi^{(m)}(\gamma) := \lim_{\lambda \searrow \lambda_m} \varphi^{\lambda,m}(\gamma)$;
    \item \label{item2} $\varphi^{(m)}(\gamma)$ converges as $m\to\infty$: let
          $\varphi^{(\infty)}(\gamma) := \lim_{m\to\infty}\varphi^{(m)}(\gamma)$;
    \item \label{item3}
          Moreover, we have the following diagram:
          \[\begin{tikzcd}
            \varphi^{m,\lambda}(\gamma)
            \ar[r, "m\to\infty", "\lambda>\mu^{-1}" below]
            \ar[d, "\lambda \to \lambda_m" left] &
            \varphi^{\lambda}(\gamma) \\
            \varphi^{(m)}(\gamma)
            \ar[r, "m\to\infty"] & 
            \varphi^{(\infty)}(\gamma)
          \end{tikzcd}\]
  \end{enumerate}
\end{theorem}

\begin{proof}
  As was the case for Theorem~\ref{ctggg}, all the ideas of the argument are
  already present in the case $k=2$, so we focus on that case below and omit the
  details of the extension to general $k\geq3$.


  \textbf{Item (\ref{item1}):}
  The tree $\mathcal T^m_{\bar{\mathbb H}}$ is periodic, so we are in the
  framework of Proposition~\ref{proposition6.1}: as $\lambda \searrow
    \lambda_m$, convergence will follow from that proposition and
  formula~\eqref{equation4} will give the limit. However, a little care has to
  be given because of the fact that the second step of $\gamma$ is not
  necessarily within the first irreducible bridge in the decomposition.
  Let $x_1 := (-1,0)$, $x_2 := (0,1)$ and $x_3 := (1,0)$ be the three neighbors
  of $o$ in $\mathbb H$. Moreover, let $S_i^m$ denote the set of all irreducible
  bridges of length at most $m$ having $[\ast,o,x_1]$ as a prefix.

  Formula~\eqref{equation4} applies directly to the cases of $x_1$ and $x_3$: as
  $\lambda \searrow \lambda_m$,
  \[ \varphi^{m,\lambda} ([\ast,o,x_1]) = \varphi^{m,\lambda} ([\ast,o,x_1]) \to
    \sum_{\gamma\in S_1^m} \lambda_m^{|\gamma|}. \] The case of $x_2$ has two
  sub-cases, since either the first step $[\ast,o]$ is an irreducible bridge in
  the decomposition by itself (in which case passing through $x_2$ afterwards is
  automatic), or it is the first step of a bridge going through $x_2$:
  therefore, applying~\eqref{equation4} to both sub-cases, we obtain that as
  $\lambda \searrow \lambda_m$,
  \[ \varphi^{m,\lambda} ([\ast,o,x_2]) \to \lambda_m + \sum_{\gamma\in S_2^m}
    \lambda_m^{|\gamma|}. \] Notice that relation~\eqref{eq:critvalueperiodic}
  implies that the sum of the three limits is indeed equal to $1$, since the set
  of relevant irreducible bridges is ${[\ast,o]} \cup S^m_1 \cup S^m_2 \cup
  S^m_3$. This gives both the convergence and the value of $\varphi^{(m)}$. To
  summarize, letting $p_{i,n}$ be the number of irreducible bridges of length
  $n\geq2$ which pass through $x_i$, we can rewrite the limit as
  \begin{equation}\label{eq:phim}
    \varphi^{(m)}([\ast,o,x_i]) =
    \delta_i^2 \lambda_m + \sum_{n=2}^{m}p_{i,n}\lambda_m^{n}.      
  \end{equation} 


  \bigskip\textbf{Item (\ref{item2}):}
  For all $m$ we have $\lambda_m \geq \lambda_c$ because $\mathcal T^m \subset
  \mathcal T_{\bar{\mathbb{H}}}$, hence
  \begin{equation}
    \label{equ:res4}
    \varphi^{(m)}([\ast,o,x_i]) \geq \delta_i^2 \lambda_c + \sum_{n=2}^{m}p_{i,n}\lambda_c^{n}.
  \end{equation}
  As $m\to\infty$, the right-hand term above increases to \[
    \tilde{\varphi}([\ast,o,x_i]) := \delta_i^2 \lambda_c +
  \sum_{n=2}^{\infty}p_{i,n}\lambda_c^{n}\] which coincides with the law of the
  first two steps in the Kesten measure discussed earlier; in particular,
  Kesten's identity implies that the sum of the $\tilde\varphi([\ast,o,x_i])$ is
  equal to $1$. We claim that
  $\varphi^{(m)}([\ast,o,x_i]) \to \tilde \varphi([\ast,o,x_i])$ (so that in the
  limit $\varphi^{(\infty)} = \tilde \varphi$); this directly follows from an
  elementary result:
  \begin{lemma}
    Let $(x_m)$, $(y_m)$, $(z_m)$, $(\alpha_m)$, $(\beta_m)$, $(\gamma_m)$ be
    sequences taking values in $[0,1]$ and satisfying the assumptions
    \begin{enumerate}
      \item For all $m$, $x_m \geq \alpha_m$, $y_m \geq \beta_m$ and $z_m \geq
      \gamma_m$;
      \item For all $m$, $x_m + y_m + z_m = 1$;
      \item As $m\to\infty$, $\alpha_m\to\alpha$, $\beta_m\to\beta$ and
      $\gamma_m\to\gamma$ with $\alpha+\beta+\gamma=1$.
    \end{enumerate}
    Then, $x_m\to\alpha$, $y_m\to\beta$ and $z_m\to\gamma$.
  \end{lemma}

  We only need to prove the lemma. To do that, let $(\xi,\eta,\zeta)$ be any
  subsequential limit of $(x_m,y_m,z_m)$: assumption (1) implies that
  $\xi\geq\alpha$, $\eta\geq\beta$ and $\zeta\geq\gamma$; assumption (2) shows
  that $\xi+\eta+\zeta=1$; and these readily imply that $\xi=\alpha$,
  $\eta=\beta$ and $\zeta=\gamma$, thus showing that the only possible
  subsequential limit of $(x_m,y_m,z_m)$ is $(\alpha,\beta,\gamma)$.



  \bigskip\textbf{Item~\eqref{item3}:}
  It            remains            to            prove            that for every
  $\lambda > \lambda_c$ and every finite $\gamma$, $\varphi^{m,\lambda}(\gamma)
  \to \varphi^\lambda(\gamma)$ as $m\to\infty$. We begin with a combinatorial
  remark. Let $\ell>1$; if $\gamma$ is a self-avoiding path of length
  $\ell$ in $\bar{\mathbb H}$ which can be extended to an infinite self-avoiding
  path (\emph{i.e.}, $\gamma$ is a vertex at height $\ell$ in the leaf-erased
  version of $\mathcal T_{\bar{\mathbb H}}$), let $m(\gamma)<\infty$ be the
  smallest $m$ such that $\gamma\in \mathcal T_{\bar{\mathbb H}}^m$, and let
  $M_\ell<\infty$ be the largest such $m(\gamma)$. By definition, the
  leaf-erased trees $\tilde{\mathcal T}_{\bar{\mathbb H}}$ and 
  $\tilde{\mathcal{T}}_{\bar{\mathbb H}}^{M_\ell}$ coincide within distance $\ell$ of their roots.
  
  Now fix $\lambda>\lambda_c$ and $m_0$ such that $\lambda_{m_0}<\lambda$; also
  fix a finite self-avoiding path $\gamma$ and some $\varepsilon>0$. By uniform
  transience, there exists $L<\infty$ such that the probability that the biased
  walk in either $\tilde{\mathcal T}_{\bar{\mathbb H}}$ or 
  $\tilde{\mathcal{T}}_{\bar{\mathbb H}}^{m}$ (for any $m \geq m_0$) has probability at most
  $\varepsilon$ of revisiting level $|\gamma|$ after visiting level $L$. 

  For any $m \geq \max(m_0,M_L)$, the biased walks in 
  $\tilde{\mathcal{T}}_{\bar{\mathbb H}}$ and $\tilde{\mathcal T}_{\bar{\mathbb H}}^{m}$ coincide
  until they reach level $L$ (because the trees coincide below that level),
  after which they revisit level $|\gamma|$ with probability at most
  $\varepsilon$ (because of the definition of $L$), thus \(|
  \varphi^{\lambda,m} (\gamma) - \varphi^\lambda(\gamma)| \leq
  \varepsilon\). This concludes the proof of the Theorem.
\end{proof}



\subsection{Conjectures}

If we take a sequence of cutsets $\pi_n:= \mathcal T_n$ and we set
$c(e)=\mu^{-|e|}$, then
$$\sum_{n}{\left(\sum_{e \in
\pi_n}c(e)\right)}^{-1}=\sum_{n=1}^{+\infty}\frac{\mu^n}{c_n}.$$ If the
prediction of Nienhuis~\cite{nienhuis1982exact} were true, we would obtain
$$\sum_{n=1}^{+\infty}\frac{\mu^n}{c_n}\geq
c\sum_{n=1}^{+\infty}\frac{1}{n^{\frac{11}{32}}}=+\infty$$
and by Theorem~\ref{NW}, this would imply the recurrence of the critical biased
random walk on the self-avoiding tree. We believe that recurrence does hold and
that it might be provable without computing the critical exponent above, so we
leave it as a conjecture:
\begin{conj}
  \label{conj2}
  The biased random walk  $RW_{\lambda_c}$ on $T_{\mathbb{H}}$ (or
  $T_{\mathbb{Z}^2}$) is recurrent.
\end{conj}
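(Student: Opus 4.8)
The plan is to apply the Nash--Williams criterion (Theorem~\ref{NW}) with the natural level cutsets of the self-avoiding tree, essentially as already indicated above. By Proposition~\ref{proposition1.1} the critical bias is $\lambda_c = 1/\mu$, and at this value $RW_{\lambda_c}$ is precisely the network random walk on $\mathcal T_{\mathbb Z^2}$ (resp.\ $\mathcal T_{\mathbb H}$) with conductances $c(e) = \mu^{-|e|}$. For each $n \geq 1$ I would take $\pi_n$ to be the set of edges of generation $n$; these form a sequence of pairwise disjoint finite E-cutsets separating the root from infinity, with $|\pi_n| = c_n$ (the number of $n$-step SAWs from the origin) in the case of $\mathcal T_{\mathbb Z^2}$, and each $e \in \pi_n$ carries conductance $\mu^{-n}$. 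Thus $\sum_{e\in\pi_n} c(e) = c_n\,\mu^{-n}$, and Theorem~\ref{NW} yields
$$\mathcal R(\lambda_c, \mathcal T_{\mathbb Z^2}) \ \geq\ \sum_{n\geq 1}\frac{\mu^n}{c_n}.$$

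The problem then reduces to the divergence of $\sum_n \mu^n/c_n$: if this series is infinite, then $\mathcal R(\lambda_c,\mathcal T_{\mathbb Z^2}) = +\infty$, equivalently $\widetilde{\mathcal C}(\lambda_c,\mathcal T_{\mathbb Z^2}) = 0$, which is exactly recurrence of $RW_{\lambda_c}$. To get the divergence I would invoke the Nienhuis prediction $c_n = \mu^{n}\, n^{11/32 + o(1)}$~\cite{nienhuis1982exact}: under it $\mu^n/c_n = n^{-11/32 + o(1)}$, and since $11/32 < 1$ the series diverges. In fact any polynomial upper bound of the form $c_n \leq C\,\mu^n n^{\beta}$ with a fixed $\beta<1$ is enough, so the statement rests on a rather weak consequence of the full prediction.

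For $\mathcal T_{\mathbb H}$ the same cutsets work: now $|\pi_n|$ is the number $c_n^{\mathbb H}$ of $n$-step half-plane SAWs from the origin, and since every half-plane SAW is a SAW in $\mathbb Z^2$ one has $c_n^{\mathbb H}\leq c_n$, hence $\sum_n \mu^n/c_n^{\mathbb H} \geq \sum_n \mu^n/c_n$; together with $\lambda_c(\mathcal T_{\mathbb H}) = 1/\mu$ (Proposition~\ref{proposition1.1}) the reasoning above then gives recurrence in the half-plane as well. (Alternatively one could feed in the predicted half-plane surface exponent, for which $\mu^n/c_n^{\mathbb H}$ in fact grows, so the series diverges a fortiori.)

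The only genuine obstacle is the last step: no polynomial upper bound on $c_n$ is rigorously available. The best unconditional estimate is the Hammersley--Welsh-type bound $c_n \leq b_n e^{O(\sqrt n)} \leq \mu^n e^{O(\sqrt n)}$ used in~\ref{equ:recprop12}, which gives merely $\sum_n \mu^n/c_n \geq \sum_n e^{-O(\sqrt n)} < \infty$ and is therefore useless here. Hence the statement must stay conditional, and an unconditional proof would apparently require either a substantial improvement of the sub-exponential correction in $c_n$, or a genuinely different mechanism forcing recurrence of the critical walk (for instance one exploiting the connection with critical percolation sketched in Section~\ref{sec:percolation}).
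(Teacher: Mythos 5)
Your argument is exactly the one the paper sketches just before stating the conjecture: apply Nash--Williams (Theorem~\ref{NW}) with the level cutsets $\pi_n=\mathcal T_n$ and conductances $\mu^{-|e|}$ to reduce recurrence at $\lambda_c$ to divergence of $\sum_n \mu^n/c_n$, which follows from the Nienhuis prediction but is not known unconditionally. You also correctly identify (as the paper implicitly does by labelling this a conjecture) that the Hammersley--Welsh bound $c_n\leq \mu^n e^{O(\sqrt n)}$ is too weak to make this rigorous, so the statement remains conditional.
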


Finally,        we        believe         that    one can take the limit
$\lambda \to \lambda_c$ without restricting the lengths of the irreducible
bridges in the decomposition:

\begin{conj}
  The following convergence diagram holds
  \[\begin{tikzcd}
    \varphi^{m,\lambda}(\gamma)
    \ar[r, "m\to\infty", "\lambda>\mu^{-1}" below]
    \ar[d, "\lambda \to \lambda_m" left] &
    \varphi^{\lambda}(\gamma) 
    \ar[d, "\lambda\to\lambda_c"] \\
    \varphi^{(m)}(\gamma)
    \ar[r, "m\to\infty"] & 
    \varphi^{(\infty)}(\gamma)
  \end{tikzcd}\]
\end{conj}

\bibliographystyle{siam}
\bibliography{bib}
\end{document}